\NeedsTeXFormat{LaTeX2e}

\documentclass[reqno,10pt]{amsart}
\usepackage{latexsym,amsmath}
\usepackage{amsfonts}
\usepackage{amssymb}
\usepackage{enumerate}
\usepackage{latexsym}
\usepackage{color}

\usepackage[T1]{fontenc}
\usepackage{fourier}
\usepackage{bbm}

\usepackage{fullpage}

\numberwithin{equation}{section}

\DeclareFontFamily{U}{mathx}{\hyphenchar\font45}
\DeclareFontShape{U}{mathx}{m}{n}{
      <5> <6> <7> <8> <9> <10>
      <10.95> <12> <14.4> <17.28> <20.74> <24.88>
      mathx10
      }{}
\DeclareSymbolFont{mathx}{U}{mathx}{m}{n}
\DeclareFontSubstitution{U}{mathx}{m}{n}
\DeclareMathAccent{\widecheck}{0}{mathx}{"71}


\newcommand\ism{\cong}

\newcommand{\Bck}[1]{\left\llbracket{#1}\right\rrbracket}

\newcommand{\nix}{\,\cdot\,}
\newcommand{\Erdos}{Erdos}
\newcommand{\Renyi}{R\'enyi}

\DeclareMathOperator{\pr}{\mathbb P}

\newcommand\T{\vec T}
\def\vec#1{\mathchoice{\mbox{\boldmath$\displaystyle#1$}}
{\mbox{\boldmath$\textstyle#1$}}
{\mbox{\boldmath$\scriptstyle#1$}}
{\mbox{\boldmath$\scriptscriptstyle#1$}}}

\newcommand\ppartial{\Delta}

\newcommand\eps{\varepsilon} 
\newcommand\betac{\beta_{\mathrm{cond}}}
\newcommand\dk{d_{k\mathrm{-SAT}}}
\newcommand\dc{d_{\mathrm{cond}}(k)}
\newcommand{\vecone}{\vec{1}}

\newcommand\SIGMA{\vec\sigma}

\newcommand{\Bin}{{\rm Bin}}

\newcommand{\bck}[1]{\left\langle{#1}\right\rangle}
\newcommand\brk[1]{\left\lbrack{#1}\right\rbrack}

\newcommand\abs[1]{\left|{#1}\right|}

\newcommand\RR{\mathbb{R}}

\newcommand{\Whp}{W.h.p.}
\newcommand{\whp}{w.h.p.}

\newcommand{\tensor}{\otimes}

\newcommand\cA{\mathcal{A}}
\newcommand\cB{\mathcal{B}}
\newcommand\cC{\mathcal{C}}
\newcommand\cD{\mathcal{D}}
\newcommand\cF{\mathcal{F}}
\newcommand\cG{\mathcal{G}}
\newcommand\cE{\mathcal{E}}

\newcommand\cH{\mathcal{H}}

\newcommand\cT{\mathcal{T}}

\newcommand\cP{\mathcal{P}}

\def\hPhi{{\widehat{\Phi}}}

\def\hve{{\widehat{\varepsilon}}}
\def\w{\ell}

\def\GW{{\textrm{GW}}}

\def\hnu{{\widehat{\nu}}}
\def\hmu{{\widehat{\mu}}}
\def\heta{{\widehat{\eta}}}
\def\hh{{\widehat{h}}}
\def\hZ{{\widehat{Z}}}
\def\hz{{\widehat{z}}}
\def\hf{{\widehat{f}}}
\def\hpi{{\widehat{\pi}}}

\def\hq{{{\widehat{h}}}}
\def\hqq{{{\widehat{q}}}}
\def\dd{{{\mathrm{d}}}}
\def\ovf{{\bar{f}}}
\def\ovz{{\bar{z}}}
\def\hT{{{\widehat{T}}}}

\newcommand\PHI{\vec\Phi}

\newcommand\hhProb{{\mathbb{P}}}
\newcommand\hhErw{{\mathbb{E}}}
\def\Wvhp{{W.v.h.p.}}
\def\wvhp{{w.v.h.p.}}
\def\hhPhi{{\widetilde{\Phi}}}
\def\dplus{{d_{k-\textrm{SAT}}}}
\newcommand\Prob{\mathbb{P}}
\newcommand\hProb{{\mathbb{P}}}
\newcommand\cProb{{\mathbb{P}}}
\newcommand\Erw{\mathbb{E}}
\newcommand\hErw{{\mathbb{E}}}
\newcommand\cErw{{\mathbb{E}}}

\def\cD{{\mathcal{D}}}
\def\cB{{\mathcal{B}}}
\def\cE{{\mathcal{E}}}
\def\hcE{\widehat{\mathcal{E}}}
\def\cF{{\mathcal{F}}}
\def\cG{{\mathcal{G}}}
\def\cP{{\mathcal{P}}}

\def\hcF{{\widehat{\mathcal{F}}}}
\def\cH{{\mathcal{H}}}
\def\cA{{\mathcal{A}}}

\def\cT{{\mathcal{T}}}

\def\Whp{{W.h.p.}}
\def\whp{{w.h.p.}}
\def\core{{\rm{Core}}}
\def\trunk{{\rm{Trunk}}}

\def\um{{{\underline{m}}}}

\newtheorem{definition}{Definition}[section]
\newtheorem{claim}[definition]{Claim}

\newtheorem{remark}[definition]{Remark}
\newtheorem{theorem}[definition]{Theorem}
\newtheorem{lemma}[definition]{Lemma}
\newtheorem{proposition}[definition]{Proposition}
\newtheorem{corollary}[definition]{Corollary}

\newtheorem{fact}[definition]{Fact}

\newcommand\Claim{Claim}

\newcommand\Lem{Lemma}
\newcommand\Prop{Proposition}
\newcommand\Thm{Theorem}
\newcommand\Cor{Corollary}
\newcommand\Sec{Section}

\newcommand\Rem{Remark}

\newcommand{\beq}{\begin{equation}} \newcommand{\eeq}{\end{equation}}

\allowdisplaybreaks

\begin{document}

\title{The condensation phase transition in the regular $k$-SAT model}

\author[Bapst and Coja-Oghlan]{Victor Bapst$^*$, Amin Coja-Oghlan$^*$}
\thanks{$^*$The research leading to these results has received funding from the European Research Council under the European Union's Seventh 
Framework Programme (FP/2007-2013) / ERC Grant Agreement n.\ 278857--PTCC}
\date{\today} 

\address{Amin Coja-Oghlan, {\tt acoghlan@math.uni-frankfurt.de}, Goethe University, Mathematics Institute, 10 Robert Mayer St, Frankfurt 60325, Germany.}

\address{Victor Bapst, {\tt bapst@math.uni-frankfurt.de}, Goethe University, Mathematics Institute, 10 Robert Mayer St, Frankfurt 60325, Germany.}

\begin{abstract}
\noindent
Much of the recent work on phase transitions in random discrete structures has been inspired by ingenious but non-rigorous approaches from physics.
The physics predictions typically come in the form of distributional fixed point problems that are intended to mimic Belief Propagation,
a message passing algorithm.
In this paper we propose a novel method for harnessing Belief Propagation directly to obtain a rigorous proof of such a prediction, namely
the existence and location of a condensation phase transition in the random regular $k$-SAT model.

\bigskip
\noindent
\emph{Mathematics Subject Classification:} 05C80 (primary), 05C15 (secondary)
\end{abstract}

\maketitle

\section{Introduction} 
\label{sec_introduction}

\subsection{Background and motivation}
Over the past three decades the study of random constraint satisfaction problems has been driven by ideas from statistical physics~\cite{MPV,MPZ}.
This work has had a substantial impact on computer science (e.g., proofs that certain benchmark instances
are difficult for certain algorithms),  coding theory (``low density parity check codes'')
and probabilistic combinatorics (random graphs, hypergraphs and formulas); e.g., ~\cite{kSAT,DMSS,DSS3,GNS,David1,David2,RichardsonUrbanke}.
All of these disciplines deal with a common setup.
There are a large number of ``variables'' that interact through a similarly large number of ``constraints''. 
Each variable ranges over a finite domain (such as the Boolean values `true' and `false') and
every constraint binds a small number of variables, either encouraging or discouraging certain value combinations.

The striking feature of the physics work is that it is based on a non-rigorous but generic approach called the {\em cavity method},
centered around the {\em Belief Propagation} message-passing algorithm, that can be applied almost mechanically~\cite{MM}.
Hence the impact of a single technique on such a wide range of problems.
By comparison, the rigorous study of random problems has largely been case-by-case. 
This begs the question of whether the Belief Propagation calculations can be put on a rigorous basis more directly.

This is precisely 
the thrust of the present paper.
We show how the physics calculations can be turned into a 
rigorous proof in a highly non-trivial and somewhat representative case.
Specifically, we  determine the ``condensation phase transition''
in the random regular $k$-SAT model. 
The proof is based on a novel approach that demonstrates how
our recent general results on the connection between spatial mixing properties and the computation
of the free energy~\cite{betheupb} can be put to work.
The centrepiece of the proof is a fairly direct analysis of the Gibbs marginals by means of  Belief Propagation.
The arguments are rather generic and we expect them to extend to other problems.

The random regular $k$-SAT model is defined as follows~\cite{Rathi}.
There are Boolean variables $x_1,\ldots,x_n$ and $m$ contraints, namely propositional clauses of length $k$.
Each variable occurs precisely $d/2$ times as a positive and precisely $d/2$ times as a negative literal.
Hence, $m=dn/(2k)$; we assume tacitly that $d$ is even and that $k$ divides $dn$.
Let $\PHI=\PHI_{d,k}(n)$ signify a uniformly random such $k$-SAT formula.%
	\footnote{The regular $k$-SAT model shares many of the properties of the better known model where $m$ clauses are chosen uniformly and independently
		but avoids the intricacies that result from having a few variables of very high degree.}
For $k$ exceeding a certain constant $k_0$
the threshold where $\PHI$ ceases to be satisfiable is known~\cite{kSAT}.
	\footnote{In the sense that $\liminf_{n\to\infty}\pr\brk{\PHI\mbox{ is satisfiable}}>0$ if $d<\dk$ and 
		$\lim_{n\to\infty}\pr\brk{\PHI\mbox{ is satisfiable}}=0$ if $d>\dk$.}
While the exact formula is cumbersome, asymptotically $\dk/k=2^k\ln2-k\ln2/2+O(1)$ 
 for large $k$.
	
Of course, 
finding the satisfiability threshold is hardly the end of the story.
Much more precise information is encoded in
the Hamiltonian $\sigma\mapsto E_{\PHI}(\sigma)$ that maps each truth assignment $\sigma$ to the number of clauses that it violates.
We think of it as a ``landscape'' on the  Hamming cube.
For instance, if  $E_{\PHI}$ is riddled with local minima, we should expect that Markov processes such as
Simulated Annealing get trapped~\cite{Barriers,pnas,MPZ}.
Hence, $E_{\PHI}$ holds the key to understanding algorithms for finding, counting and sampling
	solutions~\cite{MontanariShah,RTS}.

The key quantity upon which the study of the Hamiltonian hinges is the {\em partition function} 
	\begin{align*}
	Z_{\PHI}:\beta\in(0,\infty)\mapsto\sum_\sigma\exp(-\beta E_{\PHI}(\sigma)).
	\end{align*}
As usual, the larger the {\em inverse temperature} $\beta$, the bigger the relative contribution of ``good'' assignments that violate few clauses.
Of course, we are interested in the asymptotics as $n\to\infty$.
Since $Z_{\PHI}(\beta)$ scales exponentially with $n$, we consider%
	\begin{equation}\label{eqFreeEnergy}
	\phi_{d,k}:\beta\in(0,\infty)\mapsto\lim_{n\to\infty}\frac1n\Erw[\ln Z_{\PHI}(\beta)].
	\end{equation}
Clearly, what makes $\phi_{d,k}$ vicious is that the log is {\em inside} the expectation.
The existence of the limit follows from the interpolation method~\cite{bayati}
and Azuma's inequality implies that $\ln Z_{\PHI}(\beta)$ concentrates about $\Erw[\ln Z_{\PHI}(\beta)]$.

A key question is how smoothly $\phi_{d,k}(\beta)$ varies as a function of $\beta$ for {\em fixed} $d,k$.
Formally, let us call $\beta_0\in(0,\infty)$ {\em smooth} if there exists $\eps>0$ such that the function $\beta\in(\beta_0-\eps,\beta_0+\eps)\mapsto\phi_{d,k}(\beta)$
admits an expansion as an absolutely convergent power series around $\beta_0$.
If $\beta_0$ fails to be smooth, a {\em phase transition} occurs at $\beta_0$.

\subsection{Results} \label{subsec_results}
According to (non-rigorous) physics predictions~\cite{pnas} for certain values of $d$ close to the 
satisfiability threshold $\dk$  there occurs a so-called {\em condensation phase transition} at a certain critical $\betac(d,k)>0$.
The main result of this paper 
proves this conjecture.
Let us postpone the precise definition of $\betac(d,k)$ for a moment.

\begin{theorem}\label{Thm_main}
There exists $k_0\geq3$ such that for all $k\geq k_0$, 
	$d\leq\dk$
there is $\betac(d,k)\in(0,\infty]$ such that
any $\beta\in(0,\betac(d,k))$ is smooth.
If $\betac(d,k)<\infty$, then
there occurs a phase transition at $\betac(d,k)$.
\end{theorem}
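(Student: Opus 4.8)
The plan is to read both halves of the theorem off a single explicit functional, the \emph{Bethe free energy} $\beta\mapsto\cB_{d,k}(\beta)$ --- the replica-symmetric prediction, assembled from the fixed point of the Belief Propagation (BP) recursion on the local (tree-like) structure of $\PHI$. The threshold $\betac(d,k)$ will be defined below as the supremum of the $\beta$ for which an explicit \emph{condensation criterion} holds (an inequality between $\cB_{d,k}$ and a companion functional), tailored precisely so as to trigger the machinery of \cite{betheupb}, which asserts that the Bethe formula gives the true free energy whenever a suitable spatial-mixing property holds. Everything below is organised around exploiting this.

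First, I would establish the Bethe formula below the threshold: for every $\beta\in(0,\betac(d,k))$ the random formula $\PHI$ satisfies, \whp, the spatial-mixing hypothesis of \cite{betheupb}, whence $\phi_{d,k}(\beta)=\cB_{d,k}(\beta)$. This is the centrepiece and, I expect, the main obstacle. Concretely: (i) analyse the distributional fixed-point equation attached to BP on the limiting two-type tree (variables of degree $d$, clauses of length $k$, random literal signs) and show that below $\betac$ it has a unique stable solution with an exponential contraction rate --- equivalently, that the tree reconstruction problem is infeasible; (ii) couple the local neighbourhood of a typical variable of $\PHI$ to that tree, show that BP converges there from an arbitrary initialisation, and deduce that BP computes the Gibbs marginals up to a negligible error \whp; (iii) feed (i)--(ii) into \cite{betheupb}. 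The delicate point is the passage from the quantitative condensation criterion to the operational non-reconstruction statement, with enough uniformity to cover every compact subinterval of $(0,\betac)$.

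Next, I would check that $\beta\mapsto\cB_{d,k}(\beta)$ is real-analytic on an open neighbourhood of $(0,\betac(d,k)]$ (or of all of $(0,\infty)$ when $\betac(d,k)=\infty$). The BP fixed point solves $F(\beta,\cdot)=\mathrm{id}$ for a map $F$ that is analytic jointly in $\beta$ and in the distribution, on a suitable Banach space of measures; the contraction from the previous step makes the relevant derivative of $F$ invertible, so the analytic implicit function theorem produces an analytic family of fixed points, and $\cB_{d,k}$ is an explicit analytic functional thereof. Combined with the previous step, this already shows that every $\beta_0\in(0,\betac(d,k))$ is smooth --- the first assertion of the theorem.

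Finally, suppose $\betac(d,k)<\infty$; it remains to see that $\phi_{d,k}$ is not analytic at $\betac(d,k)$. For this I would prove the sub-claim that $\phi_{d,k}(\beta)\neq\cB_{d,k}(\beta)$ (in fact $\phi_{d,k}(\beta)<\cB_{d,k}(\beta)$, reflecting the collapse of exponentially many solution clusters into $O(1)$ of them) for every $\beta$ in some right-neighbourhood $(\betac(d,k),\betac(d,k)+\eps)$ --- that is, that $\betac(d,k)$ is a genuine threshold for the tightness of the Bethe bound rather than an artefact of the proof --- via a second-moment/planted-model computation showing that just past $\betac(d,k)$ the condensation criterion fails and the free energy departs from the replica-symmetric value. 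Granting this, the conclusion follows from the identity theorem: $\phi_{d,k}=\cB_{d,k}$ on $(\betac(d,k)-\eps,\betac(d,k))$ by the first two steps, and $\cB_{d,k}$ is analytic on $(\betac(d,k)-\eps,\betac(d,k)+\eps)$ by the second, so if $\phi_{d,k}$ were analytic at $\betac(d,k)$ as well, then $\phi_{d,k}-\cB_{d,k}$ would be analytic near $\betac(d,k)$ and vanish identically (it already vanishes on the left subinterval), contradicting the sub-claim. Hence a phase transition occurs at $\betac(d,k)$. Besides the centrepiece in the first step, this sub-claim --- essentially the sharpness of the condensation criterion --- is the second most delicate ingredient.
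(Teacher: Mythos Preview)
Your high-level logic is sound and matches the paper: establish $\phi_{d,k}=\cF$ on $(0,\betac)$ for an explicit analytic function $\cF$, then use the identity theorem together with the strict inequality $\phi_{d,k}<\cF$ just past $\betac$ to force a non-analyticity. Your step~3 is exactly the content of \Cor~\ref{Cor_1rsb}, and your analyticity step is fine (in fact simpler than you suggest: in the regular model the replica-symmetric prediction $\cF(k,d,\beta)$ is the annealed free energy, given by the closed formula of \Prop~\ref{prop_first_moment_vanilla}, so analyticity is immediate).

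Where your plan departs from the paper --- and where I think it would genuinely stall --- is the mechanism for step~1. You propose to obtain $\phi_{d,k}=\cF$ by proving non-reconstruction for the Gibbs measure of $\PHI$ itself and feeding this into \cite{betheupb}. The paper does \emph{not} do this, and for a reason: the local structure of the regular formula $\PHI$ is essentially deterministic (a regular tree with balanced signs), so the BP fixed point there is trivial and the ``distributional fixed-point equation'' you invoke does not arise from $\PHI$ at all. The distributional recursion of \Prop~\ref{prop_unique_fixed_point_tree_1} instead describes the \emph{planted} model $\hat\PHI$, whose local structure is genuinely random (the reweighting by $Z_\beta$ biases the clause types). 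The paper's route is: (i) a truncated second moment argument (\Lem~\ref{Lem_smmRemedy}) reduces the question ``does $\phi_{d,k}=\cF$?'' to computing the \emph{quenched cluster size} $\frac1n\Erw[\ln\cC_{\hat\PHI,\hat\SIGMA}(\beta)]$ in the planted model; (ii) non-reconstruction is proved for the \emph{conditional} Gibbs measure $\Bck{\,\cdot\,}$ on the cluster, in both the planted model $\hat\PHI$ and the planted replica model $\tilde\PHI$ (\Prop s~\ref{Prop_nonRe1}--\ref{Prop_nonRe2}); (iii) \cite{betheupb} is applied to that conditional measure to identify the cluster size with $\cB(k,d,\beta)$; (iv) the condensation criterion is then the explicit comparison $\cF(k,d,\beta)\lessgtr\cB(k,d,\beta)$. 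So the object whose Bethe free energy one computes is the cluster in the planted model, not $Z_\PHI$; and the non-reconstruction one needs is for $\hat\PHI$ and $\tilde\PHI$, not for $\PHI$. Without this redirection through the planted model, you would be missing both the correct tree process underlying the fixed point and the second-moment link back to $\phi_{d,k}$.
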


\noindent
Thus, if we fix $d,k$ such that $\betac(d,k)=\infty$, then the function $\phi_{d,k}$ is analytic on $(0,\infty)$.
But if $d,k$ are such that $\betac(d,k)<\infty$, then $\phi_{d,k}$ is non-analytic at the point $\betac(d,k)$.
In fact, we will see that $\betac(d,k)<\infty$ for $d$ 
exceeding a specific $\dc<\dk$. Crucially, \Thm~\ref{Thm_main} identifies the {\em precise} condensation threshold  $\betac(d,k)$;
it is the first such result in a model of this kind.

Let us take a look at the precise value of $\betac(d,k)$.
As most predictions based on the cavity method, $\betac(d,k)$ results from a {\em distributional fixed point problem}, i.e.,
a fixed point problem on the space of probability measures on the unit interval $(0,1)$.
The fixed point problem derives mechanically from the ``1RSB cavity equations''~\cite{MM}.
Specifically, writing $\cP(\Omega)$ for the set of probability measures on $\Omega$, we define two maps
	$$\cF_{k,d,\beta}:\cP(0,1)\to\cP(0,1),\qquad\hat\cF_{k,d,\beta}:\cP(0,1)\to\cP(0,1)$$
as follows.
Given $\pi\in\cP(0,1)$ let $\eta=(\eta_1,\ldots,\eta_{k-1})\in(0,1)^{k-1}$ be a random $k-1$-tuple drawn from the  distribution
	$(\hat z(\eta)/\hat Z(\pi))\,\dd\bigotimes_{j=1}^{k-1}\pi(\eta_j)$, where
	\begin{align*}
	\hat z(\eta)&=2-(1-\exp(-\beta))\prod_{j<k}\eta_j\qquad\mbox{and}\quad
	&\hat Z(\pi)&=\int\hat z(\eta)\dd\bigotimes_{j<k}\pi(\eta_j).
	\end{align*}
Then $\hcF_{k,d,\beta}(\pi)$ is the distribution of 
	$(1-(1-\exp(-\beta))\prod_{i=1}^{k-1} \eta_i) /{\hz(\eta)}.$
Similarly, given $\hat\pi\in\cP(0,1)$ draw $\hat\eta=(\hat\eta_1,\ldots,\hat\eta_{d-1})$ from
	$(z(\hat\eta)/Z (\hat\pi))\dd\bigotimes_{j=1}^{k-1}\hat\pi(\hat\eta_j)$, where
	\begin{align*}
	z (\heta)&= {\prod_{j<d/2}\heta_j \prod_{j\geq d/2}(1-\heta_j)+\prod_{j<d/2} (1-\heta_j) \prod_{j\geq d/2}\heta_j},
		&\  Z ( \hat\pi )=\int z(\heta)\dd\bigotimes_{j<k}\hat\pi(\hat\eta_j).
	\end{align*}
Then  $\cF_{k,d,\beta}(\hpi)$ is the distribution of $(\prod_{j<d/2} \heta_j \prod_{j\geq d/2}(1-\heta_j))/z(\heta)$.
Call a distribution $\pi\in\cP(0,1)$ {\em skewed} if 
the probability mass of the interval $(0,1-\exp(-k\beta/2))$ satisfies
$\pi(0,1-\exp(-k\beta/2))<2^{-0.9 k}$.

\begin{proposition} \label{prop_unique_fixed_point_tree_1}
Let $d_-(k)=\dk-k^5$ and $\beta_-(k,d)=k\ln2-10\ln k$.
The map $\cG_{k,d,\beta} = \cF_{k,d,\beta} \circ \hcF_{k,d,\beta}$ has a unique skewed fixed point $\pi^\star_{k,d,\beta}$, provided that
 $k\geq k_0$, $d\in[d_-(k),\dk]$ and $\beta>\beta_-(k,d)$.
\end{proposition}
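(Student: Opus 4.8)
The plan is to prove \Prop~\ref{prop_unique_fixed_point_tree_1} by a Banach fixed point argument, carried out not on all of $\cP(0,1)$ but on the closed subset $\cS=\cS_{k,d,\beta}$ of skewed distributions together with a companion set $\widehat{\cS}$ describing the clause-to-variable messages that $\hcF_{k,d,\beta}$ produces. Since everything happens inside an exponentially thin neighbourhood of the boundary point $1$, the plain total variation or Wasserstein distance is too coarse to see a contraction; I would instead metrise $\cS$ by $W(\pi,\pi')=W_1\bc{T_\sharp\pi,T_\sharp\pi'}$, the $1$-Wasserstein distance after pushing forward along a chart such as $T(\eta)=-\ln(1-\eta)$ (suitably truncated so that the skewed mass has a finite first moment), which stretches the good region $(1-\exp(-k\beta/2),1)$ to macroscopic size; alternatively one can keep $W_1$ on $(0,1)$ and supplement it with a separate bound on the ``atypical'' mass, leading to a two-scale contraction. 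In this metric $\cS$ is closed, hence complete. The argument then has three parts: (i) $\cG_{k,d,\beta}$ maps $\cS$ into itself; (ii) $\cG_{k,d,\beta}$ is a strict contraction on $(\cS,W)$, with ratio tending to $0$ as $k\to\infty$; (iii) conclude via Banach.

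For (i) I would factor through $\hcF_{k,d,\beta}(\cS)\subseteq\widehat{\cS}$ and $\cF_{k,d,\beta}(\widehat{\cS})\subseteq\cS$, which first requires pinning down the correct companion class $\widehat{\cS}$ and its typical-versus-atypical threshold. The clause update $\hcF_{k,d,\beta}$ is benign: its weight $\hat z(\eta)=2-(1-\exp(-\beta))\prod_{j<k}\eta_j$ lies between $1$ and $2$, so the tilted law of $(\eta_1,\dots,\eta_{k-1})$ differs from $\bigotimes_{j<k}\pi$ by a density bounded above and below by absolute constants; a union bound over the $k-1$ coordinates shows that all of them lie in the good region except with probability at most $2(k-1)2^{-0.9k}$, and on that event $1-\prod_{j<k}\eta_j$ is small enough to push the output deep into the typical range of $\widehat{\cS}$. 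The variable update $\cF_{k,d,\beta}$ is the delicate one, because the normaliser $z(\heta)$ is exponentially small and, crucially, not a product measure, so a single atypical incoming message can be heavily favoured or disfavoured by the tilt. Here is where the hypotheses $d\ge\dk-k^5$ and $\beta>k\ln2-10\ln k$ enter: in this window one must show that the multiplicative gain an atypical coordinate can extract from $z(\heta)$ is outweighed, uniformly, by its prior cost, so that even under the tilt the number of atypical coordinates retains an exponentially decaying tail; conditioning on the complement and evaluating the output formula explicitly then returns a distribution in $\cS$.

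For (ii), fix skewed $\pi,\pi'$, realise an optimal $W$-coupling of one pair $(\eta,\eta')$, and use it to build i.i.d.\ copies of coordinate pairs with $\cErw\abs{T(\eta_j)-T(\eta_j')}=W(\pi,\pi')$. The obstruction to pushing this through the updates is once more that the tilted laws of the $(k-1)$- and $(d-1)$-tuples are not products; I would get around it with a two-stage coupling, first coupling the unweighted product laws coordinatewise and then absorbing the discrepancy between the two tilts by a Radon--Nikodym/rejection step, exploiting that $\hat z$ and $z$ evaluated at the two coupled tuples are uniformly close because they depend on the coordinates only through products that sit near $0$ or $1$ and are therefore insensitive to small perturbations. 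Having coupled the tuples, push through $\hcF_{k,d,\beta}$ and then $\cF_{k,d,\beta}$ and bound $\cErw\abs{T(\mathrm{out})-T(\mathrm{out}')}$. In the chart $T$ the composed update is Lipschitz with a minuscule constant: a perturbation of one coordinate inside the good region has size at most of order $\exp(-k\beta/2)$, the products it feeds are displaced by at most that order, and the bounded smooth tilts cannot undo this; summing the $O(d)$ coordinatewise contributions, plus an additive $2^{-\Omega(k)}$ coming from the rare atypical events, gives a contraction ratio that tends to $0$ with $k$, hence is $<1$ throughout the stated regime.

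Part (iii) is then immediate: $(\cS,W)$ is a non-empty complete metric space, $\cG_{k,d,\beta}$ maps it into itself and contracts it, so by the Banach fixed point theorem it has a unique fixed point $\pi^\star_{k,d,\beta}\in\cS$, and the iterates $\cG_{k,d,\beta}^{\,n}(\pi_0)$ converge to it from every $\pi_0\in\cS$ --- convenient since later parts of the paper will presumably need quantitative control of this fixed point and its basin. I expect the real work to be concentrated entirely on the variable side: both the invariance $\cF_{k,d,\beta}(\widehat{\cS})\subseteq\cS$ and the contraction estimate for $\cF_{k,d,\beta}$ hinge on taming the exponentially small, strongly correlated normaliser $z(\heta)$ and showing that its tilt does not proliferate atypical incoming messages, and it is precisely this that forces $d$ to lie within $k^5$ of $\dk$ and $\beta$ to exceed $k\ln2-10\ln k$.
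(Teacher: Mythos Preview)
Your Banach fixed-point strategy is a genuinely different route from the paper's, and the difficulties you flag on the variable side are exactly the ones the paper has to overcome --- but it does so by a different mechanism that sidesteps any one-step contraction estimate. The paper never tries to show that a single application of $\cG_{k,d,\beta}$ contracts a metric on $\cS$. Instead it unfolds the fixed-point equation $\ell$ times and interprets the result as Belief Propagation on a depth-$2\ell$ multi-type Galton--Watson tree: writing $\dd\pi_{-}(\eta)=\frac{\eta}{q}\dd\pi(\eta)$, $\dd\pi_{+}(\eta)=\frac{1-\eta}{1-q}\dd\pi(\eta)$ (and similarly for $\hpi$), \Lem~4.4 shows that the size-biased laws satisfy a system of \emph{product} recursions whose branching coefficients are exactly the offspring law of the tree $\T$ from \Sec~3. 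This is the key point you are missing: the troublesome, exponentially small, non-product normaliser $z(\heta)$ is not tamed analytically but is \emph{absorbed} into the type structure of the branching process, after which the boundary law at depth $2\ell$ is a genuine product of $\pi_{\pm}$'s satisfying hypothesis~{\bf H}. Uniqueness and existence then come from the tree contraction \Prop~3.3, which compares any such boundary to the all-ones boundary via a path-by-path ``cold'' argument, not a Lipschitz bound.

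Your proposal is not obviously wrong, but the steps you leave open --- invariance $\cF_{k,d,\beta}(\widehat{\cS})\subseteq\cS$ and the one-step Lipschitz bound for $\cF_{k,d,\beta}$ under the $z$-tilt --- are precisely where the correlation induced by $z(\heta)$ over $d-1\asymp k2^k$ coordinates bites, and I do not see how to close them without something equivalent to the $\pi_{\pm}$ disintegration. The paper's approach also buys more than uniqueness: the GW representation is reused to compute $\cB(k,d,\beta)$ as a limit of tree functionals (\Prop~\ref{prop_approximation_internal_bethe_tree}) and to match the local structure of $\hat\PHI$, so even if your contraction argument could be completed it would not feed the rest of the proof as directly.
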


To extract $\betac(d,k)$, let $\nu_1,\ldots,\nu_k,\hat\nu_1,\ldots,\hat\nu_d$ be independent random variables
such that the $\nu_i$ have distribution $\pi^\star_{k,d,\beta}$ and the $\hat\nu_i$ have distribution $\hcF_{k,d,\beta}(\pi^\star_{k,d,\beta})$.
Setting
	\begin{align*}z_1 &= {\prod_{j\leq d/2} \hat\nu_j \prod_{j>d/2} (1-\hat\nu_j)+\prod_{j\leq d/2} (1-\hat\nu_j) \prod_{j>d/2} \hat\nu_j}, &
	 z_2 &=  1-(1-\exp(-\beta))\prod_{j\leq k} \nu_j\end{align*}
and	$  z_3= \nu_1 \hat\nu_1 + (1-\nu_1)(1-\hat\nu_1)$, we let
	\begin{align}\label{eqmyBethe}
	\cF(k,d,\beta) &=\ln \Erw \left[ z_1\right] + \frac{d}{k}\ln \Erw \left[ z_2\right] -d \ln \Erw [z_3], &
	\cB(k,d,\beta)  &=  \frac{\Erw \left[ z_1\ln z_1\right]}{\Erw \left[ z_1\right]} +
		\frac{d}{k} \frac{\Erw \left[ z_2\ln z_2\right]}{\Erw \left[ z_2\right]} 
	-d \frac{\Erw [z_3 \ln z_3]}{\Erw [z_3]} . \end{align}
Finally, with the usual convention that $\inf\emptyset=\infty$ 
we let 
	$$\betac(k,d)=\begin{cases}
		\infty&\mbox{ if }d<d_-(k),\\
		\inf \{ \beta>\beta_-(k,d): \cF(k,d,\beta) < \cB(k,d,\beta) \}&\mbox{ if }d \in [d_-(k), \dk].
		\end{cases}$$

We proceed to highlight a few consequences of \Thm~\ref{Thm_main} and its proof.
The following result shows that $\betac(d,k)<\infty$, i.e., that a condensation phase transition occurs,
for degrees $d$ strictly below the satisfiability threshold.

\begin{corollary}\label{Cor_dc}
If $k\geq k_0$, then $\dc=\min\{d>0:\betac(d,k)<\infty\}<\dk-\Omega(k)$.
\end{corollary}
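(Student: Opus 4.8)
The plan is to exhibit a single degree $d^\star=\dk-\lceil ck\rceil$, for a suitable absolute constant $c>0$ and $k$ large, with $\betac(d^\star,k)<\infty$; since then $\dc\le d^\star$ and (as $k^5\gg ck$) $d^\star>d_-(k)=\dk-k^5$, this gives the claim. By the definition of $\betac$ it suffices to produce one $\beta>\beta_-(k,d^\star)$ with $\cF(k,d^\star,\beta)<\cB(k,d^\star,\beta)$. I would obtain such a $\beta$ by passing to the zero-temperature limit $\beta\to\infty$, in which the fixed-point problem and the functionals $\cF,\cB$ reduce to ``combinatorial'' ones governing the uniform measure on satisfying assignments of $\PHI$, and then invoke the known fact (predicted in \cite{pnas}) that the zero-temperature condensation threshold lies $\Omega(k)$ below $\dk$.

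\emph{Step 1: the limit $\beta\to\infty$.} Fix $d\in[d_-(k),\dk]$. I would first show that, as $\beta\to\infty$, the skewed fixed point $\pi^\star_{k,d,\beta}$ of $\cG_{k,d,\beta}$ and its companion $\hcF_{k,d,\beta}(\pi^\star_{k,d,\beta})$ converge weakly on $[0,1]$ to a pair $(\pi^\star_{k,d,\infty},\hat\pi^\star_{k,d,\infty})$ solving the fixed-point equations obtained by replacing $1-\exp(-\beta)$ by $1$ throughout the definitions of $\hat z$, $\cF_{k,d,\beta}$ and $\hcF_{k,d,\beta}$. Skewedness is the crucial input: the bound $\pi^\star_{k,d,\beta}(0,1-\exp(-k\beta/2))<2^{-0.9k}$ forces, in the limit, a mass of at least $1-2^{-0.9k}$ onto the atom $\{1\}$, which supplies the tightness needed to extract a subsequential limit and to re-run the contraction/uniqueness argument of \Prop~\ref{prop_unique_fixed_point_tree_1} for the limiting operator. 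Since $z_1,z_2,z_3$ are bounded, $t\mapsto t\ln t$ is bounded and continuous on $[0,1]$, and $\Erw[z_1],\Erw[z_2],\Erw[z_3]$ stay bounded away from $0$ along the limit (again by skewedness, which keeps the ``defect'' part of the distribution non-degenerate), dominated convergence yields $\cF(k,d,\beta)\to\cF(k,d,\infty)$ and $\cB(k,d,\beta)\to\cB(k,d,\infty)$.

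\emph{Step 2: the sign of $\cF(k,d,\infty)-\cB(k,d,\infty)$.} Because $\pi^\star_{k,d,\infty}$ concentrates on $\{0,1\}$, $\cF(k,d,\infty)$ and $\cB(k,d,\infty)$ specialize to the frozen 1RSB free-entropy densities for the uniform distribution on satisfying assignments of $\PHI_{d,k}$, and their difference $\cF(k,d,\infty)-\cB(k,d,\infty)$ is the complexity $\Sigma_{d,k}(1)$ at Parisi parameter one; condensation at zero temperature amounts to $\Sigma_{d,k}(1)<0$. To leading order $\cF(k,d,\infty)$ is the annealed entropy of the number of satisfying assignments, which is positive throughout $d\le\dk$; the quantitative point is that $\cB(k,d,\infty)$ overtakes $\cF(k,d,\infty)$ already for $d$ within $\Theta(k)$ of $\dk$, so that $\Sigma_{d,k}(1)<0$ throughout an interval of the form $(\dk-\Omega(k),\dk]$ --- equivalently, the zero-temperature condensation threshold is $\Omega(k)$ below $\dk$. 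This is exactly the sort of first- and second-moment estimate used to pin down $\dk$ in \cite{kSAT}, and the resulting $\Omega(k)$ gap is part of that picture. Choosing $c$ so that $d^\star=\dk-\lceil ck\rceil$ lies in this interval gives $\cF(k,d^\star,\infty)<\cB(k,d^\star,\infty)$.

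\emph{Conclusion and main obstacle.} By Step~1 there is then $\beta_0>\beta_-(k,d^\star)$ with $\cF(k,d^\star,\beta)<\cB(k,d^\star,\beta)$ for all $\beta\ge\beta_0$; hence $\betac(d^\star,k)\le\beta_0<\infty$. Since $\betac(d,k)=\infty$ for every $d<d_-(k)$, the set $\{d>0:\betac(d,k)<\infty\}$ is a non-empty subset of $[d_-(k),\dk]$, so its minimum $\dc$ is attained and $\dc\le d^\star=\dk-\lceil ck\rceil<\dk-\Omega(k)$. I expect Step~2 to be the main obstacle: locating the sign change of $\Sigma_{d,k}(1)$ near $\dk-\Theta(k)$ requires controlling the zero-temperature skewed fixed point --- in particular the small ``defect'' distribution and its weight --- precisely enough to reproduce, within the Belief-Propagation formalism of this paper, the first-/second-moment gap that determines the satisfiability threshold. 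Step~1 is routine but requires care because the clause term $z_2=1-(1-\exp(-\beta))\prod_{j\le k}\nu_j$ degenerates to $0$ on $\{\nu_1=\dots=\nu_k=1\}$, so one must verify that $\Erw[z_2]$ does not tend to $0$ in the limit.
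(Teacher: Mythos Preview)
Your strategy—fix $d^\star=\dk-\lceil ck\rceil$, send $\beta\to\infty$, and show $\cF(k,d^\star,\infty)<\cB(k,d^\star,\infty)$—is the natural route, and since the paper does not give a self-contained proof of this corollary there is nothing to compare against directly. The outline is right; let me flag one inaccuracy and one genuine gap.

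The inaccuracy is the claim that $\pi^\star_{k,d,\infty}$ concentrates on $\{0,1\}$. Skewedness guarantees a dominant atom (mass $\ge 1-2^{-0.9k}$), but the residual mass need not sit at the other endpoint: at $\beta=\infty$ the BP marginals of unfrozen variables take genuine values in $(0,1)$, so the limiting fixed point has a small but non-degenerate interior component. (The paper remarks after \Prop~\ref{prop_unique_fixed_point_tree_1} that the fixed point is continuous even at finite $\beta$.) This does not invalidate your identification of $\cF-\cB$ with a complexity, but it does mean the zero-temperature object is a BP fixed point on $[0,1]$, not a Survey/Warning Propagation fixed point on $\{0,1\}$.

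The genuine gap—which you flag yourself—is Step~2. The reference \cite{kSAT} pins down $\dk$ and its second-moment machinery does involve a computation of the planted cluster entropy at $\beta=\infty$, but it does not state that the zero-temperature condensation point sits $\Omega(k)$ below $\dk$; \cite{pnas} is non-rigorous. What is actually required is a direct asymptotic expansion in $k$ of $\cF(k,d,\beta)$ and $\cB(k,d,\beta)$ for large $\beta$ and $d=\dk-\Theta(k)$, via (\ref{eqmyBethe}) and the Galton--Watson description of $\pi^\star$ in \Sec~\ref{sec_operator_tree}: the leading terms cancel, and the $\Omega(k)$ gap must emerge from the next-order contributions, which in $\cB$ are governed by precisely that interior component of $\pi^\star$. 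Your closing caveat about $\Erw[z_2]$ is on point; it stays bounded away from $0$ thanks to the skewedness bound, and checking this carefully is part of the computation rather than a separate obstacle.
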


\noindent
Furthermore, the following corollary shows that the so-called ``replica symmetric solution'' predicted by the cavity method
yields the correct value of $\phi_{d,k}(\beta)$ for $\beta<\betac(d,k)$.

\begin{corollary}\label{Cor_rs}
If $k\geq k_0$, $d\leq \dk$ and $\beta<\betac(d,k)$, then $\phi_{d,k}(\beta)=
	\cF(k,d,\beta)$.
\end{corollary}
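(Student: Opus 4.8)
\emph{Reduction to the Bethe prediction.} The plan is to realise $\cF(k,d,\beta)$ as the Bethe free energy of $\PHI$ attached to the unique skewed fixed point $\pi^\star_{k,d,\beta}$ of $\cG_{k,d,\beta}$ supplied by \Prop~\ref{prop_unique_fixed_point_tree_1}, and then to read off $\phi_{d,k}(\beta)=\cF(k,d,\beta)$ from the general result of \cite{betheupb}: whenever the Gibbs measure $\mu_{\PHI,\beta}(\sigma)\propto\exp(-\beta E_\PHI(\sigma))$ enjoys a suitable spatial-mixing (overlap-concentration) property, its free energy is exactly the Bethe functional evaluated along the Belief Propagation fixed point on the local weak limit of $\PHI$. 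The first, routine, step is to check that, fed with message law $\pi^\star_{k,d,\beta}$ on the variable-to-clause side and $\hcF_{k,d,\beta}(\pi^\star_{k,d,\beta})$ on the clause-to-variable side, this Bethe functional collapses --- using the fixed point identities, which turn the edge normalisations into the factors $z_3$ --- into the three-term expression $\cF(k,d,\beta)$ of \eqref{eqmyBethe}, with $z_1,z_2,z_3$ the variable, clause and edge contributions. Everything then reduces to verifying the spatial-mixing hypothesis of \cite{betheupb} throughout $0<\beta<\betac(d,k)$.

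\emph{Verifying spatial mixing.} This is where I would carry out the ``direct analysis of the Gibbs marginals by Belief Propagation'' advertised in the introduction, which is also the engine behind the analyticity in \Thm~\ref{Thm_main}. Concretely: show that \whp\ the BP iteration on $\PHI$, launched from the skewed messages, converges --- its trajectory is pinned down by the uniqueness in \Prop~\ref{prop_unique_fixed_point_tree_1} --- and that the empirical distribution of the stable messages matches $\pi^\star_{k,d,\beta}$, which then forces the overlap of two independent Gibbs samples to concentrate. The skewedness of $\pi^\star_{k,d,\beta}$ (all but a $2^{-0.9k}$ fraction of its mass within $\exp(-k\beta/2)$ of $1$) is what makes this tractable: the hard fields dominate and propagate essentially deterministically, so BP on the finite random formula can be compared cavity-by-cavity with the tree recursion incurring only $o(n)$ total error. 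The hypothesis $\beta<\betac(d,k)$ enters exactly as the inequality $\cF(k,d,\beta)\ge\cB(k,d,\beta)$ built into the definition of $\betac$: it rules out the condensed alternative, in which a bounded number of clusters would carry essentially all the Gibbs mass and the overlap would be macroscopically non-trivial, defeating the hypothesis of \cite{betheupb}. Equivalently, $\cF>\cB$ says the ``number of clusters'' is $\exp(n(\cF-\cB)+o(n))$, exponentially large, so none dominates; this is the point where a tight second moment / small-subgraph-conditioning argument would traditionally take over, and leaning on \cite{betheupb} is precisely what lets us skip that problem-specific computation.

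\emph{Main obstacle and loose ends.} The crux is not any single estimate but showing that the \emph{distributional} fixed point uniqueness of \Prop~\ref{prop_unique_fixed_point_tree_1} --- which a priori only controls the recursion on the idealised tree --- together with the scalar inequality $\cF\ge\cB$, genuinely implies the finitary spatial-mixing condition that \cite{betheupb} demands; in particular one must rule out a clustered-yet-replica-symmetric regime in which naive non-reconstruction fails while overlaps still concentrate, and verify that the cavity-by-cavity comparison between BP on $\PHI$ and the tree recursion survives all the way up to $\beta=\betac(d,k)$ and not merely to some smaller threshold. Finally, the boundary ranges --- $d<d_-(k)$, where $\betac(d,k)=\infty$, and $\beta\le\beta_-(k,d)$ --- lie deep inside the replica symmetric phase, well below the satisfiability threshold (cf.\ \Cor~\ref{Cor_dc}), so there the identity $\phi_{d,k}(\beta)=\cF(k,d,\beta)$ follows either from the same scheme with much less effort or from the classical second moment method, and I would treat them separately.
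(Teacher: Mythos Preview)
Your plan departs from the paper's route at the very first step, and the departure is not cosmetic. You propose to apply \cite{betheupb} directly to the full Gibbs measure $\mu_{\PHI,\beta}$ and to identify $\cF(k,d,\beta)$ as the Bethe free energy of $\PHI$ at the skewed fixed point $\pi^\star_{k,d,\beta}$. The paper does neither of these things. In the paper, $\cF(k,d,\beta)$ is simply the \emph{annealed} free energy $\lim_n n^{-1}\ln\Erw[Z_{\PHI}(\beta)]$ (this is the content of \Lem~\ref{lemma_total_size} and \Prop~\ref{prop_first_moment_vanilla}), and the identity $\phi_{d,k}(\beta)=\cF(k,d,\beta)$ is obtained by a truncated second moment argument (\Lem~\ref{Lem_smmRemedy}), which reduces everything to the inequality $\Erw[\ln\cC_{\hat\PHI,\hat\SIGMA}(\beta)]\le\ln\Erw[Z_{\hat\PHI}(\beta)]$ in the \emph{planted} model. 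The machinery of \cite{betheupb} is invoked only to compute $n^{-1}\Erw[\ln\cC_{\hat\PHI,\hat\SIGMA}(\beta)]$ --- the quenched log cluster size --- and the relevant non-reconstruction statements (\Prop s~\ref{Prop_nonRe1} and~\ref{Prop_nonRe2}) are for the \emph{conditional} Gibbs measure on the planted cluster, not for the full Gibbs measure of $\PHI$.

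This matters because your ``main obstacle'' is in fact the whole problem. Verifying overlap concentration for $\mu_{\PHI,\beta}$ directly is precisely the statement that condensation has not occurred; you cannot derive it from $\cF\ge\cB$ without the planting detour, since $\cF$ and $\cB$ are themselves computed in the planted model and the bridge back to $\PHI$ is \Lem~\ref{Lem_smmRemedy}. There is also a mismatch in your first step: the local weak limit of the unplanted $\PHI$ is the \emph{deterministic} $d$-regular $k$-SAT tree, so there is no room for a non-degenerate message law $\pi^\star$ there; the Galton--Watson tree that supports $\pi^\star$ is the local limit of $\hat\PHI$, reflecting the random number of variables per clause that agree with the planted assignment. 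So your Bethe identification would have to take place on $\hat\PHI$, which again forces you through the planted-model route. Your treatment of the easy ranges ($d<d_-(k)$ or $\beta\le\beta_-(k,d)$) via the vanilla second moment is correct and matches \Lem~\ref{Lemma_trivial}.
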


\Cor~\ref{Cor_rs} opens the door to studying the ``landscape'' $E_{\PHI}$ for $\beta<\betac(d,k)$.
Specifically, \Cor~\ref{Cor_rs} enables us to bring the ``planting trick'' from~\cite{Barriers} to bear so that we can analyse
typical properties of samples from the Gibbs measure. We leave a detailed discussion to future work.
Finally, complementing \Cor~\ref{Cor_rs}, the following result shows that 
 $\cF(k,d,\beta)$ overshoots  $\phi_{d,k}(\beta)$
for $\beta>\betac(d,k)$.

\begin{corollary}\label{Cor_1rsb}
If $k\geq k_0$, $d\leq \dk$ and $\beta>\betac(d,k)$, then there is $\betac(d,k) < \beta'<\beta$ such that $\phi_{d,k}(\beta')<\cF(k,d,\beta')$.
\end{corollary}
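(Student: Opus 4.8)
\emph{Plan.} I would derive the corollary from the Bethe free energy upper bound, the real-analyticity of $\cF(k,d,\cdot)$ near the critical point, \Cor~\ref{Cor_rs} and \Thm~\ref{Thm_main}. The two substantive inputs are: \emph{(i)} $\phi_{d,k}(\beta)\le\cF(k,d,\beta)$ for every $\beta>\beta_-(k,d)$ — the Bethe free energy upper bound of~\cite{betheupb}, which applies because the skewed fixed point $\pi^\star_{k,d,\beta}$ of \Prop~\ref{prop_unique_fixed_point_tree_1} is itself a Bethe fixed point and the bound of~\cite{betheupb} holds for the Bethe functional evaluated at \emph{any} fixed point; and \emph{(ii)} $\beta\mapsto\cF(k,d,\beta)$ is real-analytic on $(\beta_-(k,d),\infty)$. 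Input~(ii) rests on the analytic dependence of $\pi^\star_{k,d,\beta}$ on $\beta$ — an implicit-function-type argument in a suitable Banach space of measures exploiting the uniqueness from \Prop~\ref{prop_unique_fixed_point_tree_1} together with the fact that $\cG_{k,d,\beta}$ is a contraction on a neighbourhood of $\pi^\star_{k,d,\beta}$ in an appropriate metric — combined with the fact that $z_1,z_2,z_3$ are real-analytic in the $\nu_i,\hat\nu_i$ and in $\beta$; this is part of the analysis underlying \Thm~\ref{Thm_main}, which also yields $\betac(d,k)>\beta_-(k,d)$ whenever $\betac(d,k)<\infty$, so that $\cF(k,d,\cdot)$ is analytic on a genuine two-sided neighbourhood of $\betac(d,k)$.

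\emph{Main argument.} Granting (i) and (ii), fix $k\ge k_0$, $d\le\dk$ and $\beta>\betac(d,k)$, so $\betac(d,k)<\infty$ and hence, by \Thm~\ref{Thm_main}, $\phi_{d,k}$ fails to be real-analytic at $\betac(d,k)$. Suppose for contradiction that $\phi_{d,k}(\beta')=\cF(k,d,\beta')$ for all $\beta'\in(\betac(d,k),\beta)$. By \Cor~\ref{Cor_rs} the same equality holds for all $\beta'\in(0,\betac(d,k))$, so $\phi_{d,k}$ and $\cF(k,d,\cdot)$ agree on $(0,\beta)\setminus\{\betac(d,k)\}$. Now $\phi_{d,k}$ is convex on $(0,\infty)$, being the pointwise limit of the convex maps $\beta\mapsto\frac1n\Erw[\ln Z_{\PHI}(\beta)]$ (each convex because $\ln Z_{\Phi}(\beta)$ is convex in $\beta$ for every fixed formula $\Phi$, its second $\beta$-derivative being the Gibbs variance of $E_{\Phi}$), hence continuous on $(0,\infty)$; therefore, using (ii) to pass to the limit, $\phi_{d,k}(\betac(d,k))=\lim_{\beta'\to\betac(d,k)}\cF(k,d,\beta')=\cF(k,d,\betac(d,k))$. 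Thus $\phi_{d,k}=\cF(k,d,\cdot)$ on a two-sided neighbourhood of $\betac(d,k)$, on which $\cF(k,d,\cdot)$ is real-analytic by (ii); so $\phi_{d,k}$ is real-analytic at $\betac(d,k)$, contradicting \Thm~\ref{Thm_main}. Hence there is $\beta'\in(\betac(d,k),\beta)$ with $\phi_{d,k}(\beta')\neq\cF(k,d,\beta')$, and then (i) forces $\phi_{d,k}(\beta')<\cF(k,d,\beta')$, which is the assertion.

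\emph{Main obstacle.} The corollary is a soft consequence of the \Thm~\ref{Thm_main} machinery; the real work sits in inputs (i) and (ii). The point that needs care is that (i) must be applied \emph{past} the condensation point, i.e.\ to the Bethe functional evaluated at $\pi^\star_{k,d,\beta}$ even when this is no longer the ``physical'' fixed point — exactly where the generality of the upper bound of~\cite{betheupb} (valid at any fixed point) is used. A shorter route is available if one extracts from the proof of \Thm~\ref{Thm_main} the sharper dichotomy that $\phi_{d,k}(\beta)=\cF(k,d,\beta)$ if and only if $\cF(k,d,\beta)\ge\cB(k,d,\beta)$: since $\beta>\betac(d,k)$, the definition of $\betac(d,k)$ as an infimum together with the continuity of $\cF$ and $\cB$ produces $\beta'\in(\betac(d,k),\beta)$ with $\cF(k,d,\beta')<\cB(k,d,\beta')$, and then $\phi_{d,k}(\beta')<\cF(k,d,\beta')$ follows at once from (i).
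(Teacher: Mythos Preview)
Your argument is correct, and the ``shorter route'' you sketch at the end is precisely the paper's intended derivation: \Lem~\ref{Lem_smmRemedy} combined with \Lem~\ref{lemma_total_size} and \Prop~\ref{Prop_clusterSize} yield the dichotomy $\phi_{d,k}(\beta)=\cF(k,d,\beta)\Leftrightarrow\cB(k,d,\beta)\le\cF(k,d,\beta)$ for $\beta>\beta_-(k,d)$, and then the definition of $\betac$ as an infimum together with Jensen's inequality gives the conclusion.

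Your ``main argument'' --- deducing the result purely from \Thm~\ref{Thm_main} and \Cor~\ref{Cor_rs} by a contradiction on analyticity --- is a genuinely different and valid route, but your justifications for the two inputs are heavier than necessary. For (i): by \Prop~\ref{prop_first_moment_vanilla} (cf.\ the proof of \Lem~\ref{lemma_total_size}), $\cF(k,d,\beta)$ coincides with the annealed free energy $\lim_n n^{-1}\ln\Erw[Z_{\PHI}(\beta)]$, so $\phi_{d,k}\le\cF$ is nothing more than Jensen's inequality; no recourse to the Bethe upper bound of~\cite{betheupb} is needed here. For (ii): because $z_1,z_2,z_3$ are multilinear in the $\nu_i,\hat\nu_i$, the expectations $\Erw[z_1],\Erw[z_2],\Erw[z_3]$ depend on $\pi^\star_{k,d,\beta}$ only through its mean, which by \Lem~\ref{lemma_pi_hpi_to_h_hh} and \Fact~\ref{fact_unique_solution_h_hh} equals the scalar $q=q(k,d,\beta)$ from~(\ref{eq_def_q}); the analyticity of $\beta\mapsto q$ is then a one-variable implicit function theorem (the $q$-derivative of $2q-1-c_\beta q^k$ is $2-kc_\beta q^{k-1}\approx 2$), so no Banach-space argument on $\pi^\star$ is required, and $\cF(k,d,\cdot)$ is in fact analytic on all of $(0,\infty)$. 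The paper's route is more direct and pinpoints the source of the strict inequality ($\cF<\cB$); your main argument has the virtue of using \Thm~\ref{Thm_main} as a black box.
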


\subsection{Outline and related work}
Admittedly, the definition of $\betac(k,d)$ is not exactly simple.
For instance, even though the fixed point distribution 
from \Prop~\ref{prop_unique_fixed_point_tree_1} stems from a discrete problem,
it turns out to be a continuous distribution on $(0,1)$. Yet perhaps despite appearances, the analytic formula (\ref{eqmyBethe}) is conceptually {\em far} simpler than the definition of $\phi_{d,k}$.
For instance, we are going to see in \Sec~\ref{xsec_overview} that the fixed point problem 
can be understood elegantly in terms of a Galton-Watson tree. 
Thus, one could say that \Thm~\ref{Thm_main} reduces the condensation problem on the complex random formula $\PHI$ to a problem
on a random tree.

The proof of \Thm~\ref{Thm_main} 
builds upon an abstract result from~\cite{betheupb} that, roughly speaking, reduces the study of the partition function to two tasks.
First, to calculate the  marginals of the Gibbs measure induced by a 
random formula $\hat\PHI$ chosen from a reweighted probability distribution, the ``planted model''.
Second, to prove that the Gibbs measure of $\hat\PHI$ enjoys the non-reconstruction property, a spatial mixing property.
The technical contribution of the present work is to actually tackle these two tasks problems in a fairly generic way. Our principal tool is going to be the Belief Propagation algorithm, the cornerstone of the physicsts' cavity method. In particular, we are going to reduce the see 
that the distributional operator $\cG_{k,d,\beta}$ from \Prop~\ref{prop_unique_fixed_point_tree_1} mimics
Belief Propagation run on a Galton-Watson tree that captures the local geometry of the formula $\hat\PHI$. The predictions of the ``cavity method'' typically come as distributional fixed points 
 but there are few proofs that establish such predictions rigorously.
The one most closely related to the present work 
is the paper of Bapst et al.~\cite{condensationColoring} on condensation in random graph coloring. It determines the critical average degree $d$ for which condensation starts to occur 
with respect to the number of proper $k$-colorings of the \Erdos-\Renyi\ random graph.
Conceptually, this corresponds to taking the limit $\beta\to\infty$ in (\ref{eqFreeEnergy}), 
which simplifies the problem rather substantially.
 Thus, the main result of~\cite{condensationColoring} corresponds to \Cor~\ref{Cor_dc}. Other previous results on condensation, which dealt with random hypergraph $2$-coloring and the Potts model on the random graph, were only approximate
	\cite{BCOR,Lenka,CDGS}.

Interestingly, determining the satisfiability threshold on the random regular formula $\PHI$ is conceptually much easier than identifying the condensation threshold~\cite{kSAT}.
This is because the local structure of the random formula $\PHI$ is essentially deterministic, namely a tree comprising of clauses and variables in which every variable
appears $d/2$ times positively and $d/2$ times negatively.
In effect, 
the satisfiability threshold is given by a fixed point problem on the unit interval rather than on the space of probability measures on the unit interval.
Similar simplifications occur in other regular models~\cite{DSS1,DSS2}, and these proofs employed Belief Propagation 
in this simpler setting.
By contrast, we will see in \Sec~\ref{xsec_overview} that the condensation phase transition hinges on the reweighted distribution $\hat\PHI$, whose local structure is 
genuinely random.

Recent work on the $k$-SAT threshold in uniformly random formulas~\cite{kSAT,Kosta}, in particular the breakthrough paper by Ding, Sly and Sun~\cite{DSS3},
also harnessed the physicists' Belief Propagation or Survey Propagation calculations.%
	\footnote{Survey Propagation can be viewed as a Belief Propagation applied to a modified constraint satisfaction problem~\cite{MM}.}
In the uniformly random model a substantial technical difficulty is posed by the presence of variables of exceptionally high degree,
	an issue that is, of course, absent in the regular  model.
Specifically, \cite{kSAT,Kosta,DSS3} apply the second moment method to a random variable whose construction is guided by Belief/Survey Propagation.
By contrast, here we employ Belief Propagation in the more direct way enabled by~\cite{betheupb}.

\subsection{Notation and preliminaries}
We generally view a regular $k$-SAT instance $\Phi$ as bijections from sets of clause clones to sets of variable clones (``configuration model'').
That is, given $n,m,d,k$, we let $\{x_1,\ldots,x_n\}\times[d]$ be the set of variable clones and $\{a_1,\ldots,a_m\}\times[k]$ the set of clause clones.
Then $\Phi:\{x_1,\ldots,x_n\}\times[d]\to\{a_1,\ldots,a_m\}\times[k]$ is a bijection.
The first $d/2$ clones of each variable are considered its positive occurrences and the last $d/2$ ones its negative occurrences.

We denote the image of a clone $(x_i,j)$ by $\partial_\Phi(x_i,j)$ and the inverse image of $(a_i,j)$ by $\partial_\Phi(a_i,j)$.
Analogously, $\partial^\w_\Phi(v,j)$ is the depth-$\w$ neighborhood of clone $(v,j)$.
Moreover, we define $\PHI$  as a uniformly random bijection. 
By standard arguments this distribution is easily seen to be contiguous to the uniform distribution on regular formulas.

Suppose that the variables and clauses of $\Phi,\Phi'$ are $x_i,x_i',a_j,a_j'$ for $i\in[n]$, $j\in[m]$.
We distinguish (variable or clause) clones $r,r'$ of $\Phi,\Phi'$, which we consider their roots.
An {\em isomorphism} $\psi:\Phi\to\Phi'$ is a bijection
with the following properties.
	\begin{description}
	\item[ISM1] $r'=\psi(r)$.
	\item[ISM2] $\psi$ maps variable clones to variable clones and clause clones to clause clones.
	\item[ISM3] If $\psi(v,h)=(w,j)$, then $h=j$.
	\item[ISM4] We have $\psi\circ\Phi(v,h)=\Phi'\circ\psi(v,h)$ for all clones $(v,h)$.
	\end{description}

Let $\w\geq0$ 
and let $T$ be a regular $k$-SAT formula with a distinguished (variable or clause) clone $r$.
For each variable clone $(x,i)$ of $\PHI$ we have a random variable $\vecone\{\partial^\w T\cong\partial_{\PHI}^\w(x,i)\}$
that indicates that the depth-$\w$ neighborhood of $\PHI$ rooted at $(x,i)$ is isomorphic to $T$.
Similarly, for each clause cone $(a,j)$ of $\PHI$ we consider the random variable $\vecone\{\partial^{\w+1} T\cong\partial_{\PHI}^{\w+1}(a,j)\}$.
Let $\mathfrak T_\w$ be the $\sigma$-algebra generated by all these random variables.
Thus, $\mathfrak T_\w$ captures the ``local structure'' of the random formula up to depth $\w$.

\section{Outline}
\label{xsec_overview}

\subsection{Two moments do not suffice}
The default approach to studying the function $\phi_{d,k}(\beta)$ is the venerable ``second moment method''.
Cast on a logarithmic scale, if
	\begin{align}\label{eqVanillaSMM}
	\limsup_{n\to\infty}\frac1n\ln\Erw[Z_{\PHI}(\beta)^2]&\leq\lim_{n\to\infty}\frac2n\ln\Erw[Z_{\PHI}(\beta)],&\mbox{then }\\
	\phi_{d,k}(\beta)&=\lim_{n\to\infty}\frac1n\ln\Erw[Z_{\PHI}(\beta)].	\label{eqannealed}
	\end{align}
The last term 
is easy to study because the log is outside the expectation.
In particular, the function $\beta\in(0,\infty)\mapsto\lim_{n\to\infty}\frac1n\ln\Erw[Z_{\PHI}(\beta)]$ turns out to be analytic.
Consequently, the least $\beta\in(0,\infty)$ where (\ref{eqannealed}) fails to hold must be a phase transition.

From a bird's eye view, both the physics intuition and the second moment are all about
the geometry of the 
{\em Gibbs measure} of  $\PHI$ at a given $\beta\in(0,\infty)$. 
Let us encode truth assignments as points  $\sigma\in\{\pm1\}^n$ with the convention that
$1$ stands for `true' and $-1$ for `false'.
Then the Gibbs measure is  the distribution on $\{\pm1\}^n$  defined by
	$$\sigma\in\{\pm1\}^n\mapsto \exp(-\beta E_{\PHI}(\sigma))/Z_{\PHI}(\beta).$$
Thus, we weigh assignments according to the number of clauses that they violate, giving greater weight to `better' assignments as $\beta$ gets larger.
Let $\SIGMA,\SIGMA_{1},\SIGMA_{2},\ldots$ be independent samples from the Gibbs measure and write
$\bck{X(\SIGMA_1,\ldots,\SIGMA_l)}_{\PHI,\beta}$ for the expectation of 
 $X:(\{\pm1\}^{n})^l\to\RR$. 
Then 
according to the physics picture the condensation point $\betac(k)$ should be the supremum of all $\beta>0$ such that
	$\Erw\bck{\abs{\SIGMA_1\cdot\SIGMA_2}}_{\PHI,\beta}=o(n)$. In other words, if we choose a random formula $\PHI$ and then sample two assignments $\SIGMA_1,\SIGMA_2$ according to the Gibbs measure independently,
then $\SIGMA_1,\SIGMA_2$ will be about orthogonal. This decorrelation property is, roughly speaking, a {\em necessary} condition for the success of the second moment method as well~\cite{nae,yuval}.
Therefore, the prediction that $\Erw\bck{\abs{\SIGMA_1\cdot\SIGMA_2}}_{\PHI,\beta}=o(n)$ right up to $\betac(d,k)$
may inspire confidence that the same is true of~(\ref{eqVanillaSMM}).
In fact, we will prove in \Sec~\ref{sec_vanilla} that (\ref{eqVanillaSMM}) holds if either $d$ or $\beta$ is relatively small.

\begin{lemma}\label{Lemma_trivial}
If  $d\leq d_-(k)$ or 
$\beta\leq\beta_-(k,d)$ then~(\ref{eqVanillaSMM}) is true.
\end{lemma}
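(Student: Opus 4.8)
The plan is to establish the second-moment bound \eqref{eqVanillaSMM} by a direct first- and second-moment computation on the regular configuration model, exploiting the fact that in the regime $d\le d_-(k)$ or $\beta\le\beta_-(k,d)$ the ``energetic'' gain from correlated pairs cannot beat the entropic cost of deviating from orthogonality. First I would write $\Erw[Z_{\PHI}(\beta)^2]=\sum_{\sigma,\tau}\Erw[\exp(-\beta(E_{\PHI}(\sigma)+E_{\PHI}(\tau)))]$ and observe that, by symmetry of the configuration model, the summand depends only on the overlap statistics of the pair $(\sigma,\tau)$, i.e.\ on the fraction $\alpha=\frac12(1+\sigma\cdot\tau/n)$ of variables on which $\sigma$ and $\tau$ agree (and, because of the regularity constraints, one really needs the joint profile of agreements among positive and negative clones, but these concentrate). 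Thus $\frac1n\ln\Erw[Z_{\PHI}(\beta)^2]$ is, up to $o(1)$, the maximum over $\alpha\in[0,1]$ of an explicit function $f_{d,k,\beta}(\alpha)=H(\alpha)+\frac{d}{k}\ln g_{k,\beta}(\alpha)$, where $H$ is the binary entropy and $g_{k,\beta}(\alpha)$ is the per-clause second-moment weight; similarly $\frac2n\ln\Erw[Z_{\PHI}(\beta)]=2(\ln2+\frac dk\ln w_{k,\beta})$ for the obvious single-clause weight $w_{k,\beta}$. The inequality \eqref{eqVanillaSMM} is then equivalent to the statement that $f_{d,k,\beta}$ attains its maximum at $\alpha=1/2$, the ``symmetric'' point where the pair is orthogonal.

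The core of the argument is therefore a one-variable analysis of $f_{d,k,\beta}$. I would first check that $\alpha=1/2$ is always a stationary point (this is forced by the symmetry $\sigma\mapsto-\sigma$ combined with regularity) and that $f_{d,k,\beta}(1/2)$ equals the right-hand side of \eqref{eqannealed} times $2$, i.e.\ matches $2\phi^{\mathrm{ann}}_{d,k}(\beta)$. Then the task is to show $f_{d,k,\beta}(\alpha)\le f_{d,k,\beta}(1/2)$ for all $\alpha$. Near $\alpha=1/2$ this is a second-derivative computation: $H''(1/2)=-4$, and one needs $\frac dk\,(\ln g_{k,\beta})''(1/2)<4$, which holds once $d$ is below a threshold of order $d_-(k)$ because the clause-weight function is, for large $k$, exponentially close to a constant in $\alpha$ near the relevant range (the factor $1-\exp(-\beta)$ and the product structure make $g_{k,\beta}$ very flat). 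Away from $\alpha=1/2$ one splits into a ``small-overlap'' regime, where $H$ dominates and $\frac dk\ln g_{k,\beta}$ is a bounded perturbation, and regimes with $\alpha$ close to $0$ or $1$, where the entropy penalty $H(\alpha)\to0$ must be beaten by the energy term; here the condition $\beta\le\beta_-(k,d)=k\ln2-10\ln k$ (or the complementary small-$d$ condition) is exactly what guarantees that the per-clause weight cannot grow fast enough. Concretely, at $\alpha=1$ the two assignments coincide, $g_{k,\beta}(1)=w_{k,\beta}^2\cdot(\text{correction})$, and $f_{d,k,\beta}(1)-f_{d,k,\beta}(1/2)=-\ln2+\frac dk\ln(\cdots)$, which is negative precisely in the stated range.

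The main obstacle I anticipate is the global (non-local) part of this optimization: controlling $f_{d,k,\beta}(\alpha)$ uniformly over $\alpha$ bounded away from $1/2$, rather than just via the second derivative at the symmetric point. The clause-weight $g_{k,\beta}(\alpha)$ is a sum over the $2^k$ sign patterns of how a clause can be satisfied or violated by $\sigma$ and by $\tau$, reweighted by $\exp(-\beta)$ for each violation, and extracting sharp enough bounds on $\ln g_{k,\beta}(\alpha)$ for all $\alpha$ and all $\beta\le\beta_-(k,d)$ requires some care — in particular keeping track of the interplay between $\beta$ and $k$ so that the error terms are genuinely $o(1)$ as $n\to\infty$ with $k$ fixed but large. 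I would handle this by bounding $g_{k,\beta}(\alpha)$ from above by a product-form expression (treating the $k$ coordinates of a clause as if independent, which is valid up to $(1+o(1))$ in the configuration model) and then reducing to a convexity/monotonicity statement about a single explicit function of $\alpha$; the regularity of the model actually helps here, since the degree profile is deterministic and there are no heavy-tailed variables to quarantine. A secondary technical point is to make the passage from the exact configuration-model expectation to the variational formula rigorous with the right error bars, which is a standard Laplace/Stirling estimate but needs to be carried out so that the $\limsup/\liminf$ in \eqref{eqVanillaSMM} come out cleanly; I would cite the interpolation-method existence of the limit in \eqref{eqFreeEnergy} for the right-hand side and only need an upper bound on the left-hand side, which simplifies matters.
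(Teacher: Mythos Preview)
Your approach is essentially the one the paper takes: reduce \eqref{eqVanillaSMM} to showing that the overlap rate function for $\Erw[Z_{\PHI}(\beta)^2]$ is maximized at the symmetric point $\alpha=1/2$, verify this locally via a second-derivative computation, and rule out competing maxima near $\alpha\in\{0,1\}$ using the hypotheses on $d,\beta$. This is precisely what the paper does in \Sec~\ref{sec_vanilla} through \Prop~\ref{prop_first_moment_vanilla}, \Prop~\ref{prop_second_moment_rate_function} and \Lem~\ref{lemma_vanilla_non_local}.

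One technical point your sketch underplays: in the \emph{regular} configuration model the second-moment rate is not simply $H(\alpha)+\tfrac dk\ln g_{k,\beta}(\alpha)$. Because each variable appears exactly $d/2$ times with each sign, the joint sign pattern that a random clause sees under $(\sigma,\tau)$ is not an i.i.d.\ draw from the empirical overlap profile; the exact rate involves an additional relative-entropy correction and a pair of implicit tilting parameters $(h_\beta(\alpha),\hat q_\beta(\alpha))$ obtained from a change-of-measure argument (\Lem~\ref{lemma_vanilla_2_implicit} and the definition of $f_{2,k}$). Your remark ``these concentrate'' is the right intuition, but this is where the work lies: one needs it to check that $f_{2,k}(d,\beta,1/2)$ equals twice the first-moment rate (\Cor~\ref{cor_first_moment}) and that $\partial_\alpha^2 f_{2,k}(d,\beta,1/2)=-4+o_k(1)$ (\Lem~\ref{lemma_vanilla_expansion_f2}). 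For the global part the paper does exactly what you suggest, passing to a product-form upper bound $\bar f_{2,k}$ (\Lem~\ref{simple_uper_bound_moments_f}), and then uses monotonicity in $d$ and $\beta$ (Claims~\ref{claim_aux_vanilla_mon_1}--\ref{claim_aux_vanilla_mon_2}) to reduce to the two boundary cases $(d,\beta)=(d_-(k),\infty)$ and $(d,\beta)=(\dk,\beta_-(k))$.
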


\noindent
However, for $\beta$ near $\betac(d,k)$ 
the second moment method turns out to fail rather spectacularly.
Formally, if $\betac(d,k)<\infty$, then 
there exists $\eps>0$ such that~(\ref{eqVanillaSMM}) is violated for all $\beta>\betac(d,k)-\eps$, i.e.,
the second moment overshoots the square of the first moment by a factor that is exponential in $n$.

\subsection{Quenching the average}
To understand what goes awry
 it is convenient to turn the second moment 
into a first moment under a reweighted distribution that we call the {\em planted model}.
This is the distribution on formula/assignment pairs  
under which the probability of $(\hat\Phi,\hat\sigma)$ equals 
	$\exp(-\beta E_{\hat\Phi}(\hat\sigma))/\Erw[Z_{\PHI}(\beta)]$.
Let $(\hat\PHI,\hat\SIGMA)$ be a random pair drawn from this distribution.
Then by symmetry the distribution of the assignment $\hat\SIGMA$ is uniform and we may assume without loss that $\hat\SIGMA=\vecone$
is the all-ones assignment.
Further, the probability that a specific formula $\hat\Phi$ comes up equals $\pr[\hat\PHI=\hat\Phi]=Z_{\beta}(\hat\Phi)/\Erw[Z_{\beta}(\PHI)]$.
 Thus, the planted distribution weighs formulas by their partition function.
In effect,
		$$\Erw[Z_{\PHI}(\beta)^2]=\Erw[Z_{\PHI}(\beta)]\cdot\Erw[Z_{\hat\PHI}(\beta)].$$

If we go over the proof of \Lem~\ref{Lemma_trivial}, we see that $\Erw[Z_{\hat\PHI}(\beta)]$ is dominated by two distinct contributions.
First, assignments that are more or less orthogonal to $\hat\SIGMA$ yield a term of order $\Erw[Z_{\PHI}(\beta)]$.
Second, there is a  contribution from $\SIGMA$ close to $\hat\SIGMA=\vecone$; say,
	 $\SIGMA\cdot\vecone\geq n(1-2^{-{k/10}})$.
Geometrically, this reflects the fact that 
 the planted assignment $\vecone$
sits in a ``valley'' of the Hamiltonian $E_{\hat\PHI}$ \whp\
The valleys are known as clusters in the physics literature and we let
	$$\cC_{\hat\PHI,\hat\SIGMA}(\beta)=Z_{\hat\PHI}(\beta)\langle{\vecone\{\SIGMA\cdot\vecone>n(1-2^{-k/10})\}}\rangle_{\hat\PHI,\beta}$$
be the (weighted) {\em cluster size}.
Performing an elementary calculation, we find that it is the expected cluster size that derails the second moment method for $\beta$ 
 near $\betac(d,k)$.

At a second glance, this is unsurprising.
For $\cC_{\hat\PHI,\hat\SIGMA}(\beta)$ scales exponentially with $n$
and is therefore prone to large deviations effects. To suppress these we ought 
to investigate
 $\Erw[\ln\cC_{\hat\PHI,\hat\SIGMA}(\beta)]$ 
instead of $\Erw[\cC_{\hat\PHI,\hat\SIGMA}(\beta)]$.
A similar issue (that the expected cluster size drives up the second moment) occurred in earlier work on condensation~\cite{condensationColoring,BCOR,Lenka,CDGS}.
Borrowing the remedy suggested in these papers, we prove in \Sec~\ref{sec_vanilla} that 
applying the second moment method to a carefully truncated random variable yields

\begin{lemma}\label{Lem_smmRemedy}
Equation (\ref{eqannealed}) holds iff 
	\begin{equation}\label{eqProp_smmRemedy1}
	\limsup_{n\to\infty}n^{-1}\Erw[\ln\cC_{\hat\PHI,\hat\SIGMA}(\beta)]\leq\lim_{n\to\infty}n^{-1}\ln \Erw[Z_{\hat\PHI}(\beta)].
	\end{equation}	
\end{lemma}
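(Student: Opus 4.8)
The plan is to obtain the lemma from a \emph{truncated} second moment argument in the vein of~\cite{condensationColoring,BCOR,Lenka,CDGS}, building on two facts that will already be available from the proof of \Lem~\ref{Lemma_trivial}. The first is the \emph{planted-model identity}: for any non-negative $f$ one has $\Erw[\sum_\sigma\exp(-\beta E_{\PHI}(\sigma))f(\PHI,\sigma)]=\Erw[Z_{\PHI}(\beta)]\cdot\Erw[f(\hat\PHI,\hat\SIGMA)]$, which in particular recovers $\Erw[Z_{\PHI}(\beta)^2]=\Erw[Z_{\PHI}(\beta)]\cdot\Erw[Z_{\hat\PHI}(\beta)]$. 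The second is the \emph{overlap decomposition} from that proof: cutting $Z_{\hat\PHI}(\beta)$ into the contributions of the running assignment $\SIGMA$ with $\SIGMA\cdot\vecone\le n(1-2^{-k/10})$ and with $\SIGMA\cdot\vecone> n(1-2^{-k/10})$ and bounding the former exactly as there gives $\Erw[Z_{\hat\PHI}(\beta)]=\Erw[\cC_{\hat\PHI,\hat\SIGMA}(\beta)]+\exp(o(n))\Erw[Z_{\PHI}(\beta)]$. I also use two consequences of the bounded differences inequality over the $m$ clause clones (relocating one clause changes $E_\Phi(\cdot)$ pointwise by at most $1$, hence $\ln Z_\Phi(\beta)$ and the weighted cluster $\ln\cC_{\Phi,\sigma}(\beta):=\ln(Z_\Phi(\beta)\langle\vecone\{\SIGMA\cdot\sigma>n(1-2^{-k/10})\}\rangle_{\Phi,\beta})$ by at most $\beta$): for any fixed $\eps>0$, $\pr[|\ln Z_{\PHI}(\beta)-\Erw\ln Z_{\PHI}(\beta)|>\eps n]\le\exp(-\Omega(\eps^2 n))$, and the same bound for $\ln\cC_{\hat\PHI,\hat\SIGMA}(\beta)$ around its mean under the planted distribution, where by symmetry we condition on $\hat\SIGMA=\vecone$.

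For the implication~(\ref{eqProp_smmRemedy1})$\,\Rightarrow\,$(\ref{eqannealed}), set $\phi^\star=\lim_n n^{-1}\ln\Erw[Z_{\PHI}(\beta)]$, fix $\omega_n\to\infty$ with $\omega_n=o(n)$, and apply the second moment method to
\[
\bar Z_{\PHI}(\beta)=\sum_\sigma\exp(-\beta E_{\PHI}(\sigma))\,\vecone\bigl\{\ln\cC_{\PHI,\sigma}(\beta)\le\Erw[\ln\cC_{\hat\PHI,\hat\SIGMA}(\beta)]+\omega_n\bigr\}.
\]
By the planted-model identity $\Erw[\bar Z_{\PHI}(\beta)]=\Erw[Z_{\PHI}(\beta)]\cdot\pr[\ln\cC_{\hat\PHI,\hat\SIGMA}(\beta)\le\Erw[\ln\cC_{\hat\PHI,\hat\SIGMA}(\beta)]+\omega_n]=(1-o(1))\Erw[Z_{\PHI}(\beta)]$ by the concentration of $\ln\cC$. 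For the second moment the same identity gives $\Erw[\bar Z_{\PHI}(\beta)^2]=\Erw[Z_{\PHI}(\beta)]\cdot\Erw[\vecone\{\ln\cC_{\hat\PHI,\hat\SIGMA}(\beta)\le\Erw[\ln\cC_{\hat\PHI,\hat\SIGMA}(\beta)]+\omega_n\}\,\bar Z_{\hat\PHI}(\beta)]$; splitting $\bar Z_{\hat\PHI}(\beta)$ by the overlap of the second replica with $\hat\SIGMA=\vecone$, its $\SIGMA\cdot\vecone\le n(1-2^{-k/10})$ part contributes at most $\exp(o(n))\Erw[Z_{\PHI}(\beta)]$ by the overlap decomposition, whereas its $\SIGMA\cdot\vecone> n(1-2^{-k/10})$ part is bounded by $\cC_{\hat\PHI,\hat\SIGMA}(\beta)$, which on the event carried by the outer indicator is at most $\exp(\Erw[\ln\cC_{\hat\PHI,\hat\SIGMA}(\beta)]+\omega_n)=\exp(o(n))\Erw[Z_{\PHI}(\beta)]$ by hypothesis~(\ref{eqProp_smmRemedy1}). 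Hence $\Erw[\bar Z_{\PHI}(\beta)^2]\le\exp(o(n))\Erw[Z_{\PHI}(\beta)]^2=\exp(o(n))\Erw[\bar Z_{\PHI}(\beta)]^2$, and Paley--Zygmund gives $\bar Z_{\PHI}(\beta)\ge\tfrac12\Erw[\bar Z_{\PHI}(\beta)]$ with probability $\exp(-o(n))$. Since $Z_{\PHI}(\beta)\ge\bar Z_{\PHI}(\beta)$, this forces $n^{-1}\ln Z_{\PHI}(\beta)\ge\phi^\star-o(1)$ on an event whose probability $\exp(-o(n))$ beats the $\exp(-\Omega(n))$ tail of the concentration of $\ln Z_{\PHI}(\beta)$; intersecting the two yields $n^{-1}\Erw\ln Z_{\PHI}(\beta)\ge\phi^\star-o(1)$, which together with Jensen's inequality gives $n^{-1}\Erw\ln Z_{\PHI}(\beta)\to\phi^\star$, i.e.~(\ref{eqannealed}).

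For the converse I argue by contraposition. If~(\ref{eqProp_smmRemedy1}) fails then along a subsequence $n^{-1}\Erw[\ln\cC_{\hat\PHI,\hat\SIGMA}(\beta)]\ge\phi^\star+3\delta$ for a fixed $\delta>0$. Since $\cC_{\hat\PHI,\hat\SIGMA}\le Z_{\hat\PHI}$, Jensen's inequality then gives $\Erw[Z_{\PHI}(\beta)^2]=\Erw[Z_{\PHI}(\beta)]\,\Erw[Z_{\hat\PHI}(\beta)]\ge\Erw[Z_{\PHI}(\beta)]\exp(\Erw[\ln\cC_{\hat\PHI,\hat\SIGMA}(\beta)])\ge\exp((2\phi^\star+3\delta-o(1))n)$, so the plain second moment overshoots $\Erw[Z_{\PHI}(\beta)]^2$ by an exponential factor. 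Feeding this lower bound and the planted-model identity into the sub-Gaussian concentration of $\ln Z_{\PHI}(\beta)$ forces $n^{-1}\Erw\ln Z_{\PHI}(\beta)$ to remain bounded away from $\phi^\star$ along that subsequence, so~(\ref{eqannealed}) fails.

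The main obstacle is the bookkeeping in the second moment step: one has to verify that the ``close'' replicas $\SIGMA$ with $\SIGMA\cdot\vecone>n(1-2^{-k/10})$ are all confined to the cluster of the planted assignment, so that their total Gibbs weight really is at most $\cC_{\hat\PHI,\hat\SIGMA}(\beta)$ and the outer truncation indicator genuinely caps it; that the ``far'' contribution is controlled uniformly over the random formula exactly as in the proof of \Lem~\ref{Lemma_trivial}; and that $\omega_n$ can be chosen small enough to leave the first moment intact yet large enough to swamp the $O(\sqrt n)$ Azuma fluctuations of $\ln\cC_{\hat\PHI,\hat\SIGMA}(\beta)$. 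A secondary point is the converse, where turning ``the planted partition function is exponentially large'' into ``the quenched free energy of the plain model drops'' relies on the concentration of $\ln Z_{\PHI}(\beta)$ together with the a~priori control of $\Erw[Z_{\PHI}(\beta)^2]$ afforded by the overlap decomposition; the remaining estimates are all already available from the proof of \Lem~\ref{Lemma_trivial}.
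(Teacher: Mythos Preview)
Your forward direction $(\ref{eqProp_smmRemedy1})\Rightarrow(\ref{eqannealed})$ is correct and is exactly the paper's argument: both truncate to assignments whose cluster is not too large, use the planted identity plus concentration of $\ln\cC$ to keep the first moment, control the second moment by splitting the second replica by overlap (the far part via \Lem~\ref{lemma_vanilla_non_local_0}, the near part by the truncation), and finish with Paley--Zygmund and concentration of $\ln Z$.

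The converse, however, has a real gap. From the failure of~(\ref{eqProp_smmRemedy1}) you deduce that $\Erw[Z_{\PHI}(\beta)^2]\ge\exp((2\phi^\star+3\delta)n)$ and then assert that ``feeding this into the sub-Gaussian concentration of $\ln Z_{\PHI}(\beta)$'' forces the quenched average below $\phi^\star$. But an exponentially large second moment together with concentration of $\ln Z$ does \emph{not} yield this. Concretely, if~(\ref{eqannealed}) held, concentration gives only $\pr[Z_{\PHI}(\beta)>\exp((\phi^\star+\delta)n)]\le\exp(-c(\delta)n)$ with $c(\delta)$ of order $\delta^2$, whereas the crude bound $Z_{\PHI}(\beta)\le 2^n$ makes the rare-event contribution to $\Erw[Z_{\PHI}(\beta)^2]$ as large as $4^n\exp(-c(\delta)n)$; since $\phi^\star<\ln 2$, for small $\delta$ this can easily exceed $\exp((2\phi^\star+3\delta)n)$, so no contradiction results.

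The paper's converse avoids the second moment entirely. One fixes the event $\cE_n=\{n^{-1}\ln Z_{\PHI}(\beta)\le\phi^\star+\eps/2\}$. By Markov, $\pr[\PHI\in\cE_n]=1-o(1)$; by the concentration of $\ln\cC_{\hat\PHI,\hat\SIGMA}(\beta)$ and $Z_{\hat\PHI}\ge\cC_{\hat\PHI,\hat\SIGMA}$, the planted probability $\hat\pr[\hat\PHI\in\cE_n]$ is $\le\exp(-\Omega(n))$. The planted identity then gives $\Erw[Z_{\PHI}(\beta)\vecone\{\PHI\in\cE_n\}]=\Erw[Z_{\PHI}(\beta)]\cdot\hat\pr[\hat\PHI\in\cE_n]\le\exp((\phi^\star-\Omega(1))n)$, and since $\cE_n$ holds \whp\ under $\PHI$, Jensen on the conditional expectation yields
\[
\tfrac1n\Erw\ln Z_{\PHI}(\beta)=\tfrac1n\Erw[\ln Z_{\PHI}(\beta)\mid\cE_n]+o(1)\le\tfrac1n\ln\Erw[Z_{\PHI}(\beta)\vecone\{\PHI\in\cE_n\}]+o(1)\le\phi^\star-\Omega(1),
\]
contradicting~(\ref{eqannealed}). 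So the key step is to apply the planted identity to $\Erw[Z\vecone_{\cE_n}]$ rather than to $\Erw[Z^2]$; your sketch lists all the ingredients but routes them through the wrong quantity.
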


\noindent
Computing $\lim_{n\to\infty}n^{-1}\ln \Erw[Z_{\hat\PHI}(\beta)]$ is easy, as the following standard lemma shows.
\begin{lemma} \label{lemma_total_size} Assume that $d \leq \dplus$ and  $\beta \in \mathbb{R}$. Then 
$\lim_{n\to\infty}n^{-1}\ln \Erw[Z_{\hat\PHI}(\beta)] = \cF(k,d,\beta)$.
\end{lemma}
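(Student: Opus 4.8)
My plan is to start from the identity $\Erw[Z_{\hat\PHI}(\beta)]=\Erw[Z_{\PHI}(\beta)^2]/\Erw[Z_{\PHI}(\beta)]$ recorded in \Sec~\ref{xsec_overview}, which reduces the claim to determining the exponential orders of the first and second moments of $Z_{\PHI}(\beta)$ and subtracting. The first moment is routine: by the variable-flip symmetry of the regular model $\Erw[Z_{\PHI}(\beta)]=2^n\Erw[\exp(-\beta E_{\PHI}(\vecone))]$, a clause is violated by $\vecone$ exactly when all $k$ of its clones are matched to negative variable clones, and the standard configuration-model computation (expand $\exp(-\beta E_{\PHI}(\vecone))=\prod_a(1-(1-e^{-\beta})\vecone\{a\text{ violated}\})$ and use that the pairing is uniform, so that the number of violated clauses is concentrated) gives $\lim_{n\to\infty}n^{-1}\ln\Erw[Z_{\PHI}(\beta)]=\ln 2+(d/k)\ln(1-(1-e^{-\beta})2^{-k})$.

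For the second moment I would write $\Erw[Z_{\PHI}(\beta)^2]=\sum_{\sigma,\tau}\Erw[\exp(-\beta(E_{\PHI}(\sigma)+E_{\PHI}(\tau)))]$ and split the sum according to the overlap $\rho=\rho(\sigma,\tau)$; the inner expectation depends on $(\sigma,\tau)$ only through $\rho$, and for fixed $\rho$ it is itself a configuration-model first-moment quantity that one evaluates by the method of types, fixing the empirical distribution of clause patterns (the clone-type statistics around each clause), counting the consistent pairings via Stirling's formula, and extracting the rate by Laplace's method. This produces a variational problem; because of the hard per-variable degree constraint ($d/2$ positive and $d/2$ negative occurrences) its stationary points are most naturally described through a pair of probability measures $(\pi,\hat\pi)$ on $(0,1)$ — the limiting empirical laws of the variable-to-clause and clause-to-variable Belief Propagation messages on the Galton--Watson geometry underlying $\hat\PHI$ (cf.\ \Sec~\ref{xsec_overview}) — and the stationarity conditions are precisely $\pi=\cF_{k,d,\beta}(\hat\pi)$, $\hat\pi=\hcF_{k,d,\beta}(\pi)$, that is $\pi=\cG_{k,d,\beta}(\pi)$.

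In the range $d\in[d_-(k),\dplus]$, $\beta>\beta_-(k,d)$ one checks that the supremum is attained in the skewed region — outside it the contribution is negligible, by essentially the truncated-moment estimates underlying \Lem~\ref{Lemma_trivial} and \Lem~\ref{Lem_smmRemedy} — so by \Prop~\ref{prop_unique_fixed_point_tree_1} the maximiser is $\pi^\star_{k,d,\beta}$ paired with $\hcF_{k,d,\beta}(\pi^\star_{k,d,\beta})$. Substituting this back into the two variational formulas and simplifying collapses their difference to $\ln\Erw[z_1]+(d/k)\ln\Erw[z_2]-d\ln\Erw[z_3]=\cF(k,d,\beta)$, with $z_1,z_2,z_3$ the random variables from the definition of $\cF(k,d,\beta)$; this yields $\lim_{n\to\infty}n^{-1}\ln\Erw[Z_{\hat\PHI}(\beta)]=\cF(k,d,\beta)$. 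For parameters outside the range of \Prop~\ref{prop_unique_fixed_point_tree_1} one is below condensation: $\Erw[Z_{\hat\PHI}(\beta)]$ has the same exponential order as $\Erw[Z_{\PHI}(\beta)]$ by \Lem~\ref{Lemma_trivial}, and $\cF(k,d,\beta)$ coincides with that annealed value.

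The hard part will be the third step: showing the supremum is genuinely \emph{attained} at the skewed fixed point — not merely that $\pi^\star_{k,d,\beta}$ is stationary — and then executing the delicate but mechanical algebra that collapses the variational value to the compact $z_i$-form. The former I would handle by localising to the skewed region and invoking the uniqueness in \Prop~\ref{prop_unique_fixed_point_tree_1}; the latter amounts to matching the reweighting densities $\hat z(\cdot),z(\cdot)$ built into $\cF_{k,d,\beta}$ and $\hcF_{k,d,\beta}$ against the clause- and variable-weight factors produced by the Laplace computation.
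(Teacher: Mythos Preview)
Your route is a substantial detour and rests on a mistaken premise. The paper's argument is a direct three-line computation exploiting one observation you miss: in $\cF(k,d,\beta)=\ln\Erw[z_1]+(d/k)\ln\Erw[z_2]-d\ln\Erw[z_3]$ the logarithm sits \emph{outside} the expectation, and each $z_i$ is multilinear in the coordinates $\nu_j(\pm1),\hat\nu_j(\pm1)$. Hence $\Erw[z_i]$---and therefore $\cF$ itself---depends only on the scalar means $h[\pi^\star_{k,d,\beta}]$ and $\hh[\hcF_{k,d,\beta}(\pi^\star_{k,d,\beta})]$, which by \Lem~\ref{lemma_pi_hpi_to_h_hh} and \Fact~\ref{fact_unique_solution_h_hh} are exactly $q$ and $1-q$, independently of the fine structure of $\pi^\star_{k,d,\beta}$. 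Plugging in gives the closed form
\[
\cF(k,d,\beta)=\ln 2+\frac dk\ln\bigl(1-c_\beta q^k\bigr)-\frac d2\ln\frac{1}{2q}-\frac d2\ln\frac{1}{2(1-q)},
\]
which is precisely the first-moment rate computed in \Prop~\ref{prop_first_moment_vanilla}. The full distribution $\pi^\star_{k,d,\beta}$, the second moment, and \Prop~\ref{prop_unique_fixed_point_tree_1} are all irrelevant to this lemma.

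Your plan also fails on its own terms in two places. First, the second-moment rate in the regular model is a \emph{finite-dimensional} optimisation over the overlap $\alpha$ and a pair of scalars $(h_\beta(\alpha),\hqq_\beta(\alpha))$ (see \Lem~\ref{lemma_vanilla_2_implicit} and \Prop~\ref{prop_second_moment_rate_function}); its stationarity conditions are \emph{not} the distributional equations $\pi=\cG_{k,d,\beta}(\pi)$, so your identification of the maximiser with $\pi^\star_{k,d,\beta}$ is unfounded. Second, \Prop~\ref{prop_unique_fixed_point_tree_1} only covers $d\in[d_-(k),\dplus]$ and $\beta>\beta_-(k,d)$, whereas the lemma is asserted for all $\beta\in\RR$; your patch for the remaining range via \Lem~\ref{Lemma_trivial} still leaves you needing that $\cF$ equals the annealed value there---which is exactly the elementary identity the paper obtains directly. (Incidentally, your first-moment formula $\ln 2+(d/k)\ln(1-(1-e^{-\beta})2^{-k})$ is the i.i.d.\ heuristic; in the regular configuration model the correct tilt is to $q$, not $1/2$, and the extra terms in the displayed formula are nonzero.)
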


Hence, we are left to calculate $\Erw[\ln \cC_{\hat\PHI,\hat\SIGMA}(\beta)]$, the ``quenched average'' in physics jargon.
As the log and the expectation do not commute, this problem is well beyond the reach of elementary methods.
Tackling it is the main achievement of this paper.
Specifically, we are going to prove

\begin{proposition}\label{Prop_clusterSize}
Assume that $d \in [d_-(k), \dplus]$ and  $\beta > \beta_-(k,d)$.
Then
	$\lim_{n\to\infty}n^{-1}\Erw[\ln\cC_{\hat\PHI,\hat\SIGMA}(\beta)]=\cB(k,d,\beta).$
\end{proposition}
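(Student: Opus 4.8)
The plan is to identify the weighted cluster $\cC_{\hat\PHI,\hat\SIGMA}(\beta)$ with a partition function on the local (tree) structure of the planted formula, and then to run Belief Propagation on that tree to compute the quenched average. Concretely, since $\hat\SIGMA=\vecone$ and the sum defining $\cC_{\hat\PHI,\hat\SIGMA}(\beta)$ is restricted to assignments with $\SIGMA\cdot\vecone>n(1-2^{-k/10})$, only an $O(n2^{-k/10})$-fraction of variables can be flipped, so the relevant region of the Hamming cube is a ``pseudo-cluster'' around $\vecone$. The first step is a structural lemma: \wvhp\ the planted formula $\hat\PHI$ has the property that, after conditioning on the local structure $\mathfrak T_\w$ for a large constant $\w$, the restriction of the Gibbs measure to this region factorizes approximately over the components obtained by deleting a sparse set of variables (a ``rigid'' backbone of variables whose marginal is essentially $\delta_1$), the remaining ``free'' variables forming small tree-like pieces. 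This is exactly the non-reconstruction / spatial-mixing input, and it is where the \emph{skewed} hypothesis on $\pi^\star_{k,d,\beta}$ enters: the condition $\pi(0,1-\exp(-k\beta/2))<2^{-0.9k}$ says that BP messages are overwhelmingly close to $1$, i.e., variables are typically frozen to \texttt{true}, which is what makes the pseudo-cluster well-separated and the truncation in $\cC$ harmless.

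The second step is to set up Belief Propagation on the Galton--Watson tree $\hT$ that describes the local geometry of $\hat\PHI$ (the tree whose offspring distribution is governed by the reweighting $\hat z,z$ appearing before Proposition~\ref{prop_unique_fixed_point_tree_1}). One checks that the BP fixed-point messages on $\hT$ have law $\pi^\star_{k,d,\beta}$ for variable-to-clause messages and $\hcF_{k,d,\beta}(\pi^\star_{k,d,\beta})$ for clause-to-variable messages; uniqueness of the skewed fixed point from Proposition~\ref{prop_unique_fixed_point_tree_1} guarantees that BP converges to these on the (almost surely finite-in-every-neighborhood) tree. The Bethe free-energy functional evaluated at this fixed point is precisely $\cB(k,d,\beta)$: the three terms $z_1,z_2,z_3$ are the variable-node, clause-node, and edge contributions, and the combination $\Erw[z_i\ln z_i]/\Erw[z_i]$ rather than $\ln\Erw[z_i]$ is exactly the derivative-of-free-energy (quenched) form, reflecting that we are averaging $\ln\cC$ and not $\ln\Erw[\cC]$. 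So the target identity becomes: the per-variable quenched free energy of the pseudo-cluster equals the Bethe functional at the BP fixed point on $\hT$.

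The third step is to transfer this from the tree to $\hat\PHI$. The lower bound $\liminf_{n\to\infty}n^{-1}\Erw[\ln\cC_{\hat\PHI,\hat\SIGMA}(\beta)]\geq\cB(k,d,\beta)$ I would get from a second-moment / interpolation argument on the truncated cluster partition function restricted to the valley, using Lemma~\ref{Lem_smmRemedy}-type truncation to kill large deviations; alternatively one can use the abstract machinery of~\cite{betheupb}, which is built to turn non-reconstruction plus knowledge of the marginals into a free-energy formula, applied to the measure $\cC_{\hat\PHI,\hat\SIGMA}(\beta)/\|\cdot\|$. The matching upper bound $\limsup\leq\cB(k,d,\beta)$ follows from the factorization of Step~1: $\ln\cC$ telescopes along the local neighborhoods into a sum of $n$ contributions, each asymptotically the corresponding Bethe term on $\hT$, with the error controlled because the ``bad'' (long-range-correlated, or deeply-unfrozen) part of the formula has density $o(1)$ by non-reconstruction and skewedness. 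Then one invokes local weak convergence of $\partial^\w_{\hat\PHI}(x,i)$ to $\hT$ together with uniform integrability of the local $\ln$-contributions (again guaranteed by skewedness, which bounds the messages away from $0$ so that all the $z_i\ln z_i$ are bounded) to replace the empirical average over $i\in[n]$ by the expectation over $\hT$, yielding $\cB(k,d,\beta)$.

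The main obstacle I expect is Step~1, the structural/spatial-mixing statement: proving that the planted Gibbs measure, \emph{conditioned on a large but fixed-depth local neighborhood}, really does decorrelate into nearly-independent tree pieces sitting on a frozen backbone, with quantitative control strong enough that the per-variable error in $n^{-1}\ln\cC$ vanishes. This is where the reweighted (genuinely random) local structure of $\hat\PHI$ bites — unlike the plain satisfiability-threshold analysis where the local structure is deterministic — and it is the place where the general non-reconstruction results one is allowed to cite must be married to the explicit quantitative estimates coming from the skewedness of $\pi^\star_{k,d,\beta}$. Everything else (identifying the BP fixed point with $\pi^\star$, checking the Bethe functional equals $\cB$, and the local-weak-convergence bookkeeping) is, by comparison, routine once the parameter regime $d\in[d_-(k),\dplus]$, $\beta>\beta_-(k,d)$, $k\geq k_0$ is in force.
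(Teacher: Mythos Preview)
Your high-level plan matches the paper's: prove a spatial-mixing/non-reconstruction statement for the planted measure, invoke the abstract Bethe-free-energy machinery of~\cite{betheupb}, and identify the limiting Bethe functional with $\cB(k,d,\beta)$ via BP on the Galton--Watson tree. Your identification of Step~1 as the crux is also correct.

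There is, however, a genuine gap in your treatment of the \emph{lower} bound in Step~3. Non-reconstruction for the planted model $\hat\PHI$ alone, combined with~\cite{betheupb}, yields only the upper bound $\limsup n^{-1}\Erw[\ln\cC_{\hat\PHI,\hat\SIGMA}(\beta)]\leq\cB(k,d,\beta)$. To obtain the matching lower bound the paper must introduce a second random model, the \emph{planted replica model} $(\tilde\PHI,\tilde\SIGMA)$ (a formula resampled so that its local statistics match those of $\hat\PHI$ but whose marks are drawn independently from the BP marginals), and prove non-reconstruction separately for \emph{that} model. This is not a minor technicality: the planted replica model has a different, more delicate law than $\hat\PHI$, and establishing its core/sticky/cold properties requires additional arguments (transferring ``with very high probability'' estimates across the reweighting, and controlling how the independently-resampled marks interact with the local tree structure). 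Your proposed alternatives---a second-moment argument on the truncated cluster, or a direct appeal to~\cite{betheupb}---do not sidestep this: the second moment controls $\Erw[\cC]$ not $\Erw[\ln\cC]$, and the~\cite{betheupb} lower bound is precisely what demands the replica-model input. Without this second non-reconstruction result your argument would stall at a one-sided inequality.
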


\subsection{Non-reconstruction and the Bethe free energy} \label{sec_non_reconstruction_and_bethe}

In the following we let for a formula $\Phi$ and $v \in V \cup F$ and $\w \geq 0$, $\partial_\Phi^{\w} v$ (resp. $\Delta_\Phi^{\w} v$) denote the set of vertices at distance exactly $\w$ (resp. less than $\w$) from $v$ in $\Phi$.

To prove \Prop~\ref{Prop_clusterSize} we investigate the spatial mixing properties of the conditional Gibbs measure 
	$$\Bck{\nix}_{\hat\Phi,\beta}=\bck{\nix\big|\cC_{\hat\Phi,\hat\sigma}(\beta)}_{\hat\Phi,\beta}.$$
Specifically, for a variable $x$, an assignment $\sigma\in\cC_{\hat\Phi,\hat\sigma}(\beta)$ and an integer $\ell\geq0$ let $\nabla(\hat\Phi,x,\ell)$
be the $\sigma$-algebra on $\cC_{\hat\Phi,\hat\sigma}(\beta)$ generated by the random variables $\SIGMA(y)$ for variable $y$
at distance greater either $\ell$ or $\ell+1$ from $x$.
Further, we define
	\begin{align}\label{eqNaiveMargs}
	\mu_{\hat\Phi,x}^{(\ell)}(\pm1)&=\Bck{\vecone\{\SIGMA(x)=\pm1\}|\nabla(\hat\Phi,x,\ell)}_{\hat\Phi,\hat\sigma}(\hat\sigma).
	\end{align}
In words, $\mu_{\hat\Phi,x}^{(\ell)}(\pm1)$ is the probability that $x$ gets assigned $\pm1$ in a random assignment
of its depth-$\ell$ neighborhood under the boundary condition induced by $\hat\sigma$.

We lift the distributions from (\ref{eqNaiveMargs}) to clauses.
In slightly greater generality, suppose that $\mu$ is a map that assigns each variable $x$ a probability distribution $\mu_x\in\cP(\{\pm1\})$.
Then for clause $a$ of a formula $\hat\Phi$ we let $\mu_{\hat\Phi,a}^{(2 \ell +1)}$ be the distribution on $\{\pm1\}^k$ with the following two properties.
	\begin{enumerate}[(i)]
	\item if $j\in[k]$ and $x=\partial_{\hat\Phi}(a,j)$, then the marginal distribution of the $j$th coordinate coincides with $\mu_{x}$.
	\item subject to (i), $H(\mu_{\hat\Phi,a}^{(2 \ell +1)})+\bck{\ln\psi_a}_{\mu_{\hat\Phi,a}^{(2 \ell +1)}}$ is maximum.
	\end{enumerate}
These two conditions determine $\mu_{\hat\Phi,a}^{(2\ell+1)}$ uniquely (because the entropy is concave).

Now, we say that a formula $\hat\PHI$ has the {\em non-reconstruction property} if for any $\eps>0$ there is $\ell>0$ such that
	\begin{align*}
	\lim_{n\to\infty}\pr\brk{\frac1n\sum_{x}\Bck{\abs{\mu_{\hat\Phi,x}^{(2\ell)}(1)-\Bck{\SIGMA(x)|\nabla(\hat\PHI,x,2\ell)}_{\hat\PHI,\beta}}}_{\hat\PHI,\beta}<\eps}&=1
	\end{align*}
The first half of the proof of \Prop~\ref{Prop_clusterSize} consist in proving the following.

\begin{proposition}\label{Prop_nonRe1}
Assume that $d \in [d_-(k), \dplus]$ and  $\beta > \beta_-(k,d)$.
Then $\hat\PHI$ has the non-reconstruction property.
\end{proposition}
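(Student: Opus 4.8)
\emph{Overview and first reduction.}
The plan is to trade the random formula for its local limit, a Galton--Watson tree, reduce the non-reconstruction bound to a decay-of-influence statement for Belief Propagation on that tree, and then cash in the skewedness of the fixed point $\pi^\star_{k,d,\beta}$. Since the random variable inside the probability in the definition of non-reconstruction takes values in $[0,2]$, Markov's inequality shows that it suffices to prove
\[
\lim_{\w\to\infty}\ \limsup_{n\to\infty}\ \Erw\Bck{\,\abs{\mu_{\hat\Phi,x}^{(2\w)}(1)-\Bck{\SIGMA(x)\mid\nabla(\hat\PHI,x,2\w)}_{\hat\PHI,\beta}}\,}_{\hat\PHI,\beta}=0,
\]
where $x$ is a uniformly chosen variable; by the exchangeability of the variable labels in the planted model (the planted assignment $\vecone$ being invariant under relabelling) this expectation equals the expected value of the average over $x$ appearing in the statement.

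\emph{Passing to the tree.}
For fixed $\w$ the local-structure property ({\keypropertylocalstruct}) tells us that as $n\to\infty$ the depth-$(2\w+2)$ neighbourhood of a random variable $x$ is, with probability $1-o(1)$, acyclic and distributed like the (truncated) Galton--Watson tree $\hT$ that encodes the local geometry of $\hat\PHI$. On this event both $\mu_{\hat\Phi,x}^{(2\w)}$ and $\tau\mapsto\Bck{\SIGMA(x)\mid\nabla(\hat\PHI,x,2\w)}_{\hat\PHI,\beta}(\tau)$ become \emph{tree quantities}: the marginal of the root of $\hT$ obtained by running Belief Propagation up from the boundary of the ball, with that boundary pinned to $\vecone$ in the first case and to $\tau$ in the second. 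The point that needs care here is that $\Bck{\nix}_{\hat\Phi,\beta}$ carries the \emph{global} cluster constraint $\SIGMA\cdot\vecone>n(1-2^{-k/10})$; I would dispose of it as in the set-up of \Sec~\ref{xsec_overview}, using properties {\keypropertysafe}--{\keypropertygood} to argue that in the skewed regime the cluster constraint decouples from the depth-$2\w$ ball up to a total-variation error that is $o(1)$ in probability over $\hat\PHI$, and that the law of the boundary condition $\tau$ it induces is, up to such an error, the law of the boundary of $\hT$ under the fixed-point broadcast measure. After this reduction the claim becomes a statement purely about $\hT$ and Belief Propagation on it, namely $\lim_{\w\to\infty}\Erw_{\hT}\Erw_{\tau}\,\abs{M_{\hT}^{\w}(\vecone)-M_{\hT}^{\w}(\tau)}=0$, where $M_{\hT}^{\w}(\cdot)$ is the root marginal produced by BP from a depth-$2\w$ boundary condition and $\tau$ is drawn from the fixed-point boundary law.

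\emph{Decay of influence on the tree --- the crux.}
Here the skewedness of $\pi^\star_{k,d,\beta}$ (\Prop~\ref{prop_unique_fixed_point_tree_1}) is decisive. Because $\beta>\beta_-(k,d)=k\ln2-10\ln k$ the fixed-point messages concentrate overwhelmingly on the ``rigid'' part of $(0,1)$ singled out by the skewed condition, and the sign pattern of the planted formula forces each variable to receive, with probability $1-2^{-\Omega(k)}$, at least one clause that ``strongly supports'' it. A variable in this typical situation emits a message pinned to within $2^{-\Omega(k)}$ of the rigid value \emph{independently of its subtree}, so a boundary perturbation reaching it is damped by a factor that is exponentially small in $k$ (and by a further polynomial factor coming from the variable's large degree). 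Consequently the set of vertices of $\hT$ whose BP message is appreciably sensitive to the depth-$2\w$ boundary is contained in the progeny of the root under a branching process that is subcritical for $k\geq k_0$ --- the gains from the strong-support events beating the losses from the large variable and clause degrees. This progeny is therefore a.s.\ finite, and once $\w$ exceeds its (random, a.s.\ finite) depth the two boundary conditions yield the same root marginal up to a deterministic error of order $2^{-\Omega(k)}$. Averaging over $\hT$, $\tau$ and $\w$ and letting $\w\to\infty$ gives the displayed limit, completing the chain.

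\emph{Main obstacle.}
The hard part is the last step: turning the ``damping $\times$ subcriticality'' heuristic into a proof. One has to define the sensitive region so that it genuinely dominates $\abs{M_{\hT}^{\w}(\vecone)-M_{\hT}^{\w}(\tau)}$ --- which requires quantitative control of the derivatives of the BP update at a rigid variable --- and then verify that the offspring mean of the dominating branching process is below $1$; this is a delicate balance between the degrees $d\leq\dplus$ and the exponential-in-$k$ damping afforded by the strong-support events, and it is precisely where the thresholds $2^{-0.9k}$ in the skewed condition and $k\ln 2-10\ln k$ in $\beta_-(k,d)$ are used. A secondary, still non-trivial, technical point is the transfer in the second paragraph: showing that the conditional Gibbs marginals on $\hat\PHI$, cluster constraint included, really are well approximated by tree BP, which is where the machinery behind properties {\keypropertysafe}--{\keypropertygood} does the work.
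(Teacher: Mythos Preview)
Your proposal has the right end-game but inverts the difficulty: you treat the ``transfer'' step (controlling the boundary law that the Gibbs measure $\Bck{\nix}_{\hat\Phi,\beta}$ induces on the sphere of radius $2\w$) as a secondary technicality to be dispatched by unspecified properties {\keypropertysafe}--{\keypropertygood}, and you treat the tree--BP decay as the crux. In the paper it is the other way round.

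The tree decay (your third paragraph) is indeed one ingredient, and the paper handles it in \Sec~\ref{sec_galton_watson} via the trunk/cold machinery, which is a more delicate multi-type percolation than a single subcritical branching process but morally similar to what you sketch. The real work, however, is establishing that a \emph{random sample} $\SIGMA$ from the cluster measure furnishes a boundary condition satisfying the hypothesis {\bf H} of that tree analysis. This is a statement about the \emph{global} Gibbs measure on $\hat\PHI$, and local convergence alone cannot deliver it: knowing that the $2\w$-ball looks like a Galton--Watson tree says nothing about the law of $\SIGMA$ restricted to its boundary. Your sentence ``the law of the boundary condition $\tau$ \ldots\ is \ldots\ the law of the boundary of $\hT$ under the fixed-point broadcast measure'' is essentially what has to be proved, and assuming it is circular.

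The paper's route is combinatorial rather than distributional. It defines deterministic structural properties of $\hat\PHI$ (the $\lambda$-core {\bf CR1}--{\bf CR5}, the absence of sticky sets {\bf ST1}--{\bf ST2}), shows they hold w.h.p.\ by first-moment/expansion arguments, and then proves the key \Lem~\ref{Lemma_flip}: for any $\sigma$ in the cluster, flipping a $(\sigma,b)$-closed set inside $\core_1(\Phi)\setminus S_1(\Phi)$ lowers the energy by $k^{3/4}$ per vertex. This flipping argument yields \Lem~\ref{Lemma_skewedWhp}, which says each core variable has Gibbs marginal $\geq 1-\exp(-\beta k^{3/4}/2)$ and its $(-1)$-component is small; a stochastic domination (Fact~\ref{Fact_stochDom}) then upgrades this to the statement that the \emph{joint} boundary law is dominated by a product measure satisfying {\bf H}, which is exactly what the tree analysis needs. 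None of this is captured by ``properties {\keypropertysafe}--{\keypropertygood}'', which, incidentally, are macros that the paper defines but never actually states.
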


Together with results from~\cite{betheupb} \Prop~\ref{Prop_nonRe1} implies an upper bound on $\Erw[\ln\cC_{\hat\PHI,\hat\SIGMA}(\beta)]$.
But to obtain a matching lower bound a little more work is needed. Specifically, we need to consider a further distribution on formula/assignment pairs that we call the {\em planted replica model}
 $(\tilde\PHI,\tilde\SIGMA)$ generated by the following experiment.

	\begin{description}
	\item[PR1] Choose a random formula $\hat\PHI$. 
	\item[PR2] For each variable $x$ choose $\tilde\SIGMA(x)$ from $\mu_{\hat\Phi,x}^{(2\ell)}$	independently.
	\item[PR3] For every clause $a$ choose $\tilde\SIGMA(a)\in\{\pm1\}^k$ independently from $\mu_{\hat\Phi,a}^{(2\ell+1)}$.
	\item[PR4] Choose 	$\tilde\PHI$
		uniformly at random subject to the following conditions.
		\begin{itemize}
		\item If $(a,j)$ is a clause clone and $\partial_{\tilde\PHI}(a,j)=(x,i)$, then $\tilde\SIGMA(a,j)=\tilde\SIGMA(x)$.
		\item For all clause clones $(a,j)$ we have $\ppartial_{\tilde\PHI}^{4\w+1}(a,j)\ism\ppartial_{\hat\PHI}^{4\w+1}(a,j)$.
		\end{itemize}
		If no such $\tilde\PHI$ exists, start over from {\bf PR2}.
	\end{description}
The {\em planted replica model has the non-reconstruction property} if for any $\eps>0$ there is $\w>0$ such that
	\begin{align*}
	\lim_{n\to\infty}\pr\brk{\frac1n\sum_{x}\Bck{\abs{\mu_{\tilde\Phi,x}^{(2\ell)}(1)-\Bck{\SIGMA(x)|\nabla(\tilde\PHI,x,2\w)}_{\tilde\PHI,\beta}}}_{\tilde\PHI,\beta}<\eps}&=1
	\end{align*}

\begin{proposition}\label{Prop_nonRe2}
Assume that $d \in [d_-(k), \dplus]$ and  $\beta > \beta_-(k,d)$.
Then the planted replica model has the non-reconstruction property.
\end{proposition}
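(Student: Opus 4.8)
The plan is to derive \Prop~\ref{Prop_nonRe2} from \Prop~\ref{Prop_nonRe1} by a coupling argument that exploits the fact that the planted replica model $(\tilde\PHI,\tilde\SIGMA)$ is constructed precisely so that its local structure around every clause agrees with that of $\hat\PHI$ up to depth $4\w+1$ (condition \textbf{PR4}). The key point is that non-reconstruction is a statement about the decay of correlations across radius-$2\w$ balls, and both the conditional Gibbs measure and the shallow marginals $\mu^{(2\w)}$ are determined by the local geometry together with the boundary data. So I would proceed as follows.

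First, I would make precise the sense in which $\tilde\PHI$ is a ``reweighted'' version of $\hat\PHI$: by \textbf{PR2}--\textbf{PR4}, the law of $\tilde\PHI$ is absolutely continuous with respect to that of $\hat\PHI$, with a Radon--Nikodym derivative that, on a suitable high-probability event, is bounded by $\exp(o(n))$ — this is the standard calculation comparing the planted model and the planted replica model, analogous to the ``$Z_{\PHI}^2 = \Erw[Z_{\PHI}]\cdot\Erw[Z_{\hat\PHI}]$'' bookkeeping in Section~2.2 and the arguments in~\cite{condensationColoring}. Since non-reconstruction as stated is a statement that a certain empirical average is $<\eps$ \whp, and \whp-statements are preserved under changes of measure with subexponential Radon--Nikodym derivative only if the complementary event has exponentially small probability, I would actually want the stronger quantitative form: \Prop~\ref{Prop_nonRe1} should be upgraded (or is implicitly proved in the stronger form in the relevant section) to say the bad event has probability $\exp(-\Omega(n))$. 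With that in hand, the change of measure transfers the conclusion from $\hat\PHI$ to $\tilde\PHI$ directly.

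Second — and this is the conceptually cleaner route, which I would prefer — I would avoid the change-of-measure loss entirely and instead couple the two Gibbs measures locally. Fix $\w$ as provided by \Prop~\ref{Prop_nonRe1} for a given $\eps$. For a variable $x$, both $\mu_{\tilde\Phi,x}^{(2\w)}$ and the conditional expectation $\Bck{\SIGMA(x)\,|\,\nabla(\tilde\PHI,x,2\w)}_{\tilde\PHI,\beta}$ depend only on $\ppartial_{\tilde\PHI}^{4\w+1}(x,\cdot)$ together with the restriction of $\tilde\SIGMA$ to the boundary spins and the external field coming from outside the ball. Condition \textbf{PR4} guarantees that these depth-$(4\w+1)$ neighborhoods are isomorphic to those of $\hat\PHI$, clone-for-clone; moreover the boundary condition $\tilde\SIGMA$ on variables outside the ball is, conditionally on $\hat\PHI$, drawn from exactly the product of the $\mu_{\hat\Phi,y}^{(2\w)}$'s and $\mu_{\hat\Phi,a}^{(2\w+1)}$'s, which is the same data that governs the behaviour inside the conditional measure $\Bck{\nix}_{\hat\Phi,\beta}$ once one invokes the non-reconstruction conclusion of \Prop~\ref{Prop_nonRe1}. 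I would therefore set up an explicit coupling of $(\hat\PHI,\SIGMA\sim\Bck{\nix}_{\hat\Phi,\beta})$ with $(\tilde\PHI,\SIGMA\sim\Bck{\nix}_{\tilde\Phi,\beta})$ that keeps the radius-$(4\w+1)$ neighborhood of a uniformly random variable $x$ identical in the two, and argue that under this coupling the two quantities in the displays defining non-reconstruction agree up to an error that vanishes as $n\to\infty$. Summing over $x$ and applying Markov's inequality to the (nonnegative, $O(1)$-bounded) empirical average then yields the claim with the same $\w$.

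The main obstacle I expect is handling the \emph{external field}, i.e.\ the influence on the ball $\ppartial^{4\w+1}_{\tilde\PHI}(x,\cdot)$ of the part of the formula outside radius $4\w+1$: \textbf{PR4} only pins down the local geometry and the boundary spins, but in the conditional Gibbs measure the marginal law of those boundary spins is \emph{not} exactly a product — it is the true Gibbs marginal, which the product measure $\prod_y \mu_{\hat\Phi,y}^{(2\w)}$ only approximates, and the quality of that approximation is exactly what \Prop~\ref{Prop_nonRe1} (non-reconstruction for $\hat\PHI$) buys us. So the argument is genuinely circular-looking and has to be unwound carefully: one uses non-reconstruction for $\hat\PHI$ to say that conditioning on the radius-$2\w$ exterior of $x$ in $\Bck{\nix}_{\hat\Phi,\beta}$ is, on average over $x$, close to conditioning on the \emph{product} surrogate, and then \textbf{PR4} matches the product surrogate to the corresponding object in $\tilde\PHI$. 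Controlling the accumulation of these two approximation errors — one from non-reconstruction, one from the coupling of the ``start over from \textbf{PR2}'' rejection step in the definition of $\tilde\PHI$ — and showing they stay $o(1)$ after averaging over $x$ and for $n$ large, is where the real work lies; the geometric/combinatorial facts (that depth-$(4\w+1)$ neighborhoods of $\hat\PHI$ are \whp\ trees with the expected shape, that the rejection in \textbf{PR4} succeeds with probability $\exp(-o(n))$) are routine given the earlier sections.
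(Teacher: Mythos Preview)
Your approach is not the one the paper takes, and both of your routes have gaps that I do not see how to close without essentially reverting to the paper's method.

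The paper does \emph{not} reduce \Prop~\ref{Prop_nonRe2} to \Prop~\ref{Prop_nonRe1}. Instead it isolates a list of \emph{deterministic} conditions on a formula (``$(\eps,\w)$-tameness'': \textbf{Local Structure}, \textbf{Cycles}, \textbf{Core}, \textbf{Sticky}, \textbf{$(\eps,2\w)$-Cold}) and proves, via a flipping/expansion argument on the cluster Gibbs measure (\Lem s~\ref{Lemma_flip}--\ref{Lemma_noRe}), that \emph{any} tame formula enjoys non-reconstruction. \Prop s~\ref{Prop_nonRe1} and~\ref{Prop_nonRe2} then follow from verifying tameness for $\hat\PHI$ and for $\tilde\PHI$ \emph{separately} (\Prop s~\ref{proposition_typical_properties} and~\ref{proposition_typical_properties_planted}). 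For $\tilde\PHI$, the expansion-type properties \textbf{Core} and \textbf{Sticky} are indeed obtained by the change of measure you have in mind (\Lem~\ref{lemma_wvhp_implies_whp}), because those hold for $\hat\PHI$ with probability $1-\exp(-2^{-0.99k}n)$; but the \textbf{Cold} property is only established {\whp} via a factorial-moment argument with a slowly diverging exponent, and is therefore re-proved for $\tilde\PHI$ by a direct edge-swapping analysis specific to the replica model (\Lem s~\ref{lemma_aux_planted_good_1}--\ref{lemma_aux_planted_good_3}). This is precisely why your Route~1 breaks: the Radon--Nikodym derivative between $\tilde\PHI$ and $\hat\PHI$ is $\exp(\Theta(2^{-0.999k}n))$, so only events of exponentially small failure probability transfer, and non-reconstruction (through \textbf{Cold}) is not known to be such an event.

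Your Route~2 has a more fundamental gap. The non-reconstruction statement for $\tilde\PHI$ requires control of the distribution of the boundary spins of a \emph{Gibbs sample} $\SIGMA\sim\Bck{\nix}_{\tilde\PHI,\beta}$, and this distribution is a global function of $\tilde\PHI$, not of its depth-$(4\w+1)$ balls. Condition \textbf{PR4} matches local geometry, and \textbf{PR2} tells you the law of $\tilde\SIGMA$ (the planted assignment), but neither says anything about the law of a Gibbs sample on $\tilde\PHI$. When you write that non-reconstruction for $\hat\PHI$ lets you replace the Gibbs boundary by the product of the $\mu_{\hat\Phi,y}^{(2\w)}$'s, you are controlling the $\hat\PHI$-Gibbs boundary, not the $\tilde\PHI$-Gibbs boundary; these live on different formulas, and no amount of local matching converts one into the other. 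The paper avoids this circularity by proving directly, for any tame $\Phi$, that Gibbs samples in the cluster have boundary values that are ``good'' in a deterministic sense (\Lem~\ref{Lemma_skewedWhp}, \Lem~\ref{Lemma_noRe}), using only the combinatorial tameness properties and no reference to a second formula.
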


The non-reconstruction property enables us to determine $\Erw[\ln\cC_{\hat\PHI,\hat\SIGMA}(\beta)]$.
Indeed, given a map $\mu:x\mapsto \mu_x\in\cP(\{\pm1\})$ that assigns each variable a distribution on $\pm1$ we define the
{\em Bethe free energy} of a formula $\hat\Phi$ as
	\begin{align*}
	\cB_{\hat\Phi}(\mu)&=\sum_x(1-d)H(\mu_x)+\sum_a\brk{H(a)+\bck{\ln\psi_a}_{\mu_a}}.
	\end{align*}
Of course, $\mu_a$ is the extension of $\mu$ from variables to clauses as defined above. We also let $\cB_{\hat \Phi, \w} = \cB_{\hat \Phi}((\mu_{\hat \Phi, x}^{(2\w)})_{x \in V})$.
Then by combining \Prop s~\ref{Prop_nonRe1} and~\ref{Prop_nonRe2} with~\cite[\Thm s 4.4 and 4.5]{betheupb} we obtain the following.

\begin{corollary}\label{Cor_nonRe3}
We have
	$
	\lim_{n\to\infty}\frac1n\Erw[\ln\cC_{\hat\PHI,\hat\SIGMA}(\beta)]=\lim_{\w\to\infty}\lim_{n\to\infty}\frac1n\Erw[\cB_{\hat\PHI,\w}].
	$
\end{corollary}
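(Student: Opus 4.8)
The plan is to obtain \Cor~\ref{Cor_nonRe3} as a direct consequence of the abstract reduction in~\cite{betheupb}, which is tailor-made for exactly this situation: it expresses a quenched free energy through the Bethe free energy evaluated at the depth-$\w$ marginals of the associated planted model, under the sole extra hypothesis of non-reconstruction. Concretely, I would first recast the conditional Gibbs measure $\Bck{\nix}_{\hat\Phi,\beta}$ on the cluster $\cC_{\hat\Phi,\hat\sigma}(\beta)$ as the Gibbs measure of a random factor graph model in the sense of~\cite{betheupb}: the variables $x_1,\dots,x_n$ carry spins in $\{\pm1\}$, each clause $a$ contributes the weight function $\psi_a$ (which here takes values in $[\exp(-\beta),1]$, hence is strictly positive and bounded), and the restriction to $\cC_{\hat\Phi,\hat\sigma}(\beta)$ is absorbed into an additional hard overlap constraint $\SIGMA\cdot\vecone>n(1-2^{-k/10})$. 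One then has to verify that this model meets the hypotheses of~\cite[\Thm s 4.4 and 4.5]{betheupb}: boundedness and positivity of the weights are clear, the planted model $\hat\PHI$ has a local weak limit given by the Galton--Watson tree that captures the local structure of $\hat\PHI$ (see \Sec~\ref{xsec_overview}), and --- as in~\cite{condensationColoring} --- one has to check that the global overlap constraint does not disturb these local-convergence hypotheses; this last point is the only place a modicum of genuine work is required.

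Granting this, the upper bound is immediate from~\cite[\Thm 4.4]{betheupb}: combined with the non-reconstruction property of $\hat\PHI$ established in \Prop~\ref{Prop_nonRe1}, it yields $\limsup_n n^{-1}\Erw[\ln\cC_{\hat\PHI,\hat\SIGMA}(\beta)]\leq\lim_{\w\to\infty}\lim_{n\to\infty}n^{-1}\Erw[\cB_{\hat\PHI,\w}]$, together with the existence of the iterated limit on the right. Here one also needs the book-keeping identification of the functional $\cB_{\hat\Phi}(\mu)$ from \Sec~\ref{sec_non_reconstruction_and_bethe} with the Bethe functional of~\cite{betheupb}; both are built from the same interpolation data, and the max-entropy extension $\mu_a$ of a variable-marginal family to a clause $a$ coincides with the tilted product used in~\cite{betheupb} because both maximise $H(\mu_a)+\bck{\ln\psi_a}_{\mu_a}$ subject to the prescribed marginals, a maximiser being unique by strict concavity of the entropy.

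For the matching lower bound I would invoke~\cite[\Thm 4.5]{betheupb}, whose hypothesis is the non-reconstruction property of the planted replica model --- precisely the object produced by the experiment {\bf PR1}--{\bf PR4} and shown to enjoy non-reconstruction in \Prop~\ref{Prop_nonRe2}. That theorem gives $\liminf_n n^{-1}\Erw[\ln\cC_{\hat\PHI,\hat\SIGMA}(\beta)]\geq\lim_{\w\to\infty}\lim_{n\to\infty}n^{-1}\Erw[\cB_{\hat\PHI,\w}]$. Putting the two inequalities together establishes both the existence of $\lim_n n^{-1}\Erw[\ln\cC_{\hat\PHI,\hat\SIGMA}(\beta)]$ (alternatively available from the interpolation argument used for $\phi_{d,k}$) and its equality with $\lim_{\w\to\infty}\lim_{n\to\infty}n^{-1}\Erw[\cB_{\hat\PHI,\w}]$, which is the assertion of the corollary.

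Since the two substantive ingredients, \Prop s~\ref{Prop_nonRe1} and~\ref{Prop_nonRe2}, are taken as given at this point, the only obstacle specific to \Cor~\ref{Cor_nonRe3} itself is the verification sketched in the first paragraph, namely that~\cite[\Thm s 4.4 and 4.5]{betheupb} apply to the \emph{cluster-conditioned} Gibbs measure exactly as they do to an unconditioned one. I expect this to be routine --- the conditioning event has probability bounded away from zero under the planted measure and is geometrically inert at the relevant scale --- but it is the one step that cannot simply be quoted from~\cite{betheupb} as a black box.
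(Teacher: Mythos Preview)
Your proposal is correct and follows exactly the paper's approach: the paper derives \Cor~\ref{Cor_nonRe3} in one line by combining \Prop s~\ref{Prop_nonRe1} and~\ref{Prop_nonRe2} with~\cite[\Thm s 4.4 and 4.5]{betheupb}, without spelling out the verification you outline. Your discussion of the book-keeping (identifying $\cB_{\hat\Phi}$ with the Bethe functional of~\cite{betheupb}, and checking that the cluster conditioning is harmless) is more explicit than what the paper provides, but the argument is the same.
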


Furthermore, $\cB_{\hat\PHI,\w}$ is determined by the {\em local} structure of $\hat\PHI$.
Since $\hat\PHI$, the local structure of the random formula can be described in terms of a random tree.
In fact, tracing the Belief Propagation algorithm on this random tree enables us to relate $\Erw[\cB_{\hat\PHI,\w}]$
to the distributional fixed point problem from \Prop~\ref{prop_unique_fixed_point_tree_1}.
The result of this, derived in \Sec~\ref{sec_operator_tree}, is

\begin{proposition}\label{Prop_BetheOnTrees}
We have
	$\lim_{\w\to\infty}\lim_{n\to\infty}\frac1n\Erw[\cB_{\hat\PHI,\w}]=\cB(k,d,\beta)$.
\end{proposition}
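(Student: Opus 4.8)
The plan is to compute $\lim_{\w\to\infty}\lim_{n\to\infty}\frac1n\Erw[\cB_{\hat\PHI,\w}]$ by first localising the Bethe functional. Since $\cB_{\hat\Phi}(\mu)=\sum_x(1-d)H(\mu_x)+\sum_a[H(\mu_a)+\bck{\ln\psi_a}_{\mu_a}]$ is a sum over variables and clauses, each term of which depends only on the marginals $\mu_{\hat\Phi,y}^{(2\w)}$ of variables $y$ in a bounded-radius neighborhood of the vertex in question, the expectation $\frac1n\Erw[\cB_{\hat\PHI,\w}]$ is an average of a local observable. First I would invoke the local weak convergence already established for $\hat\PHI$: the depth-$L$ neighborhood of a uniformly random clause (resp.\ variable) clone of $\hat\PHI$ converges in distribution to a fixed Galton--Watson-type tree $\hat T$ (resp.\ $T$) whose offspring law reflects the reweighting by $Z_{\hat\Phi}(\beta)$ and is described in Section~\ref{xsec_overview}. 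Combined with uniform integrability (the relevant entropies and $\ln\psi_a$ are bounded on $\cC_{\hat\Phi,\hat\sigma}(\beta)$ because every variable is essentially frozen to $\hat\sigma$), this lets me replace $\frac1n\Erw[\cB_{\hat\PHI,\w}]$ by the corresponding expectation over the limiting tree, at the cost of an error that vanishes as $n\to\infty$ and then as the truncation radius $\to\infty$.

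Next I would identify the tree marginals $\mu_{\hat\Phi,x}^{(2\w)}$ with the output of Belief Propagation on the limiting tree. The key point is that the conditional marginals $\mu_{\hat\Phi,x}^{(2\w)}(\pm1)$ defined by (\ref{eqNaiveMargs}) — the Gibbs marginal of the root of a depth-$2\w$ neighborhood under the boundary condition induced by $\hat\sigma=\vecone$ — are exactly the fixed-radius BP marginals on that neighborhood. On a tree BP is exact, so the variable-to-clause and clause-to-variable messages after $\w$ rounds are obtained by iterating the local BP update starting from the boundary. The distributional recursion governing these messages, when the underlying tree is the random tree $\hat T$, is precisely $\hcF_{k,d,\beta}$ and $\cF_{k,d,\beta}$: the weights $\hat z(\eta)=2-(1-e^{-\beta})\prod_{j<k}\eta_j$ and $z(\heta)$ are the clause/variable partition functions that appear in the tilt of the offspring distribution, and the pushforwards match the stated message updates. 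Thus the law of a depth-$\w$ BP message converges, as $\w\to\infty$, to the skewed fixed point $\pi^\star_{k,d,\beta}$ from Proposition~\ref{prop_unique_fixed_point_tree_1} — here I would use that the messages stay skewed along the iteration (because $\vecone$-boundary conditions force marginals near $1$) so that the uniqueness statement applies and pins down the limit.

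With the message laws identified, it remains to evaluate the three types of local contributions to $\cB$. The variable term $(1-d)H(\mu_x)$, averaged over the root of $T$, converges to $-d\,\Erw[z_3\ln z_3]/\Erw[z_3]$ up to the normalising $\ln\Erw[z_3]$ bookkeeping; the clause term $H(\mu_a)+\bck{\ln\psi_a}_{\mu_a}$, averaged over the root clause of $\hat T$, converges to $\frac dk\,\Erw[z_2\ln z_2]/\Erw[z_2]$; and the Bethe functional's standard edge-counting identity (each variable has degree $d$, each clause degree $k$, $m=dn/(2k)$) reorganises the sum into exactly the three terms of $\cB(k,d,\beta)$ in (\ref{eqmyBethe}), with $z_1,z_2,z_3$ the root partition functions of the variable tree, clause tree, and edge respectively. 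Assembling these via dominated convergence gives $\lim_{\w\to\infty}\lim_{n\to\infty}\frac1n\Erw[\cB_{\hat\PHI,\w}]=\cB(k,d,\beta)$.

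The main obstacle I anticipate is the interchange of limits and the uniform control needed to pass from $\hat\PHI$ to the limiting tree: one must show that the BP messages concentrate and that the boundary condition $\hat\sigma$ on $\cC_{\hat\Phi,\hat\sigma}(\beta)$ genuinely translates into the skewed initialisation on the tree (rather than, say, being corrupted by rare local configurations or by the global conditioning on the cluster event $\{\SIGMA\cdot\vecone>n(1-2^{-k/10})\}$). Handling this rigorously requires quantitative contraction estimates for the BP operator near $\pi^\star_{k,d,\beta}$ in the skewed regime — precisely the regime isolated by the hypotheses $d\in[d_-(k),\dplus]$, $\beta>\beta_-(k,d)$ — so that the depth-$\w$ messages are close to the fixed point with error exponentially small in $\w$, uniformly in $n$. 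Once that contraction is in hand, the remaining calculations are routine substitutions into (\ref{eqmyBethe}).
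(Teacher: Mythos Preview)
Your proposal is correct and follows essentially the same two-step approach as the paper: local weak convergence of $\hat\PHI$ to the Galton--Watson tree (\Lem~\ref{lemma_typical_w_neighb}) reduces $\frac1n\Erw[\cB_{\hat\PHI,\w}]$ to a tree functional $\cB^{(\w)}(k,d,\beta)$, and then convergence of the depth-$\w$ BP message laws to the fixed point $\pi^\star_{k,d,\beta}$ (\Prop~\ref{prop_approximation_internal_bethe_tree}) gives $\cB^{(\w)}\to\cB$. Your anticipated obstacle is milder than you fear: since $\mu_{\hat\Phi,x}^{(2\w)}$ in (\ref{eqNaiveMargs}) is the conditional marginal evaluated at the deterministic point $\hat\sigma=\vecone$, the tree boundary is exactly the all-ones initialisation $\partial\nu^{(0)}$ and no cluster-conditioning or message-concentration issue enters this particular proposition---those difficulties live in \Prop s~\ref{Prop_nonRe1} and~\ref{Prop_nonRe2}, not here.
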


\noindent
Finally, \Prop~\ref{Prop_clusterSize} follows from \Cor~\ref{Cor_nonRe3} and \Prop~\ref{Prop_BetheOnTrees}.

\section{Belief Propagation on random trees} \label{sec_galton_watson}

{\em In the following of the paper we assume that $d \in [d_-(k), \dplus]$ and that $\beta \geq \beta_-(k,d)$. We let $c_\beta = 1- \exp(-\beta) \in (0,1)$.}

In this section, we introduce a Galton-Watson process on trees, that will describe the local neighborhood of randomly chosen vertices in random formulas but also allow to analyze the probabilistic fixed point problem in \Sec~\ref{sec_operator_tree}.

\subsection{A Galton-Watson process on trees}

We consider the following Galton-Watson process. We first observe that there is a unique $q=q(k,d,\beta)\in(0,1)$ such that
	\begin{align} \label{eq_def_q}
	1-(1-\exp(-\beta))q^k&=2(1-q).
	\end{align}
We start from the tree $T_{2\w}$ of depth $2\w$ ($\w \geq 0$) such that\begin{itemize}
\item Each node at an even depth of the tree is a variable node and has for offspring $d-1$ clause nodes.
\item Each node at an odd depth of the tree is a clause node and has for offspring $k-1$ variable nodes.
\end{itemize}
Let $V_{2\w}$ be the set of $T_{2\w}$ variable nodes, $F_{2\w}$ be the set of its clause nodes. Let $\partial V_{2\w}$ denote the subset of variables at distance $2\w$ from the root, and for node $v$ of $T_{2\w}$, let $\partial v$ (resp. $\partial_\downarrow v$) denote the set of neighbors (resp. children) of $v$ in $T_{2\w}$. We further decorate $T_{2\w}$ has follows. Each node $v \in V_{2\w} \cup F_{2\w}$ carries a number $b_{v, \uparrow} \in \{-1,1\}$ determined by the following process.
\begin{itemize}
\item[(i)] For the root $r$, we have $b_{r, \uparrow}=1$ with probability $q$ and $b_{r, \uparrow}=-1$ with probability $1-q$.
\item[(ii)] The offspring of a variable node $x$ with $b_{x, \uparrow} =\pm1$ is $\frac d2-1$ clause nodes $a$ with $b_{a, \uparrow} = \pm1$ and $\frac d2$ clause nodes $a$ such that $b_{a, \uparrow} \mp1$.
\item[(iii)] If a clause node $a$ is such that $b_{a, \uparrow} = -1$, the number of $x \in \partial_\downarrow a$ such that $b_{x, \uparrow} = -1$ has distribution $\Bin(k-1,1-q)$.
\item[(iv)] If a clause node $a$ is such that $b_{a, \uparrow} = 1$, then with probability $\exp(-\beta)q^{k-1}/(1-(1-\exp(-\beta))q^{k-1})$
			the offspring is $k-1$ variables $x$ with $b_{x,\uparrow}=1$, and otherwise the number of $x \in \partial_\downarrow a$ such that $b_{x, \uparrow} = -1$ has a conditionnal distribution distribution $\Bin_{\geq 1}(k-1,1-q)$.
.
\end{itemize}
Then we define for a clause $a$ and $x \in \partial a$, $b_{a,x} = b_{x, \uparrow}$ if $x \in \partial_\downarrow a$ and $b_{a,x} = b_{a, \uparrow}$ otherwise. We let $\partial_{\pm 1} a = \{x \in \partial a, b_{a,x} = \mp 1 \}$ and for a variable $x$, $\partial_{\pm 1} x = \{a \in \partial x, b_{a,x} = \mp 1 \}$. We finally let, for $0 \leq l \leq k$, $\partial_{\pm 1, l}x = \{a \in \partial_{\pm 1}x, |\{y \in \partial a \setminus \{x\}, b_{a,y}=1\}|=l\}$.

Let $\T=\T(d,k,\beta,2\w)$ be the resulting random (decorated) tree, let $p_{k,d,\beta}^{(2\w)}$ denote its distribution, and let $\cT_{2\w}$ denote the support of $p_{k,d,\beta}^{(2\w)}$. Similarly, we denote by $\vec \hT$ the random tree pending below the first clause adjacent to the root of $\vec T(d,k,\beta,2\w+2)$, by $\widehat{V}_{2\w+1}$ its set of variables and by $\partial \widehat{V}_{2\w+1}$ its set of variable at distance $2\w+1$ from the root, by $\widehat{p}_{k,d,\beta}^{(2\w+1)}$ its distribution, and by $\widehat{\cT}_{2\w+1}$ the support of $\widehat{p}_{k,d,\beta}^{(2\w+1)}$.

We call a sequence $\partial \nu \in \cP(\{-1,1\})^{\partial V_{2\w}}$ a {\em boundary condition} over $T \in \cT_{2\w}$. Similarly, for $\hT \in \widehat{\cT}_{2\w+1}$, we call a sequence $\partial \nu \in \cP(\{-1,1\})^{\partial \widehat{V}_{2\w+1}}$ a boundary condition on $\hT$.

We define the Belief Propagation messages induced by the boundary condition $\partial \nu$ on $T$ as the families $(\nu_{x, \uparrow}^{T,\partial \nu})_{x \in V_{2\w}}$ and $(\hnu_{a,\uparrow}^{T,\partial \nu})_{a \in F_{2\w}}$, where $\nu_{x,\uparrow}^{T,\partial \nu} = (\partial \nu)_x$ for $x \in \partial V_{2\w}$, and otherwise
\begin{align} \label{eq_def_BP_simple_tree_vertex} \nu_{x, \uparrow}^{T,\partial \nu}(s) & = \frac{ \prod_{a \in \partial_\downarrow x} \hnu_{a, \uparrow}^{T,\partial \nu}(s)}{\sum_{s' \in \{-1,1\}} \prod_{a \in \partial_\downarrow x} \hnu_{a, \uparrow}^{T,\partial \nu}(s') } \qquad \textrm{for $x \in V_{2\w} \setminus \partial V_{2\w}$ and $s \in \{-1,1\}$},
\\ \label{eq_def_BP_simple_tree_clause} \hnu_{a,\uparrow}^{T, \partial \nu}(s) & = \frac{ \sum_{s_a \in \{-1,1\}^{\partial a}}\vecone_{s_x =s}  \psi_{a,\beta}(s_a)  \prod_{y \in \partial_\downarrow a} \nu_{y, \uparrow}^{T,\partial \nu}(s_y)}{ \sum_{s_a \in \{-1,1\}^{\partial a}}  \psi_{a,\beta}(s_a)  \prod_{y \in \partial_\downarrow a} \nu_{y, \uparrow}^{T,\partial \nu}(s_y) }\qquad \textrm{for $a \in F_{2\w}$ and $s \in \{-1,1\}$} . \end{align}

In the following of this section, we let $\w$ large enough be fixed. We will be interested in showing that, under reasonnable assumptions, the message exiting a tree $T \in \cT_{2\w}$ only weakly depends on the boundary conditon $\partial \nu$. More precisely, we define $\partial\nu^{(0)} \in \cP(\{-1,1\})^{\partial V_{2\w}}$ by $\partial \nu^{(0)}_x(1) = 1= 1-\partial \nu^{(0)}_x(-1)$ for all $x \in \partial V_{2\w}$.
For $\hT \in \widehat{\cT}_{2\w+1}$ we define $\partial \nu^{ (0)} \in \cP(\{-1,1\})^{\partial \hT}$
 similarly. For a tree $T \in \cT_{2\w}$ with root $r$, and a boundary condition $\partial \nu$ on $T$, we denote by
$$ \nu_{T}^{\partial \nu} = \nu_{r,\uparrow}^{T, \partial \nu}, \qquad \nu_{T}^{(2\w)} = \nu_{r,\uparrow}^{T, \partial \nu^{(0)}}.$$
Similarly for $\hT \in \widehat{\cT}_{2\w+1}$ with root $r$ and a boundary condition $\partial \nu$ on $\hT$ we let
$$ \hnu_{T}^{\partial \nu} = \hnu_{r,\uparrow}^{T, \partial \nu}, \qquad \hnu_{T}^{(2\w+1)} = \hnu_{r,\uparrow}^{T, \partial \nu^{ (0)}}.$$

 We are now ready to state the main results of this section. In the following, we denote by $\vec T$ a random tree drawn from the distribution $p_{k,d,\beta}^{(2\w)}$, and by $\vec {\partial \nu}$ a random boundary condition, independent of $\vec T$ and  that satisfies the following condition.
 \begin{enumerate}
 \item[{\bf H}] For any $x \in \partial V_{2\w}, \ \mathbb{P} \left[ (\vec{ \partial \nu})_x(1) \leq 1-\exp(- k \beta / 2) \left | \left((\vec {\partial \nu})_y\right)_{y \in \partial V_{2\w} \setminus \{x\}} \right. \right] \leq 2^{-0.9 k}$
 \end{enumerate}
Similarly, we denote by $\vec \hT$ a random tree drawn from the distribution $\hat{p}_{k,d,\beta}^{(2\w+1)}$, and by $\vec {\partial \nu}$ a random boundary condition, independent of $\vec \hT$ and  that satisfies the following condition.
 \begin{enumerate}
 \item[{\bf H}] For any $x \in \partial V_{2\w+1}, \ \mathbb{P} \left[ (\vec{ \partial \nu})_x(1) \leq 1-\exp(- k \beta / 2) \left | \left((\vec {\partial \nu})_y\right)_{y \in \partial V_{2\w+1} \setminus \{x\}} \right. \right] \leq 2^{-0.9 k}$
 \end{enumerate}
 
 \begin{proposition} \label{prop_contraction_tree_tensor} We have 
 \begin{align*} &\Prob \left[ \| \nu^{\vec{\partial \nu}}_{\vec T} - \nu^{(2\w)}_{\vec T} \|_\infty \geq 2 \w^{-1} \right] \leq \w^{-1},
 \\&\Prob \left[ \| \hnu^{\vec{\partial \nu}}_{\vec \hT} - \hnu^{(2\w+1)}_{\vec \hT} \|_\infty \geq 2 k^2 \exp(2 \beta) \w^{-1} \right] \leq  k^2 \w^{-1}. \end{align*}  
  \end{proposition}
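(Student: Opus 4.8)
The plan is to propagate the estimate from the boundary towards the root by induction on the depth. It suffices to bound $\Erw\bigl[\|\nu^{\vec{\partial\nu}}_{\vec T}-\nu^{(2\w)}_{\vec T}\|_\infty\bigr]$ by $O(\w^{-2})$ and $\Erw\bigl[\|\widehat\nu^{\vec{\partial\nu}}_{\vec\hT}-\widehat\nu^{(2\w+1)}_{\vec\hT}\|_\infty\bigr]$ by $O\bigl(k^4\exp(2\beta)\w^{-2}\bigr)$; the two displayed bounds then follow from Markov's inequality. Both fit a single scheme: for a node $v$ write $\Delta_v=\|\nu^{T,\vec{\partial\nu}}_{v,\uparrow}-\nu^{T,\partial\nu^{(0)}}_{v,\uparrow}\|_\infty$ (and $\widehat\Delta_a$ for a clause node), and control $\Delta$ inductively from the boundary towards the root; this is legitimate because the subtree pending below a depth-$2(\w-t)$ variable $v$ of $\vec T$ is, given its type $b_{v,\uparrow}$, distributed as $\vec T(d,k,\beta,2t)$, and --- \textbf{H} being a \emph{conditional} statement --- its boundary again obeys \textbf{H}.

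The first ingredient is a \emph{rigidity lemma} for the messages. The $k$-SAT recursion reads, with $\tau=\tau(a)\in\{-1,1\}^{\partial a}$ the unique violating assignment of $a$, $\widehat\nu_{a,\uparrow}(s)\propto 1$ for $s\neq\tau_x$ and $\widehat\nu_{a,\uparrow}(\tau_x)\propto 1-c_\beta\prod_{y\in\partial_\downarrow a}\nu_{y,\uparrow}(\tau_y)$, while $\nu_{x,\uparrow}(s)\propto\prod_{a\in\partial_\downarrow x}\widehat\nu_{a,\uparrow}(s)$. From this one proves, by induction from the boundary towards the root, that under a boundary condition obeying \textbf{H} the messages at internal nodes are \emph{rigid} with overwhelming probability: a clause message lies within $\exp(-\Omega(k\beta))$ of uniform unless all $k-1$ of its down-children are ``aligned'' (their mass concentrated near their coordinate of $\tau$), which by the offspring laws (ii)--(iv) of the Galton--Watson process and the value $q$ of \eqref{eq_def_q} has probability $2^{-\Omega(k)}$, in which case it lies within $\exp(-\Omega(\beta))$ of a point mass; and a variable message is determined by the $\Theta(k)$ of its $d-1$ incoming clause messages that are non-uniform, each casting a $\pm1$ ``vote'', whence an anticoncentration estimate (the vote is a near-symmetric sum of $\Theta(k)$ signs) shows the variable message is within $2^{-\Omega(k)}$ of a point mass except on the ``soft'' event that the vote is tied, of probability $O(k^{-1/2})$. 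The same dichotomy holds under $\vec{\partial\nu}$, with the skewness threshold $1-\exp(-k\beta/2)$ of \textbf{H} playing the role of ``rigid'' at the boundary.

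The second ingredient is a one-step Lipschitz bound, fed into the recursion $\Delta_v\le\sum_{u}(\text{sensitivity of }v\text{ to }u)\,\Delta_u$ over the children $u$ of $v$. Differentiating, $|\partial\widehat\nu_{a,\uparrow}/\partial\nu_{y,\uparrow}|\le\prod_{y'\in\partial_\downarrow a\setminus\{y\}}\nu_{y',\uparrow}(\tau_{y'})$, which by rigidity is $\exp(-\Omega(k\beta))$ unless all the \emph{other} down-children of $a$ are aligned (a $2^{-\Omega(k)}$ event), and is $O(1)$ in that case; and $|\partial\nu_{x,\uparrow}/\partial\widehat\nu_{a,\uparrow}|=\nu_{x,\uparrow}(s)(1-\nu_{x,\uparrow}(s))/\bigl(\widehat\nu_{a,\uparrow}(1)\widehat\nu_{a,\uparrow}(-1)\bigr)$ is at most $\exp(\beta)$ always and is small --- at most $\exp(-\Omega(\beta))$ --- on the probability $1-O(k^{-1/2})$ event that $x$ is rigid, the extreme $\exp(\beta)$ arising only when $x$ is soft and $a$ is one of its non-uniform incident clauses. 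Unrolling the recursion, a boundary node influences the root only along a root-to-boundary path on which every clause transmits and every variable is soft; a branching estimate --- weighing the branching factor $(d-1)(k-1)$ against the product of per-step sensitivities and the $O(2^{-0.9k})$ initial discrepancy supplied by \textbf{H} --- yields the desired decay of $\Erw[\Delta_r]$. For the clause-rooted tree the root clause and the $\le k-1$ variables just below it need not be rigid, so the top one or two hops can each contribute the full sensitivity $\exp(\beta)$; a union bound over those $O(k)$ nodes accounts for the loss $k^2\exp(2\beta)$ in the threshold and $k^2$ in the probability.

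The crux, and the main obstacle, is the branching estimate: the variable degree $d-1=\Theta(2^k k)$ and the clause-to-variable sensitivities are of competing orders $2^{k}$ and $2^{-k}$, so the recursion for $\Erw[\Delta]$ sits near criticality and will not close on crude bounds. Doing so requires exploiting the precise Galton--Watson offspring distributions --- the $\mathrm{Bin}(k-1,1-q)$ and $\mathrm{Bin}_{\ge1}(k-1,1-q)$ laws and the exceptional cases of (iii)--(iv) --- together with the precise constants of \textbf{H}, i.e.\ the interplay of the failure probability $2^{-0.9k}$ with the skewness threshold $1-\exp(-k\beta/2)$, to show the effective per-level factor is genuinely controlled; and it is here, too, that one must bound the frequency along a root-to-boundary path of the rare $\exp(\beta)$-sized amplifications (a soft variable above a non-uniform clause), whose uncontrolled accumulation would be fatal and whose careful treatment near the root is exactly what forces the extra $k^2\exp(2\beta)$ in the clause-rooted bound.
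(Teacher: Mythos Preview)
Your sketch names the right difficulty but does not resolve it, and one of your heuristics is off in a way that matters.

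The paper does \emph{not} bound $\Erw[\Delta_r]$ via a level-by-level recursion and then invoke Markov. Instead it separates the randomness from the contraction: it defines a deterministic event on $(T,\partial\nu)$ --- ``cold'' --- and proves two independent propositions: (i) $(\vec T,\vec{\partial\nu})$ is cold with probability $\ge 1-\ell^{-1}$; (ii) if $(T,\partial\nu)$ is cold then $\|\nu^{\partial\nu}_T-\nu^{(2\ell)}_T\|_\infty\le\ell^{-1}$ deterministically. The cold event is built from a \emph{structural} object, the \emph{trunk}: the largest set of variables satisfying five local conditions (at least $\lfloor 0.9k\rfloor$ clauses $a\in\partial_1 x$ with $\partial_1 a=\{x\}$; at most $\lceil 0.1k\rceil$ fully-negative incident clauses; degree bounds on $\partial_{-1,l}x$; and two conditions on how many neighbouring clauses touch non-trunk variables). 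One shows by induction on depth that a variable fails the trunk conditions with probability $\le 2^{-0.9k}$, using only \textbf{H} and the offspring laws; then a path is declared cold if at least $0.4\ell$ of its variable--clause pairs are cold, and a union bound over $(dk)^\ell$ paths against $(2^{-1.7k})^{0.6\ell}$ closes. On the cold event, a deterministic Taylor/chain-rule argument gives the contraction.

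Your ``rigidity'' is a message-value notion, not a structural one, and your mechanism for it is wrong: you argue a variable is soft with probability $O(k^{-1/2})$ because ``the vote is a near-symmetric sum of $\Theta(k)$ signs''. In the planted Galton--Watson tree the vote is \emph{not} near-symmetric --- the trunk conditions are designed precisely to exhibit a decisive $\Theta(k)$ bias towards $+1$ --- and the correct per-step failure probability is $2^{-0.9k}$, not $k^{-1/2}$. This matters quantitatively: with $O(k^{-1/2})$ soft steps you cannot beat the $(dk)^\ell$ branching, whereas $2^{-0.9k}$ per step does. More importantly, whether a node is rigid in your sense depends on whether its descendants are rigid, so rigidity and $\Delta$ are correlated and the recursion $\Erw[\Delta_v]\le\sum_u\Erw[(\text{sensitivity})\Delta_u]$ does not factor; the paper avoids this by freezing the tree and boundary on the cold event and arguing deterministically. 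Your final paragraph essentially says ``the near-critical step needs the precise offspring laws and the constants in \textbf{H}'' --- true, but that is exactly where the work is, and the trunk/cold-path machinery is the missing idea that makes it go through. Your treatment of the clause-rooted case (lose $\exp(\beta)$ per top hop, union over $O(k)$ children) is, however, exactly what the paper does.
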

 
 The following variant of the proposition will follow from similar steps, and will prove usefull when analyzing random graphs in Sec~\ref{sec_marginal_analysis}-\ref{sec_typical_properties}. We first need to slightly generalize the process considered up to now.  Let $\textrm{GW}(k,d,\beta,2\w)$ denote the Galton-Watson process introduced considered up to now. Let $\textrm{GW}'(k,d,\beta,2\w)$ be the multi-type random process defined by the same rules as $\textrm{GW}(k,d,\beta,2\w)$, except for the first and second rule which are replaced by
\begin{enumerate}
\item[(i)'] The root $r$ has for offspring $\frac{d}{2}$ clauses nodes $a$ with $b_{a,\uparrow}=1$ and $\frac{d}{2}$ clauses nodes $a$ with $b_{a,\downarrow}=-1$.
\item[(ii)'] The offspring of a variable node $x$ {\em different from the root} with $b_{x, \uparrow} =\pm1$ is $\frac d2-1$ clause nodes $a$ with $b_{a, \uparrow} = \pm1$ and $\frac d2$ clause nodes $a$ such that $b_{a, \uparrow} \mp1$.
\end{enumerate}
Let $\widetilde{\cT}_{2\w}$ denote the set of trees generated by the process and by $\widetilde{p}_{k,d,\beta}^{(2\w)}$ the associated probability distribution. Let, in the following, $\vec{T}'$ denote a random tree drawn from this distribution. Let $\partial V_{2\w}'$ denote the variables ar distance $2\w$ from $\vec T'$ root (which is, as previously, a deterministic quantity). Let us call as before $\partial \nu \in \cP(\{-1,1\})^{\partial V_{2\w}'}$ a boundary condition over $\vec T'$, and let us extend condition ${\bf H}$ into 
 \begin{enumerate}
 \item[{\bf H'}] For any $x \in \partial V_{2\w}', \ \mathbb{P} \left[ (\vec{ \partial \nu})_x(1) \leq 1-\exp(- k \beta / 2) \left | \left((\vec {\partial \nu})_y\right)_{y \in \partial V_{2\w}' \setminus \{x\}} \right. \right] \leq 2^{-0.9 k}$
 \end{enumerate}

  \begin{proposition}\label{prop_good_trees_for_our_setting} Assume that $\vec {\partial \nu}''$ is a random boundary condition over $\partial V_{2\w}'$, independent of $\vec T'$ and that satisfies {\bf H'}. the following assumption. Further assume that the random boundary condition $\vec {\partial \nu'}$ (whose distribution may also depend on $\vec T'$) satisfies for all $x \in \partial V_{2\w}'$, $\vec {\partial \nu'}_x(1) \geq \vec {\partial \nu}''_x(1)$. Then
  $$\Prob \left[ \| \mu^{\vec{\partial \nu'}}_{\vec T'} - \mu^{(2\w)}_{\vec T'} \|_\infty \geq k \exp(\beta) \w^{-1} \right] \leq \w^{-1}.$$ \end{proposition}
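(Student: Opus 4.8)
The plan is to reduce \Prop~\ref{prop_good_trees_for_our_setting} to the already available \Prop~\ref{prop_contraction_tree_tensor}; the one genuinely new ingredient is a monotonicity argument that trades the potentially adversarial boundary condition $\vec{\partial\nu'}$ for the benign $\vec{\partial\nu}''$. First I would record that, directly from (\ref{eq_def_BP_simple_tree_vertex})--(\ref{eq_def_BP_simple_tree_clause}) and the ``soft-OR'' shape of $\psi_{a,\beta}$ (equal to $\exp(-\beta)$ on the unique falsifying configuration and $1$ otherwise), the BP recursion is \emph{alternatingly monotone} in the boundary condition: the log-likelihood ratio $\log(\nu_{x,\uparrow}(1)/\nu_{x,\uparrow}(-1))$ at a variable node is non-decreasing in the log-likelihood ratios of the messages of its children, whereas $\log(\hnu_{a,\uparrow}(1)/\hnu_{a,\uparrow}(-1))$ at a clause node is non-increasing in those of its children. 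Composing over the $2\w$ levels of $\vec T'$, and since all boundary variables lie at the common depth $2\w$, the root probability $\mu^{\partial\nu}_{\vec T'}(1)$ is a \emph{monotone} function of the vector $(\partial\nu_x(1))_{x\in\partial V_{2\w}'}$ — non-decreasing or non-increasing according to the parity of $\w$, but in any case with the dependence on every coordinate having the same sign. Since $\partial\nu^{(0)}$ is the coordinatewise largest boundary condition and $\vec{\partial\nu}''_x(1)\le\vec{\partial\nu'}_x(1)\le\partial\nu^{(0)}_x(1)=1$ for all $x$, it follows that $\mu^{\vec{\partial\nu'}}_{\vec T'}(1)$ always lies between $\mu^{\vec{\partial\nu}''}_{\vec T'}(1)$ and $\mu^{(2\w)}_{\vec T'}(1)=\mu^{\partial\nu^{(0)}}_{\vec T'}(1)$, so that pointwise
\[
\big\|\mu^{\vec{\partial\nu'}}_{\vec T'}-\mu^{(2\w)}_{\vec T'}\big\|_\infty\ \le\ \big\|\mu^{\vec{\partial\nu}''}_{\vec T'}-\mu^{(2\w)}_{\vec T'}\big\|_\infty .
\]
Hence it suffices to bound the right-hand side; i.e., I may assume that the boundary condition is $\vec{\partial\nu}''$, which is independent of $\vec T'$ and satisfies {\bf H'}.

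For this reduced statement I would simply re-run the proof of \Prop~\ref{prop_contraction_tree_tensor}. Indeed, $\textrm{GW}'(k,d,\beta,2\w)$ differs from $\textrm{GW}(k,d,\beta,2\w)$ only in the law of the root and of its clause-children (rules (i)'--(ii)' in place of (i)--(ii)); conditioned on the $d$ signs $b_{a,\uparrow}$ of the root's clause-children (half $+1$, half $-1$ by (i)'), the $d$ pendant subtrees of $\vec T'$ are independent, and each, conditioned on its root sign, is distributed exactly as the corresponding clause-rooted subtree in $\textrm{GW}$. So every step of the $\textrm{GW}$ analysis below the root applies verbatim, with {\bf H'} in the role of {\bf H} there, and the only modification is at the root, where the sign-balanced rule (i)' replaces the non-root variable rule (ii) — a case no harder than the one already treated, since a non-root variable in $\textrm{GW}$ already combines clause messages of both signs. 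This produces $\Prob[\|\mu^{\vec{\partial\nu}''}_{\vec T'}-\mu^{(2\w)}_{\vec T'}\|_\infty\ge k\exp(\beta)\w^{-1}]\le\w^{-1}$, with the same constants as in \Prop~\ref{prop_contraction_tree_tensor} (comfortably absorbed by taking $\w$ large).

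I expect the first step to carry the weight. Because $\psi_{a,\beta}$ is submodular (a soft OR), the $k$-SAT tree measure is \emph{not} a monotone system in the FKG sense, so no off-the-shelf stochastic-domination theorem is available; instead one must notice the parity-alternating monotonicity of BP and then exploit the \emph{symmetric} sandwich $\vec{\partial\nu}''\le\vec{\partial\nu'}\le\partial\nu^{(0)}$ around the extreme boundary $\partial\nu^{(0)}$, which makes the direction of the monotonicity irrelevant. A secondary point worth flagging is that in the second step one really has to re-run the proof of \Prop~\ref{prop_contraction_tree_tensor} rather than apply its clause bound to the $d$ pendant subtrees and recombine at the root: the naive recombination loses a multiplicative factor of order $\exp(\Theta(d))$ and would not reach the claimed bound. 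The remaining ingredients — that {\bf H'} restricts correctly to the pendant subtrees, and the constant bookkeeping — are routine.
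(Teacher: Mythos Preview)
Your monotonicity step (the load-bearing first step) does not go through. The claim that $\log(\hnu_{a,\uparrow}(1)/\hnu_{a,\uparrow}(-1))$ is non-increasing in every child's log-likelihood ratio is false: from (\ref{eq_def_BP_simple_tree_clause}) one has $\hnu_{a,\uparrow}(s)\propto 1-c_\beta\,\mathbf{1}\{s=s^*_x\}\prod_{y\in\partial_\downarrow a}\nu_{y,\uparrow}(s^*_y)$, where $s^*_z$ denotes the value of $z$ that falsifies its literal in $a$. A short calculation shows that the sign of the derivative of the outgoing log-odds with respect to the $y$-th incoming log-odds is $+1$ if $s^*_x\neq s^*_y$ and $-1$ if $s^*_x=s^*_y$. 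In the decorated trees $\vec T'$ the literal signs (the $b_{a,z}$) are genuinely mixed by rules (iii)--(iv), so along different root-to-leaf paths the composite map flips a different number of times. Hence the root marginal is monotone in each boundary coordinate separately, but \emph{not with a common sign}; and then the coordinatewise sandwich $\vec{\partial\nu}''_x(1)\le\vec{\partial\nu'}_x(1)\le\partial\nu^{(0)}_x(1)$ does not force $\mu^{\vec{\partial\nu'}}_{\vec T'}(1)$ to lie between $\mu^{\vec{\partial\nu}''}_{\vec T'}(1)$ and $\mu^{(2\w)}_{\vec T'}(1)$ (think of $f(x,y)=x-y$ on $[0,1]^2$). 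Your own remark that $k$-SAT is not an FKG system is exactly the obstruction; the parity rescue only works when all literal signs agree, which they do not here.

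The paper avoids this by observing that what \emph{is} monotone in the boundary is not the BP message but the trunk: the only way $\partial\nu$ enters the definition of $\trunk(T,\partial\nu)$ is through {\bf TR0}, the one-sided threshold $\partial\nu_x(1)\ge 1-\exp(-k\beta/2)$. Thus $\vec{\partial\nu'}_x(1)\ge\vec{\partial\nu}''_x(1)$ gives $\trunk(\vec T',\vec{\partial\nu'})\supseteq\trunk(\vec T',\vec{\partial\nu}'')$, so $(\vec T',\vec{\partial\nu}'')$ cold implies $(\vec T',\vec{\partial\nu'})$ cold. One then applies the deterministic contraction of \Prop~\ref{prop_cold_trees_tensor_b} directly to $(\vec T',\vec{\partial\nu'})$, while the probability that coldness fails is bounded via \Prop~\ref{prop_cold_trees_tensor_a} applied to $(\vec T',\vec{\partial\nu}'')$ (which is legitimate since $\vec{\partial\nu}''$ is independent of $\vec T'$ and satisfies {\bf H'}). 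Your second step --- re-running the $\textrm{GW}$ analysis under $\textrm{GW}'$, with the only change at the root --- is correct and is also what the paper does; but the reduction that feeds into it must go through monotonicity of the trunk, not of the messages.
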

 
  \subsection{The (random) trunk of random trees : proof of \Prop~\ref{prop_contraction_tree_tensor}} 
  \label{sec_proof_prop_contraction_tree_tensor}
  
  In order to prove \Prop~\ref{prop_contraction_tree_tensor}, we shall identify a concrete condition on $(T, \partial \nu)$ under which the message $\nu_{T}^{\partial \nu}$ is close to $\nu_{T}^{(2\w)}$. We define the {\em Trunk} of $T$ under the boundary condition $\partial \nu$, $\trunk(T,\partial \nu)$, as the largest subset $W$ of $V_{2\w}$ such that for any $x \in W$ either
  \begin{description} 
  \item[TR0] $x \in \partial V_{2\w}$ and $\partial \nu_x(1) \geq 1- \exp(-k \beta /2 )$
  \end{description}
  or the five following conditions hold
  \begin{description}
  \item[TR1] there are at least $ \lfloor 0.9 k \rfloor$ clauses $a\in\partial_\downarrow x$ such that $\partial_1a=\{x\}$.
\item[TR2] there are no more than $ \lceil 0.1 k \rceil$ clauses $a\in\partial x$ such that $|\partial_{-1}a|=k$.
\item[TR3] for any $1\leq l \leq k$ the number of $a\in\partial_{-1} x$ such that $|\partial_1a| = l$ is bounded by $ k^{l+3} / l!\enspace$.
\item[TR4] there are no more than $k^{3/4}$ clauses $a\in\partial_{1}x$ such that $|\partial_1a|=1$ but $\partial a\not\subset W$.
\item[TR5]  there are no more than $k^{3/4}$ clauses $a\in\partial_{-1}x$ such that $|\partial_{-1}a|<k$ and $|\partial_1 a \setminus W| \geq |\partial_1 a|/4$.\end{description}

We will first observe that the following is true. 
 \begin{lemma} \label{lemma_most_likely_trunk}
  We have $$ \Prob \left[\textrm{the root of } \vec T \textrm{ under the boundary condition } \vec {\partial \nu} \textrm{ is cold} \right] \geq 1-\w^{-1}.$$
  \end{lemma}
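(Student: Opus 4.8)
The plan is to prove the stronger statement that the root $r$ of $\vec T$ lies in $\trunk(\vec T,\vec{\partial\nu})$ with probability at least $1-\w^{-1}$, by transporting a ``smallness'' estimate from the leaves upward through the (deterministic) skeleton of the tree. Throughout one exploits the structural features of the Galton--Watson process in the window $d\in[d_-(k),\dk]$, $\beta\ge\beta_-(k,d)$: the offspring counts at a variable are essentially deterministic (about $d/2$ child clauses of each sign, by rule (ii)), $c_\beta=1-\exp(-\beta)$ is bounded away from $0$, and the solution $q$ of \eqref{eq_def_q} together with the Binomial offspring laws of rules (iii)--(iv) pin down the typical clause types and their frequencies. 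For the base case, a leaf $x\in\partial V_{2\w}$ satisfies \textbf{TR0}, hence lies in the trunk, as soon as $(\vec{\partial\nu})_x(1)\ge1-\exp(-k\beta/2)$, and hypothesis \textbf{H} bounds the conditional (hence unconditional) probability of the complementary event by $2^{-0.9k}$.

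For internal variables I separate the ``local'' conditions \textbf{TR1}--\textbf{TR3}, which depend only on $b_{x,\uparrow}$, the child-clause types and the child-variable decorations of those clauses, from the ``recursive'' conditions \textbf{TR4}--\textbf{TR5}, which involve trunk membership of the grandchildren of $x$. Routine Chernoff/first-moment estimates show that each of \textbf{TR1},\textbf{TR2},\textbf{TR3} fails with probability at most some $\delta_0=2^{-\Omega(k)}$: the expected number of child clauses $a\in\partial_\downarrow x$ with $\partial_1a=\{x\}$ is $\Theta(k)$ and lies above $\lfloor 0.9k\rfloor$ by a constant factor (lower-tail Chernoff for \textbf{TR1}); the expected number of clauses with $|\partial_{-1}a|=k$ is exponentially below $\lceil 0.1k\rceil$ (Markov for \textbf{TR2}); and for each $l$ the expected number of $a\in\partial_{-1}x$ with $|\partial_1a|=l$ is well below $k^{l+3}/l!$ (Chernoff plus a union bound over $l$ for \textbf{TR3}). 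Then I run a bottom-up recursion. Let $\rho_h$ be the supremum, over variables $x$ of $\vec T$ at distance $h$ from $\partial V_{2\w}$, of the probability that $x$ fails to lie in the trunk of the subtree $T_x$ hanging below $x$ (with the inherited boundary condition); so $\rho_0\le 2^{-0.9k}$. For $h\ge 2$, conditioning on the $b$-labeling of $T_x$ makes the ``not in trunk'' indicators of the child variables of any single clause conditionally i.i.d.\ with parameter $\le\rho_{h-2}$ and independent across distinct clauses, so the number of clauses $a$ witnessing a failure of \textbf{TR4} or of \textbf{TR5} is stochastically dominated by a sum of independent Bernoullis with mean $O(k^{c}\rho_{h-2})$ for an explicit constant $c$; a large-deviation bound then yields
\[
\rho_h\ \le\ 3\,\delta_0\ +\ \bigl(C\,k^{c}\,\rho_{h-2}\bigr)^{k^{3/4}}.
\]
Since $\delta_0=2^{-\Omega(k)}$, for $k\ge k_0$ the map $\rho\mapsto 3\delta_0+(Ck^{c}\rho)^{k^{3/4}}$ has a super-fixed point of size $2^{-\Omega(k)}$, whence $\rho_h\le 2^{-\Omega(k)}$ for all $h$ by induction; taking $h=2\w$ gives $\Prob[r\notin\trunk(\vec T,\vec{\partial\nu})]\le 2^{-\Omega(k)}\le\w^{-1}$ in the range of $\w$ considered, which is the claim. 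The $\vec\hT$ variant is identical, carrying along the extra $k^2\exp(O(\beta))$ factors that appear in \Prop~\ref{prop_contraction_tree_tensor}.

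The delicate point, and the step I expect to be the main obstacle, is the conditional-independence bookkeeping underpinning the large-deviation estimate in the recursion: one must verify that conditioning on the deterministic skeleton and on the Markov labeling genuinely decouples the grandchild subtrees, and that hypothesis \textbf{H}---which only controls leaf badness conditionally on the remaining boundary coordinates---still permits the first-moment and Chernoff bounds at the bottom layer (this is precisely why \textbf{H} is phrased as a conditional bound). A lesser nuisance is that the counts in \textbf{TR2} and \textbf{TR3} range over $\partial x$ rather than $\partial_\downarrow x$, so the parent clause of $x$ introduces a mild upward dependence; this is harmless because it perturbs the relevant counts by $O(1)$, which is absorbed into the floors and ceilings in those conditions.
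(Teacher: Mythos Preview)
Your inductive argument is exactly what the paper has in mind: its proof reads in full ``easily proved by induction'', and the details of that induction are spelled out later inside the proof of \Prop~\ref{prop_cold_trees_tensor_a}, where the paper introduces the auxiliary notion ``strongly cold with respect to $a'$'' (to absorb the parent-clause nuisance you also identified) and runs the same bottom-up recursion to get $p_{x,a'}\le 2^{-0.9k}$.

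One point to flag: your recursion $\rho_h\le 3\delta_0+(Ck^{c}\rho_{h-2})^{k^{3/4}}$ stabilises at a level $\approx 3\delta_0=2^{-\Omega(k)}$ that depends only on $k$, not on $\w$. Your final step ``$2^{-\Omega(k)}\le\w^{-1}$ in the range of $\w$ considered'' is therefore not literally valid, since $\w$ is eventually sent to infinity for fixed $k$. This is really an imprecision in the paper's own statement of the lemma rather than in your argument: the bound your induction actually proves is $\Prob[r\notin\trunk]\le 2^{-0.9k}$, and that is precisely what is needed downstream---the only place the lemma is invoked is \Lem~\ref{lemma_aux_planted_good_1}, where the bound claimed and used is $2^{-0.95k}$, not $\w^{-1}$.
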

  \begin{proof} The lemma is easily proved by induction. \end{proof}
   
 Further, we need to introduce the following definitions. For $T \in \cT_{2\w}$ with root $r$ and $x \in \partial V_{2\w}$, we denote by $[x \to r]$ the unique shortest path from $x$ to $r$ in $T$. 
  \begin{enumerate}
  \item[(i)]
  We say that a factor node $a \in F_{2\w}$ is {\em cold} if and only if $\partial_1 a \cap \trunk(T, \partial {\nu}) \neq \emptyset$.
  \item[(ii)] We say that a variable node $x \in V_{2\w}$ is {\em cold} if $x \in \trunk(T,  {\partial \nu})$.
  \item[(iii)] We say that $(x,a)$ (with $x \in \partial_\downarrow a$) is {\em cold} if $x$ is cold or $a$ is cold.
\item[(iv)]  We say that a path $[x \to r]$ ($x \in \partial V_{2\w}$) is {\em cold} if it contains at least $\lfloor 0.4 \w \rfloor$ cold pairs $(x,a)$.
 \item[(v)] Finally, we say that the pair $(T, \partial \nu) \in \cT_{2\w} \times \cP(\{-1,1\})^{\partial V_{2 \w}}$ is {\em cold} if all the paths $[x \to r]$, with $x \in \partial V_{2\w}$, are cold.
  \end{enumerate}
  
  The key result of this section is the following estimate.
  \begin{proposition} \label{prop_cold_trees_tensor_a}
  We have $$ \Prob \left[(\vec T, \vec {\partial \nu}) \textrm{ is cold} \right] \geq 1-\w^{-1}.$$
  \end{proposition}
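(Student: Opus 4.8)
The plan is to bound the probability of the complementary event by a union bound over the boundary leaves, then control, for each single root-to-leaf path, the number of its pairs that fail to be cold. Note that $(\vec T,\vec{\partial\nu})$ fails to be cold precisely when some boundary leaf $x\in\partial V_{2\w}$ has a path $[x\to r]$ carrying fewer than $\lfloor0.4\w\rfloor$ cold pairs, so I would control the bad event by a union bound over the $((d-1)(k-1))^\w$ leaves and then, for a fixed path $r=x_0,a_1,x_1,\dots,a_\w,x_\w=x$ (with $x_{j-1}$ the parent and $x_j$ a child of the clause $a_j$), estimate the probability that more than $0.6\w$ of the $\w$ pairs $(x_j,a_j)$ are \emph{hot} (i.e.\ not cold). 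The key is to use not only that $(x_j,a_j)$ is cold when $x_j\in\trunk(\vec T,\vec{\partial\nu})$, but already that it is cold when the clause $a_j$ is cold, i.e.\ when \emph{some} of the $\Theta(k)$ neighbours of $a_j$ carrying the label $-1$ lies in the Trunk; since by rules (iii)--(iv) each neighbour carries label $-1$ with probability $\approx1/2$ essentially independently, a pair being hot forces both $x_j\notin\trunk$ \emph{and} a simultaneous failure of $\Theta(k)$ near-independent ``in-Trunk'' events, the latter being a $2^{-\Theta(k)}$-rare event at the clause.

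The engine is a uniform single-vertex Trunk estimate, the natural strengthening of \Lem~\ref{lemma_most_likely_trunk}: for every variable node $v$ of the Galton--Watson tree, at every depth, $\Prob[v\notin\trunk]\le\delta$ for a suitably small $\delta=\delta(k)=2^{-\Omega(k)}$. At the boundary this is Hypothesis {\bf H}; for internal nodes I would prove it by the downward induction underlying \Lem~\ref{lemma_most_likely_trunk}. Conditions TR1--TR3 involve only $v$ and the labels of its incident clauses, hence hold with probability $1-2^{-\Omega(k)}$ by Chernoff bounds applied to the offspring counts of rule (ii) and to the $\Bin(k-1,1-q)$ and $\Bin_{\ge1}(k-1,1-q)$ variables of rules (iii)--(iv)---this is where the particular thresholds ($0.9k$, $0.1k$, $k^{l+3}/l!$) are designed to fit. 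Conditions TR4 and TR5 additionally require that all, resp.\ most, neighbours of certain incident clauses lie in the Trunk; this I would get from a first-moment estimate, the inductive hypothesis making the expected number of offending clauses $\mathrm{poly}(k)\cdot\delta$, far below the permitted budget, so a Chernoff bound makes those failures $2^{-\omega(k)}$ and the induction closes with the same $\delta$ one level up. Plugging this in and decomposing according to the set $\partial_1a_j$ of $-1$-neighbours of $a_j$, one gets $\Prob[(x_j,a_j)\text{ hot}]\le\Prob[a_j\text{ not cold}]\le k^{O(1)}\,\delta\,2^{-k}$.

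Finally, revealing the tree from the leaf upwards, subtree by subtree, makes the events ``$(x_j,a_j)$ hot'' conditionally dominated by independent Bernoulli's of parameter $k^{O(1)}\delta2^{-k}$ (altering one incident clause shifts the TR4/TR5 counts by at most one, so conditioning cannot inflate a $\delta$-rare event), whence the number of hot pairs along the path is stochastically below $\Bin(\w,k^{O(1)}\delta2^{-k})$ and $\Prob[\#\text{hot}>0.6\w]\le\binom{\w}{\lceil0.6\w\rceil}(k^{O(1)}\delta2^{-k})^{0.6\w}\le(4\,k^{O(1)}(\delta2^{-k})^{3/5})^{\w}$. Multiplying by the $((d-1)(k-1))^\w$ leaves and using $d\le\dk=2^{k}k\ln2\,(1+o(1))$ together with $\delta=2^{-\Omega(k)}$ (with a good enough exponent, e.g.\ $\delta\le2^{-0.9k}$), the base of the resulting $\w$-th power is $2^{-\Omega(k)}<1$ for $k\ge k_0$, so the bad probability is $o(1)$, in particular $\le\w^{-1}$ for $\w$ large. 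The clause-rooted tree $\vec\hT$ is treated identically, and this estimate together with \Lem~\ref{lemma_most_likely_trunk} and the definition of coldness feeds into the proof of \Prop~\ref{prop_contraction_tree_tensor}. The main obstacle is the single-vertex Trunk estimate of the second paragraph: one must make $\delta(k)$ exponentially small with an exponent large enough that $(\delta2^{-k})^{3/5}$ beats $(d-1)(k-1)\approx2^{k}$, which is precisely what forces the particular constants in TR1--TR5 and---crucially---requires the first-moment/Chernoff bounds for TR4, TR5 to keep $\delta$ uniform over all $\w$ levels rather than letting it degrade; the dependency bookkeeping along the path is a lesser issue handled by the leaf-to-root revealing order.
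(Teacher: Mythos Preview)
Your proposal is correct and follows essentially the same strategy as the paper: a union bound over the $((d-1)(k-1))^\w$ leaves, a $\lfloor0.6\w\rfloor$-subset argument along each root-to-leaf path, and a per-pair ``hot'' probability of order $2^{-(1+c)k}$ obtained by combining an inductively-proved single-vertex estimate $\Prob[v\notin\trunk]\le2^{-0.9k}$ with a clause-level bound. The only substantive difference is in how the independence along the path is secured. You argue that revealing the tree leaf-to-root and using the one-clause slack in TR2--TR5 keeps the conditional pair-probabilities uniformly bounded; the paper instead introduces an explicit \emph{strongly cold} notion (a variable $x$ is strongly cold with respect to a child edge $a'$ if $x$ would remain in the trunk under \emph{any} modification of the tree outside the subtree $T_x\setminus T_{a'}$), so that strong coldness of $x$ and of the clause $a$ above it depend on disjoint pieces of the tree and are therefore exactly independent by construction, giving $\Prob[(x_i,a_i)\text{ not strongly cold}]\le p_{x_i}\cdot q_{a_i}\le 2^{-1.7k}$ as a genuine product. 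Your slack observation is precisely what makes this definition work, so the two arguments are really the same; the paper's device just replaces the conditioning bookkeeping with a clean independence statement.
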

  
  Then in \Sec~\ref{sec_proof_prop_cold_trees_tensor_b} we shall prove the following.
  \begin{proposition} \label{prop_cold_trees_tensor_b}
  If $(T, \partial \nu) \in \cT_{2\w} \times \cP(\{-1,1\})^{\partial V_{2\w}}$ is cold, then 
  $$ \| \nu^{\partial \nu}_{T,\uparrow} - \nu^{(2\w)}_{T,\uparrow} \|_\infty \leq \w^{-1} .$$
  \end{proposition}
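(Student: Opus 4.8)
The plan is to prove a purely deterministic spatial-mixing statement: once $(T,\partial\nu)$ is cold, the Belief Propagation message at the root of $T$ computed from the boundary $\partial\nu$ deviates from the one computed from the reference boundary $\partial\nu^{(0)}$ (the all-$1$ boundary) by at most $\w^{-1}$. For each node $v$ of $T$ write $\epsilon_v=\|\nu_{v,\uparrow}^{T,\partial\nu}-\nu_{v,\uparrow}^{T,\partial\nu^{(0)}}\|_\infty$ for a variable node and $\hat\epsilon_a=\|\hnu_{a,\uparrow}^{T,\partial\nu}-\hnu_{a,\uparrow}^{T,\partial\nu^{(0)}}\|_\infty$ for a clause node, so the target is $\epsilon_r\le\w^{-1}$. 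I would first record how these errors propagate up the tree through \eqref{eq_def_BP_simple_tree_vertex}--\eqref{eq_def_BP_simple_tree_clause}. At a boundary leaf $x$ satisfying TR0 one has $\epsilon_x\le e^{-k\beta/2}$, and at any other leaf $\epsilon_x\le1$ trivially; away from the trunk, differentiating the BP recursions and using $\psi_{a,\beta}\in\{e^{-\beta},1\}$ gives $\epsilon_x\le A\max_{a\in\partial_\downarrow x}\hat\epsilon_a$ and $\hat\epsilon_a\le A\max_{y\in\partial_\downarrow a}\epsilon_y$ with $A=2^{O(k)}$ — here the degree bounds in TR2 and TR3 are what keep $A$ under control for a node sitting on the boundary of the trunk.

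The heart of the argument is a freezing estimate for trunk variables: by downward induction along $\trunk(T,\partial\nu)$ I would show that $\nu_{x,\uparrow}^{T,\partial\nu}(1)\ge1-\delta$ for every $x\in\trunk(T,\partial\nu)$, with $\delta=\exp(-\Omega(k\beta))$, and likewise for the reference boundary; the base case is exactly TR0. For the inductive step, by TR1 there are $\ge\lfloor0.9k\rfloor$ clauses $a\in\partial_\downarrow x$ with $\partial_1a=\{x\}$, and TR4 guarantees that all but at most $k^{3/4}$ of them have all their children inside the trunk, hence (by induction) frozen toward $1$; each such clause then multiplies the odds $\nu_{x,\uparrow}^{T,\partial\nu}(1):\nu_{x,\uparrow}^{T,\partial\nu}(-1)$ by a factor $\gtrsim e^{\beta}$. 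Meanwhile TR2 (at most $\lceil0.1k\rceil$ clauses with $|\partial_{-1}a|=k$), TR3 (polynomially many clauses in $\partial_{-1}x$ of each satisfaction degree) and TR5 (few clauses whose $b=-1$-children are mostly outside the trunk) bound the competing bias toward $-1$ by $\exp(O(k^{7/4}))$, which is dominated, so the odds are $\exp(\Omega(k\beta))$ and $\epsilon_x\le\delta$. Since a message pinned to within $\delta$ of $(1,0)$ is insensitive to its inputs, the same computation yields $\epsilon_x\le\exp(-\Omega(k\beta))\max_{a\in\partial_\downarrow x}\hat\epsilon_a$ whenever $x$ is cold; and when a clause $a$ on a path is cold because some child $z\in\partial_1a$ lies in the trunk, $z$ being frozen forces $a$ to be satisfied with probability $\ge1-\delta$, so $\hat\epsilon_a\le\exp(-\Omega(k\beta))\max_{y\in\partial_\downarrow a}\epsilon_y$ (in the remaining case the trunk variable in $\partial_1a$ is the parent of $a$, which is itself a cold variable one step up the path).

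Finally I would combine the two. Iterating the per-node bounds and, at each level, following the child that maximises the error, we obtain $\epsilon_r\le\prod_{e\in P}\lambda_e$ along some path $P$ from $r$ down to a boundary leaf, where $\lambda_e\le\rho:=\exp(-\Omega(k\beta))$ at each cold pair of $P$ and $\lambda_e\le A=2^{O(k)}$ at the remaining pairs. Since $(T,\partial\nu)$ is cold, $P$ contains at least $\lfloor0.4\w\rfloor$ cold pairs among its $\w$ pairs, whence $\epsilon_r\le\rho^{0.4\w}A^{0.6\w}=(\rho^{0.4}A^{0.6})^{\w}$. Because $\beta\ge\beta_-(k,d)=k\ln2-10\ln k$ forces $e^{\beta}\ge2^{k}k^{-10}$ and hence $\rho\le2^{-\Omega(k^2)}$, which dwarfs $A^{1.5}=2^{O(k)}$ once $k\ge k_0$, we get $\rho^{0.4}A^{0.6}\le1/2$ and therefore $\epsilon_r\le2^{-\w}\le\w^{-1}$. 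The main obstacle is the freezing estimate of the second paragraph: one must verify carefully that the five trunk conditions TR1--TR5 really do force an $\exp(\Omega(k\beta))$ bias at \emph{every} trunk variable and that the one exponential factor gained per cold pair outweighs both the $\approx2^{k}$ branching of $T$ and the $2^{O(k)}$ amplification that restrictive clauses can cause off the trunk; granting this, what remains is a geometric summation.
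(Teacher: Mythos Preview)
Your plan is correct and mirrors the paper's proof: first a freezing lemma (the paper's \Lem~\ref{lemma_if_frozen_then_cold}, obtained from the ratio estimates of \Lem~\ref{lemma_estimate_ratio_generic}) showing that every trunk variable has $\nu_{x,\uparrow}(1)\ge 1-e^{-k\beta/2}$ under both boundaries, then a Taylor expansion of the BP recursion that gains a factor $e^{-k\beta/2}$ at each cold pair while losing only $O(e^{\beta})$ per level, iterated and summed over all root--to--leaf paths against the branching $(dk)^{\w}$. One quantitative correction to watch when you carry this out: the competing bias toward $-1$ is \emph{not} $\exp(O(k^{7/4}))$ --- the $\lceil 0.1k\rceil$ fully-negative clauses of {\bf TR2} alone cost $e^{0.1k\beta}=e^{\Theta(k^{2})}$ --- so the induction closes because the constant $0.99\cdot 0.9$ coming from {\bf TR1}/{\bf TR4} beats $0.1+o_k(1)$ from {\bf TR2}/{\bf TR5}, not because $k^{7/4}=o(k\beta)$.
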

  Let us see how this implies \Prop~\ref{prop_contraction_tree_tensor}.
  
  \begin{proof}[Proof of \Prop~\ref{prop_contraction_tree_tensor}] The first part of the proposition directly follows from the combination of \Prop~\ref{prop_cold_trees_tensor_a} and \Prop~\ref{prop_cold_trees_tensor_b}. For the second part of the proposition, we first need to introduce one more notation. For $\hT \in \widehat{\cT}_{2\w+1}$ and $j \in [k-1]$ we denote by $\hT[j] \in \cT_{2\w}$ the subtree of $\hT$ pending below the $j$-th neighbor of the root. For a boundary conditon $\partial \nu$ over $\partial \hT$, we denote by $\partial \nu[j]$ its restriction to $\hT[j]$.
  
  We observe that if $\partial \nu$ satisfies {\bf H}, then so does $\partial \nu[j]$. Moreover, for any $T \in \cT_{2\w}$ and $j \in [k-1]$, we have by definition of the Galton-Watson process
  $$ \mathbb{P} \left[ \vec \hT[j] = T \right] \leq 2 \mathbb{P} \left[ \vec T = T\right].$$
  For $\hT \in \widehat{\cT}_{2\w+1}$ and a boundary condition $\partial \nu$, using (\ref{eq_def_BP_simple_tree_clause}) and Taylor's theorem, we get
  $$ \|\hnu^{{\partial \nu}}_{ \hT} - \hnu^{(2\w+1)}_{ \hT}  \|_\infty \leq 8k \exp(2 \beta) \sup_{j \in [k-1]} \| \nu_{\hT[j]}^{\partial \nu[j]} -  \nu_{\hT[j]}^{(2\w)} \|_\infty  .  $$
  Thereby, we obtain, using the previous observations
  \begin{align*} \mathbb{P} \left[  \|\hnu^{\vec {\partial \nu}}_{ \vec \hT} - \hnu^{(2\w+1)}_{ \vec \hT}  \|_\infty \geq k^2 \exp(2 \beta) \w^{-1} \right] &\leq   \mathbb{P} \left[ \sup_{j \in [k-1]} \| \nu_{\vec \hT[j]}^{\vec{\partial \nu}[j]}- \nu_{\vec \hT[j]}^{(2\w)} \|_\infty \geq \w^{-1}  \right]  
  \\ &\leq  (k-1)  \mathbb{P} \left[  \| \nu_{\vec {\hT}[1]}^{\vec{\partial \nu}[1]} -\nu_{\vec {\hT}[1]}^{(2\w)} \|_\infty \geq \w^{-1}  \right]
  \\ &\leq  2(k-1)  \mathbb{P} \left[  \| \nu_{\vec {T}}^{\vec{\partial \nu[1]}} - \nu_{\vec {T}}^{(2\w)} \|_\infty \geq \w^{-1}  \right]. \end{align*}
  For $\w$ large enough, and using the first part of the proposition that we already proved, we have 
  $$\mathbb{P} \left[  \| \nu_{\vec {T'}}^{\vec{\partial \nu[1]}} - \nu_{\vec {T'}}^{(2\w)} \|_\infty \geq (\w-1)^{-1}  \right] \leq \w^{-1}.$$
  The second part of the proposition follows.
  \end{proof}
   
  \begin{proof}[Proof of \Prop~\ref{prop_good_trees_for_our_setting}] For $T \in \widetilde{\cT}_{2\w+2}$, let $T'$ be the tree obtained by removing the tree pending below the last children of $T$'s root. Then if $(T',\vec {\partial \nu}'')$ is good, then so is $(T', \vec {\partial \nu'})$. In particular 
  $$ \mathbb{P} \left[ (\vec{T'},\vec {\partial \nu'}) \textrm{ is cold } \right] \geq \mathbb{P} \left[ (\vec{T'},\vec {\partial \nu}) \textrm{ is cold } \right]  \geq 1- \w^{-1} .$$ In this case we have
  $$  \| \nu^{\vec{\partial \nu}}_{\vec T'} - \nu^{(2\w+1)}_{\vec T'} \|_\infty \leq \w^{-1}$$ and moreover, applying Taylor's theorem and by a similar token as previously,
  $$\| \mu^{\vec{\partial \nu'}}_{\vec T} - \mu^{(2\w+2)}_{\vec T} \|_\infty \leq k \exp( \beta) \w^{-1}.$$
  This concludes the proof of the proposition.
  
 \end{proof}
  
  \subsection{Proof of \Prop~\ref{prop_cold_trees_tensor_b}} \label{sec_proof_prop_cold_trees_tensor_b}
  
  We begin with the following lemma, that shows that the messages exiting vertices $x \in \trunk(T, \partial \nu)$ are under tight control.
  
  \begin{lemma} \label{lemma_if_frozen_then_cold} Let $(T, \partial \nu) \in \cT_{2\w} \times \cP(\{-1,1\})^{\partial V_{2\w}}$ be fixed. For all $x \in \trunk(T,\partial \nu)$, we have 
  $$\nu_{x, \uparrow}^{T,\partial \nu}(1) > 1- \exp(- k \beta /2 ) \qquad \textrm{and} \qquad \nu_{x, \uparrow}^{T, \partial \nu^{(0)}}(1) > 1-\exp(- k \beta /2).$$
 \end{lemma}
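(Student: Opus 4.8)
The plan is to prove both inequalities simultaneously, by induction on the height $h(x):=2\w-\dist(x,r)$ of $x$ in $T$, where $r$ is the root (so leaves of $\partial V_{2\w}$ have height $0$). The two bounds share the same inductive step because the recursions \eqref{eq_def_BP_simple_tree_vertex}--\eqref{eq_def_BP_simple_tree_clause} do not see the boundary data; only the base case differs. It is convenient to work with the ratios $R_x:=\nu_{x,\uparrow}^{T,\partial\nu}(1)/\nu_{x,\uparrow}^{T,\partial\nu}(-1)$ and $\hat R_a:=\hnu_{a,\uparrow}^{T,\partial\nu}(1)/\hnu_{a,\uparrow}^{T,\partial\nu}(-1)$; reading off \eqref{eq_def_BP_simple_tree_clause} one sees that $\hat R_a\in[\exp(-\beta),\exp(\beta)]$, so all these ratios are finite and positive, and \eqref{eq_def_BP_simple_tree_vertex} becomes $R_x=\prod_{a\in\partial_\downarrow x}\hat R_a$. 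Since $\nu_{x,\uparrow}(1)>1-\exp(-k\beta/2)$ is equivalent to $R_x>\exp(k\beta/2)-1$, it suffices to prove the slightly stronger bound $R_x\ge\exp(k\beta/2)$ for every $x\in\trunk(T,\partial\nu)$, and the analogous statement for the boundary condition $\partial\nu^{(0)}$.

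At a leaf $x\in\partial V_{2\w}$ lying in the trunk, TR0 gives $\nu_{x,\uparrow}^{T,\partial\nu}(1)=\partial\nu_x(1)\ge1-\exp(-k\beta/2)$, and the definition of $\partial\nu^{(0)}$ gives $\nu_{x,\uparrow}^{T,\partial\nu^{(0)}}(1)=1$; in either case $R_x\ge\exp(k\beta/2)-1$, which is all that is needed at a leaf. Now fix an internal $x\in\trunk(T,\partial\nu)$, so that TR1--TR5 hold, and assume the bound for all trunk vertices of height $<h(x)$. For each $a\in\partial_\downarrow x$ I will lower-bound $\hat R_a$ by reading off \eqref{eq_def_BP_simple_tree_clause}, distinguishing the cases $x\in\partial_1 a$ and $x\notin\partial_1 a$ (equivalently $x\in\partial_{-1}a$).

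If $x\in\partial_1 a$, then $\psi_{a,\beta}(s_a)=1$ whenever $s_x=1$ (this is how the decorations of $T$ encode the planted all-ones assignment), so in \eqref{eq_def_BP_simple_tree_clause} the numerator of $\hnu_{a,\uparrow}(1)$ equals $1$ while that of $\hnu_{a,\uparrow}(-1)$ is at most $1$, whence $\hat R_a\ge1$. Among such clauses, TR1 provides at least $\lfloor0.9k\rfloor$ with $\partial_1 a=\{x\}$, and by TR4 all but at most $k^{3/4}$ of these also satisfy $\partial a\subset W$. For such an $a$ every child $y\in\partial_\downarrow a$ lies in the trunk and has height $h(x)-2<h(x)$, so the inductive hypothesis yields $\prod_{y\in\partial_\downarrow a}\nu_{y,\uparrow}(1)\ge(1-\exp(-k\beta/2))^{k-1}\ge1-k\exp(-k\beta/2)$; substituting into \eqref{eq_def_BP_simple_tree_clause} one gets the exact expression $\hat R_a=\bigl(1-c_\beta\prod_{y\in\partial_\downarrow a}\nu_{y,\uparrow}(1)\bigr)^{-1}\ge\bigl(\exp(-\beta)+k\exp(-k\beta/2)\bigr)^{-1}\ge\exp(0.9\beta)$, the last inequality using $k\ge k_0$ and $\beta\ge\beta_-(k,d)=k\ln2-10\ln k$ to dominate the term $k\exp(-k\beta/2)$ by $\exp(-\beta)$. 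If instead $x\notin\partial_1 a$, then $\psi_{a,\beta}(s_a)=1$ whenever $s_x=-1$, hence $\hat R_a\ge\exp(-\beta)$ in all cases; moreover TR2 bounds by $\lceil0.1k\rceil$ the number of these clauses with $|\partial_{-1}a|=k$, while for the remaining ones (those with $|\partial_1 a|\ge1$) TR5 leaves all but at most $k^{3/4}$ with some $y\in\partial_1 a\cap W$, and then by the inductive hypothesis $\nu_{y,\uparrow}(1)\ge1-\exp(-k\beta/2)$, so $a$ is satisfied with $\nu$-probability at least $1-\exp(-k\beta/2)$ irrespective of $s_x$, which forces $\hat R_a\ge1-\exp(-k\beta/2)$. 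Multiplying the $\hat R_a$ over all $a\in\partial_\downarrow x$, and bounding the product of the $1-\exp(-k\beta/2)$ factors by $(1-\exp(-k\beta/2))^{|\partial_\downarrow x|}\ge1-d\exp(-k\beta/2)\ge\tfrac12$, gives $R_x\ge\exp\bigl(0.9\beta(\lfloor0.9k\rfloor-k^{3/4})\bigr)\cdot\exp\bigl(-\beta(\lceil0.1k\rceil+k^{3/4})\bigr)\cdot\tfrac12\ge\exp(k\beta/2)$ for $k\ge k_0$. The argument for the $\partial\nu^{(0)}$-messages is identical, using the corresponding inductive hypothesis at trunk vertices of smaller height.

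The step I expect to be the main obstacle is not the gain from the $\lfloor0.9k\rfloor$ forcing clauses but the bookkeeping of the potentially $\Theta(d)$ many ``harmless'' clauses --- those with $x\in\partial_1 a$ (each giving $\hat R_a\ge1$) and those with $x\notin\partial_1 a$ possessing a frozen positive-literal witness (each giving $\hat R_a\ge1-\exp(-k\beta/2)$): one must check that their cumulative multiplicative effect on $R_x$ is $1-o(1)$ rather than $\exp(-\Omega(d))$. This is exactly where the hypotheses are used, since $d\le\dk=2^{O(k)}k$ whereas $\exp(-k\beta/2)\le\exp(-k(k\ln2-10\ln k)/2)$ decays faster than any inverse power of $d$. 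The second delicate point is the balance between TR1 (at least $0.9k$ clauses pushing $x$ toward $1$ by a factor $\approx\exp(\beta)$ once their children are known to be frozen) and TR2 (at most $0.1k$ clauses of opposite effect, each of size $\exp(-\beta)$): the resulting exponent is $\approx(0.9-0.1)k\beta$, comfortably above the target $\tfrac12 k\beta$, so the constants appearing in the Trunk conditions are essentially what make the induction close. A routine preliminary is the computation from \eqref{eq_def_BP_simple_tree_clause} that $\hat R_a=(1-c_\beta\prod_{y\in\partial_\downarrow a}\nu_{y,\uparrow}(1))^{-1}$ when $\partial_1 a=\{x\}$, so that it is genuinely the freezing of the children of $a$ that is being exploited.
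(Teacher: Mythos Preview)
Your proof is correct and follows essentially the same approach as the paper: induction on the height of $x$, bounding each clause-to-variable message ratio $\hat R_a$ case-by-case via the trunk conditions TR1--TR5 and multiplying. The only noteworthy difference is that the paper isolates the clause-ratio estimates in a separate lemma and uses TR3 (grouping the $\partial_{-1}x$ clauses by $l=|\partial_1 a|$ and summing $k^{l+3}\exp(-lk\beta/4)/l!$) to control the aggregate contribution of the ``harmless'' clauses, whereas your cruder single-witness bound $\hat R_a\ge 1-\exp(-k\beta/2)$ combined with $(1-\exp(-k\beta/2))^{d-1}\ge 1-d\exp(-k\beta/2)\ge\tfrac12$ is simpler and, since $d\exp(-k\beta/2)=o(1)$ under the standing hypotheses on $d$ and $\beta$, equally sufficient here; for the $\partial\nu^{(0)}$ statement the paper simply invokes $\trunk(T,\partial\nu)\subset\trunk(T,\partial\nu^{(0)})$, but your observation that the induction goes through verbatim with the all-ones boundary is just as valid.
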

 
 The lemma will rely on a detailled analysis of terms of the form $\frac{\hnu^{T,\partial \nu}_{x,\uparrow}(1)}{\hnu^{T,\partial \nu}_{x,\uparrow}(-1)}$. In order to simplify the discussion, we shall isolate this analysis in the following lemma.
 
\begin{lemma} \label{lemma_estimate_ratio_generic} Let a clause $a$ be fixed along with its adjacents variables $x \in \partial a$ and a family $(\nu_{x \to a})_{x \in \partial a} \in \cP(\{-1,1\})^{k}$. Let 
$$\partial_{\rm good} a  = \left \{x \in \partial a, \nu_{x \to a}(1) \geq 1- \exp(-k \beta/2)  \right \}.$$
Let $(\hnu_{a \to x})_{x \in \partial a} \in \cP(\{-1,1\})^{k}$ be defined by the following equations.
\begin{align} \label{eq_crux_hnua} \hnu_{a \to x}(s) = \frac{ \sum_{s_a \in \{-1,1\}^k} \vecone_{s_x =s}  \psi_{a,\beta}(s_a) \prod_{y \in \partial a \setminus \{x\})} \nu_{y \to a}(s_y) }{ \sum_{s_a \in \{-1,1\}^k}  \psi_{a,\beta}(s_a) \prod_{y \in \partial a \setminus \{x\}} \nu_{y \to a}(s_y)} \end{align}
Then for $x \in \partial a$, the following estimates hold true.
\begin{enumerate}
\item[{\bf (a)}]$$ \exp(- \beta) \leq \frac{\hnu_{a \to x}(1)}{\hnu_{a \to x}(-1)} \leq \exp( \beta).$$
\item[{\bf (b)}] If $x \in \partial_1 a$, then  $$ \frac{\hnu_{a \to x}(1)}{\hnu_{a \to x}(-1)} \geq 1 .$$
\item[{\bf (c)}] If $\{x\}= \partial_1 a$ and $\partial_{-1} a \subset \partial_{\rm good} a$, then  $$ \frac{\hnu_{a \to x}(1)}{\hnu_{a \to x}(-1)} \geq \exp(0.99 \beta ) .$$
\item[{\bf (d)}] If $|\partial_1(a) \setminus \{x\} \cap \partial_{\rm good} a | \geq p$, then $$ \frac{\hnu_{a \to x}(1)}{\hnu_{a \to x}(-1)} \geq \exp(- \exp(- p k \beta /3))  .$$
\end{enumerate}  \end{lemma}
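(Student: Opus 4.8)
\emph{Proof strategy.} The idea is to rewrite the quotient defining $\hnu_{a\to x}(1)/\hnu_{a\to x}(-1)$ in~(\ref{eq_crux_hnua}) in a form that isolates the contribution of the clause weight. Recall that $\psi_{a,\beta}(s_a)$ takes only the two values $\exp(-\beta)$ and $1$, equalling $\exp(-\beta)$ precisely when $a$ is violated by $s_a$, and that with the sign conventions of Section~\ref{sec_galton_watson} the clause $a$ is violated by $s_a$ exactly when $s_y=-1$ for every $y\in\partial_1 a$ and $s_y=+1$ for every $y\in\partial_{-1}a$; equivalently, the value at which literal $y$ is false equals $-1$ for $y\in\partial_1 a$ and $+1$ for $y\in\partial_{-1}a$. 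Since $\sum_{s_{\partial a\setminus\{x\}}\in\{-1,1\}^{\partial a\setminus\{x\}}}\prod_{y\in\partial a\setminus\{x\}}\nu_{y\to a}(s_y)=1$ and $c_\beta=1-\exp(-\beta)$, for each $s\in\{-1,1\}$ the $s_x=s$ sub-sum of the numerator of~(\ref{eq_crux_hnua}) equals $1-c_\beta q_s$, where
\[ q_s\ :=\ \Prob_{\bigotimes_{y\in\partial a\setminus\{x\}}\nu_{y\to a}}\!\left[a\text{ violated}\,\middle|\,s_x=s\right]\ \in\ [0,1]. \]
Consequently $\hnu_{a\to x}(1)/\hnu_{a\to x}(-1)=(1-c_\beta q_{+1})/(1-c_\beta q_{-1})$, and the whole lemma reduces to suitable estimates of $q_{+1}$ and $q_{-1}$.

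Parts~(a) and~(b) are then immediate. For~(a), $q_s\in[0,1]$ and $c_\beta\in(0,1)$ give $1-c_\beta q_s\in[\exp(-\beta),1]$, whence the ratio lies in $[\exp(-\beta),\exp(\beta)]$. For~(b), if $x\in\partial_1 a$ then $s_x=+1$ already satisfies $a$ through $x$, so $q_{+1}=0$, the numerator equals $1$, the denominator is $\le 1$, and the ratio is $\ge 1$.

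For~(c) we still have $q_{+1}=0$, so the numerator is $1$; and since $\partial_1 a=\{x\}$ forces $|\partial_{-1}a|=k-1$, the event ``$a$ violated given $s_x=-1$'' is exactly ``every $y\in\partial_{-1}a$ takes its false value $+1$'', so $q_{-1}=\prod_{y\in\partial_{-1}a}\nu_{y\to a}(1)$. Using $\partial_{-1}a\subset\partial_{\rm good}a$ this is $\ge(1-\exp(-k\beta/2))^{k-1}\ge 1-k\exp(-k\beta/2)$, hence $1-c_\beta q_{-1}\le\exp(-\beta)+k\exp(-k\beta/2)$. The crux is the size comparison: because $\beta\ge\beta_-(k,d)=k\ln2-10\ln k$, the correction $k\exp(-k\beta/2)$ is of order $2^{-\Theta(k^2)}$ whereas $\exp(-\beta)(\exp(0.01\beta)-1)$ is of order $2^{-\Theta(k)}$, so for $k\ge k_0$ one has $k\exp(-k\beta/2)\le\exp(-\beta)(\exp(0.01\beta)-1)$ and therefore $1-c_\beta q_{-1}\le\exp(-0.99\beta)$, i.e.\ the ratio is $\ge\exp(0.99\beta)$. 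For~(d), pick $Y\subset(\partial_1 a\setminus\{x\})\cap\partial_{\rm good}a$ with $|Y|\ge p$. Conditioned on any value of $s_x$, violation of $a$ forces $s_y=-1$ for every $y\in Y$; as the messages $\nu_{y\to a}$ are independent under the product measure and $\nu_{y\to a}(-1)\le\exp(-k\beta/2)$, we get $q_s\le\exp(-pk\beta/2)$ for both $s$. Hence the ratio is $\ge 1-c_\beta q_{+1}\ge 1-\exp(-pk\beta/2)$, and the claimed bound follows from the elementary inequality $\exp(-\exp(-t))\le 1-\tfrac12\exp(-t)$ (valid for $t\ge 0$) together with $\exp(-pk\beta/2)\le\tfrac12\exp(-pk\beta/3)$, the latter because $pk\beta/6\ge\ln2$ once $k\ge k_0$.

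The main --- and essentially only --- obstacle is bookkeeping of two kinds. First, in parts~(c) and~(d) one must be certain that the super-exponentially small quantities produced by the ``good'' messages --- powers of $\exp(-k\beta/2)$, which is roughly $2^{-k^2/2}$ --- are negligible against the merely exponentially small quantities such as $\exp(-\beta)\approx 2^{-k}$; this is exactly where the hypotheses $\beta\ge\beta_-(k,d)=k\ln2-10\ln k$ and $k\ge k_0$ enter. Second, one must keep the sign conventions straight: lying in $\partial_{\rm good}a$ means the message favours $+1$, which is the \emph{satisfying} value of a variable in $\partial_1 a$ but the \emph{falsifying} value of a variable in $\partial_{-1}a$; this asymmetry is precisely why~(c) yields a ratio as large as $\exp(0.99\beta)$ while~(d) yields a ratio only just below $1$.
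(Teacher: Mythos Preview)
Your proof is correct and follows essentially the same approach as the paper: both reduce the ratio $\hnu_{a\to x}(1)/\hnu_{a\to x}(-1)$ to $(1-c_\beta q_{+1})/(1-c_\beta q_{-1})$ for suitable violation probabilities and then estimate $q_{\pm 1}$ case by case. Your explicit introduction of $q_s$ and the clean separation of scales in~(c) and~(d) are slightly more streamlined than the paper's write-up, but the underlying computations are identical.
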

\begin{proof} Point (a) easily follows from the fact that for any $s_a \in \{-1,1\}^{\partial a}$, $\exp(-\beta) \leq \psi_{a,\beta}(s) \leq 1$, and that if there is $x \in \partial a$ such that $s_x =1$, then $\psi_{a,\beta}(s_a)=1$.

Point (b) follows from the observation that if $s_a, s'_a \in \{-1,1\}^{\partial a}$ satisfy $s_x =1$ and $s_y = s_y'$ for $y \in \partial a \setminus \{x\}$, then $\psi_{a,\beta}(s_a) \geq \psi_{a,\beta}(s'_a)$.

Point (c) follows from the observation that, if $\{x \} = \partial_1 a$ and $\partial_{-1} a \subset \partial_{\rm good} a$,
\begin{align*} \exp(- \vecone_{s \neq 1} \beta) &\leq \sum_{s_a \in \{-1,1\}^{\partial a}} \vecone_{s_x =s} \psi_{a,\beta}(s_a) \prod_{y \in \partial a \setminus \{x\}} \nu_{y \to a}(s_y) \leq \exp(- \vecone_{s \neq 1} \beta) + 2k \exp(-k \beta /2 ). \end{align*}

Finally, point (d) is obtained by observing that, if $|\partial_1 a \setminus \{x\} \cap \partial_{\rm good} a | \geq p$,
\begin{align*} \left |  \sum_{s_a \in \{-1,1\}^{\partial a}} \vecone_{s_x =s} \psi_{a,\beta}(s_a) \prod_{y \in \partial a \setminus \{x\}} \nu_{y \to a}(s_y) - 1 \right| &\leq \prod_{y \in \partial_1a \setminus \{x\}} \left(1 - \nu_{y \to a}(1) \right) \leq \exp(-p k \beta/2). \end{align*} \end{proof}
 
 \begin{proof}[Proof of \Lem~\ref{lemma_if_frozen_then_cold}] We first prove the statement concerning $\nu^{T,\partial \nu}_{\cdot, \uparrow}$. We prove it by induction over $t = \w - \frac{\textrm{dist}(x,r)}{2}$. For $t=0$ the result holds by definition of $\trunk(T, \partial \nu)$. Assume that the results hold for all $x \in V_{2\w}$ such that $\textrm{dist}(x,r) \geq 2(\w-t)$ and let $x \in V_{2\w}$ with $\textrm{dist}(x,r) = 2(\w-t-1)$ be fixed. We define 
 \begin{align*}
 \Delta_1 x &= \partial_\downarrow x \cap \partial_1 x,
 \\ \Delta_{1,0} x &= \left \{ a \in \partial_{\downarrow} x,\ \partial_{-1} a \cap \trunk(T,\partial \nu) = \partial_{\downarrow} a  \right \},
 \\ \Delta_{-1,0} x &= \left\{ a \in \partial_{\downarrow} x,\ a \in \partial_{-1,0} x \right \},
 \\ \textrm{ and for $1 \leq l \leq k$,} \qquad \Delta_{-1,l} x &= \left \{ a \in \partial_{-1}(x,a), | \partial_1(x) | = l, \ |\partial_1(x) \setminus \trunk(T, \partial \nu) | \leq |\partial_1(x)| /4 \right\}
 \end{align*}
 We have, by Eq. (\ref{eq_def_BP_simple_tree_vertex})
 \begin{align} \nonumber \frac{\nu_{x,\uparrow}^{T,\partial \nu}(1)}{\nu_{x,\uparrow}^{T,\partial \nu}(-1)}  =& \prod_{a \in \Delta_{1,0} x} \frac{\hnu^{T,\partial \nu}_{a, \uparrow}(1)}{\hnu^{T,\partial \nu}_{a, \uparrow}(-1)} \prod_{a \in \Delta_1 x \setminus \Delta_{1,0}x} \frac{\hnu^{T,\partial \nu}_{a, \uparrow}(1)}{\hnu^{T,\partial \nu}_{a, \uparrow}(-1)}    \prod_{l=1}^k \prod_{a \in \Delta_{-1,l} x} \frac{\hnu^{T,\partial \nu}_{a, \uparrow}(1)}{\hnu^{T,\partial \nu}_{a, \uparrow}(-1)} \prod_{a \in \Delta_{-1,0} x} \frac{\hnu^{T,\partial \nu}_{a, \uparrow}(1)}{\hnu^{T,\partial \nu}_{a, \uparrow}(-1)}
 \\ \label{eq_aux_ratio_messages_x_uparrow} & \prod_{a \in \partial_{\downarrow} x \setminus (\Delta_1 x \cup_{l=0}^k \Delta_{-1,l}x)} \frac{\hnu^{T,\partial \nu}_{a, \uparrow}(1)}{\hnu^{T,\partial \nu}_{a, \uparrow}(-1)} . \end{align}
 Because the messages $(\nu, \hnu)$ satisfy Eq. (\ref{eq_crux_hnua}), we can apply the result of \Lem~\ref{lemma_estimate_ratio_generic} to them. It follows that
 \begin{align*} \frac{\nu_{x,\uparrow}^{T,\partial \nu}(1)}{\nu_{x,\uparrow}^{T,\partial \nu}(-1)}  &\geq \exp \left( \beta \left[ 0.99  |\Delta_{1,0} x |  - | \Delta_{-1,0} x | \right] \right) \exp \left( \sum_{l=1}^k | \Delta_{-1,l}x | \exp(-k \beta l /4 ) \right) 
 \\ & \hspace{2 cm} \exp \left (-2 \beta |\partial_\downarrow x \setminus (\Delta_{1} x \cup (\cup_{l=0}^k \Delta_{-1,l} x)) | \right).\end{align*}
 By definition of $\trunk(T,\partial \nu)$, we have
 \begin{align*} |\Delta_{1,0} x | \geq  \lfloor 0.9 k  \rfloor, \qquad &|\Delta_{-1,0} x | \leq \lceil 0.1 k \rceil, \qquad |\partial_\downarrow x \setminus (\Delta_{1} x \cup (\cup_{l=0}^k \Delta_{-1,l} x )) | \leq  k^{3/4}. \end{align*}
 Moreover, for $1 \leq l \leq k$ we have $|\Delta_{-1,l} x| \leq |\partial_{-1,l} x| \leq  k^{l+3}/l!$ (by {\bf TR3}) and hereby
 \begin{align*} \sum_{l=1}^k | \Delta_{-1,l} x | \exp(-k \beta l /4 ) &\leq \sum_{l=1}^k \frac{ k^{l+3}}{l!} \exp(-k \beta l  / 4 ) \leq k^3 \exp \left(   k \exp(-k \beta / 4 ) \right) \leq 2 . \end{align*}
 Replacing with these four estimate in (\ref{eq_aux_ratio_messages_x_uparrow}), we obtain $\frac{\nu_{x,\uparrow}^{T,\partial \nu}(1)}{\nu_{x,\uparrow}^{T,\partial \nu}(-1)} \geq \exp(  2 k \beta /3 )$, as desired.
  
  The second part of the lemma, regarding $\nu_{x,\uparrow}^{(2\w)}$, follows from the observation that $\trunk(T,\partial \nu) \subset \trunk(T,\partial \nu^{(0)})$.
    \end{proof}
  
\begin{proof}[Proof of \Prop~\ref{prop_cold_trees_tensor_b}] 
Let $x \in V_{2\w} \setminus \partial V_{2\w}$ be fixed, and let for $a \in \partial_{\downarrow} x$ and a boundary condition $\partial \nu$, $\hve_{a, \uparrow}^{\partial \nu}: \{-1,1\} \to \mathbb{R}$ be defined by, for $s \in \{-1,1\}$
\begin{align} \hve_{a, \uparrow}^{T,\partial \nu}(s) &=\sum_{s_a \in \{-1,1\}^{\partial a}}  \vecone_{s_x =s} \psi_{a,\beta}(s_a) \prod_{y \in \partial_\downarrow a} \nu_{y, \uparrow}^{T,\partial \nu}(s_y).\end{align}
By applying Taylor's theorem to equation (\ref{eq_def_BP_simple_tree_vertex}), observing that $\nu_{x, \uparrow}^{T,\partial \nu}(1) = \left(1 + \prod_{a \in \partial_\downarrow x} \frac{\hve_{a,\uparrow}^{T,\partial \nu}(-1) }{\hve_{a,\uparrow}^{T,\partial \nu}(1)} \right)^{-1}$ we obtain
\begin{align}  \| \nu_{x, \uparrow}^{T,\partial \nu}-\nu_{x, \uparrow}^{T,\partial \nu^{(0)}} \|_\infty  \leq  \sup_{u \in [0,1]} &\frac{\left(  \frac{u \nu_{x,\uparrow}^{T,\partial \nu}(-1) + (1-u) \nu_{x,\uparrow}^{T,\partial \nu^{(0)}}(-1)}{u \nu_{x,\uparrow}^{T,\partial \nu}(1) + (1-u) \nu_{x,\uparrow}^{T,\partial \nu^{(0)}}(1)} \right)}{\left( 1+ \frac{u \nu_{x,\uparrow}^{T,\partial \nu}(-1) + (1-u) \nu_{x,\uparrow}^{T,\partial \nu^{(0)}}(-1)}{u \nu_{x,\uparrow}^{T,\partial \nu}(1) + (1-u) \nu_{x,\uparrow}^{T,\partial \nu^{(0)}}(1)} \right)^2} 
  \label{eq_contraction_tree_a} \sum_{a \in \partial_\downarrow x} \sup_{u \in [0,1]} \left \| \frac{ \hve^{T,\partial \nu}_{a, \uparrow} - \hve^{T,\partial \nu^{(0)}}_{a, \uparrow}} {u \hve_{a, \uparrow}^{T,\partial \nu} + (1-u) \hve_{a, \uparrow}^{T,\partial \nu^{(0)}}} \right \|_\infty . \end{align}
We observe that for $(x_1, x_2, x_3,x_4) \in [0,\infty)^4$ we have
\begin{align*}& \frac{(x_1+x_2+x_3)}{(1+x_1+x_2+x_3)^2} \leq \min \{ x_1 + x_2+x_3, (x_1 + x_2 + x_3)^{-1} \} \leq 1,
\\ &\sup_{u \in [0,1]} \frac{ux_1 + (1-u)x_2}{ux_3 + (1-u)x_4} \leq \frac{\max\{x_1,x_2\}}{\min\{x_3,x_4\}}. \end{align*}
Using this in cunjunction with \Lem~\ref{lemma_if_frozen_then_cold} we obtain
\beq \label{eq_contraction_tree_b}  \sup_{u \in [0,1]} \frac{\left( \frac{u \nu_{x,\uparrow}^{T,\partial \nu} (-1) + (1-u) \nu_{x,\uparrow}^{T,\partial \nu^{(0)}}(-1)}{u \nu_{x,\uparrow}^{T,\partial \nu}(1) + (1-u) \nu_{x,\uparrow}^{T,\partial \nu^{(0)}}(1)} \right)}{\left( 1+ \frac{u \nu_{x,\uparrow}^{T,\partial \nu}(-1) + (1-u) \nu_{x,\uparrow}^{T,\partial \nu^{(0)}}(-1)}{u \nu_{x,\uparrow}^{T,\partial \nu}(1) + (1-u) \nu_{x,\uparrow}^{T,\partial \nu^{(0)}}(1)} \right)^2}  \leq 6 \exp\left(-k \beta \vecone_{x \textrm{ is cold}} /2 \right). \eeq
We further observe that
$$ \sup_{u \in [0,1]} \left \| \frac{ \hve^{T,\partial \nu}_{a, \uparrow} - \hve^{T,\partial \nu^{(0)}}_{a, \uparrow}} {u \hve_{a, \uparrow}^{T,\partial \nu} + (1-u) \hve_{a, \uparrow}^{T,\partial \nu^{(0)}}} \right \|_\infty \leq \exp(\beta) \left \| { \hve^{T,\partial \nu}_{a, \uparrow} - \hve^{T,\partial \nu^{(0)}}_{a, \uparrow}}  \right \|_\infty.$$
In particular,
$$ \| \nu_{x, \uparrow}^{T,\partial \nu}-\nu_{x, \uparrow}^{T,\partial \nu_\tensor^{(0)}} \|_\infty \leq 6 \exp(\beta) \exp(-  k \beta \vecone_{x \textrm{ is cold}} /2 ). $$
Using again Taylor's theorem, for any $a \in F_{2\w}$ we have
\beq   \left \| { \hve^{T,\partial \nu}_{a, \uparrow} - \hve^{T,\partial \nu^{(0)}}_{a, \uparrow}}  \right \|_\infty \leq  4 \exp(-k \beta/2) \sum_{z \in \partial_\downarrow a} \min_{ \substack{y \in \partial_{\downarrow} a \cap \partial_{1} a \\ y \neq z} }  \max \left \{ \nu^{T,\partial \nu}_{y,\uparrow}  (-1),\nu^{T,\partial \nu^{(0)}}_{y,\uparrow}  (-1) \right \}  \| \nu^{T,\partial \nu}_{z,\uparrow} - \nu^{T,\partial \nu^{(0)}}_{z,\uparrow} \|_\infty. \eeq
Therefore, if $a$ is cold and the unique $x$ such that $\delta_{\uparrow} a = \{x\}$ is not cold, we have
\begin{align} \left \| { \hve^{T,\partial \nu}_{a, \uparrow} - \hve^{T,\partial \nu^{(0)}}_{a, \uparrow}}  \right \|_\infty &\leq  \exp( -k \beta/2) \sum_{x \in \partial_\downarrow a} \vecone_{x \textrm{ not cold}}  \| \nu^{T,\partial \nu}_{x,\uparrow} - \nu^{T,\partial \nu^{(0)}}_{x,\uparrow} \|_\infty \label{eq_contraction_tree_2_d} + \sum_{x \in \partial_\downarrow a} \vecone_{x \textrm{ is cold}} \| \nu^{T,\partial \nu}_{x,\uparrow} - \nu^{T,\partial \nu^{(0)}}_{x,\uparrow} \|_\infty. \end{align}
Combining (\ref{eq_contraction_tree_b}) with (\ref{eq_contraction_tree_2_d}), we obtain 
\begin{align*} \left \| \nu_{x, \uparrow}^{T,\partial \nu} - \nu_{x,\uparrow}^{T,\partial \nu^{(0)}} \right \|_\infty \leq 24 \exp(\beta) \sum_{a \in \partial_\downarrow x} \sum_{y \in \partial_\downarrow a} \exp \left(-k \beta (\vecone_{x \textrm{ is cold}}+\vecone_{x \textrm{ is not cold}} \vecone_{a \textrm{ is cold}} \vecone_{y \textrm{ is not cold}}) /2 \right) \left \| \nu_{y, \uparrow}^{T,\partial \nu} - \nu_{y,\uparrow}^{T,\partial \nu^{(0)}} \right \|_\infty. \end{align*}
Iterating this equation, we obtain, (using that for any $x \in \partial V_{2\w}$, the path from the root $r$ of $T$ to $x$ contains at least $\lfloor 0.4 \w \rfloor $ cold pairs $(x,a)$)
\begin{align*} \left \| \nu_{T, \uparrow}^{\partial \nu} - \nu_{T,\uparrow}^{(2\w)} \right \|_\infty &\leq 24^{\w} \exp(\beta \w) \sum_{x \in \partial V_{2\w}} \exp \left(-k \beta \lfloor 0.4 \w \rfloor /2 \right) \left \| \partial \nu_{x} -  \partial \nu_{x}^{(0)}  \right \|_\infty
\\ &\leq |\partial V_{2\w}| 24^\w \exp(\beta \w) \exp \left( - k \beta  \lfloor 0.4 \w \rfloor  /2 \right)
\\ &\leq (dk)^\w 24^\w \exp \left( - 0.01 k^2 \w \right) = o_\w(1).\end{align*}

 \end{proof}
 
 \subsection{Proof of \Prop~\ref{prop_cold_trees_tensor_a}}

\begin{proof} In order to prove the proposition, we will need to slightly extend the notion of cold variables and cold clauses. Let a pair $(T, \partial \nu) \in \cT_{2\w} \times \cP(\{-1,1\})^{d_\w}$ be fixed. Given $v \in T$ and $\{w\} = \partial_\uparrow v$, let $T_v$ denote the subtree of $T \setminus \{w\}$ rooted at $v$. Also recall that we denoted by $\partial V_{2\w} = \partial T$.

For $(a,x)$ with $\{x\} = \partial_\uparrow a$, we say that $x$ is strongly cold with respect to $a$ for the pair $(T,\partial \nu)$ if there exists no tree $T' \in \cT_{2\w}$ and no boundary condition $\partial \nu'$ over $\partial V_{2\w}$ such that the following is true. \begin{itemize}
 \item $x$ is not cold in $(T', \partial \nu')$,
 \item $T'_x = T_x$,
 \item $\forall x \in \partial  T_x \setminus \partial  T_{a}$, $(\partial \nu')_x = (\partial \nu)_x$.
 \end{itemize}
Observe that strongly cold variables are also cold. Let, for $a' \in \partial_{\downarrow} x$, ${p}_{x,a'}$ be the probability that $x$ is not strongly cold with respect to $a'$ when the pair $(\vec T, \vec {\partial \nu})$ is drawn at random. We shall prove by induction over $t \in \{1, \dots,\w\}$ that for $x$ at distance $2t$ from $\partial V_{2\w}$ and $a' \in \partial \downarrow x$, $p_{x,a'} \leq 2^{-0.9k}$. For $t=0$, the result follows from the assumption on the distribution of $\vec {\partial \nu}$. Let us now assume that we have proved the result up to $t \geq 0$ and consider $x$ at distance $2(t+1)$ from $\partial V_{2\w}$ and $a' \in \partial_\downarrow x$. For $x$ not to be strongly cold with respect to $a'$, one of the following must happen. Let ${V}_{\rm{cold}}^{(t)}$ be the set of strongly cold variables at distance $t$ from $\partial V_{2\w}$. Let $a$ be the clause such that $\partial_\uparrow x = \{a\}$.
  \begin{itemize}
  \item[(a)] there are less than $\lfloor 0.95 k \rfloor$ clauses $a \in \partial_{1} x$ such that $\partial_1 a = \{x\}$,
  \item[(b)] there are more than $\lceil 0.05 k \rceil$ clauses $a \in \partial x$ such that $|\partial_{-1} a |= k$,
  \item[(c)] there is $1 \leq l \leq k$ such that there are more than $0.5 k^{l+3}/l!$ clauses $a \in \partial_{-1}x$ with $|\partial_1 a| = l$,
 \item[(d)] $\left| \{b \in \partial_{1,0}x \setminus \{a\}, \partial b \setminus \{x\} \not \subset V_{\rm cold}^{(t-1)} \}  \right|  \geq  k^{3/4}-1$,
 \item[(e)] $| \{ b \in \partial_{-1}x,\  |\partial_{-1}b | \leq k, \  |\partial_{1} b \setminus \{x\} \setminus {V}_{\rm cold}^{(t-1)}| \geq | \partial_1 b|/4 \}| \geq k^{3/4}-1$.
 \end{itemize}
By definition of our random process, (a), (b) and (c) each hold with probability at most $2^{-0.95 k}$. The probability that a given clause $b \in \partial_{1, 0}x \setminus \{a\}$ contains at least one not strongly cold variable different from $x$ is $\sum_{y \in \partial \downarrow b} p_{y,b}+ \tilde O_k(2^{-k}) \leq k 2^{-0.9 k}$. By definition, the probability that each of the clauses $b_1, \dots, b_y \in \partial_{1,0}x \setminus \{a\}$ contain at least one not strongly cold variable different from $x$ is upperbounded by ${|\partial_{1,0} x \setminus \{a\}| \choose y} (k2^{-0.9k})^y$. Therefore, the probability that (d) happens is at most $2^{-1.5 k}$. Similarly, (e) happen with probability at most $2^{-1.5 k}$. Therefore we obtain $p_{x,a'} \leq 3. 2^{-0.95k} + 2. 2^{-1.5k} \leq  2^{-0.9k}$, as needed.

Let a clause $a \in F_{2\w}$ be fixed as well as $x \in \partial_\downarrow a$. Let $\partial_\uparrow a = \{y\}$. We say that $a$ is strongly cold with respect to $x$ if $|\partial_1 a \setminus \{x,y\} \cap {V}_{\rm cold}^{(t)}| \geq 1$. Let $q_{a,x}$ denote the probability that $a$ is not strongly cold with respect to $x \in \partial_\downarrow a$ when the pair $(\vec T,\vec {\partial \nu})$ is drawn from a distribution that satisfies the hypothesis of the proposition. By our previous estimate (g) and (h) happen with probability at most $2^{-1.5 k}$ while (f) happens with probability at most $2^{-0.9 k}$. In particular, $q_{a,x}\leq 2^{-0.9k}$. By construction of the strongly cold clauses, the probability that a pair $(x,a)$ with $\{a\} = \partial_{\uparrow} x$ is not strongly cold with respect to $b \in \partial \downarrow x$ is then upperbounded by $p_{x,b} q_{a,x} \leq 2^{-1.7 k}$.

Let $x \in \partial V_{2\w}$ be fixed. Recall that we denoted by $r$ the root of $T$. We denote sequence of variables and clauses on the path $[x \to r]$ by $(x_0=x,a_0, x_1, a_1, \dots, r)$. For the path $[x \to r]$ not to be cold, there must be at least $\lfloor 0.6 \w \rfloor$ pairs $(x_i,a_i)$ along this path that are not strongly cold. Moreover, for $i_1 \neq i_2 \neq \dots \neq i_l$, the probability that the $(x_{i_j},a_{i_j})$ are strongly cold is independent (by construction). Therefore we obtain
 \begin{align*} \mathbb{P} \left[ [x \to r] \textrm{ not cold} \right] &\leq \sum_{l \geq \lfloor 0.6 \w \rfloor} \sum_{0 \leq i_1 \leq \dots \leq i_l < \w} \mathbb{P} \left[ (x_{i_1},a_{i_1}) \textrm{ not strongly cold and } (x_{i_2},a_{i_2}) \textrm{ not strongly cold and } \right.
 \\ & \hspace{4 cm} \left. \dots \textrm{ and }  (x_{i_l},a_{i_l}) \textrm{ not strongly cold}\right] 
 \\ & \leq \sum_{l \geq \lfloor 0.6 \w \rfloor} \sum_{0 \leq i_1 \leq \dots \leq i_l < \w} \prod_{j=1}^l \mathbb{P} \left[ (x_{i_j},a_{i_j})   \textrm{ not strongly cold} \right]
 \\ &\leq 2^\w \left( 2^{-1.7k}\right)^{\lfloor 0.6 \w \rfloor} \leq 2^{-1.02 k \w}. \end{align*}
Consequently, we obtain with the union bound
 \begin{align*} \mathbb{P} \left[ ( \vec T, \vec {\partial \nu}) \textrm{ is not cold} \right] \leq \sum_{x \in \partial V_{2\w}}\mathbb{P} \left[ [x \to r] \textrm{ not cold} \right] \leq | \partial V_{2\w}| 2^{-1.02 k \w} \leq (dk)^\w 2^{-1.02 k \w} =o_\w(1). \end{align*}
 
 \end{proof}

\section{The fixed point problem on trees}
\label{sec_operator_tree}

{\em In this section we prove \Prop~\ref{prop_unique_fixed_point_tree_1} and \Lem~\ref{lemma_total_size}.}

We shall obtain the propositions by making the connection between the skewed fixed points of the operator $\cG_{k,d,\beta}$ and the analysis of Belief Propagation on random Galton-Watson trees studied in the previous section. We first identify $\cP(\{-1,1\})$ with $(0,1)$ through $\eta \mapsto \eta(-1)$. This also identifies $\cP(\cP(\{-1,1\}))$ with $\cP(0,1)$. With the notations of the previous section (and, in particular, $q$ given by (\ref{eq_def_q})), we shall prove that
\begin{proposition} \label{prop_fixed_point_to_galton_watson} Let $\pi$ be a skewed fixed point of $\cG_{k,d,\beta}$ and $\w \geq 1$ be fixed. Then we have $$  \pi  = \sum_{T \in \cT_{2\w}} p_{k,d,\beta}^{(2\w)}(T) \int_{\cP(\{-1,1\})^{\partial V_{2\w}}} \delta_{\nu_{T}^{\partial \nu}} \bigotimes_{x \in \partial V_{2\w}}  \left( \vecone_{b_{x, \uparrow} = -1} \frac{1-\partial \nu_x(-1)}{1-q}\dd \pi(\partial \nu_x) + \vecone_{b_{x,\uparrow}=1} \frac{\partial \nu_x(-1)}{q} \dd \pi(\partial \nu_{x}) \right).  $$ \end{proposition}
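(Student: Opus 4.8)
The plan is to derive the identity from the fixed point equation by unrolling Belief Propagation from the boundary inward. Write $\hpi:=\hcF_{k,d,\beta}(\pi)$, so that the hypothesis $\cG_{k,d,\beta}(\pi)=\pi$ reads $\cF_{k,d,\beta}(\hpi)=\pi$, and identify $\cP(\{-1,1\})$ with $(0,1)$ through $\eta\mapsto\eta(-1)$; under this identification (\ref{eq_def_BP_simple_tree_vertex}) and (\ref{eq_def_BP_simple_tree_clause}) become, once the decorations $b_{\cdot,\uparrow}$ are fixed, explicit deterministic maps $(0,1)^{d-1}\to(0,1)$ and $(0,1)^{k-1}\to(0,1)$. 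For a tree $T\in\cT_{2\w}$ with root $r$, write $\partial\nu\sim\mathrm{bd}_\pi(T)$ when the $(\partial\nu)_x$, $x\in\partial V_{2\w}$, are chosen independently with $(\partial\nu)_x$ distributed like $\pi$ reweighted by the density $\eta\mapsto\eta/q$ if $b_{x,\uparrow}=1$ and $\eta\mapsto(1-\eta)/(1-q)$ if $b_{x,\uparrow}=-1$; this is precisely the boundary law in the statement, and for these to be probability measures one needs $\int\eta\,\dd\pi(\eta)=q$, which follows quickly from the fixed point equation and (\ref{eq_def_q}).

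I would then prove, by induction on $\w\ge0$, the following sign--conditioned refinement $(\star_\w)$: for each $s\in\{-1,1\}$, if $\vec T$ is drawn from $p_{k,d,\beta}^{(2\w)}$ conditioned on $b_{r,\uparrow}=s$ and $\partial\nu\sim\mathrm{bd}_\pi(\vec T)$, then $\nu_{\vec T}^{\partial\nu}$ has the law of $\pi$ reweighted by $\eta\mapsto\eta/q$ if $s=1$ and by $\eta\mapsto(1-\eta)/(1-q)$ if $s=-1$. The base case $\w=0$, where the tree is a lone variable that is its own boundary leaf, is just the definition of $\mathrm{bd}_\pi$. Granting $(\star_\w)$, the proposition follows immediately: rule (i) of the process gives $\Prob[b_{r,\uparrow}=1]=q$, so averaging the two conditional laws with weights $q$ and $1-q$ returns $\eta\,\dd\pi(\eta)+(1-\eta)\,\dd\pi(\eta)=\dd\pi(\eta)$, which is the asserted identity once the $\bigotimes$ in the statement is read as ``draw $\vec T$ from $p_{k,d,\beta}^{(2\w)}$, then draw the boundary from $\mathrm{bd}_\pi(\vec T)$, then report the law of $\nu_{\vec T}^{\partial\nu}$''.

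For the inductive step $(\star_\w)\Rightarrow(\star_{\w+1})$ I would peel off the root and its clause children. Conditioning on $b_{r,\uparrow}=s$, rule (ii) gives the root $\tfrac d2-1$ clause children of sign $s$ and $\tfrac d2$ of sign $-s$, and rules (iii)--(iv) decorate the $k-1$ children of each such clause $a$ with signs from the prescribed (conditioned-)binomial and ``all--true'' laws. By the branching property of the Galton--Watson process, conditionally on all these signs the subtrees hanging below the depth--two variables are independent, each distributed as a $p_{k,d,\beta}^{(2\w)}$--tree conditioned on its own root sign with boundary from $\mathrm{bd}_\pi$; hence $(\star_\w)$ applies to each of them and delivers the conditional law of every upward message $\nu_x$ feeding a depth--one clause. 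It then remains to push these laws through one clause step (\ref{eq_def_BP_simple_tree_clause}) --- yielding, conditionally on $b_{a,\uparrow}=t$, the law of $\hnu_a$ as $\hpi$ reweighted in the analogous $t$--dependent fashion --- and then through one variable step (\ref{eq_def_BP_simple_tree_vertex}), yielding, conditionally on $b_{r,\uparrow}=s$, the law of $\nu_r$ as $\cF_{k,d,\beta}(\hpi)$ reweighted in the $s$--dependent fashion. Since $\cF_{k,d,\beta}(\hpi)=\pi$, this is exactly $(\star_{\w+1})$.

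The real work, and the step I expect to be the main obstacle, lies in the two ``one gadget'' computations just invoked: one must verify that the tilts hard--wired into the Galton--Watson process --- the $\tfrac d2-1$ versus $\tfrac d2$ split of rule (ii), and the $\Bin(k-1,1-q)$, $\Bin_{\ge 1}(k-1,1-q)$ and the extra weight $\exp(-\beta)q^{k-1}/(1-(1-\exp(-\beta))q^{k-1})$ of rules (iii)--(iv) --- combined with the per--leaf reweightings by $\eta/q$ and $(1-\eta)/(1-q)$ reproduce \emph{exactly} the reweighted operators $\hcF_{k,d,\beta}$ and $\cF_{k,d,\beta}$, normalising factors $\hat z$ and $z$ included. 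For a clause this amounts to unwinding $\psi_{a,\beta}$: among the sign patterns on $\partial a$ in which $x$ takes the value opposite to the one that satisfies $a$ through $x$, only the all--violating one carries weight $\neq 1$ (namely $\exp(-\beta)$ times the product of the children's messages at their violating values), so the unnormalised ratio of the two outgoing masses is $1:1-(1-\exp(-\beta))\prod_{y\in\partial_\downarrow a}\eta_y$ with $\eta_y$ the message of $y$ read away from its satisfying value, and the BP normalisation equals $2-(1-\exp(-\beta))\prod_y\eta_y=\hat z$. Matching this against the definition of $\hcF_{k,d,\beta}$, while keeping track child by child of whether the size--bias is carried by $\eta$ or by $1-\eta$, is where the weight $\exp(-\beta)q^{k-1}/(1-(1-\exp(-\beta))q^{k-1})$ gets identified with the probability of the all--true child configuration and where (\ref{eq_def_q}) reconciles the normalisations; the variable step is bookkeeping of the same flavour against the product form of (\ref{eq_def_BP_simple_tree_vertex}) and the factor $z$ of $\cF_{k,d,\beta}$. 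None of it is deep, but it must be carried out with care about which of $\eta$ and $1-\eta$ is in play.
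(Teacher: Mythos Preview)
Your proposal is correct and follows essentially the same route as the paper. The paper introduces the size-biased measures $\pi_\pm$ and $\hpi_\pm$ (your ``$\pi$ reweighted by $\eta/q$ or $(1-\eta)/(1-q)$''), first verifies $\int\eta\,\dd\pi(\eta)=q$ via a short lemma and the uniqueness in (\ref{eq_def_q}), then writes out your ``one gadget'' computations explicitly as a lemma giving four fixed-point identities for $\pi_\pm,\hpi_\pm$ in terms of the Galton--Watson offspring laws (ii)--(iv); the proposition is then obtained by the same induction on $\w$ you describe, starting from $\pi=(1-q)\pi_++q\pi_-$.
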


Let us see how \Prop~\ref{prop_unique_fixed_point_tree_1} follows from this proposition and from the result of the previous section.

\begin{proof}[Proof of \Prop~\ref{prop_unique_fixed_point_tree_1}] Let, for $\w \geq 0$, $\pi^{(2\w)} \in \cP(\{-1,1\})$ be the distribution of $\nu_{\vec T}^{(2\w)}$. 
Let $\vec \nu$ be distributed according to $\pi$. It follows from \Prop~\ref{prop_fixed_point_to_galton_watson} that $\vec \nu = \nu_{\vec T}^{\vec {\partial \nu}}$, where $\vec T$ and $\vec {\partial \nu}$ satisfies the assumptions of \Sec~\ref{sec_galton_watson}. In particular it follows from \Prop~\ref{prop_contraction_tree_tensor} that $\pi$ weakly converges towards $\pi^{(2\w)}$, hence the unicity of the fixed point. By a similar reasonning, we see that $\pi^{(2\w)}$ admits a weak limit, proving the existence of the fixed point. \end{proof}

\subsection{The multi-type Galton-Watson branching process: proof of \Prop~\ref{prop_fixed_point_to_galton_watson}}
\label{sec_multi_type_GW}

For $\pi, \hpi \in \cP(0,1)$ we define
$$ h(\pi) = \int_{(0,1)} \eta \dd \pi(\eta), \qquad \hh(\hpi) = \int_{(0,1)} \heta \dd \hpi(\heta).$$
We let $f : (0,1)^{d-1} \to (0,1)$ (resp. $\hf : (0,1)^{k-1} \to (0,1)$) be defined by \begin{align*}f (\heta_1, \dots, \heta_{d-1}) &= \frac{\prod_{j=1}^{d/2-1} \heta_j \prod_{j=d/2}^{d-1} (1-\heta_j) }{z(\heta_1, \dots, \heta_{d-1})}, \qquad
 \hf (\eta_1, \dots, \eta_{k-1}) = \frac{1-c_\beta \prod_{i=1}^{k-1} \eta_i }{\hz(\eta_1, \dots, \eta_{k-1})},
\end{align*}
and $f_{d},\hf_{k,\beta}  : [0,1] \to (0,1)$ be defined by
\begin{align*} f_{d}(\heta) = f(\heta, \dots, \heta) = 1-\heta, \qquad \hf_{k,\beta}(\eta) = \hf(\eta, \dots, \eta). \end{align*}
We say that $(\pi,\hpi)$ is a fixed point of $(\cF_{k,d,\beta},\hcF_{k,d,\beta})$ iff $\pi = \cF_{k,d,\beta}(\hpi)$ and $\hpi = \hcF_{k,d,\beta}(\pi)$.
\begin{lemma} \label{lemma_pi_hpi_to_h_hh} If $(\pi,\hpi)$ is a fixed point of $(\cF_{k,d,\beta},\hcF_{k,d,\beta})$, then we have
$$ h[\pi] = f_d (\hh[\hpi]), \qquad \hh[\hpi] = \hf_{k,\beta}(h[\pi]). $$ \end{lemma}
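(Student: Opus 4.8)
The plan is to exploit that the weight functions $z$ and $\hz$, as well as the products $f\cdot z$ and $\hf\cdot\hz$, are \emph{multilinear}: each is affine of degree one in every one of its coordinates. Although the tilted laws defining $\cF_{k,d,\beta}(\hpi)$ and $\hcF_{k,d,\beta}(\pi)$ are not product measures, integrating a sum of products of single-coordinate functions against such a law factorises into one-dimensional integrals, each of which is merely a first moment of $\pi$ or $\hpi$. Everything thus collapses to the two scalars $h[\pi]$ and $\hh[\hpi]$.

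First I would unfold $h[\pi]=\int_{(0,1)}\eta\,\dd\pi(\eta)$ using $\pi=\cF_{k,d,\beta}(\hpi)$. By definition of $\cF_{k,d,\beta}$, $h[\pi]$ is the expectation of $f(\heta_1,\dots,\heta_{d-1})$ under the law proportional to $z(\heta)\,\dd\bigotimes_{j=1}^{d-1}\hpi(\heta_j)$, so
\[
h[\pi]=\frac{\int f(\heta)\,z(\heta)\,\dd\bigotimes_{j}\hpi(\heta_j)}{\int z(\heta)\,\dd\bigotimes_{j}\hpi(\heta_j)}.
\]
Because $f(\heta)\,z(\heta)=\prod_{j<d/2}\heta_j\prod_{j\geq d/2}(1-\heta_j)$ is a product of single-coordinate factors, the numerator equals $\hh[\hpi]^{d/2-1}(1-\hh[\hpi])^{d/2}$; and since $z(\heta)=\prod_{j<d/2}\heta_j\prod_{j\geq d/2}(1-\heta_j)+\prod_{j<d/2}(1-\heta_j)\prod_{j\geq d/2}\heta_j$, the denominator $Z(\hpi)$ equals $\hh[\hpi]^{d/2-1}(1-\hh[\hpi])^{d/2}+(1-\hh[\hpi])^{d/2-1}\hh[\hpi]^{d/2}=\hh[\hpi]^{d/2-1}(1-\hh[\hpi])^{d/2-1}$. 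Dividing yields $h[\pi]=1-\hh[\hpi]=f_d(\hh[\hpi])$.

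The second identity is entirely analogous. Using $\hpi=\hcF_{k,d,\beta}(\pi)$, the quantity $\hh[\hpi]$ is the expectation of $\hf(\eta_1,\dots,\eta_{k-1})$ under the law proportional to $\hz(\eta)\,\dd\bigotimes_{j}\pi(\eta_j)$, and since $\hf(\eta)\hz(\eta)=1-c_\beta\prod_{i<k}\eta_i$ and $\hz(\eta)=2-c_\beta\prod_{i<k}\eta_i$ are both multilinear, integrating numerator and denominator against $\bigotimes_j\pi$ gives
\[
\hh[\hpi]=\frac{1-c_\beta\,h[\pi]^{k-1}}{2-c_\beta\,h[\pi]^{k-1}}=\hf_{k,\beta}(h[\pi]),
\]
where the last equality uses $\hz(\eta,\dots,\eta)=2-c_\beta\eta^{k-1}$, hence $\hf_{k,\beta}(\eta)=(1-c_\beta\eta^{k-1})/(2-c_\beta\eta^{k-1})$.

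There is no genuine obstacle here beyond index bookkeeping — the $d/2-1$ ``positive'' slots against the $d/2$ ``negative'' slots among the $d-1$ coordinates of $\cF_{k,d,\beta}$, and the diagonal specialisations defining $f_d$ and $\hf_{k,\beta}$. The only conceptual point worth stressing is that one cannot push the expectation through $f$ or $\hf$ directly, since the tilted measures are not product measures; the factorisation succeeds only after clearing the denominators $z$ and $\hz$, which is precisely why the multilinearity of $f\cdot z$ and $\hf\cdot\hz$ is the structural fact that drives the lemma.
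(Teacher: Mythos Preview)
Your proof is correct and follows essentially the same approach as the paper: both exploit the multilinearity of $z$, $\hz$, $f\cdot z$, and $\hf\cdot\hz$ to reduce the integrals defining $h[\pi]$ and $\hh[\hpi]$ to evaluations at the scalar means $\hh[\hpi]$ and $h[\pi]$. Your exposition is slightly more explicit in simplifying the denominator $Z(\hpi)=\hh[\hpi]^{d/2-1}(1-\hh[\hpi])^{d/2-1}$, whereas the paper leaves this step implicit and jumps directly to $f_d(\hh[\hpi])$, but the substance is identical.
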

\begin{proof} We first observe that, using the multilinearity of $z$ (resp. $\hz$)
\begin{align*} Z[\hpi] &= \int_{(0,1)^{d-1}} z(\heta_1, \dots, \heta_{d-1}) \bigotimes_{j=1}^{d-1} \dd \hpi(\heta_j) = z(\hh[\hpi], \dots \hh[\hpi]),
\\ \hZ[\pi]  &= \int_{(0,1)^{k-1}} \hz(\eta_1, \dots, \eta_{k-1}) \bigotimes_{j=1}^{k-1} \dd \pi(\eta_j) = \hz(h[\pi], \dots, h[\pi]). \end{align*}
Using these equations, we obtain
\begin{align*} h[\pi] = \int_{(0,1)} \eta \dd \cF_{d, k, \beta}[\hpi](\eta) &= \frac{1}{Z[\hpi]} \int_{(0,1)^{d-1}} z(\heta_1, \dots, \heta_{d-1}) f(\heta_1, \dots, \heta_{d-1}) \bigotimes_{j=1}^{d-1} \dd \hpi(\heta_j) 
\\ & = \frac{1}{Z[\hpi]} \int_{(0,1)^{d-1}} \left[ \prod_{j=1}^{d/2-1} \heta_j \prod_{j=d/2}^{d-1} (1-\heta_j) \right]  \bigotimes_{j=1}^{d-1} \dd \hpi(\heta_j)
\\ & = f_d(\hh[\hpi]).
\end{align*}
Similarly, we have
\begin{align*} \hh[\hpi] = \int_{(0,1)} \heta \dd \hcF_{d, k, \beta}[\pi](\heta) &= \frac{1}{\hZ[\pi]} \int_{(0,1)^{k-1}} \hz(\eta_1, \dots, \eta_{k-1}) f(\eta_1, \dots, \eta_{k-1}) \bigotimes_{j=1}^{k-1} \dd \pi(\eta_j) 
\\ & = \frac{1}{\hZ[\pi]} \int_{(0,1)^{k-1}} \left[ 1-c_\beta \prod_{j=1}^{k-1} \eta_j \right]  \bigotimes_{j=1}^{k-1} \dd \pi(\eta_j) 
\\ & = \hf_{k,\beta}(h[\pi]).
\end{align*}
\end{proof}

Recalling the definition of $q = q(d,k,\beta)$ in Eq.~(\ref{eq_def_q}), and defining $\widehat{q} = 1-q$, the following is a simple observation.

\begin{fact} \label{fact_unique_solution_h_hh} The set of equations $$ 
y = 1-\hat{y},\qquad \hat{y} = \hf_{k,\beta}(y),$$
admits for unique solution in $[0,1]^2$ the pair $(q,\widehat{q})$.
\end{fact}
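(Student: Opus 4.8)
The plan is to reduce the two-equation system to a single scalar equation in $y$ and to match it against~(\ref{eq_def_q}). First I would unfold the definitions: since $\hz(\eta_1,\dots,\eta_{k-1}) = 2 - c_\beta\prod_{j}\eta_j$ and the numerator of $\hf$ equals $1 - c_\beta\prod_j\eta_j$, evaluating with all arguments equal to $y$ gives
\[
\hf_{k,\beta}(y)=\frac{1-c_\beta y^{k-1}}{2-c_\beta y^{k-1}}.
\]
Because $c_\beta\in(0,1)$, the denominator $2-c_\beta y^{k-1}$ lies in $(1,2]$ for every $y\in[0,1]$, so $\hf_{k,\beta}$ is well defined on $[0,1]$ and takes values in $(0,1)$; consequently any solution of the system in $[0,1]^2$ automatically lies in $(0,1)^2$, and the denominator never vanishes along the way.

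Next I would eliminate $\hat y$. Substituting $\hat y=\hf_{k,\beta}(y)$ into $y=1-\hat y$ yields
\[
y=1-\hf_{k,\beta}(y)=\frac{1}{2-c_\beta y^{k-1}},
\]
and, multiplying through by the (strictly positive) denominator, this is equivalent to $2y-c_\beta y^{k}=1$, i.e.\ $1-c_\beta y^{k}=2(1-y)$. This is exactly the defining equation~(\ref{eq_def_q}). By the uniqueness assertion accompanying~(\ref{eq_def_q}), the only $y\in(0,1)$ satisfying it is $q=q(k,d,\beta)$, whence $\hat y=1-q=\widehat q$. For the converse, plugging $y=q$ back in: rearranging~(\ref{eq_def_q}) gives $q(2-c_\beta q^{k-1})=1$, hence $q=\tfrac{1}{2-c_\beta q^{k-1}}=1-\hf_{k,\beta}(q)$, so both equations of the system hold with $(y,\hat y)=(q,\widehat q)$. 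This proves that $(q,\widehat q)$ is the unique solution in $[0,1]^2$.

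Since the argument is just a substitution, no real obstacle arises; the only points deserving attention are the domain bookkeeping that turns ``unique in $(0,1)$'' into ``unique in $[0,1]^2$'', and the observation that clearing the denominator introduces no spurious roots because $2-c_\beta y^{k-1}>0$ throughout $[0,1]$ --- both immediate from $c_\beta\in(0,1)$. If one prefers not to invoke the uniqueness statement in~(\ref{eq_def_q}), the same conclusion follows by noting that $g(y):=2y-1-c_\beta y^{k}$ is strictly concave on $[0,1]$ with $g(0)=-1<0<1-c_\beta=g(1)$, so $g$ has exactly one zero in $(0,1)$.
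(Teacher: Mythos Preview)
Your argument is correct and is precisely the computation the paper has in mind; the paper itself gives no proof and merely records the fact as ``a simple observation,'' so your substitution reducing the system to~(\ref{eq_def_q}) and the accompanying domain check are the natural elaboration.
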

  
We define the measures $\pi_+, \pi_-, \hpi_+$ and $\hpi_-$ over $(0,1)$ by \begin{align} \label{eq_def_dd_pi_sat_unsat} \dd \pi_{+} (\eta) &= \frac{1- \eta}{1-q} \dd \pi(\eta), \qquad \dd \pi_{-} (\eta) = \frac{\eta}{q} \dd \pi(\eta),
\\ \label{eq_def_dd_hpi_sat_unsat} \dd \hpi_{+} (\eta) &= \frac{1- \heta}{1-\hqq} \dd \hpi(\heta), \qquad \dd \hpi_{-} (\heta) = \frac{\heta}{\hqq} \dd \hpi(\heta). \end{align} 

\begin{lemma} \label{lemma_fixed_point_pi_-_+} If $(\pi, \hpi)$ is a fixed point of $(\cF_{k,d,\beta},\hcF_{k,d,\beta})$, we have
\begin{align} \label{eq_fixed_point_pi_sat_unsat_a} \pi_{-}&= \int_{(0,1)^{d-1}}  \delta_{f(\heta_1, \dots, \heta_{d-1})} \bigotimes_{j=1}^{d/2-1} \dd \hpi_{-}(\heta_j) \bigotimes_{j=d/2}^{d-1} \dd \hpi_{+}(\heta_j),
\\ \label{eq_fixed_point_pi_sat_unsat_b} \pi_{+}&= \int_{(0,1)^{d-1}}  \delta_{f(\heta_1, \dots, \heta_{d-1})} \bigotimes_{j=1}^{d/2-1} \dd \hpi_{+}(\heta_j) \bigotimes_{j=d/2}^{d-1} \dd \hpi_{-}(\heta_j),
\\ \label{eq_fixed_point_pi_sat_unsat_c} \nonumber \hpi_{-} & =   \sum_{r=1}^{k-1} \binom{k-1}{r}  \frac{q^r (1-q)^{k-1-r}}{1-c_\beta q^{k-1}}    
\\ & \hspace{1cm} \int_{(0,1)^{k-1}} \delta_{\hf(\eta_1, \dots, \eta_{k-1})} \bigotimes_{j=1}^r\dd \pi_-(\eta_j) \bigotimes_{j=r+1}^{k-1}\dd \pi_+(\eta_j) 
\\ &\nonumber \hphantom{=} + \exp(-\beta) \frac{ q^{k-1} }{1-c_\beta q^{k-1}} \int_{(0,1)^{k-1}} \delta_{\hf(\eta_1, \dots, \eta_{k-1})} \bigotimes_{j=1}^{k-1} \dd \pi_-(\eta_j),
\\ \label{eq_fixed_point_pi_sat_unsat_d}\nonumber \hpi_{+} & =   \sum_{r=0}^{k-1} \binom{k-1}{r}  q^r (1-q)^{k-1-r} 
\\ & \hspace{1 cm} \int_{(0,1)^{k-1}} \delta_{\hf(\eta_1, \dots, \eta_{k-1})} \bigotimes_{j=1}^r \dd \pi_-(\eta_j) \bigotimes_{j=r+1}^{k-1}\dd \pi_+(\eta_j)  .
 \end{align}
\end{lemma}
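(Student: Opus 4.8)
The plan is to substitute the fixed-point identities $\pi=\cF_{k,d,\beta}(\hpi)$ and $\hpi=\hcF_{k,d,\beta}(\pi)$ into the reweighted densities \eqref{eq_def_dd_pi_sat_unsat}--\eqref{eq_def_dd_hpi_sat_unsat} and simplify. The first step is to cash in \Lem~\ref{lemma_pi_hpi_to_h_hh} and \Fact~\ref{fact_unique_solution_h_hh}: since $(h[\pi],\hh[\hpi])$ solves the system of \Fact~\ref{fact_unique_solution_h_hh} we have $h[\pi]=q$ and $\hh[\hpi]=\hqq=1-q$, whence by multilinearity of $z,\hz$ (exactly as in the proof of \Lem~\ref{lemma_pi_hpi_to_h_hh}) $Z[\hpi]=z(\hqq,\dots,\hqq)=(q\hqq)^{d/2-1}$ and $\hZ[\pi]=\hz(q,\dots,q)=2-c_\beta q^{k-1}$; moreover, from $1-q=\hqq=\hf_{k,\beta}(q)=(1-c_\beta q^{k-1})/(2-c_\beta q^{k-1})$ one reads off the two identities $q(2-c_\beta q^{k-1})=1$ and $\hqq(2-c_\beta q^{k-1})=1-c_\beta q^{k-1}$, which is what makes the normalising constants below collapse.

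The second ingredient is a handful of elementary identities for the weight functions: $f(\heta)z(\heta)=\prod_{j<d/2}\heta_j\prod_{j\ge d/2}(1-\heta_j)$ and $(1-f(\heta))z(\heta)=\prod_{j<d/2}(1-\heta_j)\prod_{j\ge d/2}\heta_j$ (the two products making up $z$), and $\hf(\eta)\hz(\eta)=1-c_\beta\prod_i\eta_i$, $(1-\hf(\eta))\hz(\eta)=1$; together with the absorption rules $\eta\,\dd\pi(\eta)=q\,\dd\pi_-(\eta)$, $(1-\eta)\dd\pi(\eta)=(1-q)\dd\pi_+(\eta)$, $\heta\,\dd\hpi(\heta)=\hqq\,\dd\hpi_-(\heta)$, $(1-\heta)\dd\hpi(\heta)=q\,\dd\hpi_+(\heta)$, all immediate from \eqref{eq_def_dd_pi_sat_unsat}--\eqref{eq_def_dd_hpi_sat_unsat}, and the convex decomposition $\dd\pi=(1-q)\dd\pi_++q\,\dd\pi_-$.

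For \eqref{eq_fixed_point_pi_sat_unsat_a}--\eqref{eq_fixed_point_pi_sat_unsat_b} the computation is then one line each: writing $\dd\pi_-(\eta)=(\eta/q)\dd\pi(\eta)$ and inserting $\pi=\cF_{k,d,\beta}(\hpi)$, the weight $f(\heta)$ cancels the denominator of $f$, leaving the first product of $z$ integrated against $\bigotimes_j\dd\hpi(\heta_j)$; the absorption rules turn the factors with $j<d/2$ into $\hpi_-$ and those with $j\ge d/2$ into $\hpi_+$, and the accumulated scalar $\hqq^{d/2-1}q^{d/2}/(qZ[\hpi])$ equals $1$. The case $\pi_+$ is the mirror image via $(1-f)z=$ the second product. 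Likewise for \eqref{eq_fixed_point_pi_sat_unsat_d}: $(1-\hf)\hz=1$ gives $\hpi_+=\tfrac{1}{q\hZ[\pi]}\int\delta_{\hf(\eta)}\bigotimes_j\dd\pi(\eta_j)$, the prefactor is $1$, and expanding each $\dd\pi(\eta_j)$ by the convex decomposition and invoking the permutation symmetry of $\hf$ collapses the $2^{k-1}$ terms into the binomial sum \eqref{eq_fixed_point_pi_sat_unsat_d}.

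The one place needing genuine care is \eqref{eq_fixed_point_pi_sat_unsat_c}: here $\hf(\eta)\hz(\eta)=1-c_\beta\prod_i\eta_i$ is not a single product, so after the same convex expansion of $\bigotimes_j\dd\pi(\eta_j)$ one must separately handle the correction term $-c_\beta\prod_i\eta_i\,\bigotimes_j\dd\pi(\eta_j)=-c_\beta q^{k-1}\bigotimes_j\dd\pi_-(\eta_j)$, which combines with the all-$\pi_-$ term of the binomial sum (coefficient $q^{k-1}$) to produce the coefficient $(1-c_\beta)q^{k-1}=\exp(-\beta)q^{k-1}$; the overall prefactor becomes $1/(\hqq\hZ[\pi])=1/(1-c_\beta q^{k-1})$ by the second normalising identity. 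I expect this bookkeeping — tracking which binomial term absorbs the $c_\beta$-correction, and checking that each residual scalar collapses via \Fact~\ref{fact_unique_solution_h_hh} — to be the only real content; there is no conceptual obstacle, the symmetry of $\hf$ (and the block symmetry of $f$ with respect to the split $\{1,\dots,d/2-1\}$ versus $\{d/2,\dots,d-1\}$) being precisely what lets one pass from $2^{k-1}$ (resp. $2^{d-1}$) product measures to the stated sums.
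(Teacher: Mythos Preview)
Your proposal is correct and follows essentially the same approach as the paper's own proof: substitute the fixed-point identities into the reweighted densities, use the product identities $fz=\prod_{j<d/2}\heta_j\prod_{j\ge d/2}(1-\heta_j)$, $(1-\hf)\hz=1$, $\hf\hz=1-c_\beta\prod_j\eta_j$, absorb the linear factors via \eqref{eq_def_dd_pi_sat_unsat}--\eqref{eq_def_dd_hpi_sat_unsat}, and for \eqref{eq_fixed_point_pi_sat_unsat_c}--\eqref{eq_fixed_point_pi_sat_unsat_d} expand $\bigotimes_j\dd\pi$ via $\dd\pi=(1-q)\dd\pi_++q\,\dd\pi_-$ and collect by symmetry. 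You are simply more explicit than the paper about the normalising identities $q\hZ[\pi]=1$ and $\hqq\hZ[\pi]=1-c_\beta q^{k-1}$ that come out of \Fact~\ref{fact_unique_solution_h_hh}, and about how the $-c_\beta\prod_j\eta_j$ correction merges with the all-$\pi_-$ binomial term; note incidentally that your computation (and the paper's) yields the sum in \eqref{eq_fixed_point_pi_sat_unsat_c} over $r=0,\dots,k-2$ rather than $r=1,\dots,k-1$, which is a typo in the statement.
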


\begin{proof} We first observe that, recalling the definition of $z$ in \Sec~\ref{subsec_results}, $q Z[\hpi] = \hqq^{d/2-1} (1-\hqq)^{d/2}$. We then compute
\begin{align*} \pi_{-}= \int_{(0,1)} \frac{\eta}{q} \delta_\eta  \dd \pi(\eta) &= \int_{(0,1)}  \frac{\eta}{q} \delta_\eta  \dd \cF_{k,d,\beta}[\hpi](\eta)
\\ & = \frac{1}{Z[\hpi]} \int_{(0,1)^{d-1}} \frac{1}{q} z(\heta_1, \dots, \heta_{d-1}) f(\heta_1, \dots, \heta_{d_1}) \delta_{f(\heta_1, \dots, \heta_{d_1})}  \bigotimes_{j=1}^{d-1} \dd \hpi(\heta_j)
\\ & = \int_{(0,1)^{d-1}} \frac{\prod_{j=1}^{d/2-1} \heta_j \prod_{j=d/2}^{d-1} (1-\heta_j)}{\prod_{j=1}^{d/2-1} \hqq \prod_{j=d/2}^{d-1} (1-\hqq)} \delta_{f(\heta_1, \dots, \heta_{d_1})}  \bigotimes_{j=1}^{d-1} \dd \hpi(\heta_j)
\\ & =  \int_{(0,1)^{d-1}}  \delta_{f(\heta_1, \dots, \heta_{d-1})} \bigotimes_{j=1}^{d/2-1} \dd \hpi_{-}(\heta_j) \bigotimes_{j=d/2}^{d-1} \dd \hpi_{+}(\heta_j)
\end{align*}
The equation on $\pi_{+}$ is proved similarly. We also compute
\begin{align*}  \hpi_{-} =\int_{(0,1)} \frac{\heta}{\hqq} \delta_{\heta}  \dd \hpi(\heta) &=\int_{(0,1)} \frac{\heta}{\hqq} \delta_{\heta}  \dd \hcF_{k,d,\beta}[\pi](\heta)
\\ & =   \int_{(0,1)^{k-1}}  \frac{ 1- c_\beta \prod \eta_j }{1 - c_\beta q^{k-1} } \delta_{\hf(\eta_1, \dots, \eta_{k-1})} \bigotimes_{j=1}^{k-1} \dd \pi(\eta_j)
\\ & =   \sum_{r=0}^{k-2} \binom{k-1}{r}    \frac{q^r (1-q)^{k-1-r}}{1-c_\beta q^{k-1}}   
\\ & \hspace{1cm} \int_{(0,1)^{k-1}}\delta_{\hf(\eta_1, \dots, \eta_{k-1})} \bigotimes_{j=1}^r\dd \pi_-(\eta_j) \bigotimes_{j=r+1}^{k-1}\dd \pi_+(\eta_j) 
\\ & \hphantom{=} + \exp(-\beta) \frac{ q^{k-1} }{1-c_\beta q^{k-1}} \int_{(0,1)^{k-1}} \delta_{\hf(\eta_1, \dots, \eta_{k-1})} \bigotimes_{j=1}^{k-1} \dd \pi_-(\eta_j) . \end{align*} 
The equation on $\hpi_{+}$ is proved in a similar manner.
 \end{proof}

\begin{proof}[Proof of \Prop~\ref{prop_fixed_point_to_galton_watson}] We observe that $\pi = (1-q) \pi_+ + q \pi_-$. Replacing with \Lem~\ref{lemma_fixed_point_pi_-_+} yields
$$  \pi  = \sum_{T \in \cT_2} p_{k,d,\beta}^{(2)}(T) \int_{\cP(\{-1,1\})^{\partial V_2}} \delta_{\nu_{T}^{\partial \nu}} \bigotimes_{x \in \partial V_2}  \left( \vecone_{b_{x, \uparrow} = -1} \frac{1-\partial \nu_x(-1)}{1-q}\dd \pi(\partial \nu_x) + \vecone_{b_{x,\uparrow}=1} \frac{\partial \nu_x(-1)}{q} \dd \pi(\partial \nu_{x}) \right).  $$
By induction over $1 \leq t \leq \w$, using repeatedly \Lem~\ref{lemma_fixed_point_pi_-_+}, we obtain that
$$  \pi  = \sum_{T \in \cT_{2t}} p_{k,d,\beta}^{(2t)}(T) \int_{\cP(\{-1,1\})^{\partial V_{2t}}} \delta_{\nu_{T}^{\partial \nu}} \bigotimes_{x \in \partial V_{2t}}  \left( \vecone_{b_{x, \uparrow} = -1} \frac{1-\partial \nu_x(-1)}{1-q}\dd \pi(\partial \nu_x) + \vecone_{b_{x,\uparrow}=1} \frac{\partial \nu_x(-1)}{q} \dd \pi(\partial \nu_{x}) \right).  $$ 
This concludes the proof of the proposition.
 \end{proof}
 
We define, for $(\nu_1, \dots, \nu_k, \hnu_1, \dots, \hnu_d) \in \cP(\{-1,1\})^{k+d}$ and $(b_1, \dots, b_k) \in \{-1,1\}^k$,
\begin{align*}z_1(\hnu_1, \dots, \hnu_d) &=  {\prod_{j\leq d/2} \nu_j(-1) \prod_{j>d/2} \nu_j(1)+\prod_{j\leq d/2} \nu_j(1) \prod_{j>d/2} \nu_j(-1)},
\\ z_2(\nu_1, \dots, \nu_k, b_1, \dots, b_k) &= 1-c_\beta \prod_{j =1}^k \nu_j(b_j),
\\ z_3(\nu_1,\hnu_1) &= \nu_1(1)\hnu_1(1)+\nu_1(-1)\hnu_1(-1). \end{align*}

In order to prove \Lem~\ref{lemma_total_size}, we also recall the following standard result, which we prove in \Sec~\ref{sec_vanilla}.

\begin{proposition} \label{prop_first_moment_vanilla} We have $$\frac{1}{n} \ln \Erw \left[ Z_\beta(\vec \Phi) \right] \sim \ln 2 + \frac{d}{k} \ln \left(1 - c_\beta q^k \right) - \frac{d}{2} \ln \left(\frac{1}{2 q} \right) - \frac{d}{2} \ln \left(\frac{1}{2 (1-q)} \right) .$$ \end{proposition}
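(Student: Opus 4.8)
The plan is a direct first-moment computation in the configuration model, reducing $\Erw[Z_\beta(\PHI)]$ to a one-dimensional saddle-point problem whose critical value is governed by the number $q$ from (\ref{eq_def_q}). The starting point is that $\Erw[\exp(-\beta E_\PHI(\sigma))]$ is the \emph{same} for every assignment $\sigma\in\{\pm1\}^n$: under any $\sigma$, exactly $d/2$ of the $d$ clones of each variable carry a literal evaluating to false, so the set $F_\sigma$ of falsified variable clones always has size $nd/2$; since $\PHI$ is a uniform bijection, the joint law of $\bigl(\vecone\{\partial_\PHI(a,j)\in F_\sigma\}\bigr)_{a,j}$ — hence of $E_\PHI(\sigma)=\sum_a\vecone\{\partial_\PHI(a,j)\in F_\sigma\text{ for all }j\in[k]\}$ — depends on $\sigma$ only through $|F_\sigma|$. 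Thus $\Erw[Z_\beta(\PHI)]=2^n\,\Erw[\exp(-\beta E_\PHI(\vecone))]$, and it remains to evaluate this last expectation.

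For that I would classify the bijections by the vector $(t_i)_{i=0}^{k}$ recording, for each $i$, how many clauses have exactly $i$ of their $k$ clones matched to a falsified (``negative'') variable clone. A uniform bijection factorizes into a negative/positive labelling of the $nd$ clause clones — with exactly $nd/2$ of each type, as $mk=nd$ — followed by a uniform matching inside each type class, contributing a factor $((nd/2)!)^2$; and the weight $\prod_a\psi_{a,\beta}$ depends on the labelling only through $t_k$, equalling $(1-c_\beta)^{t_k}$. Writing $w_i=1$ for $i<k$ and $w_k=1-c_\beta$, this gives
\[
\Erw[\exp(-\beta E_\PHI(\vecone))]=\frac{((nd/2)!)^{2}}{(nd)!}\sum_{\substack{(t_i):\ \sum_i t_i=m\\ \sum_i i\,t_i=nd/2}}\binom{m}{t_0,\dots,t_k}\prod_{i=0}^{k}\Bigl(\tbinom ki w_i\Bigr)^{t_i}.
\]
Introducing a fugacity $\mu>0$ for the linear constraint turns the inner sum into $[\mu^{nd/2}]\,P(\mu)^m$ with $P(\mu)=\sum_i\binom ki w_i\mu^i=(1+\mu)^k-c_\beta\mu^k$. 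A two-sided Laplace estimate — the upper bound from $[\mu^{N}]g\le\mu^{-N}g(\mu)$ for $g$ with nonnegative coefficients, the matching lower bound from the concentration of the dominant type vector — together with Stirling applied to $((nd/2)!)^2/(nd)!$ then yields
\[
\frac1n\ln\Erw[Z_\beta(\PHI)]=\ln2-d\ln2+\min_{\mu>0}\Bigl(\tfrac dk\ln P(\mu)-\tfrac d2\ln\mu\Bigr)+o(1).
\]

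It remains to carry out the minimization. Stationarity reads $2\mu P'(\mu)=kP(\mu)$, which simplifies to $(1+\mu)^{k-1}(\mu-1)=c_\beta\mu^k$; the substitution $\mu=q/(1-q)$ turns this into $c_\beta q^k=2q-1$, i.e.\ exactly (\ref{eq_def_q}). This $\mu^\star=q/(1-q)$ is the global minimizer since $\mu\mapsto\frac dk\ln P(\mu)-\frac d2\ln\mu$ is convex in $\log\mu$ (the coefficients of $P(\mu)^m$ being nonnegative) and tends to $+\infty$ as $\mu\downarrow0$ and as $\mu\to\infty$. Inserting $\mu^\star$ one has $1+\mu^\star=(1-q)^{-1}$ and $P(\mu^\star)=(1-c_\beta q^k)/(1-q)^k$, and a short rearrangement using (\ref{eq_def_q}) once more in the form $1-c_\beta q^k=2(1-q)$ brings the bracket to the right-hand side of the proposition. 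No step here is deep: the one requiring genuine (though routine) work is the two-sided Laplace estimate, and everything else is bookkeeping — which is why the statement is called standard. As a consistency check, this closed form equals the Bethe free energy of the $d$-regular $k$-uniform tree at its symmetric Belief Propagation fixed point, i.e.\ the functional $\cF$ of (\ref{eqmyBethe}) with $\pi^\star_{k,d,\beta}$ and $\hcF_{k,d,\beta}(\pi^\star_{k,d,\beta})$ replaced by the point masses at $q$ and $1-q$.
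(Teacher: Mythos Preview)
Your argument is correct and follows a genuinely different route from the paper's. The paper introduces an auxiliary i.i.d.\ probability space in which each clause clone carries an independent $\{-1,1\}$-label with bias $1-q$, together with a Bernoulli ``satisfaction bit'' per clause; it then writes the expected clause weight as $\mathbb{P}[S\mid B]\,(1+e^{-\beta})^m$ for a satisfaction event $S$ and a balance event $B$, and evaluates $\mathbb{P}[S\mid B]$ via Bayes' rule $\mathbb{P}[S]\,\mathbb{P}[B\mid S]/\mathbb{P}[B]$. The choice of $q$ from (\ref{eq_def_q}) is exactly what makes $\mathbb{P}[B\mid S]=e^{o(n)}$, and the large-deviation rate of $\mathbb{P}[B]$ supplies the Kullback--Leibler term $D_1(\tfrac12,1-q)$.

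Your direct enumeration plus generating-function saddle point is an equivalent and arguably more self-contained packaging: the tilting parameter $1-q$ in the paper plays the same role as your fugacity via $\mu=q/(1-q)$, and the vanishing of $\tfrac1n\ln\mathbb{P}[B\mid S]$ is precisely your stationarity condition $2\mu P'(\mu)=kP(\mu)$. The paper's device has the practical advantage that it recycles verbatim for the second-moment computation in \Prop~\ref{prop_second_moment_rate_function}, where the auxiliary space simply carries pairs of labels; your approach would handle that case too, at the cost of a two-parameter saddle. One small remark: the final ``short rearrangement'' does not actually need a second invocation of (\ref{eq_def_q}); plugging $\mu^\star=q/(1-q)$ into $\ln 2-d\ln 2+\tfrac{d}{k}\ln P(\mu^\star)-\tfrac{d}{2}\ln\mu^\star$ yields
\[
\ln 2+\frac{d}{k}\ln(1-c_\beta q^k)-\frac{d}{2}\ln(2q)-\frac{d}{2}\ln\bigl(2(1-q)\bigr)
\]
directly, which is the intended right-hand side (the displayed $\tfrac{1}{2q}$, $\tfrac{1}{2(1-q)}$ in the statement are sign slips --- compare the computation of $\cF(k,d,\beta)$ in the proof of \Lem~\ref{lemma_total_size} and the definition of $f_{1,k}$).
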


\begin{proof}[Proof of \Lem~\ref{lemma_total_size}] We compute
 \begin{align*}\ln \Erw \left[ z_1(\hnu_1, \dots, \hnu_d)\right]  &= \ln \left(2q^{d/2}(1-q)^{d/2} \right),
 \\ \ln  \Erw \left[ z_2(\nu_1, \dots, \nu_k,b_1, \dots, b_k) \right] &= \ln \left(1-c_\beta q^k \right),
 \\ \ln \Erw [z_3(\nu_1,\hnu_1)]  &= \ln \left( 2 q (1-q) \right). \end{align*}
 Thereby we have $$  \cF(k,d,\beta)= \ln 2 + \frac{d}{k} \ln \left(1 - c_\beta q^k \right) - \frac{d}{2} \ln \left(\frac{1}{2 q} \right) - \frac{d}{2} \ln \left(\frac{1}{2 (1-q)} \right).$$
 The proposition then follows from \Prop~\ref{prop_first_moment_vanilla}.
\end{proof}

\subsection{Finite $\w$ approximations of $\cB(k,d,\beta)$}
\label{sec_finite_w_approx}

We finally present a simple approximation of $\cB(k,d,\beta)$ that will be useful in the following. Recall that $\textrm{GW}(k,d,\beta,2\w)$ and $\textrm{GW}'(k,d,\beta,2\w)$ were defined in the previous section. Let $\widehat{p}_{k,d,\beta}^{(2\w+1)}(\widehat{T}_1, \dots \widehat{T}_d)$ denote the probability that the neighborhood of the root in is equal to $(\widehat{T}_1, \dots, \widehat{T}_d)$ under the process $\textrm{GW}'(k,d,\beta,2\w+2)$. Denoting by $e_1$ the first edge exiting the root of the random process $\textrm{GW}'(k,d,\beta,2\w+2)$, let for $T \in \cT_{2\w}$ and $\widehat{T} \in \widehat{\cT}_{2\w+1}$ $\widecheck{p}(T, \widehat{T})$ be the probability that the $2\w$-neighborhood (resp. $2\w+1$-neighborhood) of this edge when removing its clause node (resp. variable node) is formed of the tree $T$ (resp. $\widehat{T}$). Similarly, denoting by $a_1$ the first clause connected to the root of the random process $\textrm{GW}'(k,d,\beta,2\w+2)$, let for $T_1, \dots, T_k \in \cT_\w$ $\widehat{p}_{k,d,\beta}(T_1, \dots T_k)$ be the probability that the $2\w$-neighborhood of $a_1$ is equal to $(T_1, \dots, T_k)$ under the process $\textrm{GW}'(k,d,\beta,2\w+2)$.

Finally, let $\pi^{\star}_{k,d,\beta}$ be the unique skewed fixed point of $\cG_{k,d,\beta}$ and, for the ease of notations, let $\pi_+, \pi_-, \hpi_+, \hpi_-$ denote the quantities associated to $\pi^{\star}_{k,d,\beta}$ through Eq. (\ref{eq_def_dd_pi_sat_unsat}-\ref{eq_def_dd_hpi_sat_unsat}).
\begin{lemma} We have
\begin{equation*}  \begin{split}  \cB(k,d,\beta) =& \sum_{(\hT_1, \dots, \hT_d) \in  \widehat{\cT}_{2\w+1}^d } p_{k,d,\beta}^{(2\w+1)}(\hT_1, \dots, \hT_d)\int_{(0,1)^d} \ln \left[ z_1(\widehat{\nu}^{\partial \nu_1 }_{\widehat{T}_1}, \dots, \widehat{\nu}^{\partial \nu_d }_{\widehat{T}_d} ) \right] 
\bigotimes_{j=1}^d \bigotimes_{x \in \partial_1 \hT_j} \dd \pi_+( (\partial \nu_j)_x ) \bigotimes_{x \in \partial_{-1} \hT_j} \dd \pi_-((\partial \nu_j)_x )
\\ &+\frac{d}{k} \sum_{(T_1, \dots, T_k) \in {\cT}_{2\w}^k} \widehat{p}_{k,d,\beta}^{(2\w)}(T_1, \dots, T_k) \int_{(0,1)^k} \ln \left[ z_2 (\nu_{T_1}^{\partial \nu_1}, \dots,\nu_{T_k}^{\partial \nu_k}) \right]
 \bigotimes_{j=1}^k \bigotimes_{x \in \partial_1 T_j} \dd \pi_+((\partial \nu_j)_x) \bigotimes_{x \in \partial_{-1} T_j} \dd \pi_-((\partial \nu_j)_x) 
\\ & - \sum_{\substack{T \in \cT_{2\w}, \hT \in \widehat{\cT}_{2\w+1}}} \widecheck{p}_{k,d,\beta}^{(2\w+1)}(T,\hT) \int_{(0,1)^2} \ln \left [ z_3 (\nu_{T}^{\partial \nu_1}, \hnu_{\hT}^{\partial \nu_2} )  \right] \bigotimes_{j=1}^2 \bigotimes_{x \in \partial_1 \hT_j} \dd \pi_+( (\partial \nu_j)_x ) \bigotimes_{x \in \partial_{-1} \hT_j} \dd \pi_-((\partial \nu_j)_x )+ o_\w(1). \end{split} \end{equation*}  \end{lemma}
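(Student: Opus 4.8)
The plan is to recognise $\cB(k,d,\beta)$, which (\ref{eqmyBethe}) defines through the infinite-tree fixed point $\pi^{\star}_{k,d,\beta}$, as the Bethe functional of the Galton-Watson tree $\textrm{GW}'(k,d,\beta,2\w+2)$ decorated by the ``correct'' random boundary condition, and then to read off the three sums on the right-hand side. The starting point is \Prop~\ref{prop_fixed_point_to_galton_watson}: it exhibits a $\pi^{\star}_{k,d,\beta}$-distributed variable as the Belief Propagation message $\nu^{\partial\nu}_{T}$ exiting a tree $T\in\cT_{2\w}$ drawn from $p^{(2\w)}_{k,d,\beta}$, under the boundary condition that puts $\pi_+$ on leaves carrying $b_{x,\uparrow}=-1$ and $\pi_-$ on leaves carrying $b_{x,\uparrow}=1$. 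Applying the clause half of \Lem~\ref{lemma_fixed_point_pi_-_+} one more time yields the companion statement that $\hcF_{k,d,\beta}(\pi^{\star}_{k,d,\beta})$ is the law of the message $\hnu^{\partial\nu}_{\hT}$ exiting a clause-rooted tree $\hT\in\widehat\cT_{2\w+1}$ with the analogous boundary. Consequently the $\nu_i$ appearing in (\ref{eqmyBethe}) may be represented as $\nu^{\partial\nu_i}_{T_i}$ and the $\hat\nu_i$ as $\hnu^{\partial\nu_i}_{\hT_i}$, with the pairs $(T_i,\partial\nu_i)$, resp.\ $(\hT_i,\partial\nu_i)$, mutually independent.

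The central step is a multilinear reweighting in the spirit of \Lem~\ref{lemma_fixed_point_pi_-_+}. The key observation is that $z_1$, $z_2$ and $z_3$ are exactly the local normalising constants attached, respectively, to a variable node of degree $d$, to a clause node, and to a variable--clause edge, and that tilting an i.i.d.\ product of message laws by such a constant produces precisely the size-biased offspring law built into $\textrm{GW}'$. Concretely: since $z_1$ is multilinear in the coordinates $\hat\nu_j(-1)$, and since reweighting $\hcF_{k,d,\beta}(\pi^{\star}_{k,d,\beta})$ by $\hat\nu(-1)$ (resp.\ $1-\hat\nu(-1)$) gives $\hpi_-$ (resp.\ $\hpi_+$), the tilt of $\bigotimes_{j=1}^{d}\hcF_{k,d,\beta}(\pi^{\star}_{k,d,\beta})$ by $z_1/\Erw[z_1]$ reproduces, together with the correct random boundary, the joint law of the $d$ clause-rooted subtrees $\hT_1,\dots,\hT_d$ hanging off the root of $\textrm{GW}'(k,d,\beta,2\w+2)$ (matching rule~(i)$'$). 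In the same way, tilting $\bigotimes_{j=1}^{k}\pi^{\star}_{k,d,\beta}$ by $z_2=1-c_\beta\prod_j\nu_j$ reproduces the $k$ variable-rooted subtrees around the first clause (matching rules~(iii)--(iv)), and tilting $\pi^{\star}_{k,d,\beta}\otimes\hcF_{k,d,\beta}(\pi^{\star}_{k,d,\beta})$ by $z_3$ reproduces the pair of subtrees across the first edge. Carrying $\ln z_i$ along as the integrand, these tiltings turn $\Erw[z_i\ln z_i]/\Erw[z_i]$ into $\Erw[\ln z_i(\cdot)]$ over the corresponding $\textrm{GW}'$ neighbourhood; with the degree counts $1$, $d/k$ and $d$ already built into (\ref{eqmyBethe}) and into the normalisations of $p^{(2\w+1)}_{k,d,\beta}$, $\widehat p^{(2\w)}_{k,d,\beta}$ and $\widecheck p^{(2\w+1)}_{k,d,\beta}$, this is exactly the asserted right-hand side.

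It remains to account for the $o_\w(1)$: the three sums above are written with the depth-$(2\w)$, resp.\ depth-$(2\w+1)$, messages $\nu^{\partial\nu}_{T}$, whereas $\cB(k,d,\beta)$ refers to the honest fixed point $\pi^{\star}_{k,d,\beta}$, and one must bound the resulting discrepancy. This is exactly what \Prop~\ref{prop_contraction_tree_tensor} provides: the messages exiting a random tree stabilise as $\w\to\infty$, and $\pi^{(2\w)}$ converges weakly to $\pi^{\star}_{k,d,\beta}$ as established in the proof of \Prop~\ref{prop_unique_fixed_point_tree_1}; since each $|\ln z_i|$ is bounded by a constant depending only on $k,d,\beta$ (cf.\ the two-sided bounds of \Lem~\ref{lemma_estimate_ratio_generic}), dominated convergence lets the limit pass through. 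I expect the real work to lie not in any single estimate but in the bookkeeping of the central step: matching the $z_i$-tilted i.i.d.\ products with the $\textrm{GW}'$ neighbourhood laws demands careful tracking of the spin decorations $b_{x,\uparrow}$, of the classes $\partial_{\pm1}$ on the boundary, and of which subtree feeds which coordinate of which $z_i$, together with the harmless lower-order corrections that always accompany the skewed measures $\pi_\pm$; once this dictionary is in place the remaining computations are routine.
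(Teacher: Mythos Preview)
Your approach is essentially the paper's one-sentence proof spelt out in full: rewriting $\Erw[z_i\ln z_i]/\Erw[z_i]$ via the tilt $z_i/\Erw[z_i]$ and then invoking the identities of \Lem~\ref{lemma_fixed_point_pi_-_+} to recognise the tilted product measures as the $\textrm{GW}'$ neighbourhood laws is precisely what ``writing the expectations in terms of $\pi_\pm,\hat\pi_\pm$ and following steps similar to \Prop~\ref{prop_fixed_point_to_galton_watson}'' means. One small quibble: your attribution of the $o_\w(1)$ to a finite-depth-versus-fixed-point discrepancy is off, since \Prop~\ref{prop_fixed_point_to_galton_watson} gives an \emph{exact} representation of $\pi^\star$ at every finite depth with the $\pi_\pm$ boundary, so the reweighting identities you describe are themselves exact; the paper does not spell out the source of the $o_\w(1)$ either, so this does not undermine your core argument.
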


\begin{proof} The proof is obtained by writing the expectation values in the definition of $\cB(k,d,\beta)$ explicitly in terms of $\pi_+, \pi_-, \hpi_+, \hpi_-$ and by  following steps similar to the one of the proof of \Prop~\ref{prop_fixed_point_to_galton_watson}. \end{proof} 
We define
\begin{equation*}  \begin{split} 
 \cB^{(\w)}(k,d,\beta) =& \sum_{(\hT_1, \dots, \hT_d) \in  \widehat{\cT}_{2\w+1}^d } \widehat{p}_{k,d,\beta}^{(2\w+1)}(\hT_1, \dots, \hT_d) \ln \left[ z_1(\widehat{\nu}^{(2\w+1)}_{\widehat{T}_1}, \dots, \widehat{\nu}^{(2\w+1) }_{\widehat{T}_d} ) \right] 
\\ &+\frac{d}{k} \sum_{(T_1, \dots, T_k) \in {\cT}_{2\w}^k} {p}_{k,d,\beta}^{(2\w)}(T_1, \dots, T_k)  \ln \left[ z_2 (\nu_{T_1}^{(2\w)}, \dots,\nu_{T_k}^{(2\w)}) \right]
 \\ & - \sum_{\substack{T \in \cT_{2\w}, \hT \in \widehat{\cT}_{2\w+1}}} \widecheck{p}_{k,d,\beta}^{(2\w+1)}(T,\hT)  \ln \left [ z_3 (\nu_{T}^{(2\w)}, \hnu_{\hT}^{(2\w+1)} )  \right]. \end{split} \end{equation*} 
\begin{proposition} \label{prop_approximation_internal_bethe_tree} We have
$$  \cB(k,d,\beta) = \cB^{(\w)}(k,d,\beta) + o_\w(1).$$
 \end{proposition}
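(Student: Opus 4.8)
The plan is to start from the exact tree representation of $\cB(k,d,\beta)$ provided by the preceding lemma and to show that replacing each Belief Propagation message $\nu_T^{\partial\nu}$, $\hnu_{\hT}^{\partial\nu}$ by its ``free'' counterpart $\nu_T^{(2\w)}$, $\hnu_{\hT}^{(2\w+1)}$ (i.e.\ the message obtained under the all-ones boundary condition $\partial\nu^{(0)}$) perturbs the integrand by only $o_\w(1)$ in expectation. First I would record that, by construction of the measures $\pi_+,\pi_-$ in \eqref{eq_def_dd_pi_sat_unsat}, the random boundary condition appearing in the lemma's formula is exactly of the form required by Hypothesis~{\bf H}: indeed the skewedness of $\pi^\star_{k,d,\beta}$ forces $\pi(0,1-\exp(-k\beta/2))<2^{-0.9k}$, and the reweighting by $(1-\eta)/(1-q)$ or $\eta/q$ changes this mass by at most a bounded factor (here one uses that $q$ is bounded away from $0$ and $1$ for $\beta>\beta_-(k,d)$, as follows from \eqref{eq_def_q} and Fact~\ref{fact_unique_solution_h_hh}), so the product boundary condition satisfies {\bf H} for $k\geq k_0$. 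Hence \Prop~\ref{prop_contraction_tree_tensor} applies and gives
\[
\Prob\bigl[\,\|\nu^{\vec{\partial\nu}}_{\vec T}-\nu^{(2\w)}_{\vec T}\|_\infty\geq 2\w^{-1}\,\bigr]\leq\w^{-1},\qquad
\Prob\bigl[\,\|\hnu^{\vec{\partial\nu}}_{\vec{\hT}}-\hnu^{(2\w+1)}_{\vec{\hT}}\|_\infty\geq 2k^2\e^{2\beta}\w^{-1}\,\bigr]\leq k^2\w^{-1}.
\]

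Next I would turn these message estimates into estimates on the three integrands $\ln z_1,\ln z_2,\ln z_3$. The key point is that all three functions $z_1,z_2,z_3$ are bounded away from $0$ and from $\infty$ uniformly: each $\nu_j(\pm1)$ lies in $(0,1)$, and more usefully one checks that every BP message message has both coordinates bounded below by $\exp(-\beta)/(1+\exp(-\beta))$ — this is immediate for clause-to-variable messages from Lemma~\ref{lemma_estimate_ratio_generic}(a) and then propagates to variable messages through \eqref{eq_def_BP_simple_tree_vertex} — so $z_1,z_3\in[\delta,1]$ and $z_2\in[\exp(-\beta),2]$ for a constant $\delta=\delta(\beta)>0$. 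Consequently $\ln z_i$ is Lipschitz in the messages with a constant depending only on $\beta,k,d$, and each $\ln z_i$ is also bounded in absolute value by a constant $C(k,d,\beta)$. Therefore, splitting the expectation of $|\ln z_i(\text{true messages})-\ln z_i(\text{free messages})|$ according to whether the message discrepancy is below the threshold of \Prop~\ref{prop_contraction_tree_tensor} or not, the good event contributes $O(k^2\e^{2\beta}\w^{-1})$ (Lipschitz bound times the small discrepancy) and the bad event contributes at most $C(k,d,\beta)\cdot k^2\w^{-1}$ (crude bound times small probability); both are $o_\w(1)$ with $k,d,\beta$ fixed.

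Finally I would assemble the three terms. The lemma expresses $\cB(k,d,\beta)$ as a sum over $\widehat\cT_{2\w+1}^d$, $\cT_{2\w}^k$ and $\cT_{2\w}\times\widehat\cT_{2\w+1}$ with respective weights $\widehat p^{(2\w+1)}_{k,d,\beta}$, $\widehat p^{(2\w)}_{k,d,\beta}$, $\widecheck p^{(2\w+1)}_{k,d,\beta}$ and the appropriate product measures $\bigotimes\dd\pi_\pm$ on the boundary, plus an $o_\w(1)$ error; matching these weights against the definition of $\cB^{(\w)}(k,d,\beta)$, the only difference is precisely the replacement of $\nu_T^{\partial\nu},\hnu_{\hT}^{\partial\nu}$ by $\nu_T^{(2\w)},\hnu_{\hT}^{(2\w+1)}$ in the $\ln z_i$ arguments. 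Applying the bound of the previous paragraph term by term — for $z_1$ one uses the $\hnu$-estimate on each of the $d$ coordinates, for $z_2$ the $\nu$-estimate on each of the $k$ coordinates (note $z_2$ does not depend on the Boolean labels $b_j$ beyond selecting which coordinate of each $\nu_j$ to read, and under the reweighted measures this selection is exactly the one compatible with $\partial_{\pm1}$, which is already baked into the weights), and for $z_3$ on both coordinates — and summing, the total error is $O((d+k)\cdot k^2\e^{2\beta}\w^{-1})=o_\w(1)$. This yields $\cB(k,d,\beta)=\cB^{(\w)}(k,d,\beta)+o_\w(1)$ as claimed.

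The main obstacle is the verification that the reweighted boundary distribution genuinely satisfies Hypothesis~{\bf H} (not merely {\bf H} marginally, but the conditional version), since Hypothesis~{\bf H} is a statement about conditional probabilities given all the other boundary coordinates; here one exploits that under the product structure $\bigotimes_x\dd\pi_\pm$ the coordinates are independent conditionally on the labels $b_{x,\uparrow}$, so the conditional probability in {\bf H} equals the unconditional one for the single coordinate $x$, which is $\pi_\pm(0,1-\exp(-k\beta/2))\leq(1-q)^{-1}\vee q^{-1}\cdot\pi(0,1-\exp(-k\beta/2))<2^{-0.9k}$ using skewedness and the bounds on $q$ for $k\geq k_0$. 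Everything else is the routine Lipschitz-plus-union-bound argument sketched above.
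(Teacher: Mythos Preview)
Your proposal is correct and follows essentially the same route as the paper. The paper's proof is a single line invoking ``the weak convergence of $\pi^{(\w)}$ toward $\pi^\star_{k,d,\beta}$''; that weak convergence is itself established (in the proof of \Prop~\ref{prop_unique_fixed_point_tree_1}) by exactly the mechanism you spell out, namely \Prop~\ref{prop_contraction_tree_tensor}. You are simply making explicit what the paper leaves implicit: starting from the tree representation of $\cB(k,d,\beta)$ in the preceding lemma, replacing $\nu_T^{\partial\nu},\hnu_{\hT}^{\partial\nu}$ by $\nu_T^{(2\w)},\hnu_{\hT}^{(2\w+1)}$, and controlling the discrepancy via the contraction estimate together with the uniform Lipschitz and boundedness properties of $\ln z_1,\ln z_2,\ln z_3$. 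Your flagging of Hypothesis~{\bf H} for the reweighted product boundary $\bigotimes_x\dd\pi_\pm$ is the right technical point, and your resolution (independence of coordinates under the product structure, combined with the skewedness of $\pi^\star$ and $q$ bounded away from $0,1$) is exactly what is needed; the paper glosses over this same issue when it applies \Prop~\ref{prop_contraction_tree_tensor} in the proof of \Prop~\ref{prop_unique_fixed_point_tree_1}.
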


 \begin{proof} The result easily follows from the weak convergence of $\pi^{(\w)}$ toward $\pi^\star_{k,d,\beta}$. \end{proof}

We now proceed to prove \Prop~\ref{Prop_BetheOnTrees}. In order to do so, we need to state here a standard lemma about the local convergence of the random formula $\hPhi$, that we will prove in \Sec~\ref{sec_typical_properties} (see \Lem~\ref{lemma_typical_w_neighb}). 

\begin{lemma} For all $\ell \geq 0$ and $\forall T \in \widetilde{\cT}_{2\w+2}$, we have $\rho_{\Phi}(T) \sim \widetilde{p}_{k,d,\beta}^{(2\w+2)}(T)$. \end{lemma}

\begin{proof}[Proof of \Prop~\ref{Prop_BetheOnTrees}] By the previous lemma we have, for $\w \geq 0$,
$$ \lim_{n\to\infty}\frac1n\Erw[\cB_{\hat\PHI,\w}] = \cB^{(\w)}(k,d,\beta).$$
The result then follows from \Prop~\ref{prop_approximation_internal_bethe_tree}.
 \end{proof}

\section{Marginal analysis} \label{sec_marginal_analysis}

\noindent
We will exhibit a number of {\em deterministic} conditions (six in total) that entail the non-reconstruction property.
Subsequently we are going to show that the random formulas $\hat\PHI$ and $\tilde\PHI$ enjoy these properties with high probability.

Then we will need some information on the local structure of a formula $\Phi$. For a variable node $x \in V$ and $t \geq 0$ we let $\ppartial^{(t)} \Phi(x)$ denote the $t$-neighborhood of $x$ in $\Phi$. For a tree $T \in \widetilde{\cT}_{2\w+2}$ (defined in \Sec~\ref{sec_finite_w_approx}) we define the empirical density of $T$ by
$$\rho_{\Phi}(T)= \frac{1}{n} \sum_{i \in [n]} \mathbf{1}_{\ppartial^{(2\w+2)} \Phi(x_i)\ism T}.$$
We shall say that a regular $k$-SAT formula $\Phi$ satisfies property {\bf Local Structure} if the following is true, for every $\w$ large enough.
\begin{description}
\item[{\bf Local Structure}]  $\forall T \in \widetilde{\cT}_{2\w+2}, \ \rho_{\Phi}(T) \sim \widetilde{p}_{k,d,\beta}^{(2\w+2)}(T).$ \end{description}

We shall also demand that $\Phi$ satisfies the {\bf Cycles} property: 
\begin{description}
\item[{\bf Cycles}]  There are $o(\sqrt n)$ cycles of length at most $\sqrt{\ln n}$. \end{description}

In order to proceed further, we need to introduce a few more notations, similar to the ones that we used in \Sec~\ref{sec_galton_watson}. Let $\Phi$ be fixed and $V$ denote its set of vertices, $F$ denote its set of edges and $E$ denote its set of (undirected) edges. For $x \in V$ (resp. $a \in F$), we let 
\begin{align*}
\partial_{1}x&= \{a \in \partial x, b_{a,x}=-1 \}, \qquad  \partial_{-1} x = \{a \in \partial x,  b_{a,x}=1 \}, \qquad\partial_{l}x& =\left \{ a \in \partial x, | \{ y \in \partial a \setminus \{x\}, b_{a,y} = 1 \}| = l \right\},
\\ \partial_1 a &= \{x \in \partial a, b_{a,x} = -1\}, \qquad \partial_{-1} a = \{x \in \partial a, b_{a,x}=1\}.  \end{align*}
We also introduce $\partial_{1,l}x= \partial_{1}x\cap \partial_l x$,  $\partial_{-1,l}x= \partial_{-1} x \cap \partial_l x$, and for $a_0 \in \partial x$ fixed, 
 $\partial_1(x,a_0) = \partial_{1} x \setminus \{a_0\}$, $\partial_{-1}(x,a_0) = \partial_{-1} x \setminus \{a_0\}$, $\partial_{1,l}(x,a_0) = \partial_{1,l} x \setminus \{a_0\}$, $\partial_{1,l}(x,a_0) = \partial_{1,l} x \setminus \{a_0\}$.
 
We define the {\em $\lambda$-core} of $\Phi$ (in symbols: $\core_\lambda(\Phi)$) as the largest set $W$ of variables such that all $x\in W$ satisfy the following conditions.
\begin{description}
\item[CR1] there are at least $k (1 - \frac{\lambda^{-1}}{100})$ clauses $a\in\partial_1x$ such that $\partial_1a=\{x\}$.
\item[CR2] there are no more than $k \exp(-\beta) \left(1+ \frac{\lambda}{100}\right)$ clauses $a\in\partial x$ such that $|\partial_{-1}a|=k$.
\item[CR3] for any $1\leq l \leq k$ the number of $a\in\partial_{-1} x$ such that $|\partial_1a| = l$ is bounded by $\lambda k^{l+3} / l!\enspace$.
\item[CR4] there are no more than $\lambda k^{3/4}$ clauses $a\in\partial_{1}x$ such that $|\partial_1a|=1$ but $\partial a\not\subset W$.
\item[CR5]  there are no more than $\lambda k^{3/4}$ clauses $a\in\partial_{-1}x$ such that $|\partial_{-1}a|<k$ and $|\partial_1 a \setminus W| \geq |\partial_1 a|/4$.
\end{description}
The $\lambda$-core is well-defined; for if $W,W'$ satisfy the above conditions, then so does $W\cup W'$.
Further, if $\lambda<\lambda'$, then $\core_{\lambda}(\Phi)\subset\core_{\lambda'}(\Phi)$. Also note the similarity with the trunk of trees defined in \Sec~\ref{sec_proof_prop_contraction_tree_tensor}.
We say that $\Phi$ satisfies the property {\bf Core} if and only if
\begin{description}
\item[{\bf Core}] $|\core_{1/2}(\Phi)| \geq (1-2^{-0.95})n$. \end{description}

Our aim will be to identify a large set $V_{\rm good} \subset V$ of vertices whose value under a typical assignment in the cluster is unlikely to be very far from the planted one.
A first candidate for vertices whose marginal may go wrong are those which do not belong to the $1$-core of $\Phi$. Yet, we are not guaranteed that vertices in the core have marginals sufficiently close to $\mu^{(0)}$. For instance, if the marginals of most of the neighbors of a given vertex $x \in \core_1(\Phi)$ went astray, there would be no reason for $x$'s marginal not to go astray itself. However, we see that the vertices in the core whose marginals are not what we think they should be must clump together. We say that a set $S \subset V$ is $\lambda$-\textit{sticky} if and only if for all $x \in S$, one of the following conditions holds true.
\begin{description}
\item[ST1] there are at least $\lambda k^{3/4}$ clauses $a\in\partial_1x$ such that $\partial_1a=\{x\}$ and $\partial_{-1}a\cap S\neq\emptyset$. 
\item[ST2] there are at least $\lambda k^{3/4}$ clauses $a\in\partial_{-1}x$ such that $|\partial_{-1} a| < k$ and $|\partial_1a\cap S|\geq |\partial_1a|/4$.
\end{description}

We say that $\Phi$ satisfies the property {\bf Sticky} if and only if
\begin{description}
\item[{\bf Sticky}] $\Phi$ has no $1/2$ sticky set of size between $2^{-0.95k}n$ and $2^{-k/20}n$. \end{description}

Finally, say that a variable $x \in V$ is {\em $(\eps,2\w)$-cold} if the following is true. Let $T=\ppartial^{2\w} x$.
Then $T$ is a tree.
Moreover, if we choose a boundary condition $\tau$ such that
	\begin{itemize}
	\item the values of variables $y$ that do not belong to the core are chosen adversarially,
	\item the values of the other variables are chosen i.i.d.\ such that the probability of $-1$ equals $\exp(-2\beta)$,
	\item subsequently an adversary is allowd to change some of the $-1$s to $+1$s,
	\end{itemize}
then with this boundary condition the BP marginal at the root of the tree is within $\eps$ of $\mu_T$ in total variation distance.

We say that $\Phi$ satisfies the property {\bf $(\eps,2\w)$-Cold} iff
\begin{description}
\item[{\bf $(\eps,2\w)$-Cold}] All but $\eps n$ variables are $(\eps,2\w)$-cold. \end{description}

We say that a formula $\Phi$ is {\em $(\eps, \w)$-tame} iff the properties {\bf Local Structure}, {\bf Cycles}, {\bf Core}, {\bf Sticky} and {\bf $(\eps,2\w)$-Cold} hold. Planted formulas are likely to be tame.

\begin{proposition} \label{proposition_typical_properties} For any $\eps >0$, there is $\w >0$ such that \whp~ $\vec \hPhi$ is $(\eps,\w)$-tame.
 \end{proposition}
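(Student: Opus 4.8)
The plan is to establish each of the five properties in the definition of $(\eps,\w)$-tameness---\textbf{Local Structure}, \textbf{Cycles}, \textbf{Core}, \textbf{Sticky}, and \textbf{$(\eps,2\w)$-Cold}---separately, and then conclude by a union bound over these finitely many events. The workhorse throughout is the explicit description of the planted model: conditioning on $\hat\SIGMA=\vecone$ (which we may do by symmetry), $\vec\hPhi$ is the configuration-model bijection sampled with probability proportional to $\exp(-\beta\cdot\#\{\text{clauses all of whose }k\text{ slots are negative-occurrence clones}\})$. Thus $\vec\hPhi$ is an inhomogeneous regular formula in which ``all-negative'' clauses are discounted, and Sections~\ref{sec_galton_watson}--\ref{sec_operator_tree} show that its local structure is governed by the process $\mathrm{GW}'(k,d,\beta,\cdot)$ with branching parameter $q$ of~(\ref{eq_def_q}). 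This is what makes first- and second-moment computations for \emph{local} quantities in $\vec\hPhi$ tractable, despite $\vec\hPhi$ reweighting formulas by the exponentially large partition function.

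First I would prove \textbf{Local Structure} (which simultaneously yields the auxiliary lemma $\rho_{\Phi}(T)\sim\widetilde p_{k,d,\beta}^{(2\w+2)}(T)$). For fixed $T\in\widetilde\cT_{2\w+2}$, a direct first-moment computation---exposing the depth-$(2\w+2)$ neighborhood of a uniformly random variable clone one incidence at a time---gives $\Erw[\rho_{\vec\hPhi}(T)]\to\widetilde p_{k,d,\beta}^{(2\w+2)}(T)$, because the discount on all-negative clauses produces exactly the biased offspring laws (i)'--(iv) of $\mathrm{GW}'$. Concentration of $\rho_{\vec\hPhi}(T)$ around its mean then follows from a bounded-difference inequality applied to the clone-revealing martingale (a single clone swap changes $\rho_{\vec\hPhi}(T)$ by $O_\w(1/n)$), and a union bound over the finitely many relevant trees finishes this part. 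The \textbf{$(\eps,2\w)$-Cold} property then follows from Section~\ref{sec_galton_watson}: by \textbf{Local Structure}, w.h.p.\ all but $o(n)$ variables have a $2\w$-neighborhood distributed as $\vec T'$ under $\mathrm{GW}'$, and for such a variable the boundary condition $\tau$ in the definition of coldness has, after the adversary's allowed moves, independent ``core'' coordinates with $\Prob[\tau(y)=-1]=\exp(-2\beta)<2^{-0.9k}$ and at most a bounded fraction of ``non-core'' coordinates set arbitrarily. Since raising a boundary value from $-1$ to $+1$ only increases $\partial\nu_y(1)$, hypothesis \textbf{H'} holds on the core, and \Prop~\ref{prop_good_trees_for_our_setting} (whose proof rests on the coldness estimate \Prop~\ref{prop_cold_trees_tensor_a}), adapted to tolerate the small fraction of arbitrary leaf values via the slack in \textbf{TR1}--\textbf{TR5}, shows that for $\w$ large the root BP marginal is within $\eps$ of $\mu_T$ with probability $1-o_\w(1)$ over the choice of variable; a Markov bound over variables then gives that all but $\eps n$ variables are $(\eps,2\w)$-cold w.h.p.

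The properties \textbf{Cycles}, \textbf{Sticky}, and \textbf{Core} are first-moment statements. For \textbf{Cycles}: the expected number of cycles of length $\le\sqrt{\ln n}$ in the configuration model is $n^{o(1)}$, and inserting or deleting a short cycle changes $\Erw[Z_\beta]$ by at most a bounded factor, so the same bound survives the planted reweighting and Markov's inequality yields $o(\sqrt n)$ such cycles. For \textbf{Sticky}: for a candidate $S$ with $|S|=s$, conditions \textbf{ST1}/\textbf{ST2} force, at every $x\in S$, roughly $k^{3/4}$ clauses all of whose relevant variables lie in $S$; exposing incidences shows this has probability $(s/n)^{\Omega(k^{3/4})}$ per vertex, essentially independently across $S$, so $\Prob[S\text{ is }\tfrac12\text{-sticky}]\le\exp(-\Omega(k^{3/4}s))$, whence $\sum_s\binom ns\exp(-\Omega(k^{3/4}s))=o(1)$ over $s\in[2^{-0.95k}n,2^{-k/20}n]$ (a coarser first moment disposes of $s$ above this range). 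For \textbf{Core}: the complement of $\core_{1/2}(\vec\hPhi)$ decomposes into the set $B$ of variables violating one of the intrinsic conditions \textbf{CR1}--\textbf{CR3}---of expected size $2^{-\Omega(k)}n$ by the $\mathrm{GW}'$ offspring law---together with the vertices peeled off via the $W$-dependent conditions \textbf{CR4}--\textbf{CR5}; the latter set is by construction $\tfrac12$-sticky, so a branching-process argument seeded by $B$ shows the peeling does not percolate and $|V\setminus\core_{1/2}(\vec\hPhi)|\le 2^{-0.95}n$ w.h.p.

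I expect the main obstacle to be the interplay of the two sources of difficulty. On one hand, the planted reweighting by $Z_\beta$ is exponential in $n$ and, near $\betac$, heavy-tailed, so w.h.p.\ facts for the plain regular model cannot be transferred naively---every moment estimate above must be carried out directly in the inhomogeneous planted model, using the branching parameter $q$. On the other hand, the first-moment bounds for \textbf{Sticky} and \textbf{Core} have to be executed carefully enough that the $k^{3/4}$-type decay genuinely beats the $\binom ns$ entropy term uniformly over the relevant range of $s$, which is where the precise thresholds $2^{-0.95k}$ and $2^{-k/20}$ enter. A secondary technical point is checking that the ``adversarial'' boundary conditions in the definition of $(\eps,2\w)$-coldness really fall under the robust hypotheses of Section~\ref{sec_galton_watson}; this is where the slack in conditions \textbf{TR1}--\textbf{TR5}/\textbf{CR1}--\textbf{CR5}, and in particular the fact that the non-core boundary variables form only a small fraction, is essential.
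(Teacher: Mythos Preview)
Your decomposition into the five defining properties matches the paper, and your treatment of \textbf{Local Structure}, \textbf{Cycles}, \textbf{Core} and \textbf{Sticky} is essentially the paper's (the latter two are packaged there via the expansion conditions (\ref{tech_cond_0})--(\ref{tech_cond_2}) and a whitening process, \Lem~\ref{lemma_aux_strongly_safe}, but the underlying first-moment bounds are the ones you sketch).

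The genuine gap is in your argument for \textbf{$(\eps,2\w)$-Cold}, where two ingredients are missing.

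\emph{Decoupling core membership from the local tree.} To invoke \Prop~\ref{prop_good_trees_for_our_setting} you need the boundary condition to satisfy \textbf{H'}, i.e.\ each boundary vertex $y$ is ``bad'' with probability $\leq 2^{-0.9k}$ conditionally on the rest of the boundary and, crucially, on the tree $T=\ppartial^{2\w}x$ itself. But whether $y$ lies in $\core_1(\vec\hPhi)$ is a global property of the formula and is correlated with $T$; the global bound $|V\setminus\core_{1/2}|\leq 2^{-\Omega(k)}n$ from \textbf{Core} does not control this conditional probability, and ``adapting \Prop~\ref{prop_good_trees_for_our_setting} to tolerate the small fraction of arbitrary leaf values'' presupposes exactly the decoupling you have not established. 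The paper handles this with an edge-switching argument (\Lem~\ref{lemma_aux_good_3}): swap the clone linking $y$ to the interior of $T$ with the clone of a uniformly random vertex; this makes $y$ look like a uniform vertex from the viewpoint of core membership, at the cost only of the slack between $\core_{1/2}$ and $\core_1$, and yields $\hProb[y\notin\core_1\setminus S_1\mid\cD,\cF_y]\leq 2^{-0.95k}$. You correctly flag this as a ``secondary technical point'' in your last paragraph, but it is in fact the crux.

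\emph{Concentration of the non-cold count.} Even once each variable is $(\eps,2\w)$-cold with probability $1-o_\w(1)$, a Markov bound on the number $\vec Y$ of non-cold variables gives only $\hProb[\vec Y>\eps n]\leq o_\w(1)/\eps$, a bound that does not tend to $0$ with $n$ and hence does not deliver the required ``w.h.p.\ as $n\to\infty$'' for fixed $\w$. Because coldness of $x$ depends on global core membership of boundary vertices, a bounded-difference inequality is not available either. The paper instead bounds the $\alpha$-th factorial moment $\hErw[\vec Y(\vec Y-1)\cdots(\vec Y-\alpha+1)]$ for a slowly diverging $\alpha=\alpha(n)$: the switching lemma applies simultaneously to $\alpha$ variables whose $5\w$-balls are disjoint trees, giving $(y_\w n)^\alpha$ and hence $\hProb[\vec Y>3y_\w n]\leq 2^{-\alpha(n)}\to 0$.
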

 
 Similarly, formulas from the planted replica model are likely to be tame as well. 
\begin{proposition} \label{proposition_typical_properties_planted} For any $\eps >0$, there is $\w >0$ such that \whp~ $\vec \hhPhi$ is $(\eps,\w)$-tame.
 \end{proposition}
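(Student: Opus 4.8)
The plan is to exploit the two features that distinguish the planted replica model from the plain planted model: step {\bf PR4} forces the radius-$(4\w+1)$ neighbourhood of every clause clone of $\tilde\PHI$ to be isomorphic to that of $\hat\PHI$, so that all ``sufficiently local'' tameness conditions carry over from $\hat\PHI$ by \Prop~\ref{proposition_typical_properties}; and, conditionally on $\hat\PHI$ and on the spin vector $\tilde\SIGMA$ produced in {\bf PR2}--{\bf PR3}, the formula $\tilde\PHI$ is uniformly distributed among the configuration-model formulas compatible with these data, so the remaining, global, conditions can be verified by the same first-moment computations as for $\hat\PHI$. Accordingly I would establish the five requirements in the order {\bf Local Structure}, {\bf Cycles}, {\bf Core}, {\bf Sticky}, {\bf $(\eps,2\w)$-Cold}.

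Because {\bf PR4} preserves all radius-$(4\w+1)$ neighbourhoods of clause clones --- and hence the depth-$(2\w+2)$ neighbourhood of every variable --- the empirical densities satisfy $\rho_{\tilde\PHI}(T)=\rho_{\hat\PHI}(T)$ for every $T\in\widetilde\cT_{2\w+2}$, so {\bf Local Structure} is immediate from \Prop~\ref{proposition_typical_properties} and the local-convergence statement for $\hat\PHI$. For {\bf Cycles}, {\bf Core} and {\bf Sticky} I would first justify the conditional-uniformity claim: one checks that, given the revealed neighbourhoods and the (product-distributed) assignment $\tilde\SIGMA$, a compatible pairing exists with probability bounded away from $0$ --- again a consequence of the local structure of $\hat\PHI$ --- so the rejection loop ``start over from {\bf PR2}'' terminates and inflates the probability of any event by at most a bounded factor; conditionally on acceptance, $\tilde\PHI$ is a uniformly random configuration-model formula subject to finitely many local constraints and to $\tilde\SIGMA$. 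Given this, {\bf Cycles} is the routine first-moment estimate that the expected number of cycles of length at most $\sqrt{\ln n}$ is $o(\sqrt n)$; {\bf Core} follows from the branching/expansion argument that the peeling process defining $\core_{1/2}$ removes only a $2^{-\Omega(k)}$-fraction of the variables; and {\bf Sticky} follows from the union bound over candidate sets of size between $2^{-0.95k}n$ and $2^{-k/20}n$. In each case the only model-dependent inputs are the local probabilities that a clause clone realises the patterns occurring in CR1--CR5 and ST1--ST2, and by {\bf PR4} together with the Galton--Watson description of \Sec~\ref{sec_galton_watson} these coincide with the corresponding quantities for $\hat\PHI$, so the exponents are the same.

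Finally, for {\bf $(\eps,2\w)$-Cold} I would argue that whether a variable $x$ is $(\eps,2\w)$-cold is determined by $\ppartial^{2\w}x$, by which of its variables lie in $\core_{1/2}(\tilde\PHI)$, and by the tree-level contraction analysed in \Sec~\ref{sec_galton_watson}. Indeed, the boundary condition in the definition of coldness --- non-core variables adversarial, the rest equal to $-1$ with probability $\exp(-2\beta)$ and then some $-1$s flipped to $+1$s --- is precisely of the type covered by \Prop~\ref{prop_good_trees_for_our_setting}: its hypothesis {\bf H'} holds because the relevant boundary marginals are skewed for all but $\eps n$ variables (which, in turn, follows from the coldness of $\hat\PHI$ and \Lem~\ref{lemma_if_frozen_then_cold}, asserting that a message leaving a trunk variable places mass at least $1-\exp(-k\beta/2)$ on $+1$), and the monotonicity clause of \Prop~\ref{prop_good_trees_for_our_setting} absorbs the adversarial flips. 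Since $\ppartial^{4\w+1}$ agrees with that of $\hat\PHI$ around every clause and, by the {\bf Core} bound already established for both formulas, the core membership of the variables in a typical radius-$2\w$ ball agrees as well, choosing $\w=\w(\eps)$ large enough to push the $o_\w(1)$ error in \Prop~\ref{prop_good_trees_for_our_setting} below $\eps$ shows that all but $\eps n$ variables of $\tilde\PHI$ are $(\eps,2\w)$-cold w.h.p.

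The step I expect to require the most care is the conditional-uniformity claim underpinning the second paragraph: one must rule out degeneracy of the rejection sampling in {\bf PR4} and show that conditioning on the \emph{global} spin vector $\tilde\SIGMA$, rather than merely on bounded-depth data, does not perturb the configuration-model statistics enough to weaken the exponents in the {\bf Sticky} and {\bf Core} estimates. Once this is in place, each of the five verifications is a direct adaptation of the corresponding step in the proof of \Prop~\ref{proposition_typical_properties}.
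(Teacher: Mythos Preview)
Your decomposition into the five tameness ingredients and your treatment of \textbf{Local Structure} and \textbf{Cycles} match the paper's. The substantive differences lie in \textbf{Core}/\textbf{Sticky} and, more seriously, in \textbf{Cold}.

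For \textbf{Core} and \textbf{Sticky} the paper does not go through conditional uniformity. Instead it observes (\Lem~\ref{lemma_wvhp_implies_whp}) that the marginal law of $\tilde\PHI$ is the law of $\hat\PHI$ reweighted by $\exp(nB_{\Phi,\w})$, and that on a typical event this weight lies between $\exp(-2^{-0.999k}n)$ and $\exp(2^{-0.999k}n)$. Hence any event failing for $\hat\PHI$ with probability at most $\exp(-2^{-0.99k}n)$ --- which the expansion conditions (\ref{tech_cond_0})--(\ref{tech_cond_2}) underlying \textbf{Core} and \textbf{Sticky} do, by \Prop~\ref{prop_typical_properties_splitted} --- also fails with $o(1)$ probability for $\tilde\PHI$. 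This completely sidesteps the step you yourself flag as most delicate: no first-moment calculation is redone under the conditioning on $\tilde\SIGMA$, and the question of whether that conditioning perturbs the exponents never arises.

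For \textbf{Cold} there is a genuine gap. To apply \Prop~\ref{prop_good_trees_for_our_setting} at a root $x$ you must verify hypothesis~\textbf{H$'$}, which is a \emph{conditional} bound: for every boundary variable $y$ of the depth-$2\w$ tree around $x$, the probability that $y\notin\core(\tilde\PHI)$, \emph{given the tree and the core status of the other boundary variables}, is at most $2^{-0.9k}$. The global estimate $|\core_{1/2}(\tilde\PHI)|\geq(1-2^{-0.95k})n$ is only a marginal statement about a uniformly random vertex and does not deliver this; nor does your appeal to ``the coldness of $\hat\PHI$ and \Lem~\ref{lemma_if_frozen_then_cold}'', which controls BP messages on a fixed tree rather than the distribution of core membership in $\tilde\PHI$ (and concerns $\hat\PHI$, whose core need not coincide with that of $\tilde\PHI$ even though the local structures agree). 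The paper supplies the missing conditional decoupling via an edge-switching argument (\Lem s~\ref{lemma_aux_planted_good_1}--\ref{lemma_aux_planted_good_3}): one swaps the edge from $y$ toward $x$ with the edge at a random vertex of matching local type and colour, so that the core membership of $y$ inherits the \emph{unconditional} statistics, uniformly in the revealed data. Finally, even with \textbf{H$'$} established variable by variable, converting the per-variable $o_\w(1)$ failure probability into a statement that holds w.h.p.\ as $n\to\infty$ requires the factorial-moment computation with $\alpha=\alpha(n)\to\infty$ that the paper carries out; Markov's inequality alone gives a bound that is $o_\w(1)$ but not $o_n(1)$.
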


We prove \Prop s~\ref{proposition_typical_properties} and~\ref{proposition_typical_properties_planted} in \Sec~\ref{sec_typical_properties}.
We are going to show that $(\eps,\w)$-tame formulas have the non-reconstruction property. In the rest of this section, we assume that $\Phi$ is $(\eps,\ell)$-tame.
Let us briefly write $\Bck\nix=\Bck\nix_{\Phi,\beta}$ and $\bck\nix=\bck\nix_{\Phi,\beta}$.

We say that a set $T \subset\core_1(\Phi)\setminus S_1(\Phi)$ is \emph{$\sigma$-closed} if for any $x\in T$ and all $a\in\partial x$
we have $$\{y\in\partial a\cap\core_1(\Phi)\setminus S(\Phi):\sigma(y)=-1\}\subset T.$$
Moreover, for a clause $b$ we say $T \subset\core_1(\Phi)\setminus S(\Phi)$ is \emph{$(\sigma,b)$-closed} if
the above holds for all $x\in T$ and all $a\in\partial x\setminus b$.

\begin{lemma}\label{Lemma_flip}
Suppose that $\Phi$ is $(\eps,\ell)$-tame.
Then for any $\sigma$ such that $\vecone\cdot\SIGMA\geq(1-2^{-k/2})n$
and for any $(\sigma,b)$-closed set $T \subset\core_1(\Phi)\setminus S(\Phi)$ the following is true.
Let $\tilde\sigma(x)=(-1)^{\vecone\{x\in T\}}\sigma(x)$.
Then
	 \beq \label{eq_claim_victor_expansion_energy} E_{\Phi}(\widetilde{\sigma}) \leq E_{ \Phi}(\sigma) - k^{3/4} |T|. \eeq
\end{lemma}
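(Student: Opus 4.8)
The plan is to order the variables of $T$ by a peeling procedure and flip them one at a time, bounding the energy change at each flip. Throughout we use that the $\sigma$-closed sets that occur consist of variables $x$ with $\sigma(x)=-1$ (this is how closed sets arise in the applications of the lemma), so that $|T|\le|\{x:\sigma(x)=-1\}|\le 2^{-k/2-1}n<2^{-k/20}n$ by the hypothesis $\vecone\cdot\sigma\ge(1-2^{-k/2})n$. Recall that $\partial_1 x$ consists of the clauses in which $x$ occurs positively, i.e.\ whose literal of $x$ is false under $\sigma$ (as $\sigma(x)=-1$) but becomes true once $x$ is flipped to $+1$. Call $a\in\partial_1 x$ with $a\ne b$ \emph{private to $x$} if $\partial_1 a=\{x\}$, i.e.\ all other $k-1$ variables of $a$ occur negatively in $a$. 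The basic mechanism is: once $x$ is flipped to $+1$, every private clause of $x$ becomes (and remains) satisfied; and such an $a$ can be satisfied \emph{before} the flip only through a variable $y\in\partial_{-1}a$ with $\sigma(y)=-1$, which by $(\sigma,b)$-closedness (applied to $x\in T$, $a\in\partial x\setminus b$) lies outside $\core_1(\Phi)$, or in $S(\Phi)$, or already in $T$.

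First I would fix the ordering. Starting from $R:=T$, repeatedly delete a variable $x\in R$ that is neither \textbf{ST1}-sticky nor \textbf{ST2}-sticky relative to $R$ — that is, $x$ has fewer than $\tfrac12 k^{3/4}$ private clauses $a$ with $\partial_{-1}a\cap R\ne\emptyset$, and fewer than $\tfrac12 k^{3/4}$ clauses $a\in\partial_{-1}x$ with $|\partial_{-1}a|<k$ and $|\partial_1a\cap R|\ge|\partial_1a|/4$. If this got stuck with $R\ne\emptyset$, then every $x\in R$ would satisfy \textbf{ST1} or \textbf{ST2} relative to $R$, so $R$ would be a $\tfrac12$-sticky set; since $R\subseteq T\subseteq\core_1(\Phi)\setminus S_1(\Phi)$ and $|R|<2^{-k/20}n$, the property \textbf{Sticky} forbids $|R|\in[2^{-0.95k}n,2^{-k/20}n]$, while $S_1(\Phi)$ being a maximal small $\tfrac12$-sticky set forces any $\tfrac12$-sticky subset of $\core_1(\Phi)\setminus S_1(\Phi)$ of size below $2^{-0.95k}n$ to be empty — a contradiction. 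Hence the peeling exhausts $T$, giving an enumeration $T=\{y_1,\dots,y_m\}$ ($m=|T|$) in which $y_j$ is removed while the remaining set is $T_{\ge j}:=\{y_j,\dots,y_m\}$; thus $y_j$ has fewer than $\tfrac12 k^{3/4}$ private clauses meeting $T_{\ge j}$ in $\partial_{-1}$, and fewer than $\tfrac12 k^{3/4}$ clauses $a\in\partial_{-1}y_j$ with $|\partial_{-1}a|<k$ and $|\partial_1a\cap T_{\ge j}|\ge|\partial_1a|/4$.

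Now I would flip in the order $y_1,\dots,y_m$: let $\sigma^{(0)}=\sigma$ and let $\sigma^{(j)}$ be obtained from $\sigma$ by flipping $y_1,\dots,y_j$, so $\sigma^{(m)}=\widetilde\sigma$ and $E_\Phi(\widetilde\sigma)-E_\Phi(\sigma)=\sum_{j=1}^{m}\bigl(E_\Phi(\sigma^{(j)})-E_\Phi(\sigma^{(j-1)})\bigr)$, and only clauses containing $y_j$ change at step $j$. For the gain: among the $\ge k(1-\tfrac1{100})$ private clauses of $y_j$ provided by \textbf{CR1}, discard the clause $b$, the $\le k^{3/4}$ with $\partial a\not\subseteq\core_1(\Phi)$ (\textbf{CR4}), the $\le\tfrac12 k^{3/4}$ with $\partial_{-1}a\cap S(\Phi)\ne\emptyset$ (\textbf{ST1} fails for $y_j\notin S_1(\Phi)$), and the $\le\tfrac12 k^{3/4}$ with $\partial_{-1}a\cap T_{\ge j}\ne\emptyset$ (the ordering). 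For every surviving private clause $a$, each $y\in\partial_{-1}a$ has $\sigma^{(j-1)}(y)=+1$ — if $y\notin T$ then $y$ is core and non-sticky, so closedness gives $\sigma(y)=+1$; if $y\in T$ then $y\in T_{<j}$, hence already flipped to $+1$ — so $a$ is unsatisfied under $\sigma^{(j-1)}$ and satisfied under $\sigma^{(j)}$, contributing $-1$, and there are at least $k(1-\tfrac1{100})-2k^{3/4}-1$ such clauses. For the loss: flipping $y_j$ can send a clause $a$ from satisfied to unsatisfied only if $a\in\partial_{-1}y_j$ and all of $\partial_1a$ is false under $\sigma^{(j)}$, i.e.\ $\partial_1a\subseteq\{z:\sigma(z)=-1\}$ with $\partial_1a\cap T\subseteq T_{\ge j}$; then either $|\partial_{-1}a|=k$ (at most $k\exp(-\beta)(1+\tfrac1{200})\le k^{3/4}$ such $a$ by \textbf{CR2}, using $\beta\ge k\ln2-10\ln k$), or $|\partial_1 a\setminus\core_1(\Phi)|\ge|\partial_1 a|/4$ (\textbf{CR5}, $\le k^{3/4}$), or — since closedness puts $\partial_1a\cap\core_1(\Phi)\cap\{\sigma=-1\}$ into $S(\Phi)\cup T$ — either $|\partial_1 a\cap S(\Phi)|\ge|\partial_1a|/4$ or $|\partial_1 a\cap T_{\ge j}|\ge|\partial_1a|/4$, each of which holds for fewer than $\tfrac12 k^{3/4}$ clauses $a\in\partial_{-1}y_j$ (\textbf{ST2} fails for $y_j\notin S_1(\Phi)$, resp.\ the ordering). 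Hence at most $3k^{3/4}$ clauses contribute $+1$, so $E_\Phi(\sigma^{(j)})-E_\Phi(\sigma^{(j-1)})\le-\bigl(k(1-\tfrac1{100})-2k^{3/4}-1\bigr)+3k^{3/4}\le-k^{3/4}$ for $k\ge k_0$, and summing over $j$ gives $E_\Phi(\widetilde\sigma)\le E_\Phi(\sigma)-k^{3/4}|T|$.

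I expect the crux to be the second step: for an individual $x\in T$ there is no \emph{a priori} bound on how many of its private clauses are ``spoiled'' by \emph{other} flipped variables of $T$, and \textbf{CR1}--\textbf{CR5} say nothing about this. This is precisely what the sticky sets are designed to capture — the peeling exhibits an order in which each $y_j$ is spoiled (in both the \textbf{ST1} and \textbf{ST2} senses) by fewer than $\tfrac12 k^{3/4}$ of the variables still to be flipped, and the combination of the \textbf{Sticky} property, the smallness $|T|<2^{-k/20}n$, and the maximality of $S_1(\Phi)$ is what guarantees such an order exists. The remaining estimates (the loss bound, and the elementary observation that only clauses containing $y_j$ change) are routine, but the loss bound has to be carried out relative to $\sigma^{(j-1)}$ rather than $\sigma$, which is the reason for performing the flips one at a time.
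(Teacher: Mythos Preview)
Your argument is correct and is essentially the contrapositive organisation of the paper's own proof. The paper runs a greedy process that flips any $i\in V_t$ whose flip lowers $E_\Phi$ by at least $k^{3/4}$, and shows that if this process halts with $V_\tau\ne\emptyset$ then $V_\tau$ is $1$-sticky (using \textbf{CR1}--\textbf{CR5} exactly as you do), which contradicts the maximality of $S(\Phi)$ inside $\core_1(\Phi)\setminus S(\Phi)$; you instead peel $T$ by non-stickiness first and then verify the energy drop for each flip in the resulting order. The two arguments are logically equivalent (``cannot flip with gain $\ge k^{3/4}\Rightarrow$ sticky'' versus ``not sticky $\Rightarrow$ can flip with gain $\ge k^{3/4}$''), and your more explicit bookkeeping of the gain/loss terms matches the inequality the paper writes down when showing $V_\tau$ is sticky.
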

\begin{proof}
Consider the following process:
	\begin{itemize}
	\item Let $\sigma_{0} = \sigma$, $V_0 = T$ and $U_0 = \sigma^{-1}(-1) \setminus V_0$.
	\item While there is $i_t \in V_t$ such that $E_{\Phi} ((-1)^{\vecone\{\nix=i_t\}}\sigma_t(\nix)) \leq E_{\Phi}(\sigma_t) - k^{3/4}$, 
			pick one such $i_t$ uniformly at random and let $\sigma_{t+1}(\nix) = (-1)^{\vecone\{\nix=i_t\}}\sigma_t(\nix)$ and $V_{t+1} = V_t \setminus \{i_t\}$.
	\end{itemize}
Clearly,
	\beq \label{eq_aux_lemma_key_property_flipping_a}
	E_{\Phi}(\sigma_{t}) \leq E_{\Phi}(\sigma) - k^{3/4}t . \eeq
Let $\tau$ be the stopping time of this process and assume that $\tau < |T|$, or, in other words, that $V_\tau \neq \emptyset$. We claim that $V_\tau$ is a $1$-sticky set. Indeed, because $T$ is $\sigma$-closed for $i \in V_\tau$ we have
	\begin{align*}
		-k^{3/4} \leq E_{\Phi}((-1)^{\vecone\{\nix=i\}}\sigma_t(\nix)) - E_{\Phi}(\sigma_{\tau}) \leq \vecone\{b\in\partial i\}-
			& |\partial_{1,0}(i)| + | \{ a \in \partial_{1,0} i, \partial_{-1}a \cap (V_\tau \cup U_0) \neq \emptyset \}| 
		\\ &+| \partial_{-1,0}i| + | \cup_{1 \leq l \leq k} \{a \in \partial'_{-1, l}, \partial_1 a \subset V_\tau\cup U_0)\}|.
		\end{align*}
Because $i \in \core_1(\Phi)$ we have $|\partial_{1,0} i|  \geq k^{7/8}$, $|\partial_{-1,0} i | \leq 3$, $ | \{a \in \partial_{1,0}, \partial_{-1} a \cap U_0 \neq \emptyset \}| \leq {k^{3/4}}$ and $| \{ a \in \partial_{1,0} i, |\partial_{-1}a \cap U_0| \geq |\partial_{-1}a|/4\}| \leq k^{3/4}$. Therefore, one of the following must hold.
\begin{itemize}
\item[(a)] $ | \{a \in \partial_{1,0}, \partial_{-1} a \cap V_\tau \neq \emptyset \}| \geq {k^{3/4}}$,
\item[(b)] $| \{ a \in \partial_{1,0} i, |\partial_{-1}a \cap V_{\tau}| \geq |\partial_{-1}a|/4\}| \geq k^{3/4}$,
\end{itemize}
It follows that the set $V_\tau$ is $1$-sticky. However, $\core_1(\Phi) \setminus S_1(\Phi)$ cannot contain a $1$-sticky set of size $|V_\tau| \leq |T| \leq 2^{-k/20}$ as this would contradict the maximality of $S(\Phi)$. It follows that $\tau = |T|$, and therefore $\sigma_\tau = \tilde\sigma$, from which (\ref{eq_claim_victor_expansion_energy}) follows using (\ref{eq_aux_lemma_key_property_flipping_a}).
\end{proof}

\begin{fact}\label{Fact_treeCount}
For any variable $x$ the following is true.
Let $\gamma(x,L)$ be the number of trees of order $L$ rooted at $x$ that are contained in the factor graph of $\Phi$.
Then
	$\gamma(x,L)\leq L(100dk)^L.$
\end{fact}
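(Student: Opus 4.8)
The plan is to use the standard depth-first encoding of rooted subtrees, exploiting that the factor graph of $\Phi$ has bounded degree: variable nodes have degree $d$ and clause nodes degree $k$, so the maximum degree is $\Delta=\max\{d,k\}\le dk$. The number of rooted subtrees of a prescribed size is then controlled by a shape factor (for the depth-first walk) together with a factor of at most $\Delta$ per tree edge (for the choice of which neighbour to descend into).

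Concretely, I would fix a variable $x$ together with an arbitrary ordering of the neighbours of every node of the factor graph, and consider a tree $T$ of order $L$ — hence with $L-1$ edges — that is contained in the factor graph and rooted at $x$. Running a depth-first search of $T$ from $x$, always descending into the not-yet-visited children in increasing order of their index in the fixed ordering, produces a closed walk $x=w_0,w_1,\dots,w_{2(L-1)}=x$ traversing each edge of $T$ exactly once in each direction. I encode this walk by two pieces of data: the pattern in $\{\mathsf{D},\mathsf{U}\}^{2(L-1)}$ recording, at each step, whether the walk descends to a new vertex ($\mathsf{D}$) or returns to the parent ($\mathsf{U}$); and, for each of the $L-1$ positions marked $\mathsf{D}$, the index of the neighbour of the current vertex to which the walk descends. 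Reading this data from left to right reconstructs the walk, and hence $T$ with its root, so the map $T\mapsto(\text{pattern},\text{indices})$ is injective. Since there are at most $2^{2(L-1)}=4^{L-1}$ patterns and at most $\Delta$ choices at each of the $L-1$ descent steps, this yields $\gamma(x,L)\le(4\Delta)^{L-1}\le(4dk)^{L-1}\le(100dk)^{L}\le L(100dk)^{L}$, as claimed.

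I do not expect any real obstacle here: this is a routine bounded-degree subtree count. The only point needing a word of care is injectivity of the encoding, which is why I fix the neighbour ordering and make the depth-first search deterministic, so that each $T$ yields a single data tuple; and the slack between the constant $4$ that the argument actually produces and the constant $100$ demanded in the statement (as well as the gratuitous extra factor of $L$) means none of the estimates has to be tight.
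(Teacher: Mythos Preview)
Your argument is correct and entirely standard. The paper does not prove this statement at all: it is recorded as a \emph{Fact} and used as a black box, so there is no ``paper's own proof'' to compare with. Your depth-first encoding is exactly the kind of routine justification one would expect here, and the slack in the constants ($4$ versus $100$, and the superfluous factor $L$) is, as you note, immaterial.
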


Write $T(x,\sigma)$ for the smallest $\sigma$-closed set that contains $x$.
In other words, this is the $-1$-component in $\core_1(\Phi)\setminus S_1(\Phi)$ that $x$ belongs to.
If $\sigma(x)=1$ we let $T(x,\sigma)=\emptyset$.

\begin{lemma}\label{Lemma_skewedWhp}
If $\Phi$ is $(\eps,\ell)$-tame, then for all $x\in\core_1(\Phi)\setminus S_1(\Phi)$ we have 
	\begin{align*}
	&\Bck{\SIGMA(x)}\geq1-\exp(-\beta k^{3/4}/2)\quad\mbox{and}\quad\Bck{\vecone\{|T(x,\SIGMA)|>\ln\ln n\}}\leq1/\ln n.
	\end{align*}
\end{lemma}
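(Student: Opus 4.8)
The plan is to combine \Lem~\ref{Lemma_flip} with the tree-counting estimate of \Fact~\ref{Fact_treeCount} in a ``flip and count'' argument on the conditional Gibbs measure $\Bck\nix=\bck{\nix\,|\,\cC_{\Phi,\hat\sigma}(\beta)}_{\Phi,\beta}$; fix $x\in\core_1(\Phi)\setminus S_1(\Phi)$. First I would note that $\SIGMA(x)=-1$ holds exactly when $|T(x,\SIGMA)|\geq1$, and more precisely that $\{|T(x,\SIGMA)|=L\}$ is the event that the $-1$-component of $x$ inside $\core_1(\Phi)\setminus S_1(\Phi)$ has $L$ vertices, so that $\Bck{\SIGMA(x)}=1-2\sum_{L\geq1}\Bck{\vecone\{|T(x,\SIGMA)|=L\}}$ while $\Bck{\vecone\{|T(x,\SIGMA)|>\ln\ln n\}}=\sum_{L>\ln\ln n}\Bck{\vecone\{|T(x,\SIGMA)|=L\}}$. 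Both statements then reduce to a single bound on $\Bck{\vecone\{|T(x,\SIGMA)|=L\}}$ for each $L\geq1$.

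To obtain that bound I would fix $L$ and, for each configuration $\sigma$ in the cluster $\cC_{\Phi,\hat\sigma}(\beta)$ with $|T(x,\sigma)|=L$, define $\tilde\sigma$ by setting every variable of $T(x,\sigma)$ to $+1$. Since $T(x,\sigma)$ is $\sigma$-closed --- hence $(\sigma,b)$-closed for every clause $b$ --- and $\vecone\cdot\sigma$ is, inside the cluster, close enough to $n$ to meet the hypothesis of \Lem~\ref{Lemma_flip}, that lemma yields $E_\Phi(\tilde\sigma)\leq E_\Phi(\sigma)-k^{3/4}L$; and since turning $-1$'s into $+1$'s increases $\vecone\cdot\sigma$, $\tilde\sigma$ is again in the cluster. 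The map $\sigma\mapsto(\tilde\sigma,T(x,\sigma))$ is injective (flip $T(x,\sigma)$ back to $-1$ to recover $\sigma$), and $T(x,\sigma)$ is a connected vertex set of size $L$ rooted at $x$ in the factor graph of $\Phi$; adjoining its incident clauses and applying \Fact~\ref{Fact_treeCount} bounds the number of such sets by $(100dk)^{O(L)}$. Summing $\exp(-\beta E_\Phi(\sigma))\leq\exp(-\beta k^{3/4}L)\exp(-\beta E_\Phi(\tilde\sigma))$ over these $\sigma$ and dividing by the cluster partition function would then give $\Bck{\vecone\{|T(x,\SIGMA)|=L\}}\leq(100dk)^{O(L)}\exp(-\beta k^{3/4}L)$.

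To conclude, I would use $d\leq\dplus$ and $\beta\geq\beta_-(k,d)=k\ln2-10\ln k$: for $k\geq k_0$ the entropic factor $(100dk)^{O(1)}=2^{O(k)}$ is swamped by $\exp(-\beta k^{3/4})\leq2^{-\Omega(k^{7/4})}$, so $(100dk)^{O(1)}\exp(-\beta k^{3/4})\leq e^{-1}$ and the sum over $L$ is a convergent geometric-type series dominated by its $L=1$ term. This gives $\sum_{L\geq1}\Bck{\vecone\{|T(x,\SIGMA)|=L\}}\leq2\exp(-\beta k^{3/4})\leq\tfrac12\exp(-\beta k^{3/4}/2)$, hence $\Bck{\SIGMA(x)}\geq1-\exp(-\beta k^{3/4}/2)$; and $\sum_{L>\ln\ln n}\Bck{\vecone\{|T(x,\SIGMA)|=L\}}\leq\sum_{L>\ln\ln n}e^{-L}\leq1/\ln n$ for large $n$. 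The main obstacle will be checking the hypothesis $\vecone\cdot\sigma\geq(1-2^{-k/2})n$ of \Lem~\ref{Lemma_flip} for the configurations summed over: the cluster only gives $\vecone\cdot\sigma>(1-2^{-k/10})n$, so I expect to need a short preliminary step --- using the \textbf{Core} and \textbf{Sticky} properties of the tame formula, and if necessary a bootstrap that runs the same flipping argument at the coarser scale --- showing that the conditional Gibbs measure is concentrated on $\{\vecone\cdot\SIGMA\geq(1-2^{-k/2})n\}$. The remaining bookkeeping (that $\tilde\sigma$ stays in the cluster, that $|T(x,\sigma)|\leq2^{-k/20}n$ so \textbf{Sticky} can be invoked inside \Lem~\ref{Lemma_flip}, and the application of \Fact~\ref{Fact_treeCount} to the factor graph) should be routine.
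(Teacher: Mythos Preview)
Your proposal is correct and follows essentially the same route as the paper: fix $L$, flip the $\sigma$-closed set $T(x,\sigma)$ via \Lem~\ref{Lemma_flip} to gain an energy drop of $k^{3/4}L$, count the possible shapes of $T(x,\sigma)$ with \Fact~\ref{Fact_treeCount}, and sum the resulting bound $(100dk)^{O(L)}\exp(-\beta k^{3/4}L)\leq\exp(-0.99\beta k^{3/4}L)$ over $L$. The paper handles the hypothesis $\vecone\cdot\sigma\geq(1-2^{-k/2})n$ exactly as you anticipate---by first asserting (from tameness) that $\Bck{\vecone\cdot\SIGMA<n-2^{-k/2}n}\leq\exp(-\Omega(n))$, which disposes of all $\sigma$ outside that range and in particular of all $L>2^{-k/2}n$ in one stroke.
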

\begin{proof}
Let $N=2^{-k/2}n$.
Because $\Phi$ is tame we have $\Bck{\vecone\cdot\SIGMA< n-N}\leq\exp(-\Omega(n))$.
Therefore, $\Bck{\vecone\{|T(x,\SIGMA)|>N\}}\leq\exp(-\Omega(n))$.
Hence, let $t<N$ and let $\theta$ be a tree of order $t$ with root $x$ that is contained in the factor graph of $\Phi$
and whose vertices lie in $\core_1(\Phi)\setminus S_1(\Phi)$.
If $\sigma$ is such that $T(x,\sigma)=\theta$, then \Lem~\ref{Lemma_flip} implies that
$\tilde\sigma(x)=(-1)^{\vecone\{x\in T(x,\sigma)\}}\sigma(x)$ satisfies 
$E_{\Phi}(\widetilde{\sigma}) \leq E_{ \Phi}(\sigma) - k^{3/4}t$.
Consequently,
	\begin{align*}
		\frac{\bck{\vecone\{\SIGMA=\sigma\}}}{\bck{\vecone\{\SIGMA=\tilde\sigma\}}}\leq\exp(-\beta k^{3/4}t).
	\end{align*}
Hence, by Fact~\ref{Fact_treeCount}, the union bound and our assumptions on $\beta$ and $d$,
	\begin{align}\label{eqTreeUnionBound}
	\frac{\bck{\vecone\{|T(x,\SIGMA)|=t\}}}{\bck{\vecone\{\SIGMA(x)=1\}}}\leq t(100dk)^t\exp(-\beta k^{3/4}t)\leq\exp(-0.99\beta k^{3/4}t).
	\end{align}
This bound readily implies the second assertion.
To obtain the first assertion, we sum (\ref{eqTreeUnionBound}) over $1\leq t\leq N$.
\end{proof}

\begin{fact}\label{Fact_stochDom}
Let $q\in(0,1)$ and $L\geq1$.
Suppose that $\mu$ is a probability distribution on $\{\pm1\}^L$ such that for any $i\in[L]$ and any $y_1,\ldots,y_L\in\{\pm1\}$ we have
	\begin{align*}
	(1-q)\mu(y_1,\ldots,y_{i-1},-1,y_{i+1},\ldots,y_L)&\leq
		q\mu(y_1,\ldots,y_{i-1},1,y_{i+1},\ldots,y_L).
	\end{align*}
Furthermore, let $\nu$ be the distribution on $\{\pm1\}^l$ such that for all $y_1,\ldots,y_L\in\{\pm1\}$ we have
	\begin{align*}
	(1-q)\nu(y_1,\ldots,y_{i-1},-1,y_{i+1},\ldots,y_L)&=
		q\nu(y_1,\ldots,y_{i-1},1,y_{i+1},\ldots,y_L).
	\end{align*}
Moreover, let $B\subset\{\pm1\}^L$ be a set such that for all $b\in B$ and all $b'\geq b$ we have $b'\in B$.
Then $\mu(B)\geq\nu(B)$.
\end{fact}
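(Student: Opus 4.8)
The plan is to prove the stronger statement that $\mu$ stochastically dominates $\nu$ in the coordinatewise partial order on $\{\pm1\}^L$, i.e.\ that there is a coupling $(X,Y)$ of $\mu$ and $\nu$ with $X\ge Y$ coordinatewise; the asserted inequality $\mu(B)\ge\nu(B)$ for every up-set $B$ then follows at once, since $\{Y\in B\}\subseteq\{X\in B\}$. A preliminary observation is that the equality constraints defining $\nu$ fix the conditional odds of each coordinate given the others to be $(1-q):q$ and hence render the coordinates independent, so $\nu$ is simply the product measure with $\pr[y_i=1]=1-q$ for each $i$; equivalently, the hypothesis on $\mu$ states that the $\mu$-conditional probability of $y_i=1$ given the remaining coordinates is always at least $1-q$.

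First I would treat the base case $L=1$: there the hypothesis is $\mu(\{1\})\ge1-q=\nu(\{1\})$, the only nontrivial up-set is $\{1\}$, and the monotone coupling of two Bernoulli variables works. For the induction step, set $\alpha=\mu(y_1=1)$; summing the hypothesis for $i=1$ over $y_2,\dots,y_L$ gives $(1-q)(1-\alpha)\le q\alpha$, whence $\alpha\ge1-q>0$. Let $\mu_{+}$ be $\mu$ conditioned on $y_1=1$ and, when $\alpha<1$, let $\mu_{-}$ be $\mu$ conditioned on $y_1=-1$, both viewed as distributions on $\{\pm1\}^{L-1}$; dividing the hypothesis inequalities for the coordinates $i\ge2$ by $\alpha$ (respectively $1-\alpha$) shows that $\mu_{+}$ and $\mu_{-}$ again satisfy the assumptions of the \Fact\ with the same $q$. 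Let $\nu'$ be the product measure on $\{\pm1\}^{L-1}$ with $\pr[1]=1-q$, and observe that conditioning $\nu$ on \emph{either} value of $y_1$ returns $\nu'$.

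Now I would assemble the coupling in two stages. First couple $(X_1,Y_1)$ monotonically so that $X_1\ge Y_1$, which is possible exactly because $\alpha\ge1-q$. Then, conditionally on $X_1=s$, draw $(X_{2:L},Y_{2:L})$ from a monotone coupling of $(\mu_{s},\nu')$, which exists by the induction hypothesis applied to $\mu_{s}$ (in the degenerate case $\alpha=1$ the value $s=-1$ never occurs, so $\mu_{-}$ is not needed, while $\alpha=0$ is impossible). Because $\nu$ is a product measure the conditional law of $Y_{2:L}$ is $\nu'$ regardless of $Y_1$, so the construction is consistent, $Y\sim\nu$, $X\sim\mu$, and $X\ge Y$ coordinatewise.

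I do not expect a real obstacle here. The two points needing a little care are the bookkeeping that conditioning $\mu$ on its first coordinate preserves the hypothesis (a one-line division, to be checked for each value of $y_1$) and the degenerate value $\alpha=1$, handled trivially. It is worth noting that no monotonicity relation between $\mu_{+}$ and $\mu_{-}$ is required — this is precisely what makes the argument go through — and that the \Fact\ is a standard instance of stochastic domination of a one-sidedly constrained measure over a product measure, a special case of Holley's inequality.
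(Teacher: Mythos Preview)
Your argument is correct. The paper itself offers no proof of this \Fact, treating it as a standard stochastic-domination statement; your inductive monotone-coupling construction is a clean and complete justification, with the key point being that the product structure of $\nu$ makes the conditional law of $Y_{2:L}$ equal to $\nu'$ irrespective of which coupling (indexed by $X_1$) is used in the second stage, so the marginals come out right.
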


\begin{lemma}\label{Lemma_noRe}
Let $r$ be a variable for which the following conditions hold.
	\begin{enumerate}
	\item $r$ is $(\eps,\ell)$-cold.
	\item $r$ has distance at least $\ln^{1/3} n$ from any cycle of length at most $\sqrt{\ln n}$.
	\end{enumerate}
Let $\Gamma_r$ be the event that $\SIGMA$ is a good boundary condition for $r$.
Then
	$\bck{\vecone\{\SIGMA\not\in\Gamma_r\}}\leq2\eps.$
\end{lemma}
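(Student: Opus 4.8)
Write $T=\ppartial^{2\ell}r$ for the depth-$2\ell$ neighbourhood of $r$ and let $\partial V$ be the set of variables at distance exactly $2\ell$ from $r$. By hypothesis~(2) together with the {\bf Cycles} property, $T$ is a tree and $r$ lies far from any short cycle, so that the conditional law of $\SIGMA$ on the interior of $T$ given $\SIGMA|_{\partial V}$ is the tree (Belief Propagation) measure with that boundary condition. The plan is to use the definition of ``$(\eps,\ell)$-cold'' to reduce the assertion to a statement about $\SIGMA|_{\partial V}$ alone. That definition provides a family $G_r\subseteq\{\pm1\}^{\partial V}$ of \emph{good} boundary conditions with the following features: $G_r$ is an up-set for the coordinatewise order on the $+1$-indicators of the core variables in $\partial V$ (it is closed under the adversarial $-1\to+1$ flips allowed in the definition); for every $\tau\in G_r$ the BP marginal at $r$ in $T$ with boundary $\tau$ lies within $\eps$ of $\mu_T$; and $\mathbb Q(G_r)\ge1-\eps$, where $\mathbb Q$ is the product law that sets each core variable of $\partial V$ to $-1$ independently with probability $\exp(-2\beta)$. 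Since $\Gamma_r=\{\SIGMA:\SIGMA|_{\partial V}\in G_r\}$, it suffices to show that $\bck{\vecone\{\SIGMA|_{\partial V}\notin G_r\}}\le2\eps$.

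The heart of the argument is to show that, under $\bck{\nix}$, the vector $(\vecone\{\SIGMA(y)=+1\})_{y}$ indexed by $y\in\partial V\cap(\core_1(\Phi)\setminus S_1(\Phi))$ stochastically dominates a product of $\mathrm{Bernoulli}(1-\exp(-2\beta))$ variables. By \Fact~\ref{Fact_stochDom} (with $q=\exp(-2\beta)$) this amounts to the per-coordinate bound that, after conditioning on the values of the other core boundary variables and averaging over everything outside $T$, the conditional odds that $\SIGMA(y)=-1$ are at most $\exp(-2\beta)/(1-\exp(-2\beta))$. I would obtain this by running the flipping process of \Lem~\ref{Lemma_flip} inside the conditioned formula, started from the $(-1)$-component $T(y,\SIGMA)$ of $y$: as $y\in\core_1(\Phi)$, each flip lowers the energy, in total by $k^{3/4}|T(y,\SIGMA)|$, unless the process stalls on a $1/2$-sticky set, which the {\bf Sticky} and {\bf Core} properties exclude below size $2^{-k/20}n$; the event $|T(y,\SIGMA)|>\ln\ln n$ has $\bck{\nix}$-probability $O(1/\ln n)$ by \Lem~\ref{Lemma_skewedWhp}, and for bounded components a union bound over their shapes via \Fact~\ref{Fact_treeCount} converts the energy gain into the claimed odds bound (here $\beta\ge\beta_-(k,d)=k\ln2-10\ln k$ is more than large enough). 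Summed over $|\partial V|\le(100dk)^{2\ell}$ variables the $O(1/\ln n)$ exceptional terms contribute only $o(1)$.

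Granting the domination, since $G_r^{\mathrm c}$ is a down-set in the relevant $+1$-indicators (and $G_r$ is already defined to be insensitive to the non-core coordinates), one gets $\bck{\vecone\{\SIGMA|_{\partial V}\notin G_r\}}\le\mathbb Q(G_r^{\mathrm c})+o(1)\le\eps+o(1)$, which is $\le2\eps$ once $n$ is large, as desired.

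The main obstacle is the per-coordinate conditional estimate of the second paragraph. In \Lem~\ref{Lemma_skewedWhp} one flips an entire $\SIGMA$-closed subset of $\core_1(\Phi)\setminus S_1(\Phi)$ in the unconditioned Gibbs measure; here $y$ lies on $\partial V$, so some of its clauses extend into $\Phi$ outside $T$, the flip must respect the conditioning on the remaining boundary variables, and the $(-1)$-component $T(y,\SIGMA)$ could in principle reach a neighbouring boundary variable or leave the core. Showing that $T(y,\SIGMA)$ stays inside the core, disjoint from the components of the other boundary variables, and that the flipped configuration still obeys the conditioning --- this is where hypothesis~(2), {\bf Cycles}, {\bf Core} and {\bf Sticky} all come into play --- is the delicate part of the proof.
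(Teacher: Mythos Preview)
Your plan is essentially the paper's own argument: reduce to the boundary $\partial V$, establish a per--coordinate conditional odds bound by flipping the $(-1)$--component of a boundary variable via \Lem~\ref{Lemma_flip}, control the component sizes with \Lem~\ref{Lemma_skewedWhp} and \Fact~\ref{Fact_treeCount}, and then invoke \Fact~\ref{Fact_stochDom} to compare the Gibbs boundary law with the i.i.d.\ law $\mathbb Q$ from the definition of $(\eps,\ell)$--cold.

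The one point you flag as ``the delicate part'' without resolving is exactly where the paper spends its effort, and there is a concrete device you are missing. If you flip the full $\sigma$--closed set $T(y,\SIGMA)$, the component may propagate \emph{back into} the tree $T$ through the clause $a$ on the path from $y$ to $r$, and then it can indeed touch another boundary variable, destroying the conditioning. The paper's fix is to work with the $(\sigma,a)$--closed set $T(y,a,\SIGMA)$, i.e.\ the smallest set containing $y$ that is closed under all clauses in $\partial x\setminus\{a\}$ for each $x$ in the set; \Lem~\ref{Lemma_flip} is stated for $(\sigma,b)$--closed sets precisely to accommodate this. Because $T(y,a,\SIGMA)$ cannot cross the edge $\{y,a\}$, it lives entirely on the ``outside'' of $y$. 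Now hypothesis~(2) enters: on the event $\cA=\{\max_{x\in X}|T(x,\SIGMA)|\le\ln\ln n\}\cap\{\SIGMA\cdot\vecone\ge(1-2^{-k/2})n\}$, the induced subgraph on $\partial^\ell r\cup\bigcup_{x\in X}T(x,\SIGMA)$ is acyclic (any cycle there would have length $\le O((\ln\ln n)\cdot(dk)^{2\ell})\ll\sqrt{\ln n}$ and sit within distance $\ln\ln n$ of $r$, contradicting~(2)). Acyclicity forces $T(y,a,\SIGMA)$ to be disjoint from every other boundary variable $x'\in X\setminus\{y\}$, so the flip leaves $\SIGMA(x')$ unchanged and the conditioning survives. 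After this, the tree--count union bound gives the odds $\le\exp(-\beta k^{3/4}/3)\le\exp(-2\beta)$, which is the parameter you need for \Fact~\ref{Fact_stochDom}.
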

\begin{proof}
Let $X$ be the set of boundary variables.
Moreover, let $\cA$ be the event that $\max_{x\in X}|T(x,\SIGMA)|\leq\ln\ln n$ and that $\SIGMA\cdot\vecone\geq(1-2^{-k/2})n$.
Because $\ell$ is bounded, \Lem~\ref{Lemma_skewedWhp} and the union bound imply that $\Bck{\vecone\{\SIGMA\in\cA\}}\sim1$.
Furthermore, if $\cA$ occurs, then our assumption ensures that the subgraph of the factor graph induced on
	$Y=\partial^{\ell}r\cup\bigcup_{x\in X}T(x,\SIGMA)$ is acyclic.

Now, fix a variable $x\in X$ and $\sigma\in\cA$ such that $\sigma(x)=-1$.
Let $a$ be the clause that is adjacent to $x$ on its shortest path to $r$ and let $T(x,a,\sigma)$ be the smallest $(\sigma,a)$-closed set that contains $x$.
Further, define $\tilde\sigma(y)=(-1)^{\vecone\{y\in T(x,\sigma)\}}$.
Then \Lem~\ref{Lemma_flip} shows that $E_{\Phi}(\tilde\sigma)\leq k^{3/4}|T(x,a,\sigma)|$.
Moreover, because the subgraph induced on $Y$ is acyclic we have $\tilde\sigma(x')=\sigma(x')$ for all $x'\in X\setminus\{x\}$.
Consequently, by Fact~\ref{Fact_treeCount} and the union bound,
	\begin{align}\label{eqControlBoundary1}
	\frac{\bck{\vecone\{\SIGMA(x)=-1\}\prod_{y\in X\setminus\{x\}}\vecone\{\SIGMA(y)=\sigma(y)\}\vecone\{\SIGMA\in\cA\}}}
		{\bck{\vecone\{\SIGMA(x)=1\}\prod_{y\in X\setminus\{x\}}\vecone\{\SIGMA(y)=\sigma(y)\}}}
			&\leq\sum_{t\leq\ln\ln n}t(100dk)^t\exp(-\beta k^{3/4}t)]\leq\exp(-\beta k^{3/4}/2).
	\end{align}
Since $\Bck{\vecone\{\SIGMA\in\cA\}}\sim 1$
and because  for all $\tau:X\to\{\pm1\}$ we have
	$$\bck{\prod_{y\in X\setminus\{x\}}\vecone\{\SIGMA(y)=\tau(x)\}}\geq\exp(-dk\beta|X|)=\Omega(1),$$
(\ref{eqControlBoundary1}) implies that for any $\tau$,
	$$
	\frac{\bck{\vecone\{\SIGMA(x)=-1\}\prod_{y\in X\setminus\{x\}}\vecone\{\SIGMA(y)=\tau(y)\}}}
		{\bck{\vecone\{\SIGMA(x)=1\}\prod_{y\in X\setminus\{x\}}\vecone\{\SIGMA(y)=\tau(y)\}}}\leq\exp(-\beta k^{3/4}/3).
	$$
Thus, the assertion follows from Fact~\ref{Fact_stochDom}.
\end{proof}

\noindent
Finally, \Prop s~\ref{Prop_nonRe1} and~\ref{Prop_nonRe2} follow from 
\Prop s~\ref{proposition_typical_properties} and~\ref{proposition_typical_properties_planted} and \Lem~\ref{Lemma_noRe}.

\section{Typical properties of the random formula}
\label{sec_typical_properties}

  {\em In this section we prove \Prop~\ref{proposition_typical_properties} and \Prop~\ref{proposition_typical_properties_planted} }. 
Let $\cE_{n,k,d}$ denote the set of regular $k$-SAT formulas. For $v \in V \cup F$ and $\w \geq 0$, we let $\partial^{\w} v$ (resp. $\Delta^{\w} v$) denote the set of vertices at distance exactly $\w$ (resp. less than $\w$) from $v$.

\subsection{Proof of \Prop~\ref{proposition_typical_properties} and \Prop~\ref{proposition_typical_properties_planted}}

We first deal with the easiest condition {\bf Local Structure}.

\begin{lemma} \label{lemma_typical_w_neighb} \Whp~ $\vec \hPhi$ satisfies {\bf Local Structure}. \end{lemma}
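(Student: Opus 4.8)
The plan is to prove this by the first- and second-moment method applied to the local neighbourhood densities $\rho_{\hat\PHI}(T)$. Fix $\w$; since every tree in $\widetilde\cT_{2\w+2}$ has depth $2\w+2$ and branching degrees bounded by $d$ and $k$, the set $\widetilde\cT_{2\w+2}$ is finite, and a union bound over the countably many values of $\w$ is harmless, so it suffices to show that for each fixed $T\in\widetilde\cT_{2\w+2}$ one has $\rho_{\hat\PHI}(T)\to\widetilde p_{k,d,\beta}^{(2\w+2)}(T)$ in probability. Because the planted distribution is invariant under permutations of the variables, $\Erw[\rho_{\hat\PHI}(T)]=\Prob[\ppartial^{(2\w+2)}\hat\PHI(x_1)\ism T]$ and $\Erw[\rho_{\hat\PHI}(T)^2]=n^{-2}\sum_{i,j}\Prob[\ppartial^{(2\w+2)}\hat\PHI(x_i)\ism T\text{ and }\ppartial^{(2\w+2)}\hat\PHI(x_j)\ism T]$. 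Hence by Chebyshev's inequality it is enough to establish (i) $\Erw[\rho_{\hat\PHI}(T)]\to\widetilde p_{k,d,\beta}^{(2\w+2)}(T)$ and (ii) $\Erw[\rho_{\hat\PHI}(T)^2]\to\widetilde p_{k,d,\beta}^{(2\w+2)}(T)^2$.

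For (i) I would use the description of the planted model from \Sec~\ref{xsec_overview}: conditioning (without loss of generality) on $\hat\SIGMA=\vecone$, the formula $\hat\PHI$ is the uniformly random bijection reweighted by $\prod_a w(a)$, where $w(a)=\exp(-\beta)=1-c_\beta$ if clause $a$ is violated by $\vecone$ (that is, all $k$ of its literals are negative) and $w(a)=1$ otherwise. One then exposes the depth-$(2\w+2)$ neighbourhood of $x_1$ by breadth-first search, revealing at each already-discovered variable which of its remaining positive and negative clone slots are matched to fresh clauses versus to clauses already on the current boundary, and at each already-discovered clause the signs of its remaining literals. Since the reweighting factorises over clauses, this is a reweighted branching exploration; standard configuration-model estimates give that the probability of closing a cycle within depth $2\w+2$ is $O(1/n)$, so \whp\ the neighbourhood is a tree and the branching choices converge in distribution. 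The substantive part is to identify the limiting branching law with $\textrm{GW}'(k,d,\beta,2\w+2)$ from \Sec~\ref{sec_galton_watson}: the sign-bias parameter $q$ is forced to satisfy the self-consistency relation $1-c_\beta q^k=2(1-q)$ of~(\ref{eq_def_q}) --- equivalently, $q$ is the scalar BP fixed point of \Fact~\ref{fact_unique_solution_h_hh}, the same $q$ that optimises the first-moment formula of \Prop~\ref{prop_first_moment_vanilla} --- and the rules (i)$'$, (ii)$'$, (iii), (iv) appear as the conditional laws of the remaining literal signs of a discovered clause, the factor $\exp(-\beta)q^{k-1}/(1-c_\beta q^{k-1})$ in rule (iv) being exactly the Bayesian tilt that the weight $w(a)$ puts on a clause that is one negative literal short of being violated, and the $\Bin_{\geq1}$ conditioning being the complementary event. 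Carrying this out requires showing that the weight contributed by the unexposed part of the formula --- a ratio of restricted ``number-of-violated-clauses'' partition functions --- is $1+o(1)$; this is the one genuinely technical point, and it is handled by a local-limit/switching argument relying on the concentration of the number of clauses violated by $\vecone$ (as in the proof of \Prop~\ref{prop_first_moment_vanilla}) together with the fact that conditioning on a neighbourhood of bounded size perturbs that number only by $O(1)$.

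For (ii), fix distinct uniformly random variables $x_i\neq x_j$. For fixed $\w$ their depth-$(2\w+2)$ neighbourhoods are vertex-disjoint with probability $1-O(1/n)$, and conditionally on disjointness the same breadth-first exposure run from the two roots shows that the two neighbourhoods become asymptotically independent, since the clauses meeting the two regions are disjoint and the reweighting still factorises over clauses; hence $\Prob[\ppartial^{(2\w+2)}\hat\PHI(x_i)\ism T\text{ and }\ppartial^{(2\w+2)}\hat\PHI(x_j)\ism T]=\widetilde p_{k,d,\beta}^{(2\w+2)}(T)^2+o(1)$, the diagonal $i=j$ contributes $O(1/n)$, and (ii) follows. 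Combining (i) and (ii) with Chebyshev's inequality yields $\rho_{\hat\PHI}(T)\to\widetilde p_{k,d,\beta}^{(2\w+2)}(T)$ in probability; a union bound over the finitely many $T\in\widetilde\cT_{2\w+2}$ and over the values of $\w$ then gives that \whp\ $\hat\PHI$ satisfies {\bf Local Structure}. The same argument applies verbatim to $\vec\hhPhi$ for \Prop~\ref{proposition_typical_properties_planted}; there the replica construction {\bf PR1}--{\bf PR4} even pins the depth-$(4\w+1)$ neighbourhoods to those of $\hat\PHI$, which only makes the local limit more explicit.

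I expect the main obstacle to be step (i) --- decoding the reweighted breadth-first exploration so that its limit is recognised as $\textrm{GW}'(k,d,\beta,2\w+2)$, and in particular controlling uniformly the contribution of the unexposed part of the formula, which amounts to re-running a localised version of the first-moment computation behind \Prop~\ref{prop_first_moment_vanilla}.
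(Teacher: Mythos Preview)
Your proposal is correct and follows essentially the same first-moment-plus-concentration scheme as the paper; the paper's proof is just much terser, computing $\hErw[\rho_{\hat\PHI}(T)]$ by a direct configuration-model count (the factor $\widetilde p_{k,d,\beta}^{(2\w+2)}(T)$ appears because the decorated tree $T$ records the literal signs, and the reweighting by $\exp(-\beta E)$ tilts clause sign-patterns exactly as in rules (iii)--(iv)) and then invoking an unspecified ``standard concentration argument'' where you use the second moment. One small point: the union bound ``over the countably many values of $\w$'' is neither needed nor valid as stated, since the $o(1)$ in (i)--(ii) depends on $\w$; in the paper the {\bf Local Structure} property is only ever applied for a fixed $\w=\w(\eps)$, so proving $\rho_{\hat\PHI}(T)\to\widetilde p_{k,d,\beta}^{(2\w+2)}(T)$ in probability for each fixed $\w$ and each $T\in\widetilde\cT_{2\w+2}$ is all that is required.
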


\begin{proof} Let $x \in V$ and $T \in \widetilde{\cT}_{2(\w+1)}$ be fixed. Let $X_x(T)$ be the number of formulas $\Phi$ such that $\ppartial^{2(\w+1)} \Phi(x) = T$. It is straightforward to compute that there are precisely $$\widetilde{p}_{k,d,\beta}^{2(\w+1)}(T) \frac{(nd/2)!^2}{(nd/2- \epsilon_+)! (nd/2- \epsilon_-)!} (1+o_n(1))$$ ways to construct a tree of depth $2(\w+1)$ around $x$, where $\epsilon_+$ (resp. $\epsilon_-$) is the number of positive (resp. negative) literals that appear in $T \setminus \partial T$. Once this as been done, it remains to connect the $(dn/2-\epsilon_+)$ positive litterals clones (resp. $(dn/2-\epsilon_-)$ negative litterals clones) together. This yield
$$ \frac{X_x(T)}{|  \cE_{n,k,d}|} = \widetilde{p}_{k,d,\beta}^{(\w+1)}(T) \frac{(nd/2)!^2}{(nd/2- \epsilon_+)! (nd/2- \epsilon_-)!} \frac{(nd/2-\epsilon_+)! (nd/2-\epsilon_-)!}{(nd/2)!^2}(1+o_n(1))  = \widetilde{p}_{k,d,\beta}^{(\w+1)}(T)  .$$
Consequently, we have
$$ \hErw \left[ \rho_{\vec \Phi}(T) \right] = \frac{X_x(T)}{|  \cE_{n,k,d}|} =   \widetilde{p}_{k,d,\beta}^{2(\w+1)}(T).$$
Moreover by standard concentration arguments $\rho_{\vec \Phi}(T)$ is concentrated around its mean and we have \whp
$$ \rho_{\vec \Phi}(T) \sim  \widetilde{p}_{k,d,\beta}^{2(\w+1)}(T).$$
This holds for any $T$ in the finite set $\widetilde{\cT}_{2(\w+1)}$, ending the proof of the lemma.
 \end{proof}
 
 In particular, this entails the following.
 
\begin{corollary} \label{cor_typical_w_neighb} \Whp~ $\vec \hhPhi$ satisfies {\bf Local Structure}. \end{corollary}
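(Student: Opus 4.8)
The plan is to obtain \Cor~\ref{cor_typical_w_neighb} essentially for free from \Lem~\ref{lemma_typical_w_neighb}, using that the planted replica model $\vec\hhPhi$ is built on top of the planted formula $\vec\hPhi$ in a way that does not perturb deep local neighborhoods.

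First I would observe that, in the experiment {\bf PR1}--{\bf PR4} defining $\vec\hhPhi$, the underlying formula is sampled only once (in {\bf PR1}): the ``start over'' instruction in {\bf PR4} re-runs only {\bf PR2}--{\bf PR3} and the choice of the bijection, never the formula itself. Hence the marginal law of that formula under the planted replica experiment is exactly the planted distribution $\vec\hPhi$, so \Lem~\ref{lemma_typical_w_neighb} applies verbatim: \whp\ it satisfies {\bf Local Structure}, that is, $\rho_{\vec\hPhi}(T)\sim\widetilde{p}_{k,d,\beta}^{(2\w+2)}(T)$ for every $T\in\widetilde\cT_{2\w+2}$.

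Second I would invoke the constraint imposed in {\bf PR4}, namely $\ppartial^{4\w+1}_{\vec\hhPhi}(a,j)\ism\ppartial^{4\w+1}_{\vec\hPhi}(a,j)$ for every clause clone $(a,j)$. Since every variable sits at distance one from some clause and $4\w+1\geq 2\w+3$ for $\w\geq 1$, the rooted depth-$(2\w+2)$ neighborhood of each variable in $\vec\hhPhi$ is carved out of a clause-clone neighborhood that {\bf PR4} preserves; as $\vec\hhPhi$ is a bijection on the very same clone sets as $\vec\hPhi$, the multiset of depth-$(2\w+2)$ variable neighborhoods is thereby preserved. Consequently $\rho_{\vec\hhPhi}(T)=\rho_{\vec\hPhi}(T)$ for every $T\in\widetilde\cT_{2\w+2}$, deterministically on every realization. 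Chaining this with the first step yields $\rho_{\vec\hhPhi}(T)\sim\widetilde{p}_{k,d,\beta}^{(2\w+2)}(T)$ \whp, which is exactly {\bf Local Structure} for $\vec\hhPhi$. (Alternatively, one could simply rerun the short counting computation from the proof of \Lem~\ref{lemma_typical_w_neighb} directly for $\vec\hhPhi$; I would use whichever reads more cleanly.)

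There is no real obstacle here beyond the probabilistic input already present in \Lem~\ref{lemma_typical_w_neighb}. The only point demanding a moment's care is the purely deterministic bookkeeping in the second step --- checking that preservation of all depth-$(4\w+1)$ clause-clone neighborhoods does force exact preservation of the depth-$(2\w+2)$ variable-neighborhood densities --- which one handles by summing over clause clones (using that $\vec\hhPhi^{-1}$ maps clause clones bijectively to variable clones) and rests solely on the radius slack $4\w+1\geq 2\w+3$ together with {\bf PR4} fixing the clone sets. I would spell that out in a line or two and conclude.
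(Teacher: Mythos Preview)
Your proposal is correct and is precisely the reasoning the paper has in mind: the text offers no explicit proof, merely ``In particular, this entails the following'', and the intended content is exactly your two observations --- that {\bf PR1} fixes the planted formula while the restart in {\bf PR4} only reruns {\bf PR2}--{\bf PR4}, and that the {\bf PR4} isomorphism constraint on all depth-$(4\w+1)$ clause-clone neighborhoods deterministically pins down the multiset of depth-$(2\w+2)$ variable neighborhoods (via the double-counting over clause clones you sketch). Your radius check $4\w+1\geq 2\w+3$ for $\w\geq1$ is the right bookkeeping and nothing more is needed.
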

 
 The following is a standard result.
 
 \begin{fact} \Whp~ $\vec \hPhi$ and $\vec \hhPhi$ satisfy the property {\bf Cycles}. \end{fact}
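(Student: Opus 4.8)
The plan is a first-moment bound together with the observation that a cycle of length at most $\sqrt{\ln n}$ is an essentially \emph{local} feature of the factor graph, so the reweightings that define the planted models can distort its likelihood only by a subexponential factor. For a regular $k$-SAT formula $\Phi$ let $N(\Phi)$ count the cycles of length at most $\sqrt{\ln n}$ in its factor graph and $N_j(\Phi)$ those through exactly $j$ clauses. It suffices to show $\Erw[N(\vec\hPhi)]=n^{o(1)}$ and $\Erw[N(\vec\hhPhi)]=n^{o(1)}$: Markov's inequality then yields $\Prob[N\ge\eps\sqrt n]=o(1)$ for every fixed $\eps>0$, which is property {\bf Cycles}.

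For the uniform model $\PHI$ this is the classical configuration-model computation: the expected number of cycles through $j$ clauses is bounded by the number of ways of picking the $j$ clauses and $j$ variables involved and the clones they use, times the probability that the corresponding $2j$ clause/variable-clone pairs get matched in the random bijection, and a routine count bounds this by $(dk)^{O(j)}$. Hence $\Erw_\PHI[N]\le\sum_{j\le\sqrt{\ln n}/2}(dk)^{O(j)}=\exp(O(\sqrt{\ln n}\,\ln(dk)))=n^{o(1)}$, since $d,k$ are fixed.

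The transfer to $\vec\hPhi$ carries the only real content. With $\hat\SIGMA=\vecone$ the law of $\vec\hPhi$ is $Z_\beta(\Phi)/\Erw_\PHI[Z_\beta(\PHI)]$ times that of $\PHI$, so summing over potential short cycles $C$ gives $\Erw[N(\vec\hPhi)]=\sum_C\Prob_\PHI[C\subseteq\PHI]\cdot\Erw_\PHI[Z_\beta(\PHI)\mid C\subseteq\PHI]/\Erw_\PHI[Z_\beta(\PHI)]$. The key estimate is $\Erw_\PHI[Z_\beta(\PHI)\mid C\subseteq\PHI]\le\exp(O(\beta|C|))\,\Erw_\PHI[Z_\beta(\PHI)]$, uniformly in $C$: writing $Z_\beta=\sum_\sigma e^{-\beta E}$ and separating the contribution of the $O(|C|)$ clauses meeting $C$ (a per-assignment factor in $[e^{-\beta|C|},1]$), conditioning on $C\subseteq\PHI$ only fixes $O(|C|)$ edges of the random bijection, and such local surgery changes $\Erw[Z_\beta]$ by at most $\exp(O(|C|))$ — either by a routine coupling/switching argument or because $n^{-1}\ln\Erw[Z_\beta]$ depends Lipschitz-continuously on the clause density, cf.\ \Prop~\ref{prop_first_moment_vanilla}. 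Plugging in the second paragraph, $\Erw[N(\vec\hPhi)]\le\exp(O(\beta\sqrt{\ln n}))\,\Erw_\PHI[N]=n^{o(1)}$.

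For $\vec\hhPhi$ I would condition on the pair $(\vec\hPhi,\tilde\SIGMA)$ of {\bf PR1}--{\bf PR3}, on the w.h.p.\ event that $\vec\hPhi$ satisfies {\bf Local Structure} (\Lem~\ref{lemma_typical_w_neighb}) and the cycle bound of the third paragraph. Since {\bf PR4} pins every depth-$(4\w+1)$ clause-neighbourhood of $\vec\hhPhi$ to that of $\vec\hPhi$, the two formulas have the same cycles of length at most $4\w+1$, so it remains to bound longer ones; and given $(\vec\hPhi,\tilde\SIGMA)$, $\vec\hhPhi$ is uniform on the non-empty set of {\bf PR4}-admissible formulas, for which the same first-moment count applies — sign-consistency with $\tilde\SIGMA$ only deletes potential cycles, while the constant-depth local-structure pinning perturbs the probability of any cycle of length exceeding $4\w+1$ by only a bounded factor (such a cycle cannot lie inside any single pinned tree-neighbourhood, so it must cross into the freely sampled part of the bijection). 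This gives $\Erw[N(\vec\hhPhi)\mid\vec\hPhi,\tilde\SIGMA]=n^{o(1)}$, hence {\bf Cycles} for $\vec\hhPhi$. The one delicate point is the local-surgery estimate of the third paragraph and its {\bf PR4}-analogue; the rest is standard bookkeeping.
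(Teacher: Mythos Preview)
The paper provides no proof of this Fact, labelling it a ``standard result'' and moving on. Your sketch therefore already goes well beyond what the paper offers, so there is nothing to compare against.

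For $\PHI$ and $\hat\PHI$ your argument is sound. The uniform case is the textbook configuration-model calculation, and the density transfer to $\hat\PHI$ works: conditioning on a short cycle $C$ fixes $O(|C|)$ edges of the bijection and hence affects at most $O(|C|)$ clause factors, giving $\Erw_{\PHI}[Z_\beta(\PHI)\mid C\subset\PHI]\le e^{O(\beta|C|)}\Erw_{\PHI}[Z_\beta(\PHI)]$ as you claim.

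The $\tilde\PHI$ paragraph is where the argument is only a heuristic, as you acknowledge. Two specific points. First, ``sign-consistency only deletes potential cycles'' mis-describes the mechanism: sign-consistency splits the random bijection into two independent bijections (one per $\tilde\SIGMA$-sign class), and the first-moment count runs separately on each---but it does not delete any factor-graph cycles. Second, and more importantly, the justification that a long cycle ``must cross into the freely sampled part of the bijection'' does not match what {\bf PR4} actually does: there is no freely sampled part, since \emph{every} edge of $\tilde\PHI$ is constrained by some depth-$(4\w+1)$ pinning. The correct framing is that, on the {\bf Local Structure} event for $\hat\PHI$, the {\bf PR4}-admissible bijections are exactly those of a finitely-\emph{coloured} configuration model, where the colour of a clone records its depth-$4\w$ neighbourhood isomorphism type in $\hat\PHI$ together with its $\tilde\SIGMA$-sign, and matching is permitted only between compatible colour pairs. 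Since each colour class has size $\Theta(n)$ by {\bf Local Structure}, the standard first-moment cycle count goes through verbatim with per-colour degrees in place of $d,k$, yielding the same $n^{o(1)}$ bound. Your conclusion is right, but the mechanism needs to be recast along these lines to be a proof rather than a plausibility argument.
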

 
 We will prove the following in \Sec~\ref{subsec_exp_properties}.
 
 \begin{proposition} \label{prop_formulas_safe_structure} \Whp~ $\vec \hPhi$ and $\vec \hhPhi$ satisfy {\bf Core} and {\bf Sticky}.
 \end{proposition}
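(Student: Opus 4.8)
The plan is to deduce both {\bf Core} and {\bf Sticky} from the local structure of the two planted models by first-moment (union-bound) arguments over candidate vertex sets, treating $\vec\hPhi$ and $\vec\hhPhi$ uniformly because both have the same local description in terms of the multi-type Galton--Watson process $\mathrm{GW}'(k,d,\beta,\cdot)$ of \Sec~\ref{sec_galton_watson}. Rather than transferring estimates from the uniform model (to which neither planted model is contiguous), I would compute the relevant first moments directly in the planted model, where the local events attached to a vertex or a clause have exactly the probabilities prescribed by $\mathrm{GW}'$.

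For {\bf Sticky} I would fix an admissible size $s$ with $2^{-0.95k}n\le s\le 2^{-k/20}n$ and a set $S\subseteq V$ with $|S|=s$, and bound $\hProb[S\text{ is }1/2\text{-sticky}]$. If $S$ is sticky then every $x\in S$ obeys \textbf{ST1} or \textbf{ST2}, and the point is that this forces $x$ to have at least $\tfrac12 k^{3/4}$ clauses of a very restricted kind reaching into $S$: for \textbf{ST1} the witnessing clause $a$ satisfies $\partial_1 a=\{x\}$, which singles out $x$ as its unique ``owner'', while for \textbf{ST2} a constant fraction ($\ge 1/4$) of $\partial_1 a$ must lie in $S$. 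Using the Galton--Watson description to evaluate the probability of such local configurations, and the fact that $s/n\le 2^{-k/20}$ makes the chance that any particular candidate clause of $x$ reaches into $S$ tiny, one obtains a per-vertex bound on $\hProb[x\text{ obeys {\bf ST1} or {\bf ST2} relative to }S]$ whose $k^{3/4}$-th root tends to $0$ uniformly over the admissible range. By near-independence over $x\in S$ the probability that $S$ is sticky is at most the $s$-th power of this bound; multiplying by $\binom ns\le(en/s)^s$ then yields $\exp(-\Omega(n))$ for $k\ge k_0$, because the entropy factor $\binom ns$ is only exponential in $ks/n$ whereas the per-vertex bound raised to the power $s$ is super-exponentially small on this scale. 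Summing over the polynomially many admissible $s$ closes the union bound.

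For {\bf Core} I would first dispose of the ``local'' conditions \textbf{CR1}, \textbf{CR2}, \textbf{CR3}, which depend only on a depth-$2$ neighbourhood of $x$: by {\bf Local Structure} (\Lem~\ref{lemma_typical_w_neighb}) the fraction of $x$ violating one of them is, \whp, within $o(1)$ of the corresponding Galton--Watson probability, which is exponentially small in $k$ by exactly the large-deviation estimates already used for items (a)--(c) in the proof of \Prop~\ref{prop_cold_trees_tensor_a} (taken at $\lambda=1/2$); call $B$ this small set of ``locally bad'' variables. For the remaining conditions I would exploit maximality of the core together with the monotonicity that enlarging $W$ can only help \textbf{CR4} and \textbf{CR5}: this shows that any $x\in(V\setminus\core_{1/2}(\Phi))\setminus B$ must itself fail \textbf{CR4} or \textbf{CR5} relative to $\core_{1/2}(\Phi)\cup\{x\}$, i.e.\ it obeys a sticky-type condition relative to $R:=V\setminus\core_{1/2}(\Phi)$. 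Hence $R$ is ``quasi-sticky'': each of its elements lies in $B$ or obeys a sticky-type condition relative to $R$ itself. A first-moment bound over all such $R$, exactly parallel to the {\bf Sticky} argument (charging each $x\in R$ either the probability of lying in $B$ or the probability of the sticky-type condition relative to a set of size $|R|$), rules out \whp\ any quasi-sticky $R$ of size above the threshold in {\bf Core}, which together with the bound on $|B|$ gives the required lower bound on $|\core_{1/2}(\Phi)|$. Equivalently one may realise $\core_{1/2}(\Phi)$ as the limit of the peeling process that repeatedly deletes violators of \textbf{CR1}--\textbf{CR5} against the current set, and run the induction of \Prop~\ref{prop_cold_trees_tensor_a} round by round.

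Everything transfers verbatim to $\vec\hhPhi$, which satisfies {\bf Local Structure} by \Cor~\ref{cor_typical_w_neighb} and whose local neighbourhoods are described by a Galton--Watson process with the same per-vertex and per-clause statistics up to the modification at the root, so none of the first-moment estimates change. The main obstacle, carrying the bulk of the work, is the middle step of each part: obtaining per-set probabilities small enough to survive the $\binom ns$ union bound on the exponentially-small-in-$k$ scale of the relevant set sizes. This requires using, rather tightly, the restrictive form of \textbf{ST1}/\textbf{ST2} and \textbf{CR1}--\textbf{CR5} --- the unique ownership of \textbf{ST1}-clauses, the factorial degree constraint in \textbf{CR3}, the constant-fraction requirement in \textbf{ST2}/\textbf{CR5} --- together with the self-referential definition of the core, which is precisely why {\bf Core} and {\bf Sticky} are naturally proved together.
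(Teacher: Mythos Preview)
Your outline for $\vec\hPhi$ is broadly in line with the paper: it, too, reduces {\bf Core} and {\bf Sticky} to expansion-type estimates and proves those by first-moment counting in the configuration model, packaged slightly differently (the paper isolates three explicit conditions (\ref{tech_cond_0})--(\ref{tech_cond_2}) and proves a deterministic lemma, \Lem~\ref{lemma_aux_strongly_safe}, that these imply {\bf Core} and {\bf Sticky}, then establishes (\ref{tech_cond_0})--(\ref{tech_cond_2}) by the first-moment calculations of \Lem s~\ref{lemma_typical_C0}--\ref{lemma_typical_C2}). Your ``near-independence over $x\in S$'' would need to be replaced by an honest configuration-model count of pairs $(T,\text{set of witnessing clauses})$, but that is a matter of execution.

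The genuine gap is the transfer to $\vec\hhPhi$. You claim everything goes through ``verbatim'' because the local neighbourhood statistics agree, but {\bf Core} and {\bf Sticky} are \emph{global} expansion properties, not functionals of the local weak limit; your own union bounds over sets of size $\sim 2^{-k/20}n$ are not local statements, so agreement of bounded-depth neighbourhood distributions does not by itself transfer them. Worse, the planted replica model is generated via the conditioning step {\bf PR4}, which ties $\tilde\PHI$ simultaneously to the random assignment $\tilde\SIGMA$ and to the entire depth-$(4\w+1)$ local structure of $\hat\PHI$; there is no reason the configuration-model first-moment calculations should survive this conditioning unchanged, and ``computing first moments directly in $\vec\hhPhi$'' is not something one can do by hand here. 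The paper's route is different and essential: it proves (\ref{tech_cond_0})--(\ref{tech_cond_2}) for $\vec\hPhi$ \emph{with very high probability}, i.e.\ with failure probability at most $\exp(-2^{-0.99k}n)$ rather than merely $o(1)$, and then invokes a change-of-measure lemma (\Lem~\ref{lemma_wvhp_implies_whp}) bounding the Radon--Nikodym derivative of the $\vec\hhPhi$-distribution with respect to the $\vec\hPhi$-distribution by $\exp(2\cdot 2^{-0.999k}n)$, so that very-high-probability events under $\vec\hPhi$ remain high-probability events under $\vec\hhPhi$. Without this quantitative strengthening for $\vec\hPhi$ and the accompanying transfer argument, your proof of the $\vec\hhPhi$ case is incomplete.
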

 
 The remaining of this section is devoted to a proof of the two following lemmas.
 
\begin{lemma} \label{lemma_typical_properties} For all $\eps >0$, there is $\w >0$ such that \whp~ $\vec \hPhi$ is $(\eps,\w)$-cold. \end{lemma}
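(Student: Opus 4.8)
The plan is to derive the property \textbf{$(\eps,2\w)$-Cold} for $\vec\hPhi$ from the tree estimate of \Prop~\ref{prop_good_trees_for_our_setting}, using that, by \textbf{Local Structure}, the $2\w$-neighbourhood of a typical variable of $\vec\hPhi$ is distributed (up to $o(1)$) like a sample of $\textrm{GW}'(k,d,\beta,2\w)$. Concretely, I would first condition on the \whp\ events \textbf{Local Structure}, \textbf{Core} and \textbf{Cycles}, which are available from \Lem~\ref{lemma_typical_w_neighb}, \Prop~\ref{prop_formulas_safe_structure} and the \textbf{Cycles} fact; these already force $\ppartial^{2\w}x$ (the $2\w$-neighbourhood of $x$ in $\vec\hPhi$) to be a tree, in fact isomorphic to some $T\in\widetilde\cT_{2\w}$, for all but $o(n)$ variables $x$, and they pin the empirical distribution of these neighbourhood types to $\widetilde p_{k,d,\beta}^{(2\w)}$.

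The core of the argument is to replace the adversarial boundary condition in the definition of \emph{$(\eps,2\w)$-cold} by a single worst-case \emph{distributional} boundary condition to which \Prop~\ref{prop_good_trees_for_our_setting} applies. Fix $x$ with $T=\ppartial^{2\w}x\in\widetilde\cT_{2\w}$ and identify an assignment $\tau$ of the prescribed type with the point-mass boundary condition $\partial\nu'_y=\delta_{\tau(y)}$. Since Belief Propagation on a tree is monotone in the boundary --- this is \Fact~\ref{Fact_stochDom} applied clause by clause along $T$ --- the root marginal $\mu^{\partial\nu'}_T(1)$ is nondecreasing in each $\partial\nu'_y(1)$, so $\mu^{\partial\nu'}_T(1)\le\mu^{(2\w)}_T(1)$ (the all-$(+1)$ boundary being the largest) and only a matching lower bound is needed. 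Whatever the adversary chooses, $\partial\nu'$ dominates pointwise the boundary condition $\partial\nu''$ that equals $\delta_{-1}$ on every boundary variable which either lies outside the core of $\vec\hPhi$ or lies in the core but was assigned the i.i.d.\ value $-1$, and equals $\delta_{+1}$ elsewhere (on those leaves the adversary is forced to play $+1$). It then remains to check that $\partial\nu''$ meets condition \textbf{H'}: conditionally on the other leaves, the chance that a leaf carries $\delta_{-1}$ is at most the chance of being non-core plus $\exp(-2\beta)$; since $\beta\ge\beta_-(k,d)=k\ln2-10\ln k$ gives $\exp(-2\beta)\le 2^{-2k}k^{20}$, and since the binomial tail bounds from the proofs of \Prop s~\ref{prop_cold_trees_tensor_a} and~\ref{prop_formulas_safe_structure} (together with \textbf{Cycles}, which decouples the core-status of a leaf from the bounded neighbourhood of $x$) bound the probability that a Galton--Watson leaf violates \textbf{CR1}--\textbf{CR5} by $3\cdot 2^{-0.95k}+O(2^{-1.5k})$, the sum is at most $2^{-0.9k}$ for $k\ge k_0$. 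Thus $\partial\nu''$ satisfies \textbf{H'} and is independent of $\vec T'$ up to an $o(1)$ coupling error controlled by \textbf{Cycles}.

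Granting this, \Prop~\ref{prop_good_trees_for_our_setting} yields $\Prob[\|\mu^{\partial\nu'}_{\vec T'}-\mu^{(2\w)}_{\vec T'}\|_\infty\ge k\exp(\beta)\w^{-1}]\le\w^{-1}$; as $\beta$ is fixed I would pick $\w=\w(\eps,k,\beta)$ so that $k\exp(\beta)\w^{-1}<\eps$, and, by Markov's inequality applied to the conditional probability given $\vec T'$, conclude that a $\widetilde p_{k,d,\beta}^{(2\w)}$-fraction at least $1-\w^{-1/2}$ of the trees $T$ have the property that \emph{every} prescribed $\tau$ produces a root marginal within $\eps$ of $\mu_T=\mu^{(2\w)}_T$; i.e.\ ``$x$ is $(\eps,2\w)$-cold'' holds as soon as $\ppartial^{2\w}x$ is isomorphic to one of these good trees. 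Combining this with \textbf{Local Structure} (which matches the empirical density of neighbourhood types with $\widetilde p_{k,d,\beta}^{(2\w)}$ for the finite set $\widetilde\cT_{2\w}$), and taking $\w$ also large enough that $\w^{-1/2}<\eps/2$, gives that all but $\eps n$ variables of $\vec\hPhi$ are $(\eps,2\w)$-cold \whp; alternatively the same conclusion follows from a first-moment bound on the number of non-$(\eps,2\w)$-cold variables followed by Azuma's inequality, since each clause touches only $O((dk)^\w)$ such neighbourhoods. I expect the main obstacle to be the reduction in the second paragraph: exhibiting one worst-case distributional boundary condition that is simultaneously dominated by every adversarially generated $\tau$ and still obeys \textbf{H'}, which is where the $-1\to+1$ flips granted to the adversary, the monotonicity of BP, and the threshold $\exp(-2\beta)$ for the i.i.d.\ labels have to be reconciled.
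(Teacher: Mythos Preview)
The overall strategy---reducing \textbf{$(\eps,2\w)$-Cold} to an application of \Prop~\ref{prop_good_trees_for_our_setting} by exhibiting a worst-case dominated boundary condition $\partial\nu''$ and then checking \textbf{H'}---is exactly the paper's route. The gap is in your verification of \textbf{H'}.

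Condition \textbf{H'} asks that for each boundary leaf $y$, \emph{conditionally on the boundary values at all other leaves}, the probability that $\partial\nu''_y(1)$ is small (here: that $y$ lies outside the core, or was i.i.d.\ assigned $-1$) is at most $2^{-0.9k}$. The i.i.d.\ component is harmless. But the event $\{y\notin\core_1(\vec\hPhi)\}$ is \emph{global}: conditions \textbf{CR4}--\textbf{CR5} reference the set $W$ itself, so core membership is determined by a recursion over the entire factor graph, not by the $O(1)$-neighbourhood of $y$. Core-statuses of distinct leaves are therefore genuinely correlated, and revealing that some leaves are non-core can shift the conditional probability that another is. Your appeal to \textbf{Cycles} does not address this: the absence of short cycles only forces the $2\w$-neighbourhoods to be trees; it says nothing about how the globally defined core interacts with the conditioning. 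The binomial tail bounds you cite handle the local conditions \textbf{CR1}--\textbf{CR3}, but not the recursive \textbf{CR4}--\textbf{CR5}.

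The paper supplies precisely this missing conditional bound via a switching argument, \Lem~\ref{lemma_aux_good_3}. To decouple the core-status of a boundary leaf $y$ from the conditioning $\cF_y$ on the neighbourhood structure, one swaps the edge $(y,a_y)$ entering the tree with the corresponding edge of a uniformly random variable $x$; under the conditioning (and the well-separation event $\cD$) the swapped formula has the same law, while the switch can only enlarge $\core_1\setminus S_1$ relative to $\core_{1/2}\setminus S_{1/2}$. Hence the conditional probability that $y$ is non-core is bounded by the \emph{unconditional} probability that a random vertex lies outside $\core_{1/2}\setminus S_{1/2}$, which is at most $2^{-0.95k}$ by \textbf{Core} and \textbf{Sticky}. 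With this in hand the paper finishes not by first moment plus Azuma but by a factorial-moment bound on $\alpha$-tuples of non-cold vertices with $\alpha=\alpha(n)\to\infty$ slowly; your Azuma alternative would again have to confront the global nature of the core when checking the Lipschitz property.
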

\begin{lemma} \label{lemma_typical_properties_planted}   For all $\eps >0$, there is $\w >0$ such that \whp~ $\vec \hhPhi$ is $(\eps,\w)$-cold \end{lemma}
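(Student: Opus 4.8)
The two lemmas are proved by one and the same argument, so the plan is to carry it out for $\vec\hPhi$ and observe that the case of $\vec\hhPhi$ is identical after substituting \Cor~\ref{cor_typical_w_neighb} for \Lem~\ref{lemma_typical_w_neighb} (the \textbf{Core} and \textbf{Sticky} inputs already cover $\vec\hhPhi$ via \Prop~\ref{prop_formulas_safe_structure}). The idea is to bootstrap the tree estimate of \Prop~\ref{prop_good_trees_for_our_setting}. First I would reduce to a single random variable: fix $\eps>0$ and let $N_\w$ count the variables of $\vec\hPhi$ that are \emph{not} $(\eps,2\w)$-cold. Flipping one clause-clone of the configuration model perturbs the depth-$(2\w)$ neighbourhood -- and, once the \textbf{Core} and \textbf{Sticky} events are conditioned on, the $\tfrac12$-core- and sticky-membership -- of only $(dk)^{O(\w)}=O(1)$ variables, so a bounded-difference inequality (with the planted reweighting controlled by concentration of $\ln Z_{\PHI}(\beta)$) gives $N_\w\le\Erw[N_\w]+o(n)$ \whp; hence it suffices to find $\w=\w(\eps)$ for which a uniformly random variable $x$ fails to be $(\eps,2\w)$-cold with probability at most $\eps/2$.

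Next I would feed in the local structure. By \textbf{Cycles}, for all but $o(n)$ choices of $x$ the neighbourhood $T=\ppartial^{2\w}x$ is a tree, and by \textbf{Local Structure} (\Lem~\ref{lemma_typical_w_neighb}) the empirical law of $T$ over $x\in V$ converges to that of the tree generated by $\textrm{GW}'(k,d,\beta,\cdot)$ to the relevant depth. The point of \textbf{Core} is that all but a $2^{-\Omega(k)}$-fraction of variables lie in the $\tfrac12$-core, so on a typical $T$ every variable has at most $k^{3/4}$ neighbour-clauses meeting a non-core variable -- exactly the exception budget built into \textbf{CR4}--\textbf{CR5} and \textbf{TR4}--\textbf{TR5} -- and the point of \textbf{Sticky} is that the non-core variables of $T$ cannot clump into a large sticky set. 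Combining these, one shows that \whp\ the core restricted to $T$ coincides, up to those tolerated exceptions, with the trunk of $T$ under any boundary condition sending non-core leaves to arbitrary values and core leaves to values biased towards $+1$; in particular the frozen-then-cold mechanism of \Lem~\ref{lemma_if_frozen_then_cold} applies at every core variable of $T$, and every root-to-leaf path of $T$ carries at least $\lfloor0.4\w\rfloor$ cold pairs.

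Finally I would run the adversarial boundary $\tau$ from the definition of $(\eps,2\w)$-cold through the cold-tree machinery underlying \Prop~\ref{prop_good_trees_for_our_setting} (i.e.\ \Prop s~\ref{prop_cold_trees_tensor_a}--\ref{prop_cold_trees_tensor_b} together with \Lem~\ref{lemma_estimate_ratio_generic}). On a core leaf $\tau$ takes the value $-1$ with probability $\exp(-2\beta)$ and $+1$ otherwise, whereupon an adversary may flip some $-1$'s to $+1$'s; since $\beta\ge\beta_-(k,d)=k\ln2-10\ln k$ and $k\ge k_0$ we have $\exp(-2\beta)\le2^{-0.9k}$, so the underlying i.i.d.\ point-mass boundary satisfies \textbf{H'} and the flipped boundary dominates it on core leaves. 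The adversarial non-core leaves are carried along as additional \textbf{TR0}-violating boundary variables; being few (by \textbf{Core}) and non-clumping (by \textbf{Sticky}) they are absorbed into the $k^{3/4}$ exception slots of \textbf{TR1}--\textbf{TR5}, so $(T,\tau)$ stays cold in the sense of \Sec~\ref{sec_proof_prop_contraction_tree_tensor} and \Prop~\ref{prop_cold_trees_tensor_b} yields $\|\mu^{\tau}_T-\mu^{(2\w)}_T\|_\infty=o_\w(1)$. As $\mu_T=\mu^{(2\w)}_T$ in the notation of \Sec~\ref{sec_galton_watson}, choosing $\w$ large enough makes the per-variable failure probability at most $\eps/2+o_n(1)$, which closes the reduction.

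The step I expect to be the real obstacle is the middle one: proving \whp\ that on a typical depth-$(2\w)$ neighbourhood the globally defined $\tfrac12$-core agrees with the locally defined trunk of the corresponding $\textrm{GW}'$ tree, so that the global adversary controlling the non-core variables in the $(\eps,2\w)$-cold definition is no more powerful than the boundary adversary allowed in \Prop~\ref{prop_good_trees_for_our_setting}. Marrying \textbf{Core}, \textbf{Sticky} and the exception tolerances \textbf{CR4}--\textbf{CR5}/\textbf{TR4}--\textbf{TR5} to carry this through, while keeping the stochastic-domination bookkeeping for the i.i.d.\ boundary consistent with the adversarial flips, is where essentially all of the combinatorial work sits.
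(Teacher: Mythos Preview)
Your proposal has the right endpoint (feeding boundary information into \Prop~\ref{prop_good_trees_for_our_setting}) but two genuine gaps.

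First, the concentration step is wrong as stated. Flipping one clause-clone in the configuration model does \emph{not} change $\tfrac12$-core membership of only $O(1)$ variables: the core is defined by a global peeling process, and a single edge can trigger a cascade that removes or adds arbitrarily many variables. So bounded differences do not give $N_\w\le\Erw[N_\w]+o(n)$. The paper sidesteps this entirely by a factorial-moment argument: with $\alpha=\alpha(n)\to\infty$ slowly, it bounds $\Erw[\vec Y(\vec Y-1)\cdots(\vec Y-\alpha+1)]\le(y_\w n)^\alpha$ by showing that for $\alpha$ randomly chosen roots $z_1,\ldots,z_\alpha$ with well-separated tree neighbourhoods, the events ``$\ppartial^{2\w}z_i$ is not $(\eps,2\w)$-cold'' have joint probability at most $(o_\w(1))^\alpha$. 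This requires no Lipschitz control of the core at all.

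Second, and more substantially, the claim that the $\vec\hhPhi$ case is ``identical'' to the $\vec\hPhi$ case after swapping one local-structure lemma is a serious underestimate. The planted replica model reweights by the Bethe functional and conditions on matching $4\w$-neighbourhoods ({\bf PR4}), so the conditional probability that a boundary variable $y$ lies outside the core, given the local environment near the roots, is not obviously the same as under $\vec\hPhi$. The paper spends three dedicated lemmas (\Lem s~\ref{lemma_aux_planted_good_1}--\ref{lemma_aux_planted_good_3}) on exactly this: it introduces the auxiliary notion of a $2\w$-\emph{pure} tree, controls $\hhProb[\neg\cB_y\mid\cdots]$ via the marginal of $\tilde\sigma(y)$, and then uses a switching argument (swap $y$ with a random $x$ having the same $4\w$-neighbourhood and the same $\tilde\sigma$-value) to compare to the unconditional core probability. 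Your ``core $=$ trunk on typical neighbourhoods'' heuristic does not supply this; you correctly flag it as the obstacle, but the paper's resolution is the switching comparison, not a direct identification. The switching is also what delivers the crucial conditional bound $\hhProb[\neg\cC_y\mid\cD,\cG_y]\le2^{-0.94k}$ needed to verify {\bf H'} \emph{given} the other neighbourhoods, which in turn is what makes the factorial-moment factorisation go through.
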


\begin{proof}[Proof of \Prop~\ref{proposition_typical_properties} and \Prop~\ref{proposition_typical_properties_planted}] The propositions immediatly follow from the above lemmas. 
\end{proof}

Let $\alpha \geq 0$ and $(z_1, \dots, z_\alpha) \in V^\alpha$ be fixed. Let $\Delta = \{y \in V, \exists l \in [\alpha], y \in \partial^{(2 \w)} \vec \hPhi(z_l)\}$ and for $y \in \Delta$ let $\cC_y$ be the event that $y \in \core_1(\vec \hPhi) \setminus S_1(\vec \hPhi)$. Moreover, let $\mathcal{D}$ be the event that $z_1, z_2, \dots, z_\alpha$ are at distance strictly greater than $5\w$ one from the other in $\vec \hPhi$, and that their $5 \w$ neighborhoods are tree-like. For $y \in \Delta$, let also $\mathcal{F}_y$ denote the $\sigma$-algebra generated by the function $(\Phi, z_1, \dots, z_\alpha) \mapsto \left(\ppartial^{(2\w)} \Phi(z_1) \cup \dots \cup \ppartial^{(2\w)} \Phi(z_\alpha) \right)$.

 \begin{lemma}   \label{lemma_aux_good_3}   For $y \in \Delta$, we have
$$ \hProb \left[  \neg \cC_y  | \cD, \cF_{y} \right] \leq 2^{-0.95 k} .$$ 
 \end{lemma}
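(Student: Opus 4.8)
The plan is to split $\neg\cC_y=\{y\notin\core_1(\vec\hPhi)\}\cup\{y\in S_1(\vec\hPhi)\}$ into one contribution for each of the five core conditions CR1--CR5 at $y$ together with one for membership in the sticky set, and to bound each conditionally on $\cD$ and $\cF_y$. The starting point is the usual local description of the planted formula: conditionally on $\cD$ (so the $z_l$ lie at pairwise distance $>5\w$ with acyclic $5\w$-neighbourhoods) and on $\cF_y$ (which reveals the depth-$2\w$ neighbourhoods of the $z_l$, hence the structure around $y$ only \emph{towards} the $z_l$), the part of $\vec\hPhi$ hanging strictly below $y$ is, at any fixed bounded radius and up to total variation $o_n(1)$, distributed as the pendant subtree of a variable node in the Galton--Watson process $\GW'$ of \Sec~\ref{sec_galton_watson}; this is proved exactly as \Lem~\ref{lemma_typical_w_neighb}, the conditioning on a bounded local piece elsewhere being immaterial in the limit. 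In particular the depth-$2$ quantities at $y$ entering CR1--CR3 (the sizes $|\partial_1 y|$, $|\partial_{-1}y|$, and $|\partial_1 a|$ for each neighbouring clause $a$) are governed, up to $o_n(1)$, by the binomial and conditional-binomial laws of rules (ii)--(iv) with parameters $q$ and $c_\beta q^{k-1}$; since $q\in(0,1)$ is bounded away from $0$ and $1$ and $c_\beta q^k$ is exponentially small in $k$ (a short computation from \eqref{eq_def_q}, using $\beta\ge\beta_-(k,d)$), the same Chernoff bounds used for the parallel trunk conditions TR1--TR3 in the proof of \Prop~\ref{prop_cold_trees_tensor_a} show that each of CR1, CR2, CR3 is violated with conditional probability at most $2^{-0.96k}$.

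Conditions CR4 and CR5 refer to the core $W=\core_1$ itself, and so does the event $y\in S_1$; these I would handle by an induction along the depth of the tree, in exact analogy with the ``strongly cold'' induction in the proof of \Prop~\ref{prop_cold_trees_tensor_a}. One uses the peeling description of the core --- $\core_1(\Phi)$ is obtained from $V$ by iteratively deleting variables violating CR1--CR5 relative to the current set, and survival after $t$ rounds is a function of the depth-$2t$ neighbourhood --- together with the subtree-local variant (``$x$ survives $t$ rounds no matter how the rest of the formula is completed''), which is monotone and factorises over disjoint pendant subtrees. I would then prove by induction on $t$ that every variable node $v$ of the (conditionally Galton--Watson) neighbourhood of $y$ fails to survive $t$ rounds with conditional probability $\le 2^{-0.95k}$: the step adds the $2^{-0.96k}$ bounds for CR1--CR3 to the estimate for CR4, namely that a CR4-failure at $v$ requires more than $k^{3/4}$ clauses $a\in\partial_1 v$ with $|\partial_1 a|=1$ whose other $k-1$ variables are not all survivors of round $t-1$, an event of probability $\le(k-1)2^{-0.95k}$ per clause by the induction hypothesis in the disjoint pendant subtrees; as there are only $O(dq^{k-1})=O(k)$ such clauses, the expected number of bad ones is $O(k^{2}2^{-0.95k})\ll k^{3/4}$, so Markov's inequality bounds the CR4-failure probability by $2^{-1.5k}$, and likewise CR5 (with the neighbours-in-the-sticky-set replaced by non-survivors). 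The recursion $3\cdot 2^{-0.96k}+2\cdot 2^{-1.5k}\le 2^{-0.95k}$, valid for $k\ge k_0$, closes the induction, and since only $O(\w)$ rounds with $\w$ a fixed constant are needed, the $o_n(1)$ errors from the Galton--Watson approximation accumulate harmlessly. Finally, $y\in S_1(\vec\hPhi)$ forces ST1 or ST2 at $y$, i.e.\ $\ge k^{3/4}$ clauses through $y$ meeting $S_1$ in the prescribed way; propagating this downward along the tree and invoking the first-moment/expansion bound behind property {\bf Sticky} (no sticky set of size between $2^{-0.95k}n$ and $2^{-k/20}n$) gives $\hProb[y\in S_1\mid\cD,\cF_y]\le 2^{-k}$. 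Summing the six contributions yields $\hProb[\neg\cC_y\mid\cD,\cF_y]\le 2^{-0.95k}$.

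The step I expect to be the genuine obstacle is the passage from the \emph{global} objects $\core_1(\vec\hPhi)$ and $S_1(\vec\hPhi)$ to the \emph{finite-depth, tree-local} analysis: a priori $y$ could be peeled out of the core, or absorbed into a sticky set, because of structure far away, whereas the Galton--Watson approximation is valid only at bounded radius. The resolution is the one already used in \Sec~\ref{sec_galton_watson}: the exact global core and maximal sticky set are never needed, because a violation of CR4/CR5 or of ST1/ST2 at a vertex forces $\Omega(k^{3/4})$ nearby vertices with the same defect, so that an out-of-core or sticky vertex sits at the end of a long chain of such defects, whose probability is suppressed uniformly in $n$ by the exponentially small constants produced by the induction; any residual long-range contribution is $o_n(1)$ and is absorbed by the gap between $2^{-0.95k}$ and $2^{-0.96k}$ in the intermediate estimates. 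Here one uses crucially that $\w$ is a fixed constant and that, under $\cD$, the relevant neighbourhoods are genuinely acyclic, so the induction and the Galton--Watson approximation both apply with error $o_n(1)$.
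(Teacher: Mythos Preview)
Your approach is genuinely different from the paper's, and the gap you yourself flag at the end is real and not closed by your proposed resolution.

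The paper does \emph{not} analyse the conditioned neighbourhood of $y$ directly. Instead it uses a \emph{switching argument}: pick a uniformly random variable $x$ and a random clause $a_x\in\partial x$, and swap the edge $(y,a_y)$ (where $a_y$ is the unique clause linking $y$ to the $z_l$'s) with $(x,a_x)$ to obtain $\vec\hPhi'$. Conditionally on $\cD$, on $\cF_y$, and on the event $\cE$ that $\vec\hPhi'$ still satisfies $\cD$, the formulas $\vec\hPhi$ and $\vec\hPhi'$ are identically distributed. Moreover a single switch can only mildly perturb the core and the maximal sticky set, in the sense that $\core_{1/2}(\vec\hPhi)\setminus S_{1/2}(\vec\hPhi)\subset\core_1(\vec\hPhi')\setminus S_1(\vec\hPhi')$. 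Hence $\hProb[\neg\cC_y\mid\cD,\cF_y]$ is bounded (up to $o_n(1)$) by $\hProb[x\notin\core_{1/2}(\vec\hPhi)\setminus S_{1/2}(\vec\hPhi)]$ for a \emph{uniformly random} vertex $x$ with \emph{no} conditioning. This last quantity is at most $2^{1-0.96k}+o_n(1)$ directly from the global estimates of \Prop~\ref{prop_formulas_safe_structure} (the {\bf Core} and {\bf Sticky} properties). The whole point of the switch is to trade the awkward conditional statement about the specific vertex $y$ for an unconditional one about a random vertex, where the global first-moment/expansion bounds apply verbatim.

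Your direct route founders precisely at the local-to-global passage. Your induction bounds, for each fixed $t$, the probability that $y$ is removed in the first $t$ rounds of peeling, and this is indeed $\le 2^{-0.95k}+o_n(1)$ as long as the depth-$2t$ neighbourhood is covered by the Galton--Watson approximation. But $\{y\notin\core_1\}$ is the \emph{increasing union} over all $t$ of these events, and you cannot interchange $\lim_t$ with $\lim_n$: the $o_n(1)$ error depends on $t$, and you give no argument that the peeling on the finite formula stabilises within a depth where the tree approximation is still valid. Saying that a violation of CR4/CR5 ``forces a chain of defects'' does not help, because on the finite graph such a chain could have length up to $|V\setminus\core_1|=\Theta(2^{-0.97k}n)$, far beyond any bounded radius. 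The same issue, more acutely, afflicts your treatment of $S_1$: membership in the maximal sticky set is a purely global event, and your one-line appeal to ``propagating downward along the tree'' is not a proof. The paper's switching trick is exactly the missing idea that turns the global objects $\core_1$ and $S_1$ into something accessible.
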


\begin{proof} Let $a_y \in \partial y$ be such that $a_y \in \cup_{l \in [\alpha]} \ppartial^{(2\w)} \vec \hPhi(z_l)$. Let $\vec \hPhi'$ be obtained from $\vec \hPhi$ by the following operations.
\begin{itemize}
\item Select $x \in \vec \hPhi$ and $a_x \in \partial x$ uniformly at random.
\item Replace the pair of edges $\{(y,a_y),(x,a_x)\}$ by the pair of edges $\{(y,a_x),(x,a_y)\}$.
\end{itemize}
Let $\cE$ be the event that $\vec \hPhi'$ satisfies $\cD$. We observe that $\hProb[\cD] = 1-o_n(1)$ and $\hProb[\cE] = 1-o_n(1)$. Conditioned on $\cD, \cE$ and $\cF_y$, $\vec \hPhi$ and $\vec \hPhi'$ are identically distributed.
 Moreover, we have
$$ \core_{1/2} \left( \vec \hPhi \right) \setminus S_{1/2}\left(\vec \hPhi \right) \subset \core_1 \left(\vec \hPhi' \right) \setminus_1  S\left(\vec \hPhi' \right). $$
It follows that
\begin{align} \nonumber \hProb \left[ \neg \cC_y | \cD,\cF_y \right] & = \hProb \left[ \neg \cC_y | \cD,\cE,\cF_y \right]  + o_n(1)
\\ \nonumber &\leq \hProb \left[ \left. x \notin \core_{1/2} \left( \vec \hPhi \right) \setminus S_{1/2}\left(\vec \hPhi\right)  \right| \cD,\cE, \cF_y \right]  + o_n(1)
\\  \label{eq_aux_proof_aux_good_3_a}  &\leq \hProb \left[ \left. x \notin \core_{1/2} \left( \vec \hPhi \right) \setminus S_{1/2}\left(\vec \hPhi\right)   \right| \cF_y \right] + o_n(1).
 \end{align}
 For a fixed $\sigma$-algebra $\cF$ generated by $(\Phi, z_1, \dots, z_\alpha) \mapsto \left(\ppartial^{(2\w)} \Phi(z_1) \cup \dots \cup \ppartial^{(2\w)} \Phi(z_\alpha) \right)$, let $\cH$ denote the event that there is $\hPhi''$ isomorphic to $\vec \hPhi$ such that $\cF_{y} = \cF$.
  Then, because $x$ is a random element of $V$, we have
 \begin{align}\nonumber \hProb \left[ \left. x \notin \core_{1/2} \left( \vec \hPhi \right) \setminus S_{1/2}\left(\vec \hPhi\right)   \right| \cF_y = \cF \right] &= \hProb \left[ \left. x \notin \core_{1/2} \left( \vec \hPhi \right) \setminus S_{1/2}\left(\vec \hPhi\right)   \right| \cH \right] 
 \\ \label{eq_aux_proof_aux_good_3_b} &= \hProb \left[ x \notin \core_{1/2} \left( \vec \hPhi \right) \setminus S_{1/2}\left(\vec \hPhi\right)    \right] + o_n(1), \end{align}
 where the last line used that $\hProb\left[ \cH \right] = 1-o_n(1)$. Finally, \Prop~\ref{prop_formulas_safe_structure} implies that
 \beq \label{eq_aux_proof_aux_good_3_c} \hProb \left[ x \notin \core_{1/2} \left( \vec \hPhi \right) \setminus S_{1/2}\left(\vec \hPhi\right)    \right] \leq 2^{1-0.96k} + o_n(1). \eeq
 Combining (\ref{eq_aux_proof_aux_good_3_a}), (\ref{eq_aux_proof_aux_good_3_b}) and (\ref{eq_aux_proof_aux_good_3_c}) concludes the proof of the lemma.
\end{proof}

\begin{proof}[Proof of \Lem~\ref{lemma_typical_properties}] For $\eps >0$ fixed, let $\vec Y =| \{ x \in V,\ \ppartial^{(2\w)} \vec \hPhi(x) \textrm{ is not $(\eps,2\w)$-cold in } \vec \hPhi  \}|$, and let $\alpha(n)$ be a slowly diverging function. We are going to show that there is a sequence $y_\w = o_\w(1)$ such that
\beq \label{eq_aux_typical_C6_a} \hErw \left[ \vec Y(\vec Y-1) \dots (\vec Y-\alpha +1 ) \right] \leq \left( y_\w n \right)^{\alpha}. \eeq
This bound implies the assertion; indeed,
\begin{align*} \hProb \left[ \vec Y > 3 y_\w n \right] & \leq \hProb \left[ \vec Y(\vec Y-1) \dots (\vec Y-\alpha+1) > (2 y_\w n)^\alpha \right]
\\ & \leq \frac{\hErw \left[ \vec Y(\vec Y-1) \dots (\vec Y-\alpha +1 ) \right] }{(2 y_\w n)^\alpha} \leq 2^{-\alpha}. \end{align*}
To prove (\ref{eq_aux_typical_C6_a}), we observe that $\vec Y(\vec Y-1) \dots (\vec Y-\alpha+1)$ is just the number of orderer $\alpha$-tuples of variables such that $ \ppartial^{(2\w)} \Phi(x)$ is not $(\eps,2\w)$-cold. Hence, by symmetry and linearity of expectation, 
$$ \hErw \left[ \vec Y (\vec Y-1) \dots (\vec Y-\alpha +1) \right] \leq n^\alpha \hProb \left[ \vec T_1, \dots, \vec T_\alpha \textrm{ are not $(\eps,2\w)$-cold} \right],$$
where $\vec T_1, \dots, \vec T_\alpha$ are the $2\w$- neighborhoods chosen of $\alpha$ random vertices $\vec x_1, \dots, \vec x_\alpha$ of $V$. Let $\cD$ be the event that $\vec x_1, \dots, \vec x_\alpha$ are at distance greater than $5 \w$ from each others and have tree-like $5 \w$ neighborhoods, and let $\vec \Delta = \partial \vec T_1 \cup \dots \cup \partial \vec T_\alpha$. Then \Lem~\ref{lemma_aux_good_3} implies that for $j \in \vec \Delta$
\beq \nonumber \hProb \left[ \left. \neg \cC_j \right|\cD, \cF_j \right] \leq 2^{-0.95k}.\eeq
In particular, using \Lem~\ref{lemma_typical_w_neighb} we can apply the result of \Prop~\ref{prop_good_trees_for_our_setting} to obtain that, for $1 \leq i \leq \alpha$,
\beq \label{eq_ccj_prime} \Prob \left[ \vec T_i \textrm{ is not $(\eps,2\w)$-cold} | \cD, \vec T_1, \dots, \vec T_{i-1}, \vec T_{i+1}, \vec T_\alpha \right] \leq \w^{-1}. \eeq
We have
\begin{align*}\hProb \left[ \vec T_1, \dots, \vec T_\alpha \textrm{ are not $(\eps,2\w)$-cold} \right] & \leq \hProb \left[ \vec T_1 \textrm{ is not $(\eps,2\w)$-cold} | \cD \right]  \hProb \left[ \vec T_2 \textrm{ is not $(\eps,2\w)$-cold} | \vec T_1 \textrm{ is not $(\eps,2\w)$-cold(, } \cD \right] \dots 
\\ & \hspace{1.5 cm} \hProb \left[ \vec T_\alpha \textrm{ is not $(\eps,2\w)$-cold} | \vec T_1, \dots, \vec T_{\alpha-1} \textrm{ are not $(\eps,2 \w)$-cold, }  \cD \right]. \end{align*}
Using (\ref{eq_ccj_prime}) this yields 
$$\hProb \left[ \vec T_1, \dots, \vec T_\alpha \textrm{ are not $(\eps,2 \w)$-cold} | \cD \right] \leq (o_\w(1))^\alpha. $$
Along with the observation that $\hProb \left[ \cD \right] = 1-o_n(1)$, this concludes the proof of the proposition. \end{proof}

In order to prove \Lem~\ref{lemma_typical_properties_planted}, we need to extend \Lem~\ref{lemma_aux_good_3} to the planted replica model. This will require a few more auxiliary results. We say that a tree $T \in \cT_{\w}$ is $2\w$-{\em pure} if and only if
$$ \nu^{(2\w)}_T(1) \geq 1- \exp(-100 \beta).$$
Let $\cT_{2\w}^+ \subset \cT_{2\w}$ denote the set of pure trees. Let, as before, $\alpha \geq 0$ and $(z_1, \dots, z_\alpha) \in V^\alpha$ be fixed, as well as a formula and an assignment $(\Phi,\sigma) \in \cE_{n,k,d} \times \{-1,1\}^n$. Let $\Delta = \{y \in V, \exists l \in [\alpha], y \in \partial^{(2\w)} \Phi(z_l) \}$ and for $y \in \Delta$, let $a_y$ be the unique clause in $\partial y \cap \cup_{l=1}^\alpha \ppartial^{(2\w)} \Phi(z_l)$ and let $l_y \in [k]$ (resp. $l'_y \in [d]$) be such that $y$ appears in $l_y$-th position in $a_y$ (resp. $a_y$ appears in $l_y'$ position in $y$). For $y \in \Delta$, let $\ppartial^{(2\w)} \vec \hhPhi(y \to a_y)$ denote the $2 \ell$ neighborhoof of $y$ in the formula where the edge between $y$ and $a_y$ has been removed and let \begin{itemize}
\item $\cA_y$ be the event that $\ppartial^{(2\w)} \vec \hhPhi(y \to a_y) $ is tree-like and is $2\w$-pure in $\vec \hhPhi$,
\item $\cB_y$ be the event that $\vec {\widetilde{\sigma}}(y) = 1$,
\item $\cC_y$ be the event that $y \in \core_{1/2}(\vec \hhPhi) \setminus S_{1/2}(\vec \hhPhi)$.
\end{itemize}
Moreover, let $\cD$ be the event that $z_1, \dots, z_\alpha$ are at distance greater than $5\w$ in $\vec \hhPhi$. For  $y \in \Delta$, let also $\cG_y$ denote the sigma algebra induced by the functions  $$(\Phi, z_1, \dots, z_\alpha,y) \mapsto \left(X=\ppartial^{(2\w)} \Phi(z_1) \cup \dots \cup \ppartial^{(2\w)} \Phi(z_\alpha) \setminus \ppartial^{(2\w)} \Phi(y \to a_y),\sigma_{|X} \right).$$ With these notations in mind, we will prove the following.

\begin{lemma} \label{lemma_aux_planted_good_1} For $y \in \Delta$, we have
$$ \hhProb \left[ \neg \cA_y | \cD, \cG_y \right] \leq 2^{-0.95k}.$$
\end{lemma}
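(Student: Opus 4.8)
The plan is to follow the template of the proof of \Lem~\ref{lemma_aux_good_3}, feeding a switching argument for the planted replica model into one new deterministic input: being in $\core_{1/2}$ together with having a tree-like $2\w$-neighbourhood already forces $2\w$-purity. Concretely, I would first establish the following deterministic fact: \emph{if $T:=\ppartial^{(2\w)}\Phi(x)$ is a tree and $x\in\core_{1/2}(\Phi)$, then $T\in\cT_{2\w}^+$.} To see this, consider the trunk of $T$ under the all-$1$ boundary condition $\partial\nu^{(0)}$. Every variable of $T$ at depth $2\w$ lies in $\trunk(T,\partial\nu^{(0)})$ by {\bf TR0}, since $\partial\nu^{(0)}_x(1)=1\ge 1-\exp(-k\beta/2)$. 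For an internal variable $v\in\core_{1/2}(\Phi)\cap V(T)$, conditions {\bf CR1}--{\bf CR5} for $v$ in $\Phi$ imply {\bf TR1}--{\bf TR5} for $v$ in $T$ relative to the set $W:=(\core_{1/2}(\Phi)\cap V(T))\cup\partial V(T)$: {\bf CR1} gives $\ge k(1-\tfrac{2}{100})-1\ge\lfloor 0.9k\rfloor$ downward clauses $a$ with $\partial_1 a=\{v\}$; {\bf CR2}, {\bf CR3} only relax the thresholds of {\bf TR2}, {\bf TR3}; and for {\bf CR4}, {\bf CR5}, any vertex of such a clause $a$ lying outside $W$ must sit at depth $<2\w$ and hence outside $\core_{1/2}(\Phi)$, so $\partial a\not\subset W$ implies $\partial a\not\subset\core_{1/2}(\Phi)$ and the counts are bounded by the core counts $\le\tfrac12 k^{3/4}\le k^{3/4}$. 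Hence $W\subseteq\trunk(T,\partial\nu^{(0)})$, in particular $x\in\trunk(T,\partial\nu^{(0)})$, and \Lem~\ref{lemma_if_frozen_then_cold} (applied with $\partial\nu=\partial\nu^{(0)}$) yields $\nu^{(2\w)}_T(1)=\nu^{T,\partial\nu^{(0)}}_{x,\uparrow}(1)>1-\exp(-k\beta/2)\ge 1-\exp(-100\beta)$ for $k\ge k_0$ (using $\beta\ge\beta_-(k,d)$ and $k\ge 200$), i.e.\ $T\in\cT_{2\w}^+$.

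Next I would run the switching argument, mirroring the proof of \Lem~\ref{lemma_aux_good_3} but inside $\vec\hhPhi$. Resample the clone incident to $y$ that is matched towards $a_y$: pick $(x,a_x)$ uniformly at random and replace $\{(y,a_y),(x,a_x)\}$ by $\{(y,a_x),(x,a_y)\}$, keeping the literal signs; call the result $\vec\hhPhi'$. Let $\cE$ be the event that $\vec\hhPhi'$ still satisfies $\cD$. As in \Lem~\ref{lemma_aux_good_3} one has $\hhProb[\cD]=1-o_n(1)$, $\hhProb[\cE]=1-o_n(1)$, and, conditionally on $\cD$, $\cE$ and $\cG_y$, the formulas $\vec\hhPhi$ and $\vec\hhPhi'$ are identically distributed; moreover a single clone swap satisfies $\core_{1/2}(\vec\hhPhi)\setminus S_{1/2}(\vec\hhPhi)\subseteq\core_1(\vec\hhPhi')\setminus S(\vec\hhPhi')$. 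Since $\cG_y$ does not reveal $\ppartial^{(2\w)}(y\to a_y)$ and the swap moves precisely that piece, the conditional law of $\ppartial^{(2\w)}\vec\hhPhi(y\to a_y)$ coincides with the law of the $2\w$-neighbourhood of a uniformly random variable $x$ of $\vec\hhPhi$. Combining this with the deterministic fact above (so that ``$\ppartial^{(2\w)}\vec\hhPhi(y\to a_y)$ is a tree and $y\in\core_{1/2}(\vec\hhPhi)$'' already forces $2\w$-purity) gives
\[
\hhProb[\neg\cA_y\mid\cD,\cG_y]\le\hhProb\left[\ppartial^{(2\w)}\vec\hhPhi(y\to a_y)\text{ not a tree}\mid\cD,\cG_y\right]+\hhProb\left[y\notin\core_{1/2}(\vec\hhPhi)\setminus S_{1/2}(\vec\hhPhi)\mid\cD,\cG_y\right]+o_n(1),
\]
where the first term is $o_n(1)$ by the {\bf Cycles} property and the second is at most $\hhProb[x\notin\core_{1/2}(\vec\hhPhi)\setminus S_{1/2}(\vec\hhPhi)]+o_n(1)\le 2^{1-0.96k}+o_n(1)$ by \Prop~\ref{prop_formulas_safe_structure}. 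Hence $\hhProb[\neg\cA_y\mid\cD,\cG_y]\le 2^{1-0.96k}+o_n(1)\le 2^{-0.95k}$ for $k\ge k_0$ and $n$ large.

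The step I expect to be delicate is making the switching rigorous in the planted replica model. Unlike the plain configuration model of \Lem~\ref{lemma_aux_good_3}, rewiring a clone of $\vec\hhPhi$ must be consistent with the two constraints of {\bf PR4}: the endpoints of every edge carry the same spin under $\vec{\widetilde\sigma}$, and every clause-neighbourhood of radius $4\w+1$ agrees with that of $\vec\hPhi$. One therefore has to verify that, on the high-probability event (contained in $\cD$) that $y$ is far from every $z_l$ and from every short cycle, the single swap at $y$ is a bijection of the relevant sample space compatible with $\cG_y$, so that the subtree below $y$ has precisely the law of the $2\w$-neighbourhood of a uniformly random variable of $\vec\hhPhi$; this is where the radius-$(4\w+1)$ clause-matching of {\bf PR4} and the {\bf Local Structure} property of $\vec\hPhi$ (\Lem~\ref{lemma_typical_w_neighb}) enter. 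All the remaining ingredients --- the $o_n(1)$ bookkeeping, the behaviour of the cores under a single swap, and the final arithmetic --- are identical to \Lem~\ref{lemma_aux_good_3}.
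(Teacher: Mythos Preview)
Your approach takes a much longer route than the paper and, as you suspected, the switching step does not close.

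The paper's proof is essentially a one-liner. The event $\cA_y$---that $\ppartial^{(2\w)}\vec\hhPhi(y\to a_y)$ is a tree and is $2\w$-pure---is a \emph{purely local} property: both tree-ness and the value $\nu_T^{(2\w)}(1)$ are determined by the isomorphism class of the depth-$2\w$ neighbourhood alone. By condition {\bf PR4} of the planted replica model, the $(4\w{+}1)$-neighbourhood of every clause clone in $\vec\hhPhi$ is isomorphic to that in $\vec\hPhi$; hence the distribution of $\ppartial^{(2\w)}\vec\hhPhi(y\to a_y)$ is exactly that of $\ppartial^{(2\w)}\vec\hPhi(y\to a_y)$, and by {\bf Local Structure} this is the Galton-Watson law $p_{k,d,\beta}^{(2\w)}$ up to $o_n(1)$. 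The induction behind \Lem~\ref{lemma_most_likely_trunk} (spelled out in the proof of \Prop~\ref{prop_cold_trees_tensor_a}) shows that the root of a Galton-Watson tree lies in the trunk under $\partial\nu^{(0)}$ with probability $\geq 1-2^{-0.9k}$, and \Lem~\ref{lemma_if_frozen_then_cold} then gives $\nu_T^{(2\w)}(1)>1-\exp(-k\beta/2)\ge 1-\exp(-100\beta)$. No global information about $\vec\hhPhi$, no core, no switching is needed.

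By contrast, you route through the global event $\{y\in\core_{1/2}(\vec\hhPhi)\}$. Your deterministic implication ($y\in\core_{1/2}$ and tree-like $\Rightarrow$ pure) is fine. But your probabilistic step does not go through as written. An \emph{unconstrained} clone swap does not preserve the planted replica distribution: swapping $(y,a_y)$ with an arbitrary $(x,a_x)$ will typically destroy the isomorphism $\ppartial_{\tilde\PHI}^{4\w+1}(a,j)\cong\ppartial_{\hat\PHI}^{4\w+1}(a,j)$ for many clause clones $a$ within distance $4\w$ of either endpoint, violating {\bf PR4}. The correct switching in this model (as in the proof of \Lem~\ref{lemma_aux_planted_good_3}) restricts to targets $x$ with $\vec{\widetilde\sigma}_x=\vec{\widetilde\sigma}_y$ and $\ppartial^{(4\w)}\vec\hhPhi(x,a_x)=\ppartial^{(4\w)}\vec\hhPhi(y,a_y)$; but then you are only comparing $y$ to variables sharing its entire $4\w$-neighbourhood, so the swap tells you nothing new about the local tree at $y$---you would still need to bound the probability that a Galton-Watson tree is non-pure, which is precisely the paper's direct argument. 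In short, you have correctly identified the difficulty, but it cannot be bypassed: it dissolves once you notice that $\cA_y$ is local and {\bf PR4} already hands you the answer.
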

\begin{proof}
This lemma follows from \Lem~\ref{lemma_most_likely_trunk} and \Lem~\ref{lemma_if_frozen_then_cold}, using in addition the fact that $\hhProb \left[ \cD\right] = 1-o_n(1)$.
\end{proof}

\begin{lemma}  \label{lemma_aux_planted_good_2}  For $y \in \Delta$, we have
$$ \hhProb \left[  \neg \cB_y  | \cA_y, \cD, \cG_y \right] \leq 4^{- k} .$$
 \end{lemma}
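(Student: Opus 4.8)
The plan is to trace the event $\cB_y=\{\vec{\widetilde\sigma}(y)=1\}$ back to the sampling step {\bf PR2} of the planted replica model, where $\vec{\widetilde\sigma}(y)$ is drawn from the Belief Propagation marginal $\mu^{(2\w)}_{\hat\Phi,y}$, and then to show that on the event $\cA_y$ this marginal puts essentially no mass on $-1$. The whole estimate amounts to one unfolding of the BP recursion at $y$ combined with part {\bf(a)} of \Lem~\ref{lemma_estimate_ratio_generic}.

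First I would fix a tree-like realisation of $\ppartial^{(2\w)}\hat\PHI(y)$ (available on $\cA_y\cap\cD$), write $a_y$ for the unique clause of $\partial y$ lying on a path towards one of the $z_l$, and let $T=\ppartial^{(2\w)}\hhPhi(y\to a_y)$ be the subtree rooted at $y$ obtained by deleting the edge $(y,a_y)$. On a tree the marginal $\mu^{(2\w)}_{\hat\Phi,y}$ coincides with the BP root marginal under the all-ones boundary condition $\partial\nu^{(0)}$, so by~(\ref{eq_def_BP_simple_tree_vertex}) it factorises as $\mu^{(2\w)}_{\hat\Phi,y}(s)\propto\nu^{(2\w)}_{T}(s)\,\widehat\nu(s)$, where $\widehat\nu$ is the upward message sent from $a_y$ to $y$ computed from the subtree below $a_y$ under the same boundary condition. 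The event $\cA_y$ asserts that $T$ is $2\w$-pure, i.e.\ $\nu^{(2\w)}_{T}(1)\ge1-\exp(-100\beta)$, whence $\nu^{(2\w)}_{T}(1)/\nu^{(2\w)}_{T}(-1)\ge\exp(100\beta)-1$; and since $\widehat\nu$ solves the clause equation~(\ref{eq_crux_hnua}), part {\bf(a)} of \Lem~\ref{lemma_estimate_ratio_generic} yields $e^{-\beta}\le\widehat\nu(1)/\widehat\nu(-1)\le e^{\beta}$. Multiplying the two odds ratios gives $\mu^{(2\w)}_{\hat\Phi,y}(-1)\le2\exp(-99\beta)$, and since $\beta\ge\beta_-(k,d)=k\ln2-10\ln k$ and $k\ge k_0$ this is comfortably below $4^{-k}$. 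Crucially, this bound holds for \emph{every} tree-like neighbourhood of $y$ compatible with $\cA_y$.

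Next I would feed this back into {\bf PR2}: there $\vec{\widetilde\sigma}(y)$ is sampled from $\mu^{(2\w)}_{\hat\Phi,y}$ using fresh randomness, and $\mu^{(2\w)}_{\hat\Phi,y}$ is a deterministic function of $\ppartial^{(2\w)}\hat\PHI(y)$ alone (equivalently of $\ppartial^{(2\w)}\hhPhi(y)$, by the local-structure clause of {\bf PR4}). Conditioning additionally on that whole neighbourhood, $\vec{\widetilde\sigma}(y)$ becomes independent of $\cD$, of $\cG_y$ and of all the other sign choices made in {\bf PR2}--{\bf PR3}, so $\hhProb[\neg\cB_y\mid\ppartial^{(2\w)}\hat\PHI(y),\cD,\cG_y]=\mu^{(2\w)}_{\hat\Phi,y}(-1)\le4^{-k}$ on the part of the neighbourhood space compatible with $\cA_y$; averaging over these neighbourhoods gives the claimed bound.

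The delicate point, which I expect to be the main obstacle, is the rejection step {\bf PR4}: conditioning on the existence of a valid $\hhPhi$ is not manifestly independent of $\vec{\widetilde\sigma}(y)$ because {\bf PR4} involves the global assignment. The clean way around this is to note that a valid $\hhPhi$ exists with probability $1-o_n(1)$ uniformly over the choices made in {\bf PR2}--{\bf PR3} --- a routine configuration-model count of the type already carried out in the proof of \Lem~\ref{lemma_typical_w_neighb} --- so conditioning on acceptance perturbs the conditional law of $\vec{\widetilde\sigma}(y)$ by at most $o_n(1)$, which is harmless since the true estimate $2\exp(-99\beta)$ lies far below the stated $4^{-k}$.
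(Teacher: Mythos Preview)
Your proposal is correct and follows essentially the same route as the paper: both bound $\mu^{(2\w)}_{\hat\Phi,y}(-1)$ on $\cA_y$ by combining the purity of $\nu^{(2\w)}_T$ with the fact that the message from $a_y$ can distort the odds by at most $\exp(\beta)$ (your invocation of part {\bf(a)} of \Lem~\ref{lemma_estimate_ratio_generic} is exactly what the paper means by ``marginals and messages cannot differ by a factor of more than $\exp(\beta)$''). Your explicit handling of the {\bf PR4} rejection step is in fact more careful than the paper's, which simply asserts the relevant conditional identity.
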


\begin{proof} Let $\widetilde{\widehat{\cT}}_{2\w+1}$ denote the set of $2\w+1$ neighborhoods of the first children of the root of the Galton-Watson process $\GW'(k,d,\beta,4\ell)$ (defined in \Sec~\ref{sec_galton_watson}). For a probability distribution $\mu$ over $\{-1,1\}^k$ and $1 \leq l \leq k$, let $\mu[l]$ denote the $l$-th marginal of $\mu$. Finally, recall that for $a \in F$, $\mu_a^{(2\w+1)}$ was defined in \Sec~\ref{sec_non_reconstruction_and_bethe}.

Recalling the definition of the replica planted model, we have
\begin{align*} \hhProb \left[ \neg \cB_y | \cA_y, \cD, \cG_y \right] &= \sum_{\hT \in \widetilde{\widehat{\cT}}_{\w}}  \hhProb \left[ \ppartial^{(2\w+1)} \vec \hhPhi(a_y) = \hT | \cA_y, \cD, \cG_y \right] \hmu^{(2\w+1)}_a[l_y](-1).
 \end{align*}
For $\hT \in  \widetilde{\widehat{\cT}}_{2\w+1} $ and $1 \leq l \leq k$, let $\hT[l]$ denote the subtree of size $2\w$ rooted at the $l$-th variable node adjacent to the root of $\hT$. For $T \in \cT_{2\w}^+$, let $\widehat{\cT}_{2\w+1}(T,l) \subset \widetilde{\widehat{\cT}}_{2\w+1} $ denote the set of trees compatible with $T$ on $l$-th position:
$$  \widehat{\cT}_{2\w+1}(T,l) =\left \{ \hT \in \widetilde{\widehat{\cT}}_{2\w+1},  \hT[l] = T \right \}  .$$
Then we immediately deduce from the previous equation that
$$\hhProb \left[ \neg \cB_y | \cA_y, \cF_y \right]  \leq  \sup_{T \in \cT_{2\w}^+ } \sup_{\hT \in \widehat{\cT}_{2\w+1}(T,l_y)}  \hmu^{(2\w+1)}_{\hT}[l](-1) (1 +o_n(1))$$
Using the definition of $\w$-pure trees, and the observation that marginals and messages cannot differ by a factor of more than $\exp(\beta)$, for any $T \in \cT_{2\w}^+$ and $\hT \in \widehat{\cT}_\w(T,l)$ we have $\hmu^{(2\w+1)}_{\hT}[l](-1) \leq \exp(-50 \beta) \leq 4^{- k}$, ending the proof of the lemma.
 \end{proof}

\begin{lemma}   \label{lemma_aux_planted_good_3}   For $y \in \Delta$, we have
$$ \hhProb \left[  \neg \cC_y  | \cA_y, \cB_y, \cD, \cG_{y} \right] \leq 2^{-0.95 k} .$$ 
 \end{lemma}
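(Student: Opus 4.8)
The plan is to run, in the planted replica model, the same edge‑switching argument used to prove \Lem~\ref{lemma_aux_good_3}; the only genuinely new point is that the switch must be carried out so that the resulting formula/assignment pair stays inside the support of the planted replica model. First I would set $a_y$ to be the unique clause of $\partial y$ lying in $\bigcup_{l\in[\alpha]}\partial^{(2\w)}\vec\hhPhi(z_l)$, and then pick uniformly at random a variable $x$ and an incident clause $a_x\in\partial x$ subject to two requirements: the clone of $a_x$ at $x$ carries the same sign as the clone of $a_y$ at $y$, and $\vec{\widetilde\sigma}(x)=1$. Conditioning on $\cB_y$ gives $\vec{\widetilde\sigma}(y)=1$, and since under the planted replica model all but a $2^{-\Omega(k)}$‑fraction of variables are set to $1$, the restriction $\vec{\widetilde\sigma}(x)=1$ only removes a $o(1)$ proportion of the candidate pairs $(x,a_x)$. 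I would then let $\vec\hhPhi'$ be the formula obtained from $\vec\hhPhi$ by deleting the two edges $(y,a_y)$, $(x,a_x)$ and inserting $(y,a_x)$, $(x,a_y)$, keeping all the decorations; because $\vec{\widetilde\sigma}(x)=\vec{\widetilde\sigma}(y)=1$ and the signs match, $(\vec\hhPhi',\vec{\widetilde\sigma})$ is again a legitimate sample of the planted replica model.

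Next I would introduce the event $\cE$ that $\vec\hhPhi'$ also satisfies the separation and tree‑likeness requirements encoded in $\cD$. Routine first‑moment estimates give $\hhProb[\cD]=1-o_n(1)$ and $\hhProb[\cE]=1-o_n(1)$, and the crux of this step is that, conditioned on $\cA_y\cap\cB_y\cap\cD\cap\cE\cap\cG_y$, the pairs $(\vec\hhPhi,\vec{\widetilde\sigma})$ and $(\vec\hhPhi',\vec{\widetilde\sigma})$ have the same law: the data frozen by $\cG_y$ together with the subtree $\partial^{(2\w)}\vec\hhPhi(y\to a_y)$ pinned down by $\cA_y$ and $\cB_y$ is precisely the portion of the instance that the swap leaves untouched, up to relabelling the two reattached clones. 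With this in hand I would invoke, exactly as in \Lem~\ref{lemma_aux_good_3}, the stability of the core under a perturbation of two edges together with the slack between the parameters $1/2$ and $1$ in \textbf{CR1}--\textbf{CR5} and in the definition of a sticky set to obtain
\[
\core_{1/2}(\vec\hhPhi)\setminus S_{1/2}(\vec\hhPhi)\ \subset\ \core_{1}(\vec\hhPhi')\setminus S_{1}(\vec\hhPhi'),
\]
together with the symmetric containment obtained by exchanging the roles of $\vec\hhPhi$ and $\vec\hhPhi'$.

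Combining the distributional identity with these containments, $\hhProb[\neg\cC_y\mid\cA_y,\cB_y,\cD,\cG_y]$ is, up to an additive $o_n(1)$, bounded by the probability that the uniformly random variable $x$ fails to lie in $\core_{1/2}(\vec\hhPhi)\setminus S_{1/2}(\vec\hhPhi)$. Arguing as for \eqref{eq_aux_proof_aux_good_3_b}, the conditioning on the frozen local structure becomes asymptotically irrelevant because a random instance realizes any admissible such structure with probability $1-o_n(1)$ and $x$ is chosen uniformly; and by the analogue of \eqref{eq_aux_proof_aux_good_3_c} for the planted replica model, which follows from \Prop~\ref{prop_formulas_safe_structure}, the density of $V\setminus(\core_{1/2}(\vec\hhPhi)\setminus S_{1/2}(\vec\hhPhi))$ is at most $2^{1-0.96k}+o_n(1)$. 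Hence $\hhProb[\neg\cC_y\mid\cA_y,\cB_y,\cD,\cG_y]\le 2^{1-0.96k}+o_n(1)\le 2^{-0.95k}$ for $k\ge k_0$.

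I expect the main obstacle to be the bookkeeping in the second step: one has to verify carefully that, conditioned on $\cA_y$, $\cB_y$, $\cD$ and $\cG_y$, the switch is genuinely measure‑preserving on the planted replica ensemble — i.e.\ that these events do not distinguish between the admissible clones that could be reattached to $y$ — and that the joint constraint ``$\vec{\widetilde\sigma}(x)=1$ and the signs agree'' is simultaneously satisfiable for a $1-o(1)$ fraction of the pairs $(x,a_x)$ and sufficient to keep $(\vec\hhPhi',\vec{\widetilde\sigma})$ in the support. The core‑robustness containment in the third step is mildly delicate as well, because the core is defined self‑referentially through \textbf{CR4}--\textbf{CR5}; but this is the same $\lambda$‑slack argument already used for \Lem~\ref{lemma_aux_good_3}.
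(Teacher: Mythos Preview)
Your edge‑switching idea is the right one, but the swap you describe is not measure‑preserving on the planted replica ensemble, and this is not merely a bookkeeping nuisance: it is the essential difference between this lemma and \Lem~\ref{lemma_aux_good_3}. Recall from {\bf PR4} that $\tilde\PHI$ is drawn uniformly among formulas whose depth‑$(4\w+1)$ neighbourhood around every clause clone is isomorphic to that of $\hat\PHI$. Consequently, if you reattach to $a_y$ a variable $x$ whose $4\w$‑neighbourhood $\ppartial^{(4\w)}\vec\hhPhi(x\to a_x)$ differs from $\ppartial^{(4\w)}\vec\hhPhi(y\to a_y)$, the resulting formula generically violates the local‑structure constraint of {\bf PR4} at the clause clones near $a_y$, and even when it does not, the swap is not a bijection on the support. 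Matching only the sign of the clone and the value $\vec{\widetilde\sigma}(x)=1$ is therefore insufficient for your distributional identity to hold.

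There is a second, related problem with your conditioning claim. The event $\cA_y$ constrains the tree $\ppartial^{(2\w)}\vec\hhPhi(y\to a_y)$; after the swap the vertex attached to $a_y$ is $x$, so the analogue of $\cA_y$ now concerns $\ppartial^{(2\w)}\vec\hhPhi(x\to a_x)$, which you have not constrained at all. Thus the events $\cA_y,\cB_y$ are \emph{not} ``left untouched'' by the swap in the way you assert. The paper fixes both issues simultaneously by requiring the much stronger matching $\ppartial^{(4\w)}\vec\hhPhi(x,a_x)=\ppartial^{(4\w)}\vec\hhPhi(y,a_y)$ (and the same clone position $l_y'$). This makes the swap a genuine involution on the conditional support, but at the cost that one can no longer simply invoke ``$x$ is uniform in $V$'': one must decompose over the possible $4\w$‑trees $T'\in\cT_{4\w}^+$, compare the conditional law $\hhProb[\,\ppartial^{(4\w)}\vec\hhPhi(y,a_y)=T'\mid\cA_y,\cB_y,\cD,\cE,\cG_y\,]$ to the unconditional one via several applications of Bayes' rule (using \Lem~\ref{lemma_aux_planted_good_1} and \Lem~\ref{lemma_aux_planted_good_2} to control the denominators), and only then appeal to \Prop~\ref{prop_formulas_safe_structure}. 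Your outline skips precisely this part, which is where all the work specific to the planted replica model lies.
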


\begin{proof} Let $\vec \hhPhi'$ be obtained from $\vec \hhPhi$ by the following operation.
\begin{itemize}
\item Select $x \in \vec \hhPhi$ such that $\vec {\widetilde{\sigma}}_x = \vec {\widetilde{\sigma}}_y$ and (denoting by $a_x$ the $l_y'$-th clause adjacent to $x$) $\ppartial^{(4\w)}\vec \hhPhi(x,a_x) = \ppartial^{(4\w)} \vec \hhPhi(y,a_y) $ at random.
\item Replace the pair of edges $\{(y,a_y),(x,a_x)\}$ by the pair of edges $\{(y,a_x),(x,a_y)\}$.
\end{itemize}
Let $\cE$ be the event that $\vec \hhPhi'$ satisfies $\cD$. We observe that $\hProb \left[\cD\right] = 1-o_n(1)$ and $\hProb \left[\cE\right] = 1-o_n(1)$. Conditionned on $\cD, \cE$ and $\cG_y$, $\vec \hhPhi$ and $\vec \hhPhi'$ are identically distributed. Moreover, we have
$$ \core _{1/2} \left( \vec \hhPhi \right) \setminus S_{1/2}\left(\vec \hhPhi'\right) \subset \core _1\left(\vec \hhPhi' \right) \setminus  S_1\left(\vec \hhPhi '\right) . $$
It follows that
\begin{align} \nonumber \hhProb \left[ \neg \cC_y | \cA_y, \cB_y, \cD,\cG_y \right] & = \hhProb \left[ \neg \cC_y | \cA_y, \cB_y, \cD, \cE, \cG_y \right]  (1+o_n(1))
\\ \nonumber &\leq  \sum_{T' \in {\cT}_{4 \w}^+}  \hhProb \left[ \ppartial^{(4\w)} \vec \hhPhi(y,a_y) = T' \left| \cA_y, \cB_y, \cD, \cE, \cG_y \right. \right] 
\\ \label{eq_aux_lemma_aux_planted_good_a} & \hspace{2.5 cm}\hhProb \left[ x \notin \core_{1/2} \left( \vec \hhPhi \right) \setminus S_{1/2}\left(\vec \hhPhi\right)  \left| \cB_x, \cD, \cE, \cG_y, \ \ppartial^{(4\w)} \vec \hhPhi(x,a_x)= T' \right. \right].
 \end{align}
We define, for $T' \in \cT_{4 \w}^+$, $T(T') \in \widehat{\cT}_{4 \w+1}$ by $T[l_y'] = \ppartial^{(4\w)} \Phi(a_y \to y)$ and $T[l] = T'[l]$ for $l' \neq l_y'$ (where $T[l]$ denotes the $l$-th subtree pending on $T$'s root).
It follows from the same argument as previously for $T' \in \cT_{4 \w}^+$ we have $\mu^{(2\w+1)}_{T(T')}[l_y](1) \geq 1/2$. Therefore 
 \begin{align*} \hhProb \left[ \left . \ppartial^{(4\w)} \vec \hhPhi(y,a_y) = T' \right| \cA_y, \cB_y, \cD, \cE, \cG_y \right]  &= \frac{ \hProb \left[ \left . \ppartial^{(4\w)} \vec \hhPhi(y,a_y)  = T' \right|  \cA_y, \cG_y  \right]  \mu^{(2\w+1)}_{T(T')}[l_y](1)}{\sum_{T'' \in \cT_{4\w}^+} \hProb \left[ \left. \ppartial^{(4\w)} \vec \hhPhi(y,a_y)  = T'' \right| \cA_y, \cG_y  \right]  \mu^{(2\w+1)}_{T(T'')}[l_y](1)} (1+o_n(1))
 \\ & \leq \frac{ \hProb \left[ \left. \ppartial^{(4\w)} \vec \hhPhi(y,a_y)  = T' \right| \cA_y, \cG_y \right]  \mu^{(2\w+1)}_{T(T')}[l_y](1) }{\sum_{T'' \in \cT_{4\w}^+} \hProb \left[ \left. \ppartial^{(4\w)} \vec \hhPhi(y,a_y)  = T'' \right| \cA_y, \cG_y  \right]  1/2 }
 \\ & \leq 2 \hProb \left[ \left. \ppartial^{(4\w)} \vec \hhPhi(y,a_y)   = T' \right | \cA_y, \cG_y  \right], \end{align*}
 Moreover 
 \begin{align*}\hProb \left[ \left. \ppartial^{(4\w)} \vec \hhPhi(y,a_y)  = T' \right | \cA_y, \cG_y  \right] &\leq 
 \\ & \leq  {\hProb \left[  \ppartial^{(4\w)} \vec \hhPhi(y,a_y) = T' \right]} {\hProb \left[ \cA_y\right] }^{-1}  {\hProb \left[ \cG_y\right] }^{-1} 
 \\ & \leq 2 \hProb \left[ \ppartial^{(4\w)} \vec \hhPhi(y,a_y)  = T '\right] = 2 \hhProb \left[ \ppartial^{(4\w)} \vec \hhPhi(y,a_y)  = T'   \right].\end{align*}
 where we used \Lem~\ref{lemma_aux_planted_good_3} to obtain the second inequality. Using Baye's theorem once more, we have
 $$ \hhProb \left[\ppartial \vec \hhPhi^{(4\w)}(y,a_y)  = T'   \right] = \frac{ \hhProb \left[ \left. \ppartial^{(4\w)} \vec \hhPhi(y,a_y) = T' \right | \cB_y  \right] \hhProb \left[ \cB_y \right] }{ \hhProb \left[ \left.  \cB_y   \right |\ppartial \vec \hhPhi^{(4\w)}(y,a_y)  = T'\right]} \leq 2 \hhProb \left[ \left. \ppartial^{(4\w)} \vec \hhPhi(y,a_y)  = T' \right | \cB_y  \right].$$
 In order to deduce the last inequality, we used that by an argument similar to \Lem~\ref{lemma_aux_planted_good_2}, $ \hhProb \left[ \left.  \cB_y   \right | \ppartial^{(4\w)} \vec \hhPhi(y,a_y) \right] \geq 1/2$. It follows by replacing in (\ref{eq_aux_lemma_aux_planted_good_a}) that
 \begin{align*} \hhProb \left[ \neg \cC_y | \cA_y, \cB_y, \cD, \cG_y \right] &\leq  6 \sum_{T' \in {\cT}_{4\w}^+}  \hhProb \left[ \left. \ppartial^{(4\w)} \vec \hhPhi(y,a_y)  = T' \right| \cB_y \right] \hhProb \left[ x \notin \core_{1/2} \left( \vec \hhPhi \right) \setminus S_{1/2}\left(\vec \hhPhi\right)  \left| \cB_x, \cD,\cE, \ppartial^{(4\w)} \vec \hhPhi(x,a_x)  = T' \right. \right]
 \\ & \leq 6 \hhProb \left[ x \notin \core_{1/2} \left( \vec \hhPhi \right) \setminus S_{1/2}\left(\vec \hhPhi\right)  \left| \cB_x, \cD, \cE \right. \right] 
 \\ & \leq 6 {\hhProb \left[ x \notin \core_{1/2} \left( \vec \hhPhi \right) \setminus S_{1/2}\left(\vec \hhPhi\right)  \right] }{\hhProb \left[ \cB_x \right]}^{-1} {\hhProb \left[ \cD \right]}^{-1} {\hhProb \left[ \cE \right]}^{-1}
 \\ & \leq 8 {\hhProb \left[ x \notin \core_{1/2} \left( \vec \hhPhi \right) \setminus S_{1/2}\left(\vec \hhPhi\right)  \right] }.
 \end{align*}
We used \Lem~\ref{lemma_aux_planted_good_2} to deduce the last inequality. Along with \Prop~\ref{prop_formulas_safe_structure}, this ends the proof of the lemma.
\end{proof}

\begin{proof}[Proof of \Prop~\ref{proposition_typical_properties_planted}] We take a path similar to the proof of \Prop~\ref{proposition_typical_properties}. Let $\eps>0$ be fixed. Let $$\vec Y =| \{ x \in V,\ \ppartial^{(2\w)} \vec \hhPhi(x) \textrm{ is not $(\eps,2\w)$-cold in } \vec \hhPhi  \}|,$$ and let $\alpha(n)$ be a slowly diverging function. We are going to show that there is a sequence $y_\w = o_\w(1)$ such that
\beq \label{eq_aux_typical_C6_a_bis} \hhErw \left[ \vec Y(\vec Y-1) \dots (\vec Y-\alpha +1 ) \right] \leq \left( y_\w n \right)^{\alpha}. \eeq
This bound implies the assertion as previously. As before, we observe that
$$ \hhErw \left[ \vec Y (\vec Y-1) \dots (\vec Y-\alpha +1) \right] \leq n^\alpha \hhProb \left[ \vec T_1, \dots, \vec T_\alpha \textrm{ are not $(\eps,2\w)$-cold} \right],$$
where $\vec T_1, \dots, \vec T_\alpha$ are $2\w$- neighborhoods of $\alpha$ random vertices $\vec x_1, \dots, \vec x_\alpha$ of $V$. Let $\cD$ be the event that $\vec x_1, \dots, \vec x_\alpha$ are at distance greater than $5 \w$ from each others and have tree-like $5 \w$ neighborhoods, and let $\vec \Delta = \partial \vec T_1 \cup \dots \cup \partial \vec T_\alpha$. By combining \Lem~\ref{lemma_aux_planted_good_1}, \Lem~\ref{lemma_aux_planted_good_2}, \Lem~\ref{lemma_aux_planted_good_3} we obtain that for $y \in \vec \Delta$
\beq \nonumber \hhProb \left[ \left. \neg \cC_y \right|\cD, \cG_y \right] \leq 2^{-0.94k}.\eeq
In particular, using \Cor~\ref{cor_typical_w_neighb} we can apply the result of \Prop~\ref{prop_good_trees_for_our_setting} to obtain that, for $1 \leq i \leq \alpha$,
\beq \label{eq_ccj_prime_planted} \Prob \left[ \vec T_i \textrm{ is not $(\eps,2\w)$-cold} | \cD, \vec T_1, \dots, \vec T_{i-1}, \vec T_{i+1}, \vec T_\alpha \right] \leq \w^{-1}. \eeq
We have
\begin{align*}\hhProb \left[ \vec T_1, \dots, \vec T_\alpha \textrm{ are not $(\eps,2\w)$-cold} | \cD \right] & \leq \hhProb \left[ \vec T_1 \textrm{ is not $(\eps,2\w)$-cold} | \cD \right]  \hhProb \left[ \vec T_2 \textrm{ is not $(\eps,2\w)$-cold} | \vec T_1 \textrm{ is not $(\eps,2\w)$-cold, } \cD \right] \dots 
\\ & \hspace{1.5 cm} \hhProb \left[ \vec T_\alpha \textrm{ is not $(\eps,2\w)$-cold} | \vec T_1, \dots, \vec T_{\alpha-1} \textrm{ is not $(\eps,2\w)$-cold, }  \cD \right]. \end{align*}
Using (\ref{eq_ccj_prime_planted}) yields
$$\hhProb \left[ \vec T_1, \dots, \vec T_\alpha \textrm{ are not $(\eps,2\w)$-cold} | \cD \right] \leq (o_\w(1))^\alpha. $$
Along with the observation that $\hhProb \left[ \cD \right] = 1-o_n(1)$, this concludes the proof of the proposition.
 \end{proof}

\subsection{Proof of \Prop~\ref{prop_formulas_safe_structure}}

In order to prove \Prop~\ref{prop_formulas_safe_structure}, we will identify a set of simpler events that will imply the proposition. We will first need to control the number of vertices with unusual 2-neighborhood. To this end, we let for a formula $\Phi$, $U_0$ be the set of variables such that $\left| \{a \in \partial_{1,0} x \}  \right| < 2 k^{7/8}$, $|\partial_{-1,0} x | \geq 2$ or such that there exists $1 \leq l \leq k$ such that $ | \{\partial_{-1,l} x \} | \geq 0.01 k^{l+3}/l!$. Our first condition will ensure that $U_0$ is not too large:
\beq \label{tech_cond_0} \tag{$\mathcal{C}$0} |U_0| \leq 2^{-0.98k}n. \eeq
We now turn to expansion properties of $\Phi$. We define, for a set $T \subset V$ the sets
\begin{align*} F_0(T) &= \{ a \in F, |\partial_{1} a| = 1, \partial_{1} a \subset T, |\partial_{-1}(a) \cap T | \geq 1 \} ,
\\F_l(T) &= \{a \in F, \partial_{-1} a \cap T \neq \emptyset,  |\partial_{1}a | =l, |\partial_{1} a \cap T | \geq l/4 \}  \qquad \textrm{(for $1 \leq l \leq k$)}.
\end{align*}
The following conditions encompass bounds on the sizes of the sets $F_{i}(T)$ when $T$ has moderate size.
\begin{align}
\label{tech_cond_1} \tag{$\mathcal{C}$1} &\mbox{ There is no set $T \subset V$ of size $|T| \in [2^{-0.97 k} n, 2^{-k/20} n]$ and such that $|F_0(T)| \geq k^{3/4}|T|/100$.}
\\ \label{tech_cond_2} \tag{$\mathcal{C}$2} &\mbox{For each $1 \leq l \leq k$, there is no set $T \subset V$ of size $|T| \in [2^{-0.97 k} n ,  2^{-k/20} n ]$ }
\\ \nonumber & \hspace{5 cm} \mbox{ and such that $|F_l(T)| \geq k^{3/4}|T|/(100l^2)$.}
\end{align}

\begin{lemma}\label{lemma_aux_strongly_safe} Assume that $\Phi$ satisfies (\ref{tech_cond_0})-(\ref{tech_cond_2}). Then it satisfies {\bf Core} and {\bf Sticky}. \end{lemma}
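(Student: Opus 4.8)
The plan is to prove both properties by contradiction, in each case producing a set $T$ whose size lies in the window $[2^{-0.97k}n,2^{-k/20}n]$ and which violates $(\mathcal{C}1)$ or $(\mathcal{C}2)$.

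\emph{The property \textbf{Sticky}.} Suppose $S$ is a $1/2$-sticky set with $|S|\in[2^{-0.95k}n,2^{-k/20}n]$; note that $|S|$ also lies in the range of $(\mathcal{C}1)$--$(\mathcal{C}2)$. Split $S=S_1\cup S_2$ according to whether a vertex $x\in S$ satisfies \textbf{ST1} or \textbf{ST2}. If a constant fraction of $S$ lies in $S_1$, then for each $x\in S_1$ the at least $k^{3/4}/2$ witnessing clauses satisfy $\partial_1 a=\{x\}$ and $\partial_{-1}a\cap S\ne\emptyset$, hence lie in $F_0(S)$; since such an $a$ has $\partial_1 a$ a singleton and therefore determines its vertex, these clause-families are pairwise disjoint as $x$ ranges over $S_1$, so $|F_0(S)|\ge k^{3/4}|S_1|/2$, contradicting $(\mathcal{C}1)$. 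If instead almost all of $S$ lies in $S_2$, first delete the at most $|U_0|\le 2^{-0.98k}n$ vertices of $S_2$ that belong to $U_0$: the remaining vertices then have, by $(\mathcal{C}0)$, a controlled number of incident clauses with each prescribed value of $|\partial_1 a|$. Now double count the incidences between this part of $S_2$ and $\bigcup_l F_l(S)$, grouping the \textbf{ST2}-witnesses of each vertex by dyadic ranges of $l=|\partial_1 a|$ and passing to a sub-collection concentrated on a single range; combining the resulting uniform lower bound on the number of incidences with the upper bound coming from the $l^{-2}$-weighted estimates of $(\mathcal{C}2)$ and the per-vertex bounds of $(\mathcal{C}0)$ shows there is no room for $\Omega(k^{3/4}|S|)$ witnessing pairs, the desired contradiction.

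\emph{The property \textbf{Core}.} Recall that $\core_{1/2}(\Phi)$ is the stable set of the monotone peeling $W_0=V$, where $W_{t+1}$ consists of those $x\in W_t$ for which \textbf{CR1}--\textbf{CR5} hold with respect to $W=W_t$. The conditions \textbf{CR1}--\textbf{CR3} depend only on the depth-two neighbourhood of $x$, and unwinding the definitions (for $\lambda=1/2$) shows that the vertices failing one of them are contained in $U_0$; hence $|V\setminus W_1|\le 2^{-0.98k}n$ by $(\mathcal{C}0)$, and from the second round on only \textbf{CR4} or \textbf{CR5} can be violated. If $x$ is peeled at a round $t+1\ge 2$, then each violating clause is incident, through a variable other than $x$, to the set $R=V\setminus\core_{1/2}(\Phi)$ of all peeled vertices, so $x$ is \textbf{ST1}- or \textbf{ST2}-witnessed inside $R$. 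Thus, if $|R|>2^{-0.95}n$ (i.e.\ if \textbf{Core} failed), then since the set peeled after round one grows from an exponentially small base, at some round it first lands inside the window $[2^{-0.95k}n,2^{-k/20}n]$, and at that stage the vertices peeled after round one form (the bulk of) a $1/2$-sticky set of size in the window, contradicting the \textbf{Sticky} part just proved. Hence $|R|\le 2^{-k/20}n$ and $|V\setminus\core_{1/2}(\Phi)|\le 2^{-0.98k}n+2^{-k/20}n<2^{-0.95}n$, which is \textbf{Core}.

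The main obstacle is the counting in the \textbf{ST2} case of \textbf{Sticky}: a clause of $F_l(S)$ can a priori witness as many as $k-l$ vertices, so the naive charging loses a factor of order $k$, and recovering it forces one to use simultaneously the dyadic decomposition in $l$, the $l^{-2}$ weights in $(\mathcal{C}2)$, and the per-vertex degree constraints encoded in $(\mathcal{C}0)$; getting the numerical constants $0.95,0.97,0.98,3/4,1/20$ to close is where the real work lies. A secondary subtlety is keeping the \textbf{Core} argument honest: one must stop the peeling exactly when the removed set first meets the window, and further peel off (using $(\mathcal{C}0)$ again) the vertices whose \textbf{ST1}/\textbf{ST2}-witnesses rely only on the round-one removals, so that what remains is a genuinely self-witnessed sticky set of admissible size rather than one that overshoots $2^{-k/20}n$.
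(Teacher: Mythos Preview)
Your overall strategy matches the paper's, but you overcomplicate both halves.

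For \textbf{Sticky}, the paper does not use $(\mathcal{C}0)$ at all. Instead of dyadic ranges, it decomposes $S$ by the \emph{exact} value $l=|\partial_1 a|$ of the \textbf{ST2}-witnesses: let $S_l$ be the set of $x\in S$ with at least $k^{3/4}/2$ clauses $a\in\partial_{-1,l}x$ satisfying $|\partial_1 a\cap S|\ge l/4$, and let $S_0$ correspond to \textbf{ST1}. Since $S\subset S_0\cup\bigcup_{l\ge1}S_l$ and $\frac{1}{100}+\sum_{l\ge1}\frac{1}{100l^2}<1$, by pigeonhole either $|S_0|\ge|S|/100$ or some $|S_l|\ge|S|/(100l^2)$. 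A direct double count then gives $|F_0(S)|\ge k^{3/4}|S_0|/2$ or $|F_l(S)|\ge k^{3/4}|S_l|/(2l)$, contradicting $(\mathcal{C}1)$ or $(\mathcal{C}2)$. The ``lost factor of $k$'' you worry about never materializes: the $l^{-2}$ weight built into $(\mathcal{C}2)$ absorbs the overcounting, and no appeal to the per-vertex degree bounds of $(\mathcal{C}0)$ is needed.

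For \textbf{Core}, rather than reducing to the just-proved \textbf{Sticky}, the paper runs the whitening from $U_0$ (adding any vertex satisfying condition (a) or (b) with threshold $k^{3/4}/2$) and applies $(\mathcal{C}1)$--$(\mathcal{C}2)$ \emph{directly} to the set $U$ at the first moment $|U|=2^{-0.97k}n$. Decompose $U=U_0\cup V_0\cup\bigcup_l V_l$ according to why each vertex was added; since $|U_0|\le 2^{-0.98k}n\le|U|/k$, the same pigeonhole forces $|V_0|\ge|U|/100$ or some $|V_l|\ge|U|/(100l^2)$, and the same double counts give a contradiction. This sidesteps exactly the ``secondary subtlety'' you flag: you never need to peel off vertices whose witnesses rely only on $U_0$, because $U_0$ is allowed to sit inside the set to which the expansion conditions are applied.
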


\begin{proof} Let $\Phi$ be such that it satisfies (\ref{tech_cond_0})-(\ref{tech_cond_2}). We first prove that $\Phi$ does not admit a $1/2$-sticky set $S$ with $|S| \in [2^{-0.97 k}n,2^{-k/20}n]$. Indeed, let $S \subset V$ be a $1/2$-sticky set for $\Phi$ and let 
\begin{align*}
S_0 &= \left \{x \in S,\left | \{a \in \partial_{1,0} x, \partial(a,x) \cap S  \neq \emptyset \} \right| \geq k^{3/4}/2 \right \},
\\ S_l &= \left \{x \in S, \left | \ \{a \in \partial_{-1, l} x, ||\partial_{1}(a,x)| = l,  |\partial_{1}(a,x) \cap S| \geq l/4  \} \right| \geq k^{3/4}/2 \right \} \qquad \textrm{(for $1 \leq l \leq k-1$)}.
\end{align*}
We first observe that
\begin{align}  \label{eq_constraint_sticky_clauses} |F_0(S)| \geq {k^{3/4}} |S_0| /2, \qquad \textrm{and that for  $1 \leq l \leq k-1$ } |F_l(S)| \geq k^{3/4}{|S_l|}/(2l). \end{align}
Because $S$ is $1/2$-sticky, we have $S \subset S_0 \bigcup \cup_{l=1}^{k-1} S_l $ and therefore either $|S_1| \geq |S|/100$ or there is $1 \leq l \leq k-1$ such that $|S_l |\geq |S| / (100 l^2)$. In either case, it follows from (\ref{eq_constraint_sticky_clauses}) and (\ref{tech_cond_1})-(\ref{tech_cond_2}) that $|S_l| \notin [2^{-0.98k}n,2^{-k/20}n]$. Using that (for $0 \leq l \leq k-1$) $|S_l| \leq |S| \leq k |S_l |$ shows that $S$ has size outside the range $[2^{-0.97 k}n, 2^{-k/20}n]$.

We now turn to the study of the $1/2$-core of $\Phi$. Given $\Phi$, we consider the following {\em whitening} process. Let $ U=  U_0$ initially. While there is a variable $x \not\in U$ such that one of the following conditions occurs, add $x$ to $ U$.
\begin{itemize}
\item[(a)] $|\{a \in \partial_{1,0} x, |\partial_{-1,0} a \cap U| \geq 1\}| > k^{3/4}/2$.
\item[(b)] $| \{a \in \partial_{{-1}} x, |\partial_{1} a \cap U | \geq |\partial_1 a|/4 \} | > k^{3/4}/2$.
\end{itemize}
It is easily seen that the process converges. Let $ U_{\infty}$ be the resulting subset of $V$, then we have 
\beq \label{eq_core_whitening} \core( \Phi)_{1/2} = V \setminus U_\infty . \eeq
We are going to show that $ U_\infty$ cannot be too large. By condition (\ref{tech_cond_0}), we can assume that $|U_0| \leq 2^{-0.98k}n$. Assume for contradiction that $|U_\infty| \geq 2^{-0.97 kn}$ and let $U$ be the set obtained when precisely $ 2^{-0.97 k}n - |U_0|$ variables have been added to $U_0$. By construction each variable $x \in U$ has one of the following properties.
\begin{itemize}
\item[(00)] $x$ belongs to $U_0$,
\item[(0)] $x$ belongs to more than $k^{3/4}/2$ clauses $a$ with $\partial_{1}a = \{x\}$ and $|\partial_{-1} a \cap  U | \geq 1 $,
\item[($l$)] $x$ belongs to more than $k^{3/4}/2$ clauses $a\in  \partial_{{-1},l} x $ with $|\partial_{1} a \cap  U| \geq l/4$.
\end{itemize}
Let $ U_0 \subset  U$ be the set of variables $x \in  U$ that satisfy (00),  $ V_0 \subset  U$ be the set of variables $x \in  U$ that satisfy (0),  and for $1 \leq l \leq k-1$ $ V_l \subset  U$ be the set of variables $x \in  U$ that satisfy ($l$). As $| U| \leq | U_{0}| + | V_0| + \sum_{l=1}^k | V_l| $ and $|U_0| \leq |U| /k$, either $|V_0| \geq |U|/100 \geq 2^{-0.98 k}n$ or there is $l$ such that 
$|V_l| \geq |U|/(100 l^2) \geq 2^{-0.98k}n$. Either case is impossible by a similar reasonning as previously and we obtaind that$| U_\infty| \leq 2^{-0.97k}n$ \whp.
 \end{proof}

Studying $\vec \hPhi$ will be enough to obtain the information needed about $\vec \hhPhi$. Indeed, we shall obtain sufficiently strong estimates of the probability of events under the random formula $\vec \hPhi$ to transfer them into high probability statements for the biased distribution generating $\vec \hhPhi$. More precisely, say that $\vec \hPhi$ satisfies a property ($\mathcal{P}$) \textit{with very high probability} ($\wvhp$) iff ($\mathcal{P}$) has probability larger than $1-\exp\left(-2^{-0.99k}n \right)$ under $\vec \hPhi$. Then we can infer that ($\mathcal{P}$) has a large probability under the random formula $\vec \hhPhi$.

\begin{lemma}\label{lemma_wvhp_implies_whp} Let $\cA$ be an event. Assume that $\vec \hPhi$ satisfies $\cA$ \wvhp. Then $\vec \hhPhi$ satisfies $\cA$ \whp. \end{lemma}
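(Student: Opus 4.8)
The plan is to prove a pointwise change-of-measure bound: there is a deterministic $q=q(n,k,d)=\exp(o(n))$ such that
\[ \Prob[\vec\hhPhi = \Phi'] \le q\cdot\Prob[\vec\hPhi = \Phi'] \]
for every regular $k$-SAT formula $\Phi'$. This immediately yields the lemma, since if $\cA$ holds \wvhp\ under $\vec\hPhi$ then
\[ \Prob[\vec\hhPhi\notin\cA] \le q\cdot\Prob[\vec\hPhi\notin\cA] \le \exp(o(n))\exp(-2^{-0.99k}n) = o(1). \]
In fact any $q\le\exp(2^{-k}n/\mathrm{poly}(k))$ would do, so there is ample room to absorb all lower-order errors; this is exactly the reason the threshold in the definition of \wvhp\ is the particular rate $2^{-0.99k}n$.

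To obtain the bound I would unfold the definition of the planted replica model. Fix $\Phi'$ and let $\cT(\Phi')$ be the (finite) class of formulas whose depth-$(4\w+1)$ neighbourhood profile agrees with that of $\Phi'$; by {\bf PR4} the formula produced has exactly this profile, so
\[ \Prob[\vec\hhPhi = \Phi'] = \sum_{\Phi_0\in\cT(\Phi')}\Prob[\vec\hPhi = \Phi_0]\cdot\Prob[\vec\hhPhi = \Phi'\mid\vec\hPhi = \Phi_0]. \]
Given $\vec\hPhi = \Phi_0$, the decoration $\tilde\SIGMA$ is drawn from $\bigotimes_x\mu^{(2\w)}_{\Phi_0,x}\otimes\bigotimes_a\mu^{(2\w+1)}_{\Phi_0,a}$, restricted to the event $\Omega(\Phi_0,\tilde\SIGMA)\neq\emptyset$ that a consistent profile-preserving rewiring exists, and then $\vec\hhPhi$ is uniform on $\Omega(\Phi_0,\tilde\SIGMA)$. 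Parametrising a $\tilde\SIGMA$ consistent with $\Phi'$ by its restriction $s\in\{\pm1\}^V$ to the variables (the clause-clone labels being then forced through $\Phi'$) gives
\[ \Prob[\vec\hhPhi = \Phi'\mid\vec\hPhi = \Phi_0] = \frac{\sum_{s\in\{\pm1\}^V}\big(\prod_x\mu^{(2\w)}_{\Phi_0,x}(s_x)\big)\big(\prod_a\mu^{(2\w+1)}_{\Phi_0,a}((s\circ\Phi')_a)\big)\,|\Omega(\Phi_0,s)|^{-1}}{\Prob[\Omega(\Phi_0,\tilde\SIGMA)\neq\emptyset\mid\Phi_0]}. \]

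The bound then reduces to a handful of estimates, each of which should cost only $\exp(o(n))$. First, a standard configuration-model count gives $|\Omega(\Phi_0,s)| = \exp(-o(n))\,|\cT(\Phi_0)|$ for structurally regular $\Phi_0$ and typical $s$, and likewise $\Prob[\Omega(\Phi_0,\tilde\SIGMA)\neq\emptyset\mid\Phi_0]=\exp(-o(n))$: the only obstruction to a consistent rewiring is that the numbers of $\pm1$ variable- and clause-clones balance, which is precisely what the resampling loop in {\bf PR4} arranges. Second, $|\cT(\Phi')|=\exp(-o(n))\,|\cE_{n,k,d}|$ for the typical depth-$(4\w+1)$ profile (\Lem~\ref{lemma_typical_w_neighb}), so restricting $\Phi'$ to it is lossless, and the $\Phi_0$'s in $\cT(\Phi')$ that are not structurally regular carry $\vec\hPhi$-mass $\exp(-\Omega(n))$ and may be discarded. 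Third — the crux — the ``Bethe-type'' sum $\sum_s(\prod_x\mu^{(2\w)}_{\Phi_0,x}(s_x))(\prod_a\mu^{(2\w+1)}_{\Phi_0,a}((s\circ\Phi')_a))$ equals, up to $\exp(o(n))$ and a normalisation that depends only on the common profile $\cT(\Phi_0)=\cT(\Phi')$, the partition function $Z_\beta(\Phi')$; since $\Prob[\vec\hPhi = \Phi_0]\propto Z_\beta(\Phi_0)$ and $Z_\beta(\Phi')$ is precisely the quantity we divide by, all factors cancel up to $\exp(o(n))$, giving $q=\exp(o(n))$.

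The main obstacle is this last estimate, together with the count of $|\Omega(\Phi_0,s)|$, both needed uniformly over the structurally regular $\Phi_0\in\cT(\Phi')$. Here one leans on the picture built up in Sections~\ref{sec_galton_watson}--\ref{sec_typical_properties}: a typical planted formula has a well-behaved core containing all but at most a $2^{-0.98k}$-fraction of variables, on which the messages $\mu^{(2\w)}_{\Phi_0,\cdot}$ are sharply skewed towards $+1$ (cf. {\bf Core}, {\bf $(\eps,2\w)$-Cold}, \Lem~\ref{lemma_if_frozen_then_cold} and \Lem~\ref{Lemma_skewedWhp}), and it enjoys non-reconstruction (\Prop~\ref{Prop_nonRe1}). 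Consequently the Bethe functional computed from the messages of one such formula, evaluated along another of the same profile, reproduces the partition function up to an error confined to the $o_k(2^{-k})n$ atypical clones, i.e.\ up to $\exp(o_k(2^{-k})n)=\exp(o(2^{-0.99k}n))$ — exactly the slack the \wvhp\ hypothesis provides. The restart in {\bf PR4} is absorbed into the lower bound on $\Prob[\Omega\neq\emptyset\mid\Phi_0]$, and because the comparison of $\vec\hPhi$ and $\vec\hhPhi$ is pointwise no further moment computation is required.
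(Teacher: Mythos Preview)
Your high-level strategy --- a pointwise change-of-measure bound $\hhProb[\vec\hhPhi=\Phi']\le q\cdot\hProb[\vec\hPhi=\Phi']$ with $q\ll\exp(2^{-0.99k}n)$ --- is exactly right, and is what the paper does. But your execution diverges from the paper's at the decisive point and runs into a real gap.

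The paper's argument is much shorter. It observes (this is the meaning of the planted replica construction, cf.\ \cite{betheupb}) that the law of $\vec\hhPhi$ is the law of $\vec\hPhi$ reweighted by $\exp(n\cB_{\Phi,\w})$, the Bethe free energy built from the local marginals. It then bounds $\cB_{\Phi,\w}$ above by the average marginal entropy $H(\mu_\Phi^{(2\w)})$, and notes that for a variable whose $2\w$-neighbourhood is a \emph{pure} tree this entropy is at most $4^{-k}$. After intersecting $\cA$ with the (itself \wvhp) event ``all but $2^{-0.999k}n$ neighbourhoods are pure'', one gets $\sup_{\Phi\in\cA}\exp(n\cB_{\Phi,\w})\le\exp(2^{-0.999k}n)$, while the normaliser is $\ge\exp(-2^{-0.999k}n)$ by the computation of $\cB^{(\w)}(k,d,\beta)$ in \Sec~\ref{sec_finite_w_approx}. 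No partition functions, no counting of rewirings, no non-reconstruction.

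Your route instead tries to reconstruct this reweighting by hand from {\bf PR1}--{\bf PR4} and then cancel it against $Z_\beta(\Phi')$. The step you label ``the crux'' --- that the product $\sum_s\prod_x\mu^{(2\w)}_{\Phi_0,x}(s_x)\prod_a\mu^{(2\w+1)}_{\Phi_0,a}((s\circ\Phi')_a)$ equals $Z_\beta(\Phi')$ up to $\exp(o_k(2^{-k})n)$ --- is essentially the statement that the Bethe free energy computes the (cluster) free energy. That is precisely \Cor~\ref{Cor_nonRe3}, whose proof invokes the non-reconstruction \Prop s~\ref{Prop_nonRe1}--\ref{Prop_nonRe2} together with the abstract theorems of~\cite{betheupb}; it is the endpoint of the whole argument, not an input to this technical lemma. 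Moreover, your cancellation needs $Z_\beta(\Phi_0)$ and $Z_\beta(\Phi')$ to agree up to $\exp(o_k(2^{-k})n)$ whenever $\Phi_0,\Phi'$ share the same depth-$(4\w+1)$ profile; this is false in general (two locally identical formulas can have exponentially different partition functions) and is again equivalent to the Bethe/free-energy identification you are trying to avoid. The paper sidesteps all of this by never comparing to $Z_\beta$ at all: the only property of the marginals it uses is that they are \emph{skewed}, hence have tiny entropy, which is a direct consequence of the tree analysis in \Sec~\ref{sec_galton_watson} and needs none of the later machinery.
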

\begin{proof} Without loss of generality we can assume that $\cA$ contains the event $$\{\textrm{all but $2^{-0.999k}n$ of the $2\w$ neighborhood of variables $x \in V$ consists of a pure tree}\}.$$ Reformulating the definition of the planted replica model, we see that
\begin{align}\nonumber  \hhProb\left[ \neg \cA \right] &=  \frac{\sum_{\Phi} \mathbf{1}\left[ \Phi \notin \cA \right] \hProb \left[ \vec \hPhi = \Phi \right] \exp( n B_{\Phi,\w})}{\sum_{\Phi} \hProb \left[ \vec \hPhi = \Phi \right] \exp( n B_{\Phi,\w})} 
\\ \label{eq_aux_lemma_whvp_whp} & \leq \frac{\sup_{\Phi \in \cA} \exp( n B_{\Phi,\w})}{\sum_{\Phi} \hProb \left[ \vec \hPhi = \Phi \right] \exp( n B_{\Phi,\w})} \hProb \left[ \neg \cA \right ].
\end{align}
We observe that $B_{\Phi,\w} \leq H(\mu_{\Phi}^{(2\w)})$. Moreover, for all pure trees $T$ $H(\mu_T^{(\w)}) \leq 4^{-k}$. It therefore follows that
$$ \sup_{\Phi \in \cA} \exp( n B_{\Phi,\w}) \leq \exp \left[ n H(\mu_\Phi^{(2\w)})\right] \leq \exp \left (2^{-0.999k} n \right). $$
Returning to the definition of $\cB^{(\w)}(k,d,\beta)$ in \Sec~\ref{sec_finite_w_approx} we obtain on the other hand
\begin{align*}  \sum_{\Phi} \hProb \left[ \vec \hPhi = \Phi \right] \exp( n B_{\Phi,\w}) \geq 1/2 \exp \left( n \cB^{(\w)}(k,d,\beta) \right) \geq \exp \left( -2^{-0.999k}n \right),  \end{align*}
where the last estimate follows from an analysis similar as previsously. Replacing in (\ref{eq_aux_lemma_whvp_whp}) yields 
$$\hhProb \left[ \neg \cA \right] \leq 2 \exp\left( 2^{1-0.999k}n \right) \hProb \left[ \neg \cA \right] \leq 2\exp\left( 2^{1-0.999k}n -2^{-0.99 k}n\right)= o_n(1),$$
as desired.
\end{proof}

In order to obtain our result, we are thus left with proving the following proposition.

\begin{proposition} \label{prop_typical_properties_splitted}  \Wvhp~ $\vec \hPhi$ satisfies (\ref{tech_cond_0}), (\ref{tech_cond_1}) and (\ref{tech_cond_2}). \end{proposition}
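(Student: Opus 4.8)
## Plan

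The three conditions $(\mathcal{C}0)$, $(\mathcal{C}1)$ and $(\mathcal{C}2)$ are all statements about the {\em local} and {\em medium-scale} structure of the configuration model random formula $\vec\hPhi$, so the plan is a standard first-moment (union bound) argument carried out with exponential tail bounds, since we need probability $1-\exp(-2^{-0.99k}n)$ rather than just $1-o(1)$. The key point that makes this feasible is that $\vec\hPhi$ is the {\em planted} formula, whose distribution by the remarks in Section~\ref{xsec_overview} is the uniform regular formula reweighted by $Z_\beta(\Phi)/\Erw[Z_\beta(\PHI)]$; but for all three combinatorial conditions we only ever need an {\em upper} bound on the probability of a bad event, and $\pr[\vec\hPhi\in\mathcal{B}] = \Erw[Z_\beta(\PHI)\vecone\{\PHI\in\mathcal{B}\}]/\Erw[Z_\beta(\PHI)]$. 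Since $Z_\beta(\Phi)\le 2^n$ deterministically and $\Erw[Z_\beta(\PHI)] = \exp(n\cF(k,d,\beta)+o(n))$ with $\cF(k,d,\beta)$ bounded below by roughly $\ln 2 - O(d2^{-k})$ (from \Lem~\ref{lemma_total_size} and \Prop~\ref{prop_first_moment_vanilla}), the reweighting costs at most a factor $\exp(O(d2^{-k})n)$. Hence it suffices to show that under the {\em plain} uniform regular formula $\PHI$ each bad event has probability at most $\exp(-2^{-0.98k}n)$, say, and then absorb the reweighting.

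First I would handle $(\mathcal{C}0)$: a variable $x$ lies in $U_0$ precisely when its depth-$2$ neighborhood is ``atypical'' in one of three explicit senses, and under the configuration model the depth-$2$ neighborhood of a fixed $x$ converges to the Galton--Watson description $\GW(k,d,\beta,2)$ of Section~\ref{sec_galton_watson} with the relevant probabilities being $2^{-\Omega(k)}$ — this is exactly the computation underlying {\bf TR1}--{\bf TR3} in \Lem~\ref{lemma_most_likely_trunk}. So $\Erw|U_0|\le 2^{-0.99k}n(1+o(1))$; then a bounded-differences/Azuma argument on the edge-switching filtration of the configuration model (each single switch changes $|U_0|$ by $O(dk)$) gives $\pr[|U_0|>2^{-0.98k}n]\le\exp(-\Omega(2^{-1.96k}n/(dk)^2))$, which is $\exp(-2^{-0.99k}n)$ comfortably for $k\ge k_0$ after accounting for the planting factor. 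For $(\mathcal{C}1)$ and $(\mathcal{C}2)$ I would run a direct first-moment bound over sets $T$ of each size $s\in[2^{-0.97k}n,2^{-k/20}n]$: fix $T$ with $|T|=s$, and bound the probability that $|F_0(T)|\ge k^{3/4}s/100$ (resp.\ $|F_l(T)|\ge k^{3/4}s/(100l^2)$) by a second moment / Chernoff estimate on the number of clauses of the relevant type incident to $T$ in the configuration model. A clause of the type counted by $F_0(T)$ needs exactly one of its $k$ literal-slots to be a positive occurrence of a variable in $T$ and at least one negative occurrence in $T$; the expected number of such clauses is $O(m\cdot k\cdot(ds/(dn))\cdot k\cdot(ds/(dn))) = O(k^2 s^2/n)$, which is $o(k^{3/4}s)$ in the relevant range, so the deviation probability is exponentially small in $k^{3/4}s$. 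Multiplying by $\binom ns\le\exp(s\ln(en/s))$ and summing over $s$ and over $l\in[k]$, the entropy term $s\ln(en/s)\le s\cdot 0.97k\ln 2\cdot(1+o(1))$ is dominated by the $k^{3/4}s$ (really: by $\Omega(k^{3/4}s\log k)$) gain from the Chernoff bound, so the whole sum is $\le\exp(-\Omega(k^{3/4}\cdot 2^{-k}n))$, again beating $\exp(-2^{-0.99k}n)$.

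The main obstacle is getting the numerics to line up in $(\mathcal{C}1)$--$(\mathcal{C}2)$: one must check that for {\em every} set size $s$ in the window and {\em every} $l$, the Chernoff gain of order $k^{3/4}s\log(\text{something})$ strictly beats the union-bound entropy $\binom ns\le e^{s\ln(en/s)}$, and here the worst case is the {\em small} end $s\approx 2^{-0.97k}n$ where $\ln(en/s)\approx 0.97k\ln2$ is largest — this is why the exponent $0.97k$ (rather than, say, $0.9k$) appears in the definitions, and why the target probability is phrased with $2^{-0.99k}n$. The delicate bit is bounding $|F_l(T)|$ for {\em all} $l$ simultaneously with a single union bound over $l\in[k]$, using that a clause contributing to $F_l(T)$ has $|\partial_1 a|=l$ and at least $l/4$ of those $l$ positive-side variables in $T$, so its probability is at most $\binom{l}{l/4}(ds/(dn))^{l/4}\cdot(\text{rest})$, and one needs $\sum_l$ of these to still be negligible against $k^{3/4}s/(100l^2)$; the $l^{-2}$ slack in the definition of $(\mathcal{C}2)$ is exactly what makes this sum converge. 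Once these three tail bounds are in hand, \Prop~\ref{prop_typical_properties_splitted} follows by a union bound over the three events, and then \Prop~\ref{prop_formulas_safe_structure} follows from \Lem~\ref{lemma_aux_strongly_safe} together with \Lem~\ref{lemma_wvhp_implies_whp}.
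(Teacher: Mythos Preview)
Your overall architecture --- expectation plus concentration for $(\mathcal{C}0)$, and a first-moment union bound over sets $T$ for $(\mathcal{C}1)$ and $(\mathcal{C}2)$ --- is exactly what the paper does. The genuine gap is the reduction step at the very beginning. You propose to transfer bounds from the uniform model $\PHI$ to the planted model $\hat\PHI$ via
\[
\hProb[\mathcal{B}]\;=\;\frac{\Erw[Z_\beta(\PHI)\,\vecone\{\PHI\in\mathcal{B}\}]}{\Erw[Z_\beta(\PHI)]}\;\le\;\frac{2^n}{\Erw[Z_\beta(\PHI)]}\cdot\pr[\PHI\in\mathcal{B}],
\]
and you correctly identify the prefactor as $\exp(O(d2^{-k})n)$. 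But in the regime of the proposition $d\in[d_-(k),\dk]$, so $d\sim 2^k k\ln 2$ and hence $d\,2^{-k}/k=\Theta(1)$: the prefactor is $\exp(\Theta(n))$, not $\exp(2^{-(1-o(1))k}n)$. Meanwhile the bad events for $(\mathcal{C}1)$, $(\mathcal{C}2)$ involve sets of size $\Theta(2^{-0.97k}n)$, and your own Chernoff-plus-entropy calculation only yields $\pr[\PHI\in\mathcal{B}]\le\exp(-2^{-(1-o(1))k}n)$ --- there is no way to squeeze a rate-$\Theta(1)$ bound out of them. The prefactor therefore swamps the tail and the transfer is hopeless.

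The paper sidesteps this by never leaving the planted model. The structural point is that the planting weight $\exp(-\beta E_\Phi(\vecone))$ depends on $\Phi$ only through the sign pattern of the clause clones --- equivalently, through the vector $\um(\Phi)=(m_0(\Phi),\ldots,m_k(\Phi))$ of clause types relative to~$\vecone$. Conditionally on this pattern, $\hat\PHI$ is a \emph{uniform} configuration-model matching of positive variable clones to positive clause slots and negative to negative. One first shows $m_l(\hat\PHI)\le m_l'$ \wvhp\ for explicit $m_l'$, and then the first-moment combinatorics for $(\mathcal{C}1)$, $(\mathcal{C}2)$ --- choose $T$, choose which of the at most $m_l'$ type-$l$ clauses lie in $F_l(T)$, choose and match the relevant clones --- goes through directly under $\hErw$, with essentially the same binomial bookkeeping you sketched. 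For $(\mathcal{C}0)$ one likewise computes $\hErw[|U_0|]$ straight from the planted local-limit description (\Lem~\ref{lemma_typical_w_neighb}) and then applies Azuma on the switching filtration. No comparison with the uniform model is needed or used.
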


\begin{proof}[Proof of \Prop~\ref{prop_formulas_safe_structure}] The propositions follow from combining \Lem~\ref{lemma_aux_strongly_safe} combined with \Lem~\ref{lemma_wvhp_implies_whp} and \Prop~\ref{prop_typical_properties_splitted}. \end{proof}

\subsection{Proof of \Prop~\ref{prop_typical_properties_splitted}}
\label{subsec_exp_properties}

In this section we shall study typical properties of the random formula $\vec \hPhi$. For a formula $\Phi$ and $0 \leq l \leq k$, we let $m_{l}(\Phi)$ count the number of clauses $a$ of $\Phi$ such that $| \delta_{1} a | = l$. 

 \begin{lemma}\label{lemma_typical_C0} \Wvhp~ $\vec \hPhi$ satisfies (\ref{tech_cond_0}). \end{lemma}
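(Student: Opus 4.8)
The plan is to prove $(\mathcal{C}0)$ by a first-moment estimate on $\Erw_{\vec\hPhi}[|U_0|]$, sharpened into a \wvhp\ statement by a direct count in the configuration model.

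\emph{Step 1: reduction to a local event.} Recall that $\vec\hPhi$ has law $\pr[\vec\hPhi=\Phi]=Z_\beta(\Phi)/\Erw[Z_\beta(\vec\PHI)]$ and that, by the symmetry of the regular model, we may condition on the planted assignment $\hat\SIGMA=\vecone$; then $\vec\hPhi$ is a uniformly random regular formula reweighted by $\exp(-\beta E_\Phi(\vecone))$, a factor that splits as $\prod_a\exp(-\beta\vecone\{a\text{ all-negative}\})$ over clauses. (We must work directly in $\vec\hPhi$ rather than transfer a bound from $\vec\PHI$, since the reweighting factor $Z_\beta/\Erw Z_\beta$ is exponential in $n$ and would swamp any \wvhp\ estimate one could hope for under $\vec\PHI$.) By inspecting its definition, membership $x\in U_0$ depends only on the depth-$2$ neighbourhood $\ppartial^{(2)}\vec\hPhi(x)$ together with the induced signs $b$; hence $\Erw_{\vec\hPhi}[|U_0|]=n\,\pr_{\vec\hPhi}[x_1\in U_0]$, and everything reduces to bounding this single-vertex probability and then to a concentration argument.

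\emph{Step 2: the single-vertex probability.} By the same local-limit computation that underlies \Lem~\ref{lemma_typical_w_neighb} (now only at depth $2$), the law of $\ppartial^{(2)}\vec\hPhi(x_1)$ is within $o(1)$ total variation of the depth-$2$ truncation of the process $\mathrm{GW}'(k,d,\beta,\cdot)$ of \Sec~\ref{sec_galton_watson}. Under that law the statistics $|\partial_{1,0}x|$, $|\partial_{-1,0}x|$ and $(|\partial_{-1,l}x|)_{1\le l\le k}$ at the root are, by rules (i)$'$--(iv), sums of independent indicators, one per clause-child, with means read off directly from $q$, $\beta$ and $d$. Each defining inequality of $U_0$ is therefore a Chernoff/Bennett large-deviation event for such a sum: a lower tail for the first statistic, whose mean is $\Theta(k)$ and hence far above $2k^{7/8}$, and upper tails far above the mean for the other two, whose thresholds ($2$, respectively $\Theta(k^{l+3}/l!)$) dominate the means. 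Summing the Chernoff bounds over $l$, one gets $\pr_{\vec\hPhi}[x_1\in U_0]\le 2^{-0.99k}$ for $k\ge k_0$ — in fact much smaller for the latter two conditions — and hence $\Erw_{\vec\hPhi}[|U_0|]\le 2^{-0.99k}n$.

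\emph{Step 3: concentration to \wvhp.} It remains to show $\pr_{\vec\hPhi}[|U_0|\ge 2^{-0.98k}n]\le\exp(-2^{-0.99k}n)$. If $|U_0|\ge 2^{-0.98k}n$ then some set $W$ of $s:=\lceil 2^{-0.98k}n\rceil$ variables is entirely defective, so it suffices to prove $\sum_{|W|=s}\pr_{\vec\hPhi}[W\subseteq U_0]\le\exp(-2^{-0.99k}n)$. The governing statistics are sums of single-clause functionals (a clause's contribution depends only on which literal-clones it consumes), and each defective variable can be blamed on a bounded collection of such clauses, each shared by at most $k$ variables; after conditioning on the concentrated value of $E_\Phi(\vecone)$ so that the reweighting drops out, a direct clone-by-clone count in the resulting near-uniform configuration model bounds $\pr_{\vec\hPhi}[W\subseteq U_0]$ by $(2^{-0.99k})^{|W|}$ up to a $(1+o(1))^{|W|}$ factor. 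Combined with $\binom ns\le(en/s)^s$ this gives $\binom ns\bigl(2^{-0.99k}(1+o(1))\bigr)^s\le\bigl(e\cdot 2^{-0.01k}(1+o(1))\bigr)^s=\exp(-\Omega(k\,2^{-0.98k}n))\le\exp(-2^{-0.99k}n)$ for $k$ large. I expect this last clone-counting estimate to be the main obstacle: one must verify that the $U_0$-defects of overlapping depth-$2$ neighbourhoods are correlated weakly enough for the product bound to survive the sum over clumped $W$, and it is precisely here that the clause-wise structure of the statistics and of the weight $\exp(-\beta E_\Phi(\vecone))$ must be exploited; the remaining ingredients (Chernoff bounds, local weak convergence, concentration of $E_\Phi(\vecone)$) are routine.
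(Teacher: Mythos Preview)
Your proposal is correct and follows essentially the same route as the paper: both compute the single-variable defect probability via the local (Galton--Watson) limit of $\vec\hPhi$ and then upgrade the resulting first-moment bound to a \wvhp\ statement by a concentration argument. If anything, you are more explicit than the paper, which simply invokes a ``standard concentration argument'' after bounding $\hErw[\vec Y_i]$; your union-bound over size-$s$ subsets is the natural way to flesh this out, and your identification of the clone-counting correlation estimate as the only nontrivial remaining step is accurate.
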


\begin{proof} Let $\vec Y_1$ denote the number of variables $x \in V$ such that $\left| \{a \in \partial_{1,0} x\}  \right| \leq 4 k^{7/8}$. Let $p$ denote the probability that a binomial of parameters $(k-1,1- q)$ takes values $0$. Using \Lem~\ref{lemma_typical_w_neighb} and recalling the definition of $\widetilde{p}_{k,d,\beta}^{(\w)}$ in \Sec~\ref{sec_operator_tree} gives 
$$ \hErw[\vec Y_1 ] \leq \sum_{r =0}^{4 k^{7/8}-1} \binom{d/2}{r} p^r (1-p)^{d/2-r}. $$
A simple computation reveals that $p = 2^{1-k} + \tilde O_k(4^{-k})$. This implies that the summand is maximal for $r = 4^{k^{7/8}}$ and allows to bound $\hErw[\vec Y_1]$ as
\begin{align*} \hErw[\vec Y_1 ] &\leq  O_k(k^{10}) \left( \frac{k e (\ln 2)}{k^{7/8}} \right)^{4 k^{7/8}} 2^{-k} + \tilde O_k(4^{-k}) = \tilde O_k(2^{-k}). \end{align*} A standard concentration argument then yields that $\vec Y_1 \leq 2^{-0.99k}$ \wvhp.

Similarly, let  $\vec Y_2$ denote the number of variables $x \in V$ such that $\partial_{-1} x \geq 2$. Let $Q$ denote the probability that a binomial of parameter $(d/2, \exp(-\beta) q^{k-1}/(1-c_\beta q^{k-1})$ takes a value larger than $2$. By another simple computation, we find $Q = \tilde O_k(2^{-k})$. It follows from \Lem~\ref{lemma_typical_w_neighb} that $\hErw[\vec Y_2] = Q = \tilde O_k(2^{-k})$. Again, by concentration this implies $\vec Y_2= \leq 2^{-0.99 k}$ \wvhp.

Finally, for $1 \leq l \leq k$ let $\vec Y_3(l)$ be the number of variables $x \in V$ with $|\partial_{{1},l} i| \geq k^{l+3}/l!$. By similar computations, we obtain
\begin{align*} \hErw[\vec Y_3(l) ] &\leq \sum_{r =k^{l+3}/l!}^{d/2} \binom{d/2}{r}  \binom{k-1}{l} \frac{1}{2^k} \left(1+ \tilde O_k(2^{-k}) \right) = \tilde O_k(2^{-k}). \end{align*}
It follows that $\hErw[\vec Y_3] = \tilde O_k(2^{-k})$, and by the same concentration argument as previously $\vec Y_3 \leq 2^{-0.99k}$ \wvhp.

The proof of the lemma is completed by noting that $| \vec U_0| \leq \vec Y_1 + \vec Y_2 + \vec Y_3$.
\end{proof}

We define $m'_l = \frac{1}{200} \frac{d}{2^k} k^{l+3}/l!$. The previous estimates can easily be (slightly extended and) recast as follows.

\begin{remark} \label{remark_control_clauses_types} \Wvhp~ we have for all $0 \leq l \leq k$, $m_l(\vec \hPhi) \leq m_l'$. \end{remark}

We are now ready to complete
 \begin{lemma}\label{lemma_typical_C1} \Wvhp~ $\vec \hPhi$ satisfies (\ref{tech_cond_1}). \end{lemma}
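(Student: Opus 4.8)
The plan is to establish (\ref{tech_cond_1}) by a first-moment (union bound) argument over all candidate sets $T$ of the relevant size, exploiting the fact that the clauses in $F_0(T)$ are precisely the clauses with a unique positive literal whose positive literal falls in $T$ and which additionally has a negative occurrence in $T$. First I would fix the target size $s=|T|$ ranging over the dyadic-type window $[2^{-0.97k}n,2^{-k/20}n]$ and let $N_s$ be the number of pairs $(T,\mathcal{A})$ where $|T|=s$ and $\mathcal{A}\subseteq F_0(T)$ has size exactly $\lceil k^{3/4}s/100\rceil$; it suffices to show $\Erw[N_s]$ is exponentially small in $n$, uniformly over $s$ in the window, and then a union bound over the $O(n)$ values of $s$ finishes the claim with the required $\wvhp$ strength $1-\exp(-2^{-0.99k}n)$.

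The key computation is to bound, for a fixed set $T$ of size $s$ and a fixed collection of clause-slots, the probability that all those clauses are actually in $F_0(T)$ in the configuration model defining $\vec\hPhi$. Using Remark~\ref{remark_control_clauses_types} to condition on $m_1(\vec\hPhi)\le m_1'=\frac{1}{200}\frac{d}{2^k}k^4 n/(k\cdot\text{const})$ — i.e. there are only $\tilde O(2^{-k}n)$ clauses with a unique positive literal in the whole formula — I would first choose the set $T$ in $\binom{n}{s}$ ways, then choose which $j:=\lceil k^{3/4}s/100\rceil$ of the at most $m_1'$ unique-positive-literal clauses lie in $\mathcal A$ in at most $\binom{m_1'}{j}$ ways, and then estimate the probability that each of these $j$ chosen clauses has both its unique positive literal-clone matched into $T$'s $d/2$ positive clones ($\approx sd/(2\cdot nd/2)=s/n$ per clone) and at least one of its $k-1$ negative slots matched into $T$'s negative clones (another factor $O(ks/n)$). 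This yields a bound of the shape
\begin{align*}
\Erw[N_s]&\le\binom{n}{s}\binom{m_1'}{j}\left(\frac{Cks}{n}\right)^{2j}+\exp(-\Omega(n))\\
&\le\left(\frac{en}{s}\right)^{s}\left(\frac{e\,\tilde O(2^{-k}n)}{j}\right)^{j}\left(\frac{Cks}{n}\right)^{2j}+\exp(-\Omega(n)).
\end{align*}
Substituting $j=\Theta(k^{3/4}s)$ and $s\le 2^{-k/20}n$, the three factors combine to $\bigl(s/n\bigr)^{\Theta(k^{3/4}s)}$ times lower-order terms, and since $s/n\le 2^{-k/20}$ this is at most $2^{-\Theta(k^{3/4}\cdot k/20\cdot s)}=\exp(-\Omega(ks))$, which — using $s\ge 2^{-0.97k}n$ — beats $\exp(-2^{-0.99k}n)$ with room to spare.

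The main obstacle I expect is handling the lower end $s\approx 2^{-0.97k}n$ of the window correctly: there the entropy term $\binom{n}{s}=\exp(\Theta(s\log(n/s)))$ is comparable to $\exp(\Theta(ks))$, so the exponential decay coming from the $(s/n)^{2j}$ factor must be shown to dominate, which forces one to track the constants in $j=\lceil k^{3/4}s/100\rceil$ and in the per-clause probability carefully rather than sloppily. One clean way around this is to note $\log(n/s)\le 0.97k\ln 2+O(1)$ throughout the window while $2j\log(n/s)$ contributes $-2j\log(n/s)$ with $j$ of order $k^{3/4}s$, so the net exponent is $s\bigl(\log(n/s)+\Theta(2^{-k})+\log(2^{-k})\cdot\Theta(k^{3/4})-2\Theta(k^{3/4})\log(n/s)\bigr)$, and the $k^{3/4}$-weighted negative terms dominate for $k$ large. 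A second, minor technical point is that the events "clone of clause $a_i$ lands in $T$" are not independent across the $j$ clauses, but since $j\cdot(k+d)=o(n)$ the dependence only perturbs each probability by a $1+o(1)$ factor, absorbed into the constant $C$; this is the same configuration-model bookkeeping already used in Lemma~\ref{lemma_typical_w_neighb}. Finally, (\ref{tech_cond_1}) is the $l=0$ case; the analogous Lemma for (\ref{tech_cond_2}) (deferred, not stated here) follows by the same argument with the per-clause probability replaced by the probability that a clause with exactly $l$ positive literals has $\ge l/4$ of them and $\ge 1$ negative literal inside $T$, contributing a factor $\binom{l}{\lceil l/4\rceil}(s/n)^{\lceil l/4\rceil}\cdot O(ks/n)$, together with Remark~\ref{remark_control_clauses_types} applied to $m_l'$.
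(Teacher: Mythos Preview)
Your approach is essentially the same as the paper's: a first-moment bound over pairs $(T,\mathcal A)$, using Remark~\ref{remark_control_clauses_types} to cap the pool of eligible clauses and then bounding in the configuration model the probability that each selected clause has both its unique $\partial_1$-literal and at least one $\partial_{-1}$-literal inside $T$. The paper carries an extra book-keeping parameter $y$ recording the total number of $\partial_{-1}$-clones of $F_0(T)$ that land in $T$ and matches clones explicitly, whereas you collapse this via the per-clause union bound $\le (s/n)\cdot O(ks/n)$; both routes yield an exponent dominated by a term of order $-k^{3/4}s\cdot\ln(n/s)$, which beats the entropy $s\ln(n/s)$ throughout the window and gives the required $\exp(-2^{-0.99k}n)$ bound at $s=2^{-0.97k}n$.

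One numerical slip you should fix: you assert that $m_1'=\tilde O(2^{-k}n)$, but since $d=\Theta(k2^k)$ one has $d/2^k=\Theta(k)$ and hence the number of clauses with $|\partial_1 a|=1$ is $\tilde O(n)$, not $\tilde O(2^{-k}n)$. Fortunately this does not damage your argument: with the corrected value $\binom{m_1'}{j}\le(\tilde O(n)/j)^j$ the combined second and third factors still give $\bigl(\tilde O(k^{5/4})\,s/n\bigr)^j$, and the $(s/n)^j\le 2^{-(k/20)j}=2^{-\Theta(k^{7/4}s)}$ decay still dominates both the entropy $\exp(\Theta(ks))$ and the polylogarithmic prefactor. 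Your anticipated difficulty at the lower edge $s\approx 2^{-0.97k}n$ is exactly right, and your resolution --- that the $k^{3/4}$-weighted negative term outpaces the entropy for large $k$ --- is the same mechanism the paper relies on.
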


\begin{proof}
Given $\vec \hPhi$, let $\vec X_0(t,r,y)$ count the number of sets $T \subset V$ of size $|T|  = t n$, such that 
\begin{itemize}
\item $|F_0(T)| = r t n$.
\item $\sum_{a \in F_0(T)} | \partial_{-1}a \cap T| = yrtkn$.
\end{itemize}
By definition of $F_1(T)$, $\vec X_0(t,r,y)=0$ if $y < k^{-1}$. The expected value of $\vec X_0(t,r,y)$ can be computed in the following manner. First choose the sets $T$ and $F_0(T)$. The latter has to be chosen among the $m_{0} (\vec \hPhi)$ satisfied clauses.  Among the $tdn$ literal clones from $T$, choose the $rtn$ positive literal clones that will be connected to the positive literal clones of clauses in $F_0(T)$, and the $ytdnk$ literal clones that will be connected to negative literal clones of clauses in $F_0(T)$. Make the same choices among the negative and positive literal clones of the clauses in $F_0(T)$. Then match these $rtn$ positive literal clones (resp. $yrtkn$ negative literal clones) at random, and then match the remaining $dn/2-rtn$ remaining positive literal clones (resp. $dn/2-rtn$ remaining negative literal clones) at random. The normalizing factor is the total number of graphs that can be obtained from the configuration model, $(dn/2)!^2$. Without words, and using in addtion \Rem~\ref{remark_control_clauses_types} to observe that we can assume $m_0(\vec \hPhi) \leq m_0' = \frac{d}{2^k} k^3$, this gives
\begin{align*} \hErw[\vec X_0(t,r)] &\leq \binom{n}{tn} \binom{m_0'}{ r t n } \binom{tdn}{rtn} \binom{tdn}{yrtkn} \binom{rtkn}{rtn} \binom{rtkn}{yrtkn} \frac{(rtnk)!  (dn/2 -rtnk)! (dn/2)!}{(dn/2)!^2}
\\&\leq \binom{n}{tn} \binom{m_0'}{ r t n } \binom{tdn}{rtn} \binom{tdn}{yrtkn} \binom{rtkn}{rtn} \binom{rtkn}{yrtkn} \binom{dn/2}{rtn}^{-1} \binom{dn/2}{yrtkn}^{-1}.  \end{align*}
We shall bound this quantity by using the bounds, for $1 \leq a \leq b$ and $n > 0$
\beq \label{eq_aux_bound_binomial} b \ln \left( \frac{a}{b} \right) \leq \frac{1}{n} \ln \binom{an}{bn} \leq b \ln \left( \frac{ae}{b} \right)  . \eeq
This yields
\begin{align*} \frac{1}{n} \ln \hErw[\vec X_0(t,r,y)] &\leq t \ln \left( \frac{e}{t} \right) +  rt \ln \left( \frac{dk^3 }{2^{k} t r } \right) +rt \ln \left ( 2k e^2 t \right) + y r t k  \ln \left( \frac{2e^2 t}{y} \right).
 \end{align*}
In particular for $r \geq k^{3/4}$, $t \in  [2^{-0.98k}n, 2^{-k/20}n]$, and $y \geq 1/k$, we get
\begin{align*}  \frac{1}{n} \ln \hErw[\vec X_0(t,r,y)]& \leq - t \ln t + t + k^{3/4} t \ln \left( k^{10} t \right) \leq - 2^{-0.98k} n. \end{align*}
In particular $$\sum_{\substack{t \in  [2^{-0.98k}, 2^{-k/20}] \\ tn \in \mathbb{N}}}  \sum_{\substack{r \in [k^{3/4}, d]\\ rtn \in \mathbb{N} }}  \sum_{\substack{y \in [0,1]\\ yrtn \in \mathbb{N} }} \hErw \left[ \vec X_0(t, r,y) \right] \leq \exp \left[ - 2^{-0.985k}n \right].$$ This implies by Markov's inequality that \wvhp~there are no sets $T$ of size $|T| \in [2^{-0.98k}n, 2^{-k/20}n]$ such that $|F_0(T)| \geq  k^{3/4} |T|$.
\end{proof}

\begin{lemma}\label{lemma_typical_C2} \Wvhp~ $\vec \hPhi$ satisfies (\ref{tech_cond_2}). \end{lemma}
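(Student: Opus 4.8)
The plan is to prove Lemma~\ref{lemma_typical_C2} by an expectation/first-moment argument entirely parallel to the proof of Lemma~\ref{lemma_typical_C1}, carried out separately for each clause-length parameter $1\leq l\leq k$. Fix such an $l$. Given $\vec\hPhi$, I would introduce a counting random variable $\vec X_l(t,r,y)$ that tallies the number of sets $T\subset V$ with $|T|=tn$ such that $|F_l(T)|=rtn$ and $\sum_{a\in F_l(T)}|\partial_{-1}a\cap T|=yrtn$ (the auxiliary parameter $y$ records how the negative-literal clones of the clauses in $F_l(T)$ fall inside $T$, exactly as in the previous lemma). By definition of $F_l(T)$, the contributing clauses each have $|\partial_1 a|=l$ with at least $l/4$ of their positive-literal clones landing in $T$ and at least one negative-literal clone in $T$, so the relevant fractions are bounded below by explicit constants depending only on $l$.

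First I would compute $\hErw[\vec X_l(t,r,y)]$ in the configuration model: choose $T$ (a factor $\binom{n}{tn}$), choose $F_l(T)$ among the $m_l(\vec\hPhi)$ clauses with exactly $l$ positive literals (a factor $\binom{m_l(\vec\hPhi)}{rtn}$), select which positive/negative literal clones of $T$ and of the clauses in $F_l(T)$ are matched to one another, pair them up, and then pair the remaining clones uniformly; the normalisation is $(dn/2)!^2$. Using Remark~\ref{remark_control_clauses_types} I would replace $m_l(\vec\hPhi)$ by $m_l'=\frac{1}{200}\frac{d}{2^k}k^{l+3}/l!$, which is legitimate \wvhp. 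Taking logarithms and applying the elementary binomial bounds~(\ref{eq_aux_bound_binomial}), one obtains an estimate of the shape
\begin{align*}
\frac1n\ln\hErw[\vec X_l(t,r,y)]\leq t\ln\frac et+rt\ln\Bigl(\frac{d\,k^{l+3}/l!}{2^k t r}\Bigr)+rt\,O\bigl(\ln(k t)\bigr)+yrt\,O\bigl(\ln(t/y)\bigr),
\end{align*}
where all suppressed constants are absolute. The key point is the factor $2^{-k}$ coming from $m_l'$: it beats the entropy term $-t\ln t\leq 0.98k\,t\ln 2$ in the regime $t\in[2^{-0.97k},2^{-k/20}]$. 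Specialising to $r\geq k^{3/4}/(100l^2)$ (the threshold in~(\ref{tech_cond_2})) and summing the geometric-type series in $r$ and $y$, I would conclude that
$$\sum_{l=1}^{k}\ \sum_{\substack{t\in[2^{-0.97k},2^{-k/20}]\\ tn\in\mathbb N}}\ \sum_{\substack{r\geq k^{3/4}/(100l^2)\\ rtn\in\mathbb N}}\ \sum_{\substack{y\in[0,1]\\ yrtn\in\mathbb N}}\hErw[\vec X_l(t,r,y)]\leq\exp\bigl(-2^{-0.98k}n\bigr),$$
so that by Markov's inequality $\vec\hPhi$ satisfies~(\ref{tech_cond_2}) \wvhp; intersecting with the \wvhp\ event from Remark~\ref{remark_control_clauses_types} finishes the proof.

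The main obstacle is bookkeeping the exponent carefully enough to see that it is genuinely negative and of order $-2^{-ck}n$ for some $c<1$ across the whole range $t\in[2^{-0.97k},2^{-k/20}]$ and for every $l$ simultaneously. Two competing effects must be balanced: the set-selection entropy $t\ln(e/t)$, which is largest (roughly $0.97k\ln2\cdot t$) at the small end of the $t$-range, versus the gain $rt\ln(2^{-k}\cdot\text{poly}(k)/t)$ from having to pick the $F_l(T)$-clauses out of only $m_l'=O(2^{-k}dn/l!)$ available ones, which is comfortably $\leq -k\ln2\cdot rt$ once $r\geq k^{3/4}/(100l^2)$ and $k$ is large. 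Because $r\geq k^{3/4}/(100l^2)\gg 1$, the negative term dominates with room to spare, and the extra $\ln(t/y)$ and $\ln(kt)$ corrections are lower order; one also has to check that the number of admissible triples $(t,r,y)$ is only polynomial in $n$ (at most $n^3$), which is absorbed trivially. Handling the dependence on $l$ — in particular that $1/l!$ in $m_l'$ only helps — and confirming the union over $l=1,\dots,k$ costs at most a factor $k$ is the last routine check.
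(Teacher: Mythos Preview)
Your overall strategy is exactly the paper's: a first-moment bound in the configuration model, Remark~\ref{remark_control_clauses_types} to cap $m_l$, the binomial bounds~(\ref{eq_aux_bound_binomial}), and Markov. But your parametrisation $\vec X_l(t,r,y)$ drops one coordinate that the paper keeps, and this is not cosmetic. The paper works with $\vec X_l(t,r,x,y)$ where $x$ records $\sum_{a\in F_l(T)}|\partial_1 a\cap T|=xrtkn$; the defining constraint $|\partial_1 a\cap T|\ge l/4$ forces $x\ge l/(4k)$, and the corresponding term $xrtk\ln(2e^2t/x)$ contributes, after maximising over $x$, a piece of size roughly $rt\cdot\tfrac{l}{4}\ln t$. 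Together with the $y$-term this puts a factor $t^{l/4+1}$ (or at least $t^{l/4}$) inside the combined logarithm, and it is \emph{this} power of $t$ that drives the exponent below $-2^{-0.98k}$. Your displayed bound has only $rt\,O(\ln(kt))$ in place of $rt\cdot\tfrac{l}{4}\ln(\,\cdot\,)$; the missing factor $l$ is fatal for $l$ not bounded.

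Your stated mechanism (``the factor $2^{-k}$ from $m_l'$ beats the entropy'') is also off. Since $d/2^k=O(k)$, the clause-selection term is $rt\ln\bigl(\Theta(k^{l+4}/(l!\,rt))\bigr)$; the $-\ln(rt)$ inside contributes $+|\ln t|\approx 0.97k\ln 2$ at the small end of the $t$-range, so this term alone is \emph{positive} and cannot beat $-t\ln t$. Relatedly, your claim $r\ge k^{3/4}/(100l^2)\gg 1$ fails once $l\gtrsim k^{3/8}$, which is most of the range $1\le l\le k$. The fix is simply to reinstate the $x$-parameter (or, equivalently, to insert the explicit probability $\le 2^l t^{\lceil l/4\rceil}$ that a given clause with $|\partial_1 a|=l$ has at least $l/4$ of its positive literals in $T$), after which the calculation closes exactly as in the paper.
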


\begin{proof}
Given $\vec \hPhi$ and $1 \leq l \leq k$, let $\vec X_l(t,r,x,y)$ count the number of sets $T \subset V$ of size $|T|  = t n$ and such that the following condition are true.
\begin{itemize}
\item $|F_l(T)| = r t n$.
\item $\sum_{a \in F_l(T)} | \partial_{1}a \cap T| = x rt k n$.
\item $\sum_{a \in F_l(T)} | \partial_{-1}a \cap T| = y rt k n$.
\end{itemize}
By definition of $F_l(T)$, $\vec X_l(t,r,x,y) = 0$ if $x < l k^{-1}/4$ or $y < k^{-1}$. With \Rem~\ref{remark_control_clauses_types} we can assume $$m_l(\vec \hPhi)  \leq m'_l.$$
Reasoning as before, we obtain
\begin{align*} \hErw[\vec X_l(t,r,x,y)] &\leq \binom{n}{tn} \binom{m_{l}'}{rtn} \binom{tdn}{xrtkn} \binom{tdn}{yrtkn} \binom{rtkn}{xrtkn} \binom{rtkn}{yrtkn} \binom{dn/2}{xrtnk}^{-1} \binom{dn/2}{yrtkn}^{-1}.\end{align*}
Taking logarithm and using (\ref{eq_aux_bound_binomial}), we obtain
\begin{align*} \frac{1}{n} \ln \hErw[\vec X_l(t,r,x,y)] &\leq t \ln \left( \frac{e}{t} \right) + rt \ln \left( \frac{d k^{l+3}}{2^{k} l! rt } \right)+ xrtk \ln \left( \frac{2e^2 t}{x} \right) + yrtk \ln \left( \frac{2e^2 t}{y} \right). \end{align*}
In particular, for $r \geq k^{3/4}/(100l^2)$, $t \in [ 2^{-0.98k}, 2^{-k/20}]$ and $x \geq l k^{-1} / 4, y \geq k^{-1}$, we have
\begin{align*} \frac{1}{n} \ln \hErw[\vec X_l(t,r,x,y)] &\leq t \ln \left( \frac{e}{t} \right) +  rt \ln \left( \frac{d k^{l+6} t^{l/4+1}}{ 2^kr /l!} \right) 
 \leq - t \ln t + t + \frac{k^{3/4}}{100 l^2} t \ln \left( k^{l+8} t^{l/4+1} \right) \end{align*}
 For any $1 \leq l \leq k$ we have $k^{l+6} t^{l/4+1} \leq k^{l^2}$ and we thereby obtain that
 $$ \frac{1}{n} \ln \hErw[\vec X_l(t,r,x,y)] \leq  - 2^{-0.98k} n.$$
This entails that, for any $1\leq l \leq k$, $$\sum_{\substack{t \in [2^{-0.98k} ,2^{-k/20}] \\ tn \in \mathbb{N}}} \sum_{\substack{r \in [k^{3/4}/(100l^2), d]\\ rtn \in \mathbb{N} }} \sum_{\substack{x \in [0, 1] \\ xrtn \in \mathbb{N}}} \sum_{\substack{y \in [0, 1] \\ yrtn \in \mathbb{N}}} \hErw \left[ \vec X_l(t,r,x,y) \right] \leq \exp \left[ - 2^{-0.985k} n \right].$$ This implies by Markov's inequality that \wvhp~there are no $1 \leq l \leq k$ and no sets $T$ of size $|T| \in [2^{-0.98k}n, 2^{-k/20} n]$ such that $|F_l(T)| \geq k^{3/4}  |T|/(100 l^2)$.
\end{proof}

\section{Moment computations}
 \label{sec_vanilla}
 
 {\em In this section we prove \Lem~\ref{Lemma_trivial}, \Lem~\ref{Lem_smmRemedy} and \Prop~\ref{prop_first_moment_vanilla}. We recall that $q = q(k,d,\beta)$ was defined in \Sec~\ref{sec_galton_watson}, Eq. (\ref{eq_def_q}).}
 
 \subsection{Preliminaries}
  
 We will need the following version of the inverse function theorem.
 
 \begin{lemma} \label{lemma_implicit_functions_theorem} Let $U \subset \mathbb{R}^h$ be an open set and let $f \in {C}^\infty(U)$. Assume that $u \in U$ and $r>0$ are such that
 $$ \{x \in \mathbb{R}^h : \| x - u \|_2 \leq r \} \subset U.$$
 Let $Df(x)$ be the Jacobian matrix of $f$ at $x$, $\rm{id}$ the identity matrix, and $\| \cdot \|$ the operator norm over $L^2(\mathbb{R}^h)$. Assume that $Df(u) = \rm{id}$ and
 $$ \| Df(x)- \rm{id} \| \leq \frac{1}{3} \qquad \textrm{for all $x \in \mathbb{R}^h$ such that $\|x-u\|_2 \leq r$}.$$
 Then for each $y \in \mathbb{R}^h$ such that $\|y-f(u) \| \leq r/2$ there is precisely one $x \in \mathbb{R}^h$ such that $\|x-u\| \leq r$ and $f(x)=y$. Furthermore, the inverse map $f^{-1}$ is $C^\infty$ on $\{x \in \mathbb{R}^h : \| x-u\| < r\}$, and $Df^{-1}(x) = (Df(x))^{-1}$ on this set.
   \end{lemma}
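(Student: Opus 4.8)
The plan is to deduce the lemma from the Banach fixed point theorem, following the standard quantitative proof of the inverse function theorem. First I would reduce to the case $u=0$, $f(u)=0$ by replacing $f$ with $x\mapsto f(x+u)-f(u)$, which changes neither the hypotheses nor the conclusions. Set $g(x)=x-f(x)$ on the closed ball $\bar B=\{x:\norm{x}_2\leq r\}$; then $Dg(x)=\mathrm{id}-Df(x)$, so $\norm{Dg(x)}\leq 1/3$ for all $x\in\bar B$, and since $\bar B$ is convex the mean value inequality yields $\norm{g(x)-g(x')}_2\leq\frac13\norm{x-x'}_2$ for $x,x'\in\bar B$. In particular $g(0)=0$ and $g$ is a $\frac13$-contraction of $\bar B$ into itself.

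Next, fix $y$ with $\norm{y}_2\leq r/2$ and consider $\phi_y(x)=y+g(x)$, whose fixed points in $\bar B$ are precisely the solutions of $f(x)=y$ in $\bar B$. For $x\in\bar B$ we have $\norm{\phi_y(x)}_2\leq\norm{y}_2+\norm{g(x)-g(0)}_2\leq r/2+r/3=5r/6<r$, so $\phi_y$ maps $\bar B$ into the open ball $B=\{x:\norm{x}_2<r\}$, and it is a $\frac13$-contraction. By the Banach fixed point theorem $\phi_y$ has a unique fixed point in $\bar B$, which moreover lies in $B$. This gives the existence and uniqueness assertion and defines $f^{-1}$ on $\{y:\norm{y}_2\leq r/2\}$ with values in $B$.

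For the regularity, I would first note that $\norm{Df(x)-\mathrm{id}}\leq 1/3<1$ forces $Df(x)$ to be invertible for every $x\in\bar B$, with $\norm{Df(x)^{-1}}\leq 3/2$ via the Neumann series. A Lipschitz bound for $f^{-1}$ follows from the contraction estimate: if $f(x_i)=y_i$ with $x_i\in\bar B$, then $\norm{x_1-x_2}_2\leq\norm{g(x_1)-g(x_2)}_2+\norm{y_1-y_2}_2\leq\frac13\norm{x_1-x_2}_2+\norm{y_1-y_2}_2$, so $\norm{f^{-1}(y_1)-f^{-1}(y_2)}_2\leq\frac32\norm{y_1-y_2}_2$. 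Differentiability of $f^{-1}$ at $y_0=f(x_0)$ with $x_0\in B$ then follows from the usual computation: writing $f(x)-f(x_0)=Df(x_0)(x-x_0)+o(\norm{x-x_0}_2)$ and using the Lipschitz bound to turn $o(\norm{x-x_0}_2)$ into $o(\norm{y-y_0}_2)$, one gets $f^{-1}(y)-f^{-1}(y_0)=Df(x_0)^{-1}(y-y_0)+o(\norm{y-y_0}_2)$, i.e. $D(f^{-1})(y_0)=Df(f^{-1}(y_0))^{-1}$. Finally, smoothness is obtained by bootstrapping: $D(f^{-1})=\iota\circ Df\circ f^{-1}$, where $\iota$ denotes matrix inversion; since $\iota$ and $Df$ are $C^\infty$, if $f^{-1}$ is $C^j$ then $D(f^{-1})$ is $C^j$, whence $f^{-1}$ is $C^{j+1}$, and by induction $f^{-1}\in C^\infty$ on the open ball $\{y:\norm{y}_2<r/2\}$.

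There is no genuine obstacle: this is the classical quantitative inverse function theorem, and the only points needing a little care are bookkeeping ones — making sure the fixed point lands in the \emph{open} ball $B$ so that $f^{-1}$ takes values where $Df$ is invertible and the differentiation argument is legitimate, and running the smoothness bootstrap cleanly. (The displayed conclusion ``$f^{-1}$ is $C^\infty$ on $\{x:\norm{x-u}<r\}$'' is to be read as the statement on the open ball of radius $r/2$ about $f(u)$ in the target, with $Df^{-1}$ at such a point given by $(Df(x))^{-1}$ for $x=f^{-1}(y)$.)
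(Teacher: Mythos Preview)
Your proof is correct and is exactly the standard quantitative inverse function theorem argument via the Banach fixed point theorem. The paper does not supply its own proof of this lemma; it is stated as a preliminary fact (with Spivak's \emph{Calculus on Manifolds} in the bibliography), so there is nothing to compare against beyond noting that your argument is the classical one.
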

   
   We will also need the following result on the large deviation function of the multinomial distribution.
   
 \begin{lemma} \label{lemma_deviation_binomial} Let $l \geq 2$ and $(p_1, \dots, p_l) \in (0,1)^l$ satisfying $\sum_{j=1}^l p_j = 1$ be fixed. We have, for any $(q_1, \dots, q_l) \in (0,1)^l$ satisfying $\sum_{j=1}^l q_j = 1$
 $$ \frac{1}{n} \ln P \left[ \forall j \in [l], \ \left| \mathrm{Multinomial}(n, p_1, \dots, p_l)_j - n q_j \right| \leq 0.01 \sqrt{n} \right] = \sum_{j=1}^l q_j \ln \left( \frac{p_j}{q_j} \right)  + o_n(1).$$ \end{lemma}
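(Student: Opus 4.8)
The statement is a local (Stirling-type) estimate of the multinomial probability of landing in a cube of side $O(\sqrt n)$ around $nq$. I would prove it by a direct counting argument combined with Stirling's formula, with no need for the inverse function theorem. First I would reduce to the case where each $nq_j$ is an integer: changing each $q_j$ by $O(1/n)$ changes $\sum_j q_j\ln(p_j/q_j)$ by $o_n(1)$ and merely translates the window $|{\cdot}-nq_j|\le 0.01\sqrt n$ by an $O(1)$ amount, which is absorbed by the slack in the window (for $n$ large the shifted window still contains the shifted integer point and is contained in a window of width $0.02\sqrt n$). So it suffices to estimate, for integer vectors $k=(k_1,\dots,k_l)$ with $\sum_j k_j=n$ and $|k_j-nq_j|\le 0.02\sqrt n$,
\[
  S_n \;=\; \sum_{k}\binom{n}{k_1,\dots,k_l}\prod_{j=1}^l p_j^{k_j},
\]
and to show $\frac1n\ln S_n=\sum_j q_j\ln(p_j/q_j)+o_n(1)$.

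\textbf{Key steps.} The lower bound is immediate: $S_n$ is at least the single term at $k_j=\lfloor nq_j\rfloor$ (rounded so the sum is $n$), and Stirling gives
\[
  \frac1n\ln\binom{n}{k_1,\dots,k_l}=-\sum_j q_j\ln q_j+o_n(1),\qquad
  \frac1n\ln\prod_j p_j^{k_j}=\sum_j q_j\ln p_j+o_n(1),
\]
so $\frac1n\ln S_n\ge\sum_j q_j\ln(p_j/q_j)+o_n(1)$. For the upper bound, note first that the number of lattice points $k$ in the window is at most $(0.04\sqrt n+1)^{l-1}=n^{O(1)}$, which contributes only $o_n(1)$ to $\frac1n\ln S_n$; hence it suffices to bound each individual term. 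For a fixed $k$ in the window, apply Stirling again:
\[
  \frac1n\ln\Bigl[\binom{n}{k_1,\dots,k_l}\prod_j p_j^{k_j}\Bigr]
  = -\sum_j \frac{k_j}{n}\ln\frac{k_j}{n}+\sum_j \frac{k_j}{n}\ln p_j+o_n(1)
  = \sum_j \frac{k_j}{n}\ln\frac{p_j}{k_j/n}+o_n(1).
\]
Writing $k_j/n=q_j+\delta_j$ with $|\delta_j|=O(n^{-1/2})$ and Taylor-expanding $t\mapsto t\ln(p_j/t)$ around $t=q_j$: the zeroth order term is $q_j\ln(p_j/q_j)$, the first order term is $\sum_j\delta_j(\ln(p_j/q_j)-1)$, and since $\sum_j\delta_j=0$ this contributes $\sum_j\delta_j\ln(p_j/q_j)=O(n^{-1/2})$, while the quadratic remainder is $O(n^{-1})$. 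Thus every term in the window has $\frac1n\ln(\cdot)=\sum_j q_j\ln(p_j/q_j)+o_n(1)$ uniformly, and combining with the polynomial bound on the number of terms yields the matching upper bound.

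\textbf{Main obstacle.} There is no deep difficulty here; the only points requiring care are (i) making the rounding/translation reduction clean — in particular checking that after perturbing the $q_j$ to rational numbers with denominator $n$ the event in the lemma still sandwiches between two events of the prescribed form up to $o_n(1)$ in the exponent, using that $\sum_j q_j\ln(p_j/q_j)$ is continuous in $q$ on the compact simplex away from the boundary (the $p_j,q_j$ are in $(0,1)$, so this is fine); and (ii) getting the Stirling estimates with a uniform $o_n(1)$ error over the $n^{O(1)}$ values of $k$ in the window, which is automatic because the standard bound $\ln m!=m\ln m-m+O(\ln m)$ is uniform and $k_j=\Theta(n)$ throughout the window. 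The first-order Taylor term vanishing by $\sum_j\delta_j=0$ is the one conceptual observation and it is elementary.
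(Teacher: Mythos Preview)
Your proof is correct and is exactly the standard Stirling-plus-Taylor argument one would expect. The paper itself gives no proof of this lemma: it is stated as a known large-deviation fact for the multinomial distribution and used as a black box in the first- and second-moment computations. So there is nothing to compare against beyond noting that your argument is the canonical one, and the two minor points you flag (rounding $q$ to lattice points and uniformity of the Stirling error over the $n^{O(1)}$ terms in the window, using $k_j=\Theta(n)$ since $q_j\in(0,1)$) are indeed the only places requiring any care.
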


Finally, we will need the following concentration result.

\begin{lemma} \label{lemma_concentration} Let $d \leq \dplus$ and $\beta \in \mathbb{R}$ be fixed. For any $\alpha >0$ there is $\delta >0$ such that
\begin{align*} & \Prob \left[ \left | \frac{1}{n} \ln Z_{\vec \Phi}( \beta)-\frac{1}{n} \Erw \ln \left[ Z_{\vec \Phi}(\beta) \right] \right| > \alpha \right] < \exp(-\delta n),
\\ & \hProb \left[ \left | \frac{1}{n} \ln \cC_{\vec \hPhi,\hat\SIGMA}(\beta)-\frac{1}{n} \hErw \ln \left[ \cC_{\vec \hPhi,\hat\SIGMA}(\beta) \right] \right| > \alpha \right] < \exp(-\delta n).
\end{align*} 
  \end{lemma}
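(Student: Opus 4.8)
The plan is to establish both concentration bounds via the bounded-differences (Azuma--Hoeffding) inequality applied to the configuration model, after a standard truncation argument to handle the fact that $\ln Z$ can in principle be very negative. First I would recall that $\vec\Phi$ is a uniformly random bijection from $dn$ variable clones to $dn$ clause clones, and that any such bijection can be sampled by exposing the images of the clones one at a time; this gives a Doob martingale
\[
X_t=\Erw\!\left[\tfrac1n\ln Z_{\vec\Phi}(\beta)\,\middle|\,\mathcal F_t\right],\qquad t=0,\dots,dn,
\]
where $\mathcal F_t$ is generated by the first $t$ exposed pairings. The key observation is that swapping the image of a single clone changes at most two clauses, hence changes $E_{\vec\Phi}(\sigma)$ by at most $2$ for every assignment $\sigma$ simultaneously; therefore $Z_{\vec\Phi}(\beta)$ changes by at most a factor $\exp(2|\beta|)$, so $|\tfrac1n\ln Z_{\vec\Phi}(\beta)|$ has a clone-swap Lipschitz constant of $O(|\beta|/n)$. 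By the standard martingale argument for the configuration/permutation model (exposing one clone at a time, each step of $X_t$ is bounded by $O(|\beta|/n)$), Azuma's inequality yields
\[
\Prob\!\left[\left|\tfrac1n\ln Z_{\vec\Phi}(\beta)-\tfrac1n\Erw\ln Z_{\vec\Phi}(\beta)\right|>\alpha\right]\le 2\exp\!\left(-\frac{\alpha^2 n}{C(\beta)}\right),
\]
which gives the first inequality with $\delta=\alpha^2/(2C(\beta))$.

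For the second inequality the same martingale idea applies to $\ln\cC_{\vec\hPhi,\hat\SIGMA}(\beta)$, but there are two wrinkles. First, the planted model $\vec\hPhi$ is not uniform — it is reweighted by $Z_{\hat\Phi}(\beta)/\Erw[Z_{\vec\Phi}(\beta)]$ — so I would instead work with the planted pair $(\vec\hPhi,\hat\SIGMA)$ together, using that conditioned on $\hat\SIGMA=\vecone$ (which we may assume by symmetry) the formula is drawn from a tilted measure on bijections; the clone-exposure martingale still works because the tilting by $Z_{\hat\Phi}(\beta)$ is itself $\exp(2|\beta|)$-Lipschitz under a single clone swap, so the Radon--Nikodym derivative between the uniform and planted step-distributions is bounded, and the increments of the Doob martingale for $\tfrac1n\ln\cC_{\vec\hPhi,\hat\SIGMA}(\beta)$ remain $O(|\beta|/n)$. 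Concretely, since $\cC_{\hat\Phi,\hat\sigma}(\beta)=\sum_{\sigma\cdot\vecone>n(1-2^{-k/10})}\exp(-\beta E_{\hat\Phi}(\sigma))$ is a restricted partition function, a single clone swap again changes at most two clauses and hence changes $\cC$ by at most a factor $\exp(2|\beta|)$, giving the bounded-differences property.

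The second and more delicate wrinkle is that $\cC_{\hat\Phi,\hat\sigma}(\beta)$ may be as small as a single term (if the restricted sum is dominated by $\sigma=\vecone$), so $\tfrac1n\ln\cC$ could be as negative as $-d|\beta|/k$; this is harmless since it is still bounded below by a constant depending only on $d,k,\beta$, so no truncation is actually needed and Azuma applies verbatim. Thus I expect the main obstacle to be purely bookkeeping: carefully verifying that exposing the bijection one clone at a time (rather than one clause at a time) produces martingale increments of the claimed order under the \emph{tilted} measure, i.e.\ controlling
\[
\left|X_t-X_{t-1}\right|\le \frac{C(\beta)}{n}
\]
uniformly, where the constant absorbs both the $\exp(2|\beta|)$ factor from swapping two clauses and the corresponding bound on the likelihood ratio of the planted step-kernel. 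Once this Lipschitz estimate is in place, both displayed inequalities follow immediately from Azuma's inequality with $\delta=\delta(\alpha,\beta)=\alpha^2/(2C(\beta)^2)$, and the hypotheses $d\le\dplus$, $\beta\in\mathbb R$ enter only to guarantee $\Erw[Z_{\vec\Phi}(\beta)]$ and $\Erw[\cC_{\vec\hPhi,\hat\SIGMA}(\beta)]$ are finite and of order $\exp(\Theta(n))$, so that the centring quantities $\tfrac1n\Erw\ln Z_{\vec\Phi}(\beta)$ and $\tfrac1n\hErw\ln\cC_{\vec\hPhi,\hat\SIGMA}(\beta)$ are themselves $O(1)$.
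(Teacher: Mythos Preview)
Your approach is exactly the paper's: both inequalities come from the $2\beta$-Lipschitz bound on $\ln Z_\Phi$ and on $\ln \cC_{\Phi,\sigma}$ under a single edge switch, followed by Azuma on the configuration model. For the uniform model your argument is complete, and the paper's own proof is no more detailed than what you wrote.

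For the planted inequality you are right to flag the tilted measure as an issue (the paper does not address it explicitly either), but your specific fix does not work as stated. The assertion that ``bounded Radon--Nikodym derivative between the uniform and planted step-distributions implies the Doob increments stay $O(\beta/n)$'' is not correct: under the transposition coupling the increment $|g(j)-g(j')|$ decomposes as an $O(\beta/n)$ Lipschitz piece plus a reweighting term, and the latter is only bounded by $(e^{2\beta}-1)$ times the \emph{conditional mean absolute deviation} of $\tfrac1n\ln\cC$, which a~priori is $O(1)$, not $O(1/n)$. Feeding increments of order $O(1)$ into Azuma over $dn$ steps gives nothing. (Also, once you have fixed $\hat\SIGMA=\vecone$ the tilt is $\exp(-\beta E_{\hat\Phi}(\vecone))$, not $Z_{\hat\Phi}(\beta)$; this does not change the Lipschitz constant but it is the right object to work with.)

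The clean route, which the paper uses implicitly elsewhere (see the proof of \Lem~\ref{Lemma_antiPlanting}), is to condition on the clause-type vector $\underline m(\hat\Phi)=(m_0,\ldots,m_k)$. Given $\hat\SIGMA=\vecone$ and given $\underline m$, the planted law of $\hat\Phi$ coincides with the \emph{uniform} law conditioned on the same $\underline m$, and this conditional law is generated by a finite sequence of genuinely uniform combinatorial choices: which clauses receive which type, which positions in each clause are positive, and two independent uniform bijections on the positive and negative clone sets. Standard Azuma on each of these uniform stages gives $\hProb[\,|\tfrac1n\ln\cC-\hErw[\tfrac1n\ln\cC\mid\underline m]|>\alpha\mid\underline m\,]\le\exp(-\delta n)$. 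Since $\underline m$ itself concentrates at rate $\exp(-\Theta(n))$ under the planted law (it is essentially a conditioned multinomial) and the conditional mean is $O(\beta/n)$-Lipschitz in $\underline m$ (changing one clause's type alters at most one clause), the unconditional bound follows without any Radon--Nikodym argument.
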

\begin{proof} The proof follows from the fact that if two formula $\Phi, \Phi' $ differ by at most one switch of edges, the associated partition functions satisfy
$$ |\ln Z_\Phi (\beta) -\ln Z_{\Phi'}(\beta)| \leq 2\beta \qquad \textrm{and, for $\sigma \in \{-1,1\}^n$} \qquad |\ln \cC_{\Phi,\sigma}( \beta) -\ln \cC_{\Phi',\sigma} (\beta)| \leq 2\beta .$$
The stated concentration result is then a consequence of Azuma's inequality (applied to the configuration model). 
\end{proof}

 \subsection{The first moment computation}

Let $z_{1,k} : \mathbb{R} \times (0,1) \to (0,1]$ be defined by
$$ z_{1,k}(\beta,h) = 1-c_\beta (1-h)^k ,$$
the Kullback-Leibler divergence $D_1 : (0,1)^2 \to \mathbb{R}$ be defined by
$$ D_1(\alpha,h) =  \alpha \ln \left( \frac \alpha h \right) + (1-\alpha) \ln \left( \frac {1-\alpha}{1-h} \right),$$
and $f_{1,k} : \mathbb{R}^2 \to \mathbb{R}$ be defined by
$$ f_{1,k}(d, \beta) = \ln 2 + \frac{d}{k} \ln z_{1,k}(\beta,1-q) + d D_1\left( \frac{1}{2},1-q \right).$$

\begin{proof}[Proof of \Prop~\ref{prop_first_moment_vanilla}] We need to compute the expected value of $\prod_{a \in F} \psi_{a,\beta}(\sigma)$ under a random assignment $\sigma \in \{-1,1\}^n$. To do this, we introduce a different probability space formed of all vectors in $\{-1,1\}^{km} \times \{0,1\}^m$
$$ (\phi_{al})_{a \in [m], l \in [k]} , (y_a)_{a \in [m]}$$
with a probability distribution $\mathbb{P}$ such that the $(\phi_{al})_{a \in [m], l \in [k]}$ are independent random variables distributed as $\mathbb{P}(\phi_{al}=1)=1-q$ and the $(y_a)_{a \in [m]}$ are independent Bernoulli random variables of parameter $\exp(-\beta)/(1+\exp(-\beta))$.
We consider the two events
$$S=\left \{ \forall \ a \in [m] \ (y_a = 0 \textrm{ and }  \exists \ l \in [k], \phi_{al}=1) \textrm{ or } \left(y_{a}=1 \textrm{ and } \forall \ l \in [k], \phi_{al}=-1 \right) \right \}$$
and 
$$B= \left \{ \left | |\{(a,l):\phi_{al}={1}\}|-\frac{d}{2}n \right| \leq \sqrt{n} \right \}.$$
Then, using that $\psi_{a,\beta}(\phi_{al}) =\left(1 + \exp(-\beta) \right) \left( \mathbb{P} \left[ y_a = 0 \right] \mathbf{1}_{\exists \ l \in [k], \phi_{al}=1}+\mathbb{P} \left[ y_a = 0 \right] \mathbf{1}_{\forall \ l \in [k], \phi_{al}=-1} \right)$, we see the expected value of $\prod_{a \in F} \psi_{a,\beta}$ under \textit{any} given assignment $\sigma \in \{-1,1\}^n$ is given by $\mathbb{P}[S|B](1+\exp(-\beta))^m$. In particular,
\beq \label{eq_aux_vanilla_rate_function_1_a} \frac{1}{n} \ln \Erw \left[ Z_\beta(\vec \Phi) \right] \sim \ln2 + \frac{1}{n} \ln \mathbb{P} [S|B] + \frac{d}{k} \ln(1+\exp(-\beta)) .\eeq
By Bayes' theorem we have
\beq \label{eq_aux_vanilla_rate_function_1_aa} \mathbb{P} [S|B] = \frac{\mathbb{P}[S] \mathbb{P}[B|S]}{\mathbb{P}[B]} . \eeq
It follows from \Lem~\ref{lemma_deviation_binomial} that
\beq \label{eq_aux_vanilla_rate_function_1_b} \frac{1}{km} \ln \mathbb{P}[B] \sim -D_1 \left(\frac{1}{2}, 1-q \right).\eeq
It is also straightforward to obtain that
\beq \label{eq_aux_vanilla_rate_function_1_c} \frac{1}{m} \ln \mathbb{P}[S] \sim z_{1,k}(\beta,1-q) -\ln (1+\exp(-\beta)),\eeq
and by definition of $q$ we have (using the central limit theorem)
\beq \label{eq_aux_vanilla_rate_function_1_d} \frac{1}{m} \ln \mathbb{P}[B|S] = o_n(1).\eeq
The proposition is obtained by combining Eq.(\ref{eq_aux_vanilla_rate_function_1_a}-\ref{eq_aux_vanilla_rate_function_1_d}).
\end{proof}

 \subsection{The second moment computation}

Recall that $c_\beta = 1-\exp(-\beta)$. Let $\cT = \{(h,\hq) \in (0,1)^2, \hq < h \}$. Let $z_{2,k} : \mathbb{R} \times \cT \to (0,1]$ be defined by
$$ z_{2,k}(\beta,h,\hq) = 1-2 c_\beta (1-h)^k + c_\beta^2(1-2h+\hq)^k. $$

\begin{lemma}\label{lemma_vanilla_2_implicit} Let $g_{2,k,\beta}: \cT \to \mathbb{R}^2$ be defined by
$$ g_{2,k,\beta}(h,\hq) = \left( \frac{\hq + (h-\hq ) \left[ 1-c_\beta(1-h)^{k-1} \right]}{z_{2,k}(\beta, h, \hq)},  \frac{\hq}{z_{2,k}(\beta, h, \hq)} \right).$$
Let $\alpha \in (0,1)$ and let $\mathcal{U} = \{(x,y) \in \mathbb{R}^2, \| (x,y)-  (\frac12, \frac{1-\alpha}{2} )\|_2 \leq  k 2^{-k} \}$. Then the equation $g_{2,k,\beta}(h,\hq) = ( \frac{1}{2},\frac{1-\alpha}{2})$ admits a unique solution in $\cT \cap \mathcal{U}$ that we denote by $(h_\beta(\alpha), \hq_\beta(\alpha))$.
Moreover, $\alpha \to h_\beta(\alpha)$ (resp. $\alpha \to \hq_\beta(\alpha)$) is of class $C^\infty$ on $(0,1)$ and the following is true.
\begin{align} \label{eq_lemma_v_2_imp_1}\hq_\beta \left(1/2 \right) &= h_\beta \left( 1/2\right)^2 = q^2,
\\ \label{eq_lemma_v_2_imp_2}  \hq_\beta'(\alpha) &=  h_\beta'(\alpha) -1/2 + \tilde O_k(2^{-4k/3}) \qquad \textrm{for $|\alpha-1/2| \leq 2^{-k/3}$}.
\end{align}
\end{lemma}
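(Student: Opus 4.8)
\noindent The plan is to realise $g_{2,k,\beta}$ as a near‑identity map on the ball $\mathcal U$ and feed it into the quantitative inverse function theorem \Lem~\ref{lemma_implicit_functions_theorem}. The estimate underpinning everything is that on $\mathcal U\cap\cT$ one has $1-h=\tfrac12+O(2^{-k})$ and $0<1-2h+\hq\le\tfrac12+O(2^{-k})$, so that $(1-h)^{k}$, $(1-h)^{k-1}$, $(1-2h+\hq)^{k}$ and all their first‑ and second‑order partial derivatives are $\tilde O_k(2^{-k})$ there, uniformly in $\alpha$. Since moreover $c_\beta\in(0,1)$, this gives $z_{2,k}(\beta,h,\hq)=1+\tilde O_k(2^{-k})$, which in particular does not vanish, so $g_{2,k,\beta}\in C^\infty(\mathcal U\cap\cT)$; and differentiating the ratio defining $g_{2,k,\beta}$ one gets $Dg_{2,k,\beta}=\mathrm{id}+E$ on $\mathcal U\cap\cT$, where $E=(a_{ij})_{i,j\in\{1,2\}}$ has $\|E\|=\tilde O_k(2^{-k})$ and (because the second derivatives are again $\tilde O_k(2^{-k})$) $E$ varies by $\tilde O_k(2^{-k})$ over $\mathcal U$.

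\medskip\noindent With this in hand I would first establish existence, uniqueness, localisation and smoothness. Fix $\alpha\in(0,1)$, let $u=(\tfrac12,\tfrac{1-\alpha}{2})$ be the centre of $\mathcal U$, and put $A:=Dg_{2,k,\beta}(u)$, which by the above is invertible with $\|A^{-1}\|\le 2$ for $k\ge k_0$. Apply \Lem~\ref{lemma_implicit_functions_theorem} to $f:=A^{-1}\circ g_{2,k,\beta}$ on the ball of radius $r=k2^{-k}$ about $u$: then $Df(u)=\mathrm{id}$, $\|Df(x)-\mathrm{id}\|=\|A^{-1}\bigl(Dg_{2,k,\beta}(x)-Dg_{2,k,\beta}(u)\bigr)\|\le\tfrac13$ for $k\ge k_0$, and $\|A^{-1}u-f(u)\|=\|A^{-1}\bigl(u-g_{2,k,\beta}(u)\bigr)\|\le 2\,O(2^{-k})\le r/2$ for $k\ge k_0$ (one checks $\|u-g_{2,k,\beta}(u)\|=O(2^{-k})$ directly). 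The lemma then produces a unique $(h_\beta(\alpha),\hq_\beta(\alpha))$ in that ball with $g_{2,k,\beta}(h_\beta(\alpha),\hq_\beta(\alpha))=u$ and shows the local inverse is $C^\infty$. Membership in $\cT$ is automatic: eliminating $\hq$ from the two scalar equations, with $w:=1-c_\beta(1-h)^{k-1}\in(0,1)$ one finds $\hq=z_{2,k}(1-\alpha)/2$ and $h=\tfrac{z_{2,k}}{2}\bigl(1+\alpha\tfrac{1-w}{w}\bigr)>\tfrac{z_{2,k}}{2}\ge\hq$. Global $C^\infty$‑dependence on $\alpha\in(0,1)$ follows by applying the implicit function theorem to $(\alpha,h,\hq)\mapsto g_{2,k,\beta}(h,\hq)-(\tfrac12,\tfrac{1-\alpha}{2})$, whose Jacobian in $(h,\hq)$ equals $\mathrm{id}+E$ and hence is invertible. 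For $(\ref{eq_lemma_v_2_imp_1})$ one substitutes the candidate values into $g_{2,k,\beta}$: using the identity $1-2h+\hq=(1-h)^{2}$ valid when $\hq=h^{2}$, the denominator $z_{2,k}$ collapses to a perfect square and $g_{2,k,\beta}(h_\beta(\tfrac12),\hq_\beta(\tfrac12))=(\tfrac12,\tfrac14)$ becomes equivalent to the single defining relation $(\ref{eq_def_q})$ for $q$; uniqueness then identifies this point with the solution constructed above.

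\medskip\noindent It then remains to prove the derivative identity $(\ref{eq_lemma_v_2_imp_2})$, and this is where the real work lies. Differentiating $g_{2,k,\beta}(h_\beta(\alpha),\hq_\beta(\alpha))=(\tfrac12,\tfrac{1-\alpha}{2})$ in $\alpha$ gives $(\mathrm{id}+E)\bigl(h_\beta'(\alpha),\hq_\beta'(\alpha)\bigr)^{\top}=(0,-\tfrac12)^{\top}$; expanding $(\mathrm{id}+E)^{-1}$ yields $h_\beta'(\alpha)=\tfrac12 a_{12}+\tilde O_k(2^{-2k})$ and $\hq_\beta'(\alpha)=-\tfrac12+\tfrac12 a_{22}+\tilde O_k(2^{-2k})$, hence $\hq_\beta'(\alpha)-h_\beta'(\alpha)=-\tfrac12-\tfrac12(a_{12}-a_{22})+\tilde O_k(2^{-2k})$. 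The naive bounds $a_{12},a_{22}=\tilde O_k(2^{-k})$ are not good enough, so one must exploit that the \emph{difference} $a_{12}-a_{22}$ is much smaller than either term: writing the two partials out, the leading $c_\beta(1-h)^{k-1}$‑contributions cancel and one is left with $a_{12}-a_{22}=c_\beta(1-h)^{k-1}z_{2,k}\,(2h-1)+\tilde O_k(4^{-k})$. Now $(\ref{eq_lemma_v_2_imp_1})$ and $(\ref{eq_def_q})$ give $|2h_\beta(\tfrac12)-1|=|2q-1|=c_\beta q^{k}$; a direct analysis of $(\ref{eq_def_q})$ using $c_\beta=1-\tilde O_k(2^{-k})$ shows $q=\tfrac12+\tilde O_k(2^{-k})$, whence $q^{k}=\tilde O_k(2^{-k})$ and $|2h_\beta(\tfrac12)-1|=\tilde O_k(2^{-k})$; and for $|\alpha-\tfrac12|\le 2^{-k/3}$ the a priori bound $h_\beta'=\tilde O_k(2^{-k})$ gives $|2h_\beta(\alpha)-1|\le|2h_\beta(\tfrac12)-1|+2\cdot2^{-k/3}\,\tilde O_k(2^{-k})=\tilde O_k(2^{-k})$. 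Therefore $a_{12}-a_{22}=\tilde O_k(2^{-k})\cdot\tilde O_k(2^{-k})=\tilde O_k(2^{-2k})\subset\tilde O_k(2^{-4k/3})$, which is $(\ref{eq_lemma_v_2_imp_2})$. The genuinely delicate point is exactly this chain: one has to combine the precise algebraic cancellation in $a_{12}-a_{22}$ with the identity $|2h_\beta(\tfrac12)-1|=c_\beta q^{k}$ and with $q-\tfrac12=\tilde O_k(2^{-k})$; the remaining work (the uniform $\tilde O_k(2^{-k})$ control of $z_{2,k}$, $Dg_{2,k,\beta}$ and their derivatives on $\mathcal U$) is routine, but must be carried out sharply enough to license \Lem~\ref{lemma_implicit_functions_theorem} with $r=k2^{-k}$.
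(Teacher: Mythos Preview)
Your proposal is correct and follows essentially the same route as the paper: both invoke the quantitative inverse function theorem (\Lem~\ref{lemma_implicit_functions_theorem}) for existence, uniqueness and smoothness, verify $(\ref{eq_lemma_v_2_imp_1})$ by direct substitution, and obtain $(\ref{eq_lemma_v_2_imp_2})$ by differentiating the implicit equations and exploiting a cancellation between the two components. You are in fact slightly more careful than the paper in two places: you precompose with $A^{-1}$ so that the hypothesis $Df(u)=\mathrm{id}$ of \Lem~\ref{lemma_implicit_functions_theorem} holds exactly (the paper applies the lemma directly to $g_{2,k,\beta}$, relying on $Dg_{2,k,\beta}=\mathrm{id}+\tilde O_k(2^{-k})$ without the normalisation), and you verify membership in $\cT$ explicitly. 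For the derivative identity the paper subtracts the two differentiated scalar equations, whereas you compute the Jacobian entries $a_{12},a_{22}$ and their difference; these are algebraically equivalent manipulations, both hinging on the observation that the leading $c_\beta(1-h)^{k-1}$ contributions cancel and the residual is proportional to $2h-1=\tilde O_k(2^{-k})$. (A tiny slip: your displayed formula for $a_{12}-a_{22}$ has $z_{2,k}$ where it should be $1/z_{2,k}$, but since $z_{2,k}=1+\tilde O_k(2^{-k})$ this does not affect anything.)
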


\begin{proof} The Jacobian matrix $D g_{2,k,\beta}(h,\hq)$ of $g_{2,k,\beta}$ at $(h, \hq) \in\cT$ is given by $D g_{2,k,\beta}( h, \hq) = \textrm{id} + \tilde O_k(2^{-k})$; in particular it satisfies $\| Dg_{2,k,\beta}(h,\hq) - \textrm{id} \| \leq 1/3$. Then \Lem~\ref{lemma_implicit_functions_theorem} applied to $g_{2,k,\beta} \in C^\infty(\cT)$ with $y=u = \left(\frac{1}{2}, \frac{1-\alpha}{2} \right)$ and $r = k 2^{-k}$ imply that there is exactly one $(h_\beta(\alpha),\hq_\beta(\alpha)) \in \cT$ such that $\| (h_\beta(\alpha),\hq_\beta(\alpha))- \left( \frac{1}{2}, \frac{1-\alpha}{2} \right) \|_2 \leq r$ and $g_{2,k,\beta}(h_\beta(\alpha),\hq_\beta(\alpha)) = \left( \frac12, \frac{1-\alpha}2 \right)$. Moreover, the map $\alpha \to (h_\beta(\alpha), \hq_\beta(\alpha))$ is of class $C^\infty$ and $(h_\beta'(\alpha),\hq_\beta'(\alpha))  =(0,-1/2)+\tilde O_k(2^{-k})$. A more detailled computation (using $\frac{d}{d \alpha} g_{2,k,\beta}(h_\beta(\alpha),\hq_\beta(\alpha)) = (0,-1/2)$ and the chain rule for computing derivatives) reveals that, for $|\alpha -1/2| \leq 2^{-k/3}$ 
\begin{align*} \hq_\beta'(\alpha) +\frac{1}{2} z_{2,k}(\beta, h_\beta(\alpha), \hq_\beta(\alpha)) &= \frac{1}{4} \left( h_\beta'(\alpha) \frac{\partial z_{2,k}}{\partial h} + \hq_\beta'(\alpha) \frac{\partial z_{2,k}}{\partial \hq}\right) (\beta, h_\beta(\alpha), \hq_\beta(\alpha))  = \tilde O_k(4^{-k}),
\\  h_\beta'(\alpha) + c_\beta \hq'_\beta(\alpha) 2^{1-k} + \tilde O_k(2^{-4k/3}) &= \frac{1}{2} \left( h_\beta'(\alpha) \frac{\partial z_{2,k}}{\partial h} + \hq_\beta'(\alpha) \frac{\partial z_{2,k}}{\partial \hq}\right) (\beta, h_\beta(\alpha), \hq_\beta(\alpha))  = \tilde O_k(4^{-k}).  \end{align*}
In particular
$$ \hq_\beta'(\alpha) - h_\beta'(\alpha) +\frac12 = \frac{1}{2} \left(1-z_{2,k}(\beta, h_\beta(\alpha), \hq_\beta(\alpha)) \right)-2^{-k} c_\beta + \tilde O_k(2^{-4k/3}) = \tilde O_k(2^{-4k/3}).$$
Finally, (\ref{eq_lemma_v_2_imp_1}) is easily proved by inspection.
 \end{proof}
 
 In particular, we observe that \Prop~\ref{prop_first_moment_vanilla} and the above lemma imply the following.
 \begin{corollary} \label{cor_first_moment} We have $\lim_{n \to \infty} \frac{1}{n} \ln \Erw \left[ Z_\beta(\vec \Phi) \right]  = \frac{1}{2} f_{2,k}(d,\beta,1/2)$. \end{corollary}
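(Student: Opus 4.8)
The plan is to treat \Cor~\ref{cor_first_moment} as a purely algebraic consequence of the two results that precede it. \Prop~\ref{prop_first_moment_vanilla} already identifies $\lim_{n\to\infty}\frac1n\ln\Erw[Z_\beta(\vec\Phi)]$ with an explicit closed form in $q$, so the only thing left is to evaluate $\tfrac12 f_{2,k}(d,\beta,\tfrac12)$ and check that it returns the same expression. First I would unwind the definition of $f_{2,k}(d,\beta,\alpha)$ at the orthogonal overlap $\alpha=\tfrac12$: structurally it is the two-replica analogue of $f_{1,k}$, assembled from the clause weight $z_{2,k}(\beta,h_\beta(\alpha),\hq_\beta(\alpha))$ together with the large-deviations cost of prescribing the joint clause-clone statistics of a pair of assignments at overlap $\alpha$.

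The key step is the evaluation of $z_{2,k}$ at the minimiser furnished by \Lem~\ref{lemma_vanilla_2_implicit}. By (\ref{eq_lemma_v_2_imp_1}) one has $\hq_\beta(\tfrac12)=h_\beta(\tfrac12)^2$, and feeding $\hq=h^2$ into $z_{2,k}(\beta,h,\hq)=1-2c_\beta(1-h)^k+c_\beta^2(1-2h+\hq)^k$ collapses the last term through $(1-2h+h^2)^k=\bigl((1-h)^k\bigr)^2$, so that
\[
z_{2,k}\bigl(\beta,h_\beta(\tfrac12),\hq_\beta(\tfrac12)\bigr)=\bigl(1-c_\beta\bigl(1-h_\beta(\tfrac12)\bigr)^k\bigr)^2,
\]
i.e.\ the square of the one-replica weight at the same point; in the parametrisation of \Prop~\ref{prop_first_moment_vanilla} this reads $(1-c_\beta q^k)^2$. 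Next I would verify that the combinatorial prefactor of $f_{2,k}(d,\beta,\tfrac12)$ is exactly twice that of the first moment: selecting a pair of assignments at overlap $\tfrac12$ costs $\binom{n}{n/2}=2^{n+o(n)}$, contributing an extra $\ln2$ on top of the $\ln2$ already present, while the multinomial large-deviations rate of \Lem~\ref{lemma_deviation_binomial} for the four clause-clone types of an orthogonal pair factorises into the product of two copies of the binomial rate used in the proof of \Prop~\ref{prop_first_moment_vanilla}. Combining the weight and the prefactor yields $f_{2,k}(d,\beta,\tfrac12)=2\bigl[\ln2+\tfrac dk\ln(1-c_\beta q^k)-\tfrac d2\ln\tfrac1{2q}-\tfrac d2\ln\tfrac1{2(1-q)}\bigr]$, so dividing by two gives precisely the right-hand side of \Prop~\ref{prop_first_moment_vanilla}.

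I do not anticipate a genuine obstacle: the corollary is bookkeeping once \Prop~\ref{prop_first_moment_vanilla} and \Lem~\ref{lemma_vanilla_2_implicit} are available. The single point that has to come out exactly is the perfect-square collapse of $z_{2,k}$ at $\alpha=\tfrac12$, which is the algebraic shadow of the fact that two Gibbs samples at orthogonal overlap are ``independent'' as seen by the clauses, and which is exactly what the relation $\hq_\beta(\tfrac12)=h_\beta(\tfrac12)^2$ in (\ref{eq_lemma_v_2_imp_1}) delivers. The only real care point is to keep the conventions ($h$ versus $1-h$, and which of $q$, $1-q$ enters $z_{1,k}$, $z_{2,k}$ and $g_{2,k,\beta}$) consistent throughout the substitution, so that the term $\tfrac dk\ln(1-c_\beta q^k)$ and the two $\ln\bigl(2q(1-q)\bigr)$-type terms emerge with the correct signs.
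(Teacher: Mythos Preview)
Your proposal is correct and follows exactly the route the paper indicates: the paper simply states that the corollary is implied by \Prop~\ref{prop_first_moment_vanilla} together with \Lem~\ref{lemma_vanilla_2_implicit}, and you have spelled out the underlying algebra---the perfect-square collapse of $z_{2,k}$ via $\hq_\beta(\tfrac12)=h_\beta(\tfrac12)^2$ and the factorisation of the entropy/KL terms---which is precisely what is needed (and is also the content of the identity $f_{2,k}(d,\beta,\tfrac12)=2f_{1,k}(d,\beta)$ recorded in \Lem~\ref{lemma_vanilla_expansion_f2}). Your caveat about keeping the $q$ versus $1-q$ convention straight is well placed, since the paper itself is a bit loose there.
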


Let the Kullback-Leibler divergence $D_2 : (0,1)^3 \to \mathbb{R}$ be defined by
$$ D_2(\alpha, h,\hq) = \alpha \ln \left( \frac{\alpha}{2 (h-\hq)} \right) + \frac{1-\alpha}{2} \ln \left( \frac{1-\alpha}{2 \hq} \right)+ \frac{1-\alpha}{2} \ln \left( \frac{1-\alpha}{2 (1-2h+\hq)} \right) ,$$
and $f_{2,k} : \mathbb{R}^2 \times (0,1) \to \mathbb{R}$ be defined by
$$f_{2,k}(d,\beta,\alpha) = \ln 2 + H(\alpha) + \frac{d}{k} \ln z_{2,k}(\beta, h_\beta(\alpha), \hq_\beta(\alpha)) + d D_2\left( \alpha,h_\beta(\alpha), \hq_\beta(\alpha)\right).$$
We also let \begin{align*} Z(d,\beta,\alpha)= \Erw \left [ \sum_{\substack{\sigma,\tau \\ \sigma \cdot \tau=(2\alpha-1) n}}  \prod_{a \in F} \left( \psi_{a,\beta}(\sigma_a)  \psi_{a,\beta}(\tau_a) \right) \right ].\end{align*}
so that $$\Erw \left[  Z^2_\beta (\vec \Phi) \right] = \sum_{\alpha \in \{0,1/n, \dots, 1\}} Z(d,\beta, \alpha).$$
\begin{proposition} \label{prop_second_moment_rate_function} Let $d>0, \beta \in \mathbb{R}$ and $I \subset [0,1]$ be fixed. We have
$$ \frac{1}{n}\ln \left(  \sum_{\alpha \in \{0,1/n, \dots, 1\}\cap I} Z(d,\beta, \alpha) \right) \sim \sup_{\alpha \in I} f_{2,k}(d,\beta,\alpha) $$ and in particular
$$ \frac{1}{n} \ln \Erw \left[  Z_\beta^2 (\vec \Phi) \right] \sim \sup_{\alpha \in (0,1)} f_{2,k}(d,\beta,\alpha).$$
\end{proposition}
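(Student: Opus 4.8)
The plan is to pin down the exponential order of each individual term $Z(d,\beta,\alpha)$ and then sum over the at most $n+1$ admissible values of the overlap. Fix $\alpha$. There are exactly $2^n\binom{n}{\alpha n}$ pairs $(\sigma,\tau)$ with $\sigma\cdot\tau=(2\alpha-1)n$, contributing $\ln2+H(\alpha)+o(1)$ to $\tfrac1n\ln(\cdot)$. By the symmetry of the configuration model the inner expectation $\mathcal Z(\alpha):=\Erw_{\vec\Phi}\big[\prod_{a\in F}\psi_{a,\beta}(\sigma_a)\psi_{a,\beta}(\tau_a)\big]$ is the same for every such pair, and it depends on $(\sigma,\tau)$ only through the composition of the $dn$ variable clones according to the joint literal type $\big(\mathbf 1\{\text{true under }\sigma\},\mathbf 1\{\text{true under }\tau\}\big)$, a deterministic function of $\alpha$. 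Hence $\tfrac1n\ln Z(d,\beta,\alpha)=\ln2+H(\alpha)+\tfrac1n\ln\mathcal Z(\alpha)+o(1)$, and everything hinges on $\mathcal Z(\alpha)$.

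To evaluate $\mathcal Z(\alpha)$ I would run the same device as in the proof of \Prop~\ref{prop_first_moment_vanilla}, now carrying the joint type of each clause clone. For $(h,\hq)\in\cT$ let $\rho_{h,\hq}$ be the law on the four joint literal types in which each coordinate is true with probability $h$ and both are true with probability $\hq$. If the clause clones' types were i.i.d.\ $\rho_{h,\hq}$, the clauses would be independent and a one-line inclusion--exclusion gives $\Erw\big[\prod_a\psi_{a,\beta}(\sigma_a)\psi_{a,\beta}(\tau_a)\big]=z_{2,k}(\beta,h,\hq)^m$. Conditioning these i.i.d.\ variables on reproducing the true (overlap-$\alpha$) clone composition recovers $\mathcal Z(\alpha)$; a change-of-measure argument in which one picks the base tilt $(h,\hq)$ that makes this conditioning event \emph{typical} under the $\prod_a\psi_{a,\beta}(\sigma_a)\psi_{a,\beta}(\tau_a)$-reweighted tilted measure, together with the multinomial large-deviation estimate \Lem~\ref{lemma_deviation_binomial}, then yields
\[\tfrac1n\ln\mathcal Z(\alpha)=\tfrac dk\ln z_{2,k}(\beta,h,\hq)+d\,D_2(\alpha,h,\hq)+o(1)\]
evaluated at that self-consistent tilt. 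The role of \Lem~\ref{lemma_vanilla_2_implicit} is precisely that the self-consistency condition is the equation $g_{2,k,\beta}(h,\hq)=(\tfrac12,\tfrac{1-\alpha}2)$, which has a unique solution $(h_\beta(\alpha),\hq_\beta(\alpha))\in\cT$. Thus $\tfrac1n\ln Z(d,\beta,\alpha)=f_{2,k}(d,\beta,\alpha)+o(1)$, with the error uniform in $\alpha$ on compact subsets of $(0,1)$; the degenerate endpoints $\alpha\in\{0,1\}$ (where the clone composition hits $\partial\cT$) are handled separately and seen not to dominate.

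Finally, since $\alpha\mapsto(h_\beta(\alpha),\hq_\beta(\alpha))$ is $C^\infty$ on $(0,1)$ by \Lem~\ref{lemma_vanilla_2_implicit} and $H,z_{2,k},D_2$ are continuous, $\alpha\mapsto f_{2,k}(d,\beta,\alpha)$ is continuous on $[0,1]$. For any $I\subset[0,1]$ the restricted sum $\sum_{\alpha\in\{0,1/n,\dots,1\}\cap I}Z(d,\beta,\alpha)$ has at most $n+1$ terms, each of exponential order $\exp\!\big(n\,f_{2,k}(d,\beta,\alpha)\big)$: the lower bound comes from a single near-maximising grid point and the upper bound from (number of terms)$\,\times\,$(largest term), continuity ensuring that the maximum over the $\tfrac1n$-grid converges to $\sup_{\alpha\in I}f_{2,k}(d,\beta,\alpha)$. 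This proves the first assertion, and taking $I=[0,1]$ in $\Erw[Z_\beta(\vec\Phi)^2]=\sum_\alpha Z(d,\beta,\alpha)$ proves the second.

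The main obstacle is the middle step: turning the exact matching expectation into the tilted ``annealed'' quantity with matching upper and lower bounds on the exponential scale, i.e.\ showing that the Laplace/saddle-point evaluation at $(h_\beta(\alpha),\hq_\beta(\alpha))$ incurs only sub-exponential corrections. This is where \Lem~\ref{lemma_implicit_functions_theorem} enters — to locate the saddle, verify it is non-degenerate, and check that it lies in the interior of $\cT$ — and where one must control uniformity in $\alpha$ and the behaviour of $(h_\beta(\alpha),\hq_\beta(\alpha))$ as $(h,\hq)$ approaches $\partial\cT$.
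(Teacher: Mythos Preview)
Your proposal is correct and follows essentially the same approach as the paper: tilt the clause-clone types to an i.i.d.\ law parametrised by $(h,\hq)$, choose the tilt so that the conditioning on the true clone composition becomes a typical event (this is exactly the equation $g_{2,k,\beta}(h,\hq)=(\tfrac12,\tfrac{1-\alpha}{2})$ solved in \Lem~\ref{lemma_vanilla_2_implicit}), and read off the rate via Bayes' formula and \Lem~\ref{lemma_deviation_binomial}. The paper implements the auxiliary randomness concretely via the pair $\big((\phi_{al}),(y_a^{(1)},y_a^{(2)})\big)$ and the events $S^{(2)},B^{(2)}$, but this is precisely your ``same device as in the proof of \Prop~\ref{prop_first_moment_vanilla}''; the Laplace step over $\alpha$ is then handled exactly as you describe.
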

\begin{proof}
We need to compute the expected value of $\prod_{a \in F} \psi_{a,\beta}(\sigma) \psi_{a,\beta}(\tau)$ under a random pair of assignements $(\sigma,\tau) \in \{-1,1\}^{2n}$. To do this, we introduce a different probability space formed of all vectors in $\{-1,1\}^{2km}\times \{0,1\}^{2m}$
$$ (\phi_{al})_{a \in [m], l \in [k]} , (y_a^{(1)},y_a^{(2)})_{a \in [m]}$$
with a probability distribution $\mathbb{P}$ such that the $(\phi_{al})_{a \in [m], l \in [k]}$ independent random variables satisfying
$$\phi_{al}=
\begin{cases}
(1,1), &\text{with probability } \hq_{\beta}(\alpha)\\
(1,-1), &\text{with probability } h_{\beta}(\alpha)-\hq_{\beta}(\alpha)\\
(-1,1), &\text{with probability } h_{\beta}(\alpha)-\hq_{\beta}(\alpha)\\
(-1,-1), &\text{with probability }  1-2h_{\beta}(\alpha)+\hq_{\beta}(\alpha)\\
\end{cases}$$
independently for all $a,l$, and the $(y_a^{(1)})_{a \in [m]}$ (resp. $(y_a^{(2)})_{a \in [m]}$) are independent Bernoulli random variables of parameter $\exp(-\beta)/(1+\exp(-\beta))$.
We consider the following events.
\begin{align*} S_1&=\{ \forall \ a \in [m] \ (y_a^{(1)} = 0 \textrm{ and }  \exists \ l \in [k], \phi_{al}\in \{(1,1),(1,-1)\})  
\\ & \hspace{4 cm}  \textrm{ or } \left(y_{a}^{(1)}=1 \textrm{ and } \forall \ l \in [k], \phi_{al}\in \{(-1,1),(-1,-1)\} \right)\},
 \\ S_2&=\{ \forall \ a \in [m] \ (y_a^{(2)} = 0 \textrm{ and }  \exists \ l \in [k], \phi_{al}=\in \{(1,1),(-1,1)\})
 \\ & \hspace{4 cm} \textrm{ or } \left(y_{a}^{(2)}=1 \textrm{ and } \forall \ l \in [k], \phi_{al}\in \{(1,-1),(-1,-1)\} \right)\},
 \\ S^{(2)} &= S_1 \cap S_2, \end{align*}
and 
\begin{align*}B^{(2)}=& \left \{ \left| |\{(a,l):\phi_{al}=(1,1)\}|-\frac{1-\alpha}{2}n\right| \leq \frac{1}{\sqrt{n}},\ \left| |\{(a,l):\phi_{al}=(-1,1)\}|-\frac{\alpha}{2}n\right| \leq \frac{1}{\sqrt{n}} \right. \\ 
& \left. \phantom{\{} \left| |\{(a,l):\phi_{al}=(-1,1)\}|-\frac{\alpha}{2}n\right| \leq \frac{1}{\sqrt{n}},\ \left| |\{(a,l):\phi_{al}=(-1,-1)\}|-\frac{1-\alpha}{2}n\right| \leq \frac{1}{\sqrt{n}} 
\right\}.\end{align*}
Then the expected value of $\prod_{a \in F} \psi_{a,\beta}(\sigma)  \psi_{a,\beta}(\tau)$ under \textit{any} given pair of assignments $(\sigma,\tau) \in \{-1,1\}^{2n}$ that satisfies 
\begin{align*} & \left| |\{i \in [n]:(\sigma_{i},\tau_i)=(1,1)\}|-\frac{1-\alpha}{2}n\right| \leq {\sqrt{n}},\ \left| |\{i\in [n]:(\sigma_{i},\tau_i)=(1,-1)\}|-\frac{\alpha}{2}n\right| \leq {\sqrt{n}}  \\ 
& \ \phantom{\{} \left| |\{i \in [n]:(\sigma_{i},\tau_i)=(-1,1)\}|-\frac{\alpha}{2}n\right| \leq {\sqrt{n}},\ \left| |\{i\in [n]:(\sigma_{i},\tau_i)=(-1,-1)\}|-\frac{1-\alpha}{2}n\right| \leq {\sqrt{n}} 
.\end{align*} is given as previously by $\mathbb{P}[S^{(2)}|B^{(2)}](1+\exp(-\beta))^{2m}$. In particular,
\beq \label{eq_aux_vanilla_rate_function_2_a} \frac{1}{n} \ln \Erw \left[ Z^2_\beta(\vec \Phi) \right] \sim \ln2 +H(\alpha)+ \frac{1}{n} \sup_{\alpha \in (0,1)} \ln \mathbb{P}[B^{(2)}|S^{(2)}] + \frac{2d}{k}\ln(1+\exp(-\beta)) .\eeq
By Bayes' theorem we have
\beq \label{eq_aux_vanilla_rate_function_2_aa} \mathbb{P} [ S^{(2)}|B^{(2)}] = \frac{\mathbb{P}[S^{(2)}] \mathbb{P}[B^{(2)}|S^{(2)}]}{\mathbb{P}[B^{(2)}]} . \eeq
It follows from \Lem~\ref{lemma_deviation_binomial} that
\beq \label{eq_aux_vanilla_rate_function_2_b} \frac{1}{km} \ln \mathbb{P}[B^{(2)}] \sim -D_2 \left(\alpha, h_\beta(\alpha),\hq_\beta(\alpha) \right).\eeq
It is also straightforward to obtain that
\beq \label{eq_aux_vanilla_rate_function_2_c} \frac{1}{m} \ln \mathbb{P}[S^{(2)}] \sim z_{2,k}(\beta,h_\beta(\alpha), \hq_\beta(\alpha)) - 2\ln (1+\exp(-\beta)),\eeq
and by definition of $h_\beta$ we have
\beq \label{eq_aux_vanilla_rate_function_2_d} \frac{1}{m} \ln \mathbb{P}[B^{(2)}|S^{(2)}] = o_n(1).\eeq
The proposition is obtained by combining Eq.(\ref{eq_aux_vanilla_rate_function_2_a}-\ref{eq_aux_vanilla_rate_function_2_d}).\end{proof}

\begin{lemma} \label{lemma_vanilla_non_local_0} Assume that $d \leq \dplus$ and $\beta \in \mathbb{R}$. Then we have
$$ \sup_{\alpha \geq 2^{-k/10}} f_{2,k}(d,\beta,\alpha) \leq f_{2,k}(d,\beta, 1/2) $$ \end{lemma}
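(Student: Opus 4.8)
The plan is to treat the function $\alpha \mapsto f_{2,k}(d,\beta,\alpha)$ on the interval $[2^{-k/10},1]$ and show that on this whole range it never exceeds its value at $\alpha = \tfrac12$. By the symmetry $\alpha \mapsto 1-\alpha$ it suffices to work on $[\tfrac12, 1-2^{-k/10}]$ together with a small neighbourhood of $\alpha = 1$ handled separately. First I would record the ingredients already available: from \Lem~\ref{lemma_vanilla_2_implicit} we have $C^\infty$-dependence of $(h_\beta(\alpha),\hq_\beta(\alpha))$ on $\alpha$ together with the expansions $\hq_\beta(1/2) = h_\beta(1/2)^2 = q^2$ and $\hq'_\beta(\alpha) = h'_\beta(\alpha) - 1/2 + \tilde O_k(2^{-4k/3})$ near $\alpha = 1/2$, and from \Prop~\ref{prop_first_moment_vanilla} / \Cor~\ref{cor_first_moment} that $\tfrac12 f_{2,k}(d,\beta,1/2)$ equals the (logarithmic) first moment. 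The strategy then splits into a local analysis near $\alpha = 1/2$ and a global analysis on the rest of $[\tfrac12, 1]$.

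For the local part, I would compute the first two derivatives of $g(\alpha) := f_{2,k}(d,\beta,\alpha)$ at $\alpha = 1/2$. The derivative $g'(1/2)$ vanishes by the symmetry $g(\alpha) = g(1-\alpha)$. For $g''(1/2)$, I would differentiate the explicit formula $g = \ln 2 + H(\alpha) + \tfrac{d}{k}\ln z_{2,k}(\beta,h_\beta(\alpha),\hq_\beta(\alpha)) + d\, D_2(\alpha,h_\beta(\alpha),\hq_\beta(\alpha))$, using the chain rule and the expansions from \Lem~\ref{lemma_vanilla_2_implicit}; the entropy term contributes $H''(1/2) = -4$, while the $z_{2,k}$ and $D_2$ terms contribute only $\tilde O_k(2^{-k})\cdot d$ corrections because $\partial z_{2,k}/\partial h$, $\partial z_{2,k}/\partial\hq$, and the relevant second derivatives of $D_2$ are all exponentially small or controlled. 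Since $d \le \dplus = \Theta(k\,2^k)$, one must check that the $d$-weighted correction is smaller in absolute value than $4$; this is where the precise constants matter. The upshot is $g''(1/2) < 0$, so $\alpha = 1/2$ is a strict local maximum, and a quantitative version of this (valid on $|\alpha - 1/2| \le 2^{-k/3}$, say) gives $g(\alpha) < g(1/2)$ on a small punctured neighbourhood.

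For the global part, on $\alpha \in [2^{-k/3}, 1-2^{-k/3}]$ (and the symmetric reflection), I would bound $f_{2,k}(d,\beta,\alpha)$ from above crudely and compare against the known value $f_{2,k}(d,\beta,1/2) \approx 2 f_{1,k}(d,\beta)$. Concretely: $z_{2,k}(\beta,h,\hq) \le 1$ always, and $H(\alpha) \le \ln 2$, so $f_{2,k}(d,\beta,\alpha) \le 2\ln 2 + d\, D_2(\alpha, h_\beta(\alpha), \hq_\beta(\alpha))$; I would then lower-bound the entropy deficit $\ln 2 - H(\alpha) \ge c\,(\alpha - 1/2)^2$ for $\alpha$ away from $0,1$, and use that $D_2$ measures a Kullback--Leibler divergence whose minimum at the ``correlated'' point forces a comparable gap. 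The second moment exceeding the square of the first moment at a non-$1/2$ value of $\alpha$ would contradict the entropy budget since $d/k < \dplus/k = 2^k\ln 2 + O(k)$ is precisely calibrated so that the gain from $\ln z_{2,k}$ (at most $\tfrac dk \cdot c_\beta^2 (1-2h+\hq)^k$-type terms) cannot compensate the $H(\alpha)$-loss.

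\textbf{Main obstacle.} I expect the delicate point to be the matching of constants in the local analysis: showing $g''(1/2) < 0$ requires that the exponentially small per-clause corrections, once multiplied by $d = \Theta(k 2^k)$, remain strictly below the $|H''(1/2)| = 4$ coming from the entropy, and that they do so uniformly in $\beta \ge \beta_-(k,d) = k\ln 2 - 10\ln k$. This forces one to track the $\tilde O_k(\cdot)$ terms in \Lem~\ref{lemma_vanilla_2_implicit} with enough precision (essentially to order $2^{-k}$, not merely ``$o_k(1)$''), and to verify that the interval $[2^{-k/3}, 2^{-k/10}]$ is covered either by the local estimate or by the global one without a gap. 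Bridging those two regimes cleanly — rather than the two separate asymptotic analyses leaving a sliver of $\alpha$ uncontrolled — is the part that needs the most care.
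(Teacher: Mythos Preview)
Your local analysis near $\alpha=1/2$ is correct and matches the paper's \Lem~\ref{lemma_vanilla_expansion_f2} almost verbatim: $g'(1/2)=0$, the second derivative is $-4+\tilde O_k(2^{-k/3})$, and this controls the window $|\alpha-1/2|\le 2^{-k/3}$. (Minor quibble: $f_{2,k}$ is not literally symmetric in $\alpha\mapsto 1-\alpha$; one only has the inequality $f_{2,k}(d,\beta,\alpha)\ge f_{2,k}(d,\beta,1-\alpha)$ for $\alpha<1/2$, so the reduction goes to $[2^{-k/10},1/2]$, not $[1/2,1-2^{-k/10}]$. But this does not affect the argument.)

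The global part, however, does not work as written. Your ``concrete'' bound $f_{2,k}(d,\beta,\alpha)\le 2\ln 2 + d\,D_2(\alpha,h_\beta(\alpha),\hq_\beta(\alpha))$ is vacuous: $D_2$ is a Kullback--Leibler divergence, hence \emph{nonnegative}, and it appears with a $+d$ coefficient, so this upper bound can be arbitrarily large. You then propose to ``lower-bound the entropy deficit $\ln 2 - H(\alpha)$'', but you have already discarded exactly that deficit in the previous line by using $H(\alpha)\le\ln 2$. The closing sentence about the ``gain from $\ln z_{2,k}$'' being too small to compensate the $H(\alpha)$-loss is the right intuition, but nothing you have written bounds the troublesome $+d\,D_2$ term, and with $d\sim k2^k$ that term cannot simply be ignored.

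The paper handles the global range by a different device. It introduces the explicit upper bound
\[
\ovf_{2,k}(d,\beta,\alpha)=\ln 2 + H(\alpha) + \tfrac{d}{k}\ln\bigl(1-2c_\beta 2^{-k}+c_\beta^2((1-\alpha)/2)^k\bigr),
\]
obtained by evaluating the second-moment rate at the \emph{unconditioned} overlap type $(h,\hq)=(1/2,(1-\alpha)/2)$ rather than at the implicit $(h_\beta(\alpha),\hq_\beta(\alpha))$; a short probabilistic argument (\Lem~\ref{simple_uper_bound_moments_f}) shows $f_{2,k}\le\ovf_{2,k}$, and crucially $\ovf_{2,k}$ has \emph{no} $D_2$ term. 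One then observes that $\partial_d\ovf_{2,k}$ and $\partial_\beta\ovf_{2,k}$ are monotone in $\alpha$ (Claims~\ref{claim_aux_vanilla_mon_1}--\ref{claim_aux_vanilla_mon_2}), which reduces the global comparison to the single extremal case $d=\dk$, $\beta\to\infty$; there $\ovf_{2,k}(\dk,\cdot)$ is analyzed by elementary calculus on three sub-intervals (\Lem~\ref{lemma_aux_vanilla_ovf_inter}), and a final boundary comparison $\ovf_{2,k}(d,\beta,1/2-k^22^{-k/2})<f_{2,k}(d,\beta,1/2)$ (\Lem~\ref{lemma_upb_ovf_2}) closes the argument. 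The ``main obstacle'' you flag---matching constants in the local second-derivative bound---is in fact the routine part; the substance is in getting rid of the implicit $D_2$ term so that the global range becomes tractable.
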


\begin{lemma} \label{lemma_vanilla_non_local} Assume that $d \leq d_-(k)$ or that $d \in [d_-(k), \dplus]$ and that $\beta \leq \beta_-(k)$.Then we have
$$ \sup_{\alpha \in (0,1)} f_{2,k}(d,\beta,\alpha) \leq f_{2,k}(d,\beta, 1/2) $$ \end{lemma}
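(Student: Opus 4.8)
\textbf{Proof plan for \Lem~\ref{lemma_vanilla_non_local}.} The goal is to show that the ``short-range'' overlap $\alpha=1/2$ dominates the exponential order of the second moment rate function $f_{2,k}(d,\beta,\cdot)$ in the regime where $d$ is small (below $d_-(k)$) or $\beta$ is small (below $\beta_-(k)$). By \Lem~\ref{lemma_vanilla_non_local_0} this is already known for the ``far'' range $\alpha\geq 2^{-k/10}$, so the plan is to handle the complementary range $\alpha\in(0,2^{-k/10})$ (and, by the symmetry $\alpha\leftrightarrow 1-\alpha$ of $f_{2,k}$, equivalently $\alpha\in(1-2^{-k/10},1)$) and to treat the ``near'' range around $\alpha=1/2$ separately. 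First I would record the exact value $f_{2,k}(d,\beta,1/2)=\tfrac12 f_{2,k}(d,\beta,1/2)\cdot 2$, i.e.\ compare against $2\cdot\frac1n\ln\Erw[Z_\beta(\vec\Phi)]$ via \Cor~\ref{cor_first_moment}, which lets me phrase everything as ``$f_{2,k}(d,\beta,\alpha)\leq$ twice the first-moment exponent''.

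The main work is the range $\alpha\in(0,2^{-k/10})$. Here I would exploit \Lem~\ref{lemma_vanilla_2_implicit}, which tells us that near $\alpha=1/2$ one has $h_\beta(\alpha)\approx q$, $\hq_\beta(\alpha)\approx q^2$, but for \emph{small} $\alpha$ one instead expects $h_\beta(\alpha)\to$ something and $\hq_\beta(\alpha)\to 0$; the point is that the cross-correlation parameter $\hq_\beta(\alpha)$ is forced small, so $z_{2,k}(\beta,h_\beta(\alpha),\hq_\beta(\alpha))\approx (1-c_\beta(1-h)^k)^2$ factorizes, i.e.\ the two replicas decouple up to $\tilde O_k$-corrections. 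Thus $f_{2,k}(d,\beta,\alpha)$ is, to leading order, $2f_{1,k}(d,\beta)$ plus an ``entropy deficit'' term $H(\alpha)+dD_2(\alpha,h_\beta(\alpha),\hq_\beta(\alpha))-$ (the value at $\alpha=1/2$), and I would show this deficit is strictly negative. Concretely, I would Taylor-expand $g(\alpha):=f_{2,k}(d,\beta,\alpha)-f_{2,k}(d,\beta,1/2)$ in $\alpha$ near $0$: the dominant contribution is $H(\alpha)\sim -\alpha\ln\alpha$ (positive, of order $2^{-k/10}k$) competing against $dD_2$, whose leading behavior near $\alpha=0$ is like $-\alpha\ln\alpha$ with a coefficient $\asymp d/k$ from the $\frac{\alpha}{2(h-\hq)}$ term (since $h-\hq\approx$ constant). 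Because $d$ is essentially $2^k$ up to lower-order factors, the $dD_2$ term wins by a factor $\asymp 2^k/k$ over $H(\alpha)$, forcing $g(\alpha)<0$ for all $\alpha\in(0,2^{-k/10})$. The small-$d$ hypothesis $d\leq d_-(k)$ or the small-$\beta$ hypothesis $\beta\leq\beta_-(k)$ enters precisely to control the sign/size of the $\tilde O_k(\cdot)$ error terms that come from \Lem~\ref{lemma_vanilla_2_implicit} and from the non-factorizing part of $z_{2,k}$; with $\beta$ small, $c_\beta=1-e^{-\beta}$ is small, which suppresses those corrections further.

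For the remaining ``near'' range $\alpha\in(2^{-k/10},1-2^{-k/10})$ minus a neighborhood of $1/2$ I would use the explicit derivative information from \Lem~\ref{lemma_vanilla_2_implicit}: for $|\alpha-1/2|\leq 2^{-k/3}$ one has $\hq_\beta'(\alpha)=h_\beta'(\alpha)-1/2+\tilde O_k(2^{-4k/3})$, which I would feed into $\frac{d}{d\alpha}f_{2,k}(d,\beta,\alpha)$ together with $\frac{d}{d\alpha}\big(H(\alpha)+dD_2(\alpha,h_\beta(\alpha),\hq_\beta(\alpha))\big)$; at $\alpha=1/2$ the derivative vanishes by symmetry, and I would show the second derivative there is strictly negative of order $-\Theta(d)=-\Theta(2^k)$, dominating the $\tilde O_k$ terms, so $\alpha=1/2$ is a strict local max with a quantitative gap. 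Then I'd patch the interval $[2^{-k/10},1/2-2^{-k/3}]$ (and its mirror) by a crude monotonicity / convexity argument in $\alpha$, or simply by noting that on this compact range away from the endpoints $f_{2,k}$ is smooth and its derivative, computed as above, keeps the sign pushing toward $1/2$, using again that $c_\beta$ or $d$ is small enough to dominate errors.

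\textbf{Main obstacle.} The delicate point is the interplay of the two competing $-\alpha\ln\alpha$-type terms in the small-$\alpha$ regime: both $H(\alpha)$ and $dD_2(\alpha,h_\beta(\alpha),\hq_\beta(\alpha))$ have the same $\alpha\log\alpha$ singularity at $0$, so proving $g(\alpha)<0$ requires a careful bookkeeping of constants — in particular pinning down $h_\beta(\alpha)$ and $\hq_\beta(\alpha)$ well enough (beyond what \Lem~\ref{lemma_vanilla_2_implicit} states near $\alpha=1/2$) throughout $(0,2^{-k/10})$, which may require re-running the inverse-function-theorem argument of \Lem~\ref{lemma_vanilla_2_implicit} on a ball centered at the small-$\alpha$ fixed point rather than at $(\tfrac12,\tfrac{1-\alpha}2)$. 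I expect this to be where the hypotheses $d\leq d_-(k)=\dk-k^5$ and $\beta\leq\beta_-(k)=k\ln2-10\ln k$ are used in an essential, quantitative way, via the asymptotics $\dk/k=2^k\ln2-k\ln2/2+O(1)$ — the $k^5$ and $10\ln k$ slack being exactly what is needed to beat the $\tilde O_k$ error terms.
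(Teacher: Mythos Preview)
Your plan has several concrete errors in the asymptotics that would derail the argument, and it misses the two devices the paper actually uses.

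First, your picture of $h_\beta(\alpha),\hq_\beta(\alpha)$ for small $\alpha$ is wrong. From \Lem~\ref{lemma_vanilla_2_implicit} the solution lies in a ball of radius $k2^{-k}$ around $(\tfrac12,\tfrac{1-\alpha}{2})$, so as $\alpha\to 0$ one has $\hq_\beta(\alpha)\to\tfrac12$ (not $0$) and $h_\beta(\alpha)-\hq_\beta(\alpha)\approx\alpha/2$ (not a constant). Plugging this into $D_2$ gives $\alpha\ln\bigl(\alpha/(2(h-\hq))\bigr)\approx\alpha\ln 1=0$: the $\alpha\ln\alpha$ singularities of $H(\alpha)$ and $dD_2$ cancel to leading order, so there is no ``$dD_2$ wins by $2^k/k$'' mechanism. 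The actual competition lives entirely in the $\tilde O_k(2^{-k})$ corrections, which is why the paper does not attempt to track $h_\beta,\hq_\beta$ for small $\alpha$ at all. Second, $\beta_-(k)=k\ln 2-10\ln k$ is \emph{large}, so $c_\beta=1-e^{-\beta}\approx 1-k^{10}2^{-k}$ is essentially $1$; ``$c_\beta$ small'' is not the reason the errors are controlled. Third, the second derivative at $\alpha=1/2$ is $-4+\tilde O_k(2^{-k/3})$ (see \Lem~\ref{lemma_vanilla_expansion_f2}), not $-\Theta(d)$; the point is that the naive $(1-d)H''$ term is almost exactly cancelled by the implicit-function contributions, leaving an $O(1)$ negative number.

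The paper's route avoids all of this. It first proves the elementary upper bound $f_{2,k}(d,\beta,\alpha)\le\ovf_{2,k}(d,\beta,\alpha):=\ln 2+H(\alpha)+\tfrac{d}{k}\ln\bigl(1-2c_\beta 2^{-k}+c_\beta^2((1-\alpha)/2)^k\bigr)$ (\Lem~\ref{simple_uper_bound_moments_f}), which eliminates the implicit functions entirely. It then shows $\ovf_{2,k}$ is monotone in $d$ and in $\beta$ in the right direction on $(0,\tfrac12-k^22^{-k/2}]$ (Claims~\ref{claim_aux_vanilla_mon_1}--\ref{claim_aux_vanilla_mon_2}), reducing to the two boundary cases $d=d_-(k),\ \beta\to\infty$ and $d=\dplus,\ \beta=\beta_-(k)$. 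For each boundary case it locates the small-$\alpha$ maximizer $\alpha^\star\approx 2^{-k-1}$ of $\ovf_{2,k}$ explicitly, computes $\ovf_{2,k}$ there, and checks it sits below $\ovf_{2,k}(d,\beta,\tfrac12-k^22^{-k/2})<f_{2,k}(d,\beta,\tfrac12)$; the $k^5$ and $10\ln k$ slacks in $d_-(k),\beta_-(k)$ are used precisely in these two explicit comparisons (\Lem s~\ref{lemma_border_case_1}--\ref{lemma_border_case_2}). The near-$1/2$ window is handled by the second-derivative bound of \Lem~\ref{lemma_vanilla_expansion_f2}, and the intermediate range $[2^{-k/10},\tfrac12-k^22^{-k/2}]$ by a direct derivative calculation on $\ovf_{2,k}$ (\Lem~\ref{lemma_aux_vanilla_ovf_inter}). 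You should replace your small-$\alpha$ plan with this $\ovf_{2,k}$-plus-monotonicity reduction.
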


We defer the proof of these lemma to \Sec~\ref{sec_proof_lemma_vanilla_non_local}.

\begin{proof}[Proof of \Lem~\ref{Lemma_trivial}] The proposition follows by combining \Cor~\ref{cor_first_moment}, \Prop~\ref{prop_second_moment_rate_function} and \Lem~\ref{lemma_vanilla_non_local}.  \end{proof}

\subsection{Proof of \Lem~\ref{Lem_smmRemedy}}

To facilitate the proof of \Lem~\ref{Lem_smmRemedy} we introduce a random variable that explicitly controls the ``cluster size'' $\cC_{\Phi,\sigma}(\beta)$. More precisely, we call $\sigma \in \{-1,1\}^n$ {\em tame} in $\Phi$ iff 
	$$\cC_{\Phi,\sigma}(\beta)\leq \Erw[Z_\beta(\vec \Phi)].$$
Now, let
	$$Z_{\rm tame}(\Phi, \beta) = \sum_{\sigma \in \{-1,1\}^n}  \prod_{a \in F} \psi_{a,\beta}(\sigma_a) \mathbf{1}_{\sigma \textrm{\footnotesize \ is tame}}.$$
	
We shall also need to introduce a few more notations: we denote by $\um = (m_0, \dots, m_k)$ a vector of $[m]^{k+1}$, and by $\um(\Phi) = (m_0(\Phi), \dots, m_k(\Phi))$, with the $m_j(\Phi)$ as defined in \Sec~\ref{subsec_exp_properties}. Also recall that $\vec {\hat \sigma}$ denotes the all $1$ vector of length $n$.

\begin{lemma}\label{Lemma_antiPlanting}
Let $d \leq \dplus$ and $\beta \in \mathbb{R}$ be fixed.
Assume that
		$$\cProb \left[ \vec {\hat \sigma} \textrm{ isa tame in } \vec \hPhi \right] \geq \exp(o_n(n)).$$
Then
$$\Erw \left[Z_{\rm tame}(\vec \Phi,\beta \right] \geq \exp(o_n(n)) \Erw \left[Z_\beta(\vec \Phi) \right].$$ 
\end{lemma}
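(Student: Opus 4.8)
The plan is to show that the stated inequality is really an \emph{identity}: I claim that
\[ \Erw\left[Z_{\rm tame}(\vec\Phi,\beta)\right] \;=\; \cProb\left[\vec{\hat \sigma}\textrm{ is tame in }\vec\hPhi\right]\cdot\Erw\left[Z_\beta(\vec\Phi)\right], \]
after which the lemma follows at once from the hypothesis. There is no analytic content here; the whole argument is symmetry bookkeeping, and the only step demanding care is to fix the symmetry group of the configuration model and check that the Boltzmann weight, the cluster size $\cC_{\Phi,\sigma}(\beta)$, and hence the notion of being tame, are all invariant under it. Let $G$ be the group acting on the configuration model by an arbitrary permutation of the $n$ variables composed with an arbitrary subset of \emph{sign flips}, the flip at a variable $x$ being the clone permutation that swaps its $d/2$ positive clones with its $d/2$ negative clones. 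An element $g\in G$ carries a formula $\Phi$ to a formula $\Phi^g$ and an assignment $\sigma$ to an assignment $\sigma^g$ so that (i) $\Phi\mapsto\Phi^g$ is a bijection of $\cE_{n,k,d}$, hence $\vec\Phi^g$ is uniform when $\vec\Phi$ is; (ii) $E_{\Phi^g}(\sigma^g)=E_\Phi(\sigma)$, so the weight $\exp(-\beta E_\Phi(\sigma))=\prod_a\psi_{a,\beta}(\sigma_a)$ is $G$-invariant; (iii) $\{\tau\cdot\sigma>n(1-2^{-k/10})\}$ is $G$-equivariant in $(\tau,\sigma)$, so $\cC_{\Phi^g,\sigma^g}(\beta)=\cC_{\Phi,\sigma}(\beta)$ and the set of tame pairs $(\Phi,\sigma)$ is $G$-invariant; and (iv) $G$ acts transitively on $\{\pm1\}^n$.

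First I would symmetrize the truncated partition function. Writing $Z_{\rm tame}(\Phi,\beta)=\sum_\sigma\exp(-\beta E_\Phi(\sigma))\,\mathbf{1}\{\sigma\textrm{ tame in }\Phi\}$ and using (i)--(iv), each term $\Erw_{\vec\Phi}\!\left[\exp(-\beta E_{\vec\Phi}(\sigma))\,\mathbf{1}\{\sigma\textrm{ tame}\}\right]$ is independent of $\sigma$, and likewise $\Erw_{\vec\Phi}\!\left[\exp(-\beta E_{\vec\Phi}(\sigma))\right]$ is independent of $\sigma$. Summing the latter over $\sigma$ gives $2^n\,\Erw_{\vec\Phi}\!\left[\exp(-\beta E_{\vec\Phi}(\vecone))\right]=\Erw\left[Z_\beta(\vec\Phi)\right]$, while the former yields
\[ \Erw\left[Z_{\rm tame}(\vec\Phi,\beta)\right]\;=\;2^n\,\Erw_{\vec\Phi}\!\left[\exp(-\beta E_{\vec\Phi}(\vecone))\,\mathbf{1}\{\vecone\textrm{ tame in }\vec\Phi\}\right]. \]
Next I would identify the planted conditional law. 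By definition $\cProb[(\vec\hPhi,\vec{\hat \sigma})=(\Phi,\sigma)]$ is proportional to $\exp(-\beta E_\Phi(\sigma))$, so conditioning on the planted assignment being $\vecone$ (the generic choice, legitimate by the $G$-invariance of the joint law),
\[ \cProb\left[\vec\hPhi=\Phi\;\middle|\;\vec{\hat \sigma}=\vecone\right]\;=\;\frac{\exp(-\beta E_\Phi(\vecone))}{\sum_{\Phi'\in\cE_{n,k,d}}\exp(-\beta E_{\Phi'}(\vecone))}\;=\;\frac{\exp(-\beta E_\Phi(\vecone))}{|\cE_{n,k,d}|\,\Erw_{\vec\Phi}\!\left[\exp(-\beta E_{\vec\Phi}(\vecone))\right]}. \]
Hence, using once more the $G$-invariance of the joint planted law,
\[ \cProb\left[\vec{\hat \sigma}\textrm{ tame in }\vec\hPhi\right]\;=\;\cProb\left[\vecone\textrm{ tame in }\vec\hPhi\;\middle|\;\vec{\hat \sigma}=\vecone\right]\;=\;\frac{\Erw_{\vec\Phi}\!\left[\exp(-\beta E_{\vec\Phi}(\vecone))\,\mathbf{1}\{\vecone\textrm{ tame}\}\right]}{\Erw_{\vec\Phi}\!\left[\exp(-\beta E_{\vec\Phi}(\vecone))\right]}. \]

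Combining the three displays gives exactly
\[ \Erw\left[Z_{\rm tame}(\vec\Phi,\beta)\right]\;=\;2^n\,\Erw_{\vec\Phi}\!\left[\exp(-\beta E_{\vec\Phi}(\vecone))\right]\cdot\cProb\left[\vec{\hat \sigma}\textrm{ tame in }\vec\hPhi\right]\;=\;\Erw\left[Z_\beta(\vec\Phi)\right]\cdot\cProb\left[\vec{\hat \sigma}\textrm{ tame in }\vec\hPhi\right], \]
and feeding in the hypothesis $\cProb[\vec{\hat \sigma}\textrm{ tame in }\vec\hPhi]\ge\exp(o_n(n))$ yields $\Erw[Z_{\rm tame}(\vec\Phi,\beta)]\ge\exp(o_n(n))\,\Erw[Z_\beta(\vec\Phi)]$, as desired. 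The ``hard part'' is therefore only the verification that $G$ — sign flips included — genuinely preserves uniformity of the configuration model, the Boltzmann weights, and the cluster sizes $\cC_{\Phi,\sigma}(\beta)$ (equivalently, that tameness is a $G$-invariant property of the pair $(\Phi,\sigma)$), together with the transitivity of $G$ on assignments; granting these, every step above is an equality rather than an estimate.
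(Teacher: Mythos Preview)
Your proof is correct. You establish the exact identity
\[
\Erw\!\left[Z_{\rm tame}(\vec\Phi,\beta)\right]=\cProb\!\left[\vec{\hat\sigma}\text{ tame in }\vec\hPhi\right]\cdot\Erw\!\left[Z_\beta(\vec\Phi)\right]
\]
directly from the definition of the planted model as a size-biased law together with the $G$-invariance of energies and cluster sizes; the lemma is then immediate. The paper reaches the same conclusion but routes through a conditioning argument: it observes that conditionally on the clause-type profile $\um(\Phi)=(m_0,\ldots,m_k)$ the uniform and planted formulas coincide, decomposes $\Erw[Z_\beta-Z_{\rm tame}]$ over $\um$, and then reassembles using $\exp(-\beta m_0)$ as the reweighting factor. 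Your argument is more direct and makes it transparent that the statement is an equality rather than an inequality; the paper's $\um$-conditioning is really just a concrete realization of the same symmetry (since $E_\Phi(\vecone)=m_0(\Phi)$), so the two proofs are close in spirit but yours avoids the intermediate bookkeeping.
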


\begin{proof} 

Given that $\um(\vec \Phi) = \um(\vec \hPhi)$ the two formula $\vec \Phi$ and $\vec \hPhi$ are identically distributed. Thus we have for any $\um \in [m]^{k+1}$ \begin{align*} \Prob [ \vec {\hat \sigma} \textrm{ is not a tame in } \vec \Phi | \um(\vec \Phi) = \um] &= \cProb [ \vec {\hat \sigma} \textrm{ is not a tame in } \vec \hPhi | \um(\vec \hPhi ) = \um] . \end{align*}
In particular this implies that
\begin{align*} \Erw \left[ Z_\beta (\vec \Phi) - Z_{\rm{tame}} (\vec \Phi, \beta) \right] &= 2^n \sum_{\um \in [m]^{k+1}}  \Prob \left[ \vec {\hat \sigma} \textrm{ is not a tame in } \vec \Phi | \um(\vec \Phi) = \um  \right] \Prob \left[ \um(\vec \Phi) = \um \right] \exp(-\beta m_0)
\\ & \leq  2^n \sum_{\um \in [m]^{k+1}} \cProb [ \vec {\hat \sigma} \textrm{ is not a tame in } \vec \hPhi | \um(\vec \hPhi ) = \um]  \Prob \left[ \um(\vec \hPhi) = \um \right] \Erw \left[ Z_\beta(\vec \Phi )\right] 
\\ & \leq\cProb \left[ \vec {\hat \sigma} \textrm{ is not a  a tame in } \vec \hPhi \right]  \Erw \left[ Z_\beta(\vec \Phi) \right].  \end{align*}
This concludes the proof of the lemma.
 \end{proof}

\begin{lemma}\label{Lemma_smm}
Assume that $d\leq \dplus$ and $\beta \in \mathbb{R}$ are such that 
	$$\frac{\Erw[Z_{\rm tame}(\vec \Phi, \beta)]}{\Erw[Z_\beta(\vec \Phi)]}>\exp(o_n(n)).$$
Then $$\frac{\Erw[Z_{\rm tame}(\vec \Phi, \beta)]^2}{\Erw[Z_{\rm tame}(\vec \Phi, \beta)^2]}>\exp(o_n(n)).$$
\end{lemma}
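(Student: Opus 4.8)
The plan is to run the standard truncated second moment argument: the restriction to \emph{tame} assignments is needed only to control the pairs of assignments that fall into a common ``cluster'', while every other overlap is already under control through the rate function $f_{2,k}$ of \Sec~\ref{sec_vanilla}. First I would reduce the statement to an upper bound on $\Erw[Z_{\rm tame}(\vec\Phi,\beta)^2]$. Since $\mathbf 1_{\sigma\text{ tame}}\le1$ we always have $\Erw[Z_{\rm tame}(\vec\Phi,\beta)]\le\Erw[Z_\beta(\vec\Phi)]$, and together with the hypothesis $\Erw[Z_{\rm tame}(\vec\Phi,\beta)]\ge\exp(o_n(n))\Erw[Z_\beta(\vec\Phi)]$ this yields $\frac1n\ln\Erw[Z_{\rm tame}(\vec\Phi,\beta)]=\frac1n\ln\Erw[Z_\beta(\vec\Phi)]+o_n(1)$, which by \Cor~\ref{cor_first_moment} converges to $\frac12 f_{2,k}(d,\beta,1/2)$. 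Hence $\Erw[Z_{\rm tame}(\vec\Phi,\beta)]^2=\exp\bigl(nf_{2,k}(d,\beta,1/2)+o_n(n)\bigr)$, and the claim reduces to proving $\limsup_{n\to\infty}\frac1n\ln\Erw[Z_{\rm tame}(\vec\Phi,\beta)^2]\le f_{2,k}(d,\beta,1/2)$.

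The heart of the argument is then a split of the second moment by overlap. Writing $\Erw[Z_{\rm tame}(\vec\Phi,\beta)^2]=\Erw\bigl[\sum_{\sigma,\tau\text{ tame}}\prod_{a\in F}\psi_{a,\beta}(\sigma_a)\psi_{a,\beta}(\tau_a)\bigr]$, I would divide the inner sum into the pairs with $\sigma\cdot\tau>n(1-2^{-k/10})$ and the rest. For the first piece, for any fixed $\Phi$ and any tame $\sigma$ the inner $\tau$-sum is, directly from the definitions, $\sum_{\tau:\,\tau\cdot\sigma>n(1-2^{-k/10})}\prod_{a}\psi_{a,\beta}(\tau_a)=\cC_{\Phi,\sigma}(\beta)\le\Erw[Z_\beta(\vec\Phi)]$, so this piece is at most $\Erw[Z_\beta(\vec\Phi)]\cdot\Erw[Z_{\rm tame}(\vec\Phi,\beta)]\le\Erw[Z_\beta(\vec\Phi)]^2=\exp\bigl(nf_{2,k}(d,\beta,1/2)+o_n(n)\bigr)$ by \Cor~\ref{cor_first_moment}. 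For the remaining piece I would simply drop the tameness indicators, bounding it by $\sum_{\alpha\,:\,2\alpha-1\le 1-2^{-k/10}}Z(d,\beta,\alpha)$; by \Prop~\ref{prop_second_moment_rate_function} this equals $\exp\bigl(n\sup_\alpha f_{2,k}(d,\beta,\alpha)+o_n(n)\bigr)$ with the supremum extending over exactly the overlaps surviving outside the cluster, and by \Lem~\ref{lemma_vanilla_non_local_0} that supremum is at most $f_{2,k}(d,\beta,1/2)$ (the threshold $2^{-k/10}$ in the definitions of $\cC_{\Phi,\sigma}(\beta)$ and of tameness being chosen precisely so that \Lem~\ref{lemma_vanilla_non_local_0} applies to the surviving range). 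Adding the two estimates gives $\Erw[Z_{\rm tame}(\vec\Phi,\beta)^2]\le\exp\bigl(nf_{2,k}(d,\beta,1/2)+o_n(n)\bigr)$, which is what we needed.

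I expect this to be a short argument with no new estimates; the only substantive point is the calibration just mentioned, namely that the cut-off $2^{-k/10}$ simultaneously isolates exactly the pairs counted by $\cC_{\Phi,\sigma}(\beta)$ (so that the definition of tameness can be applied verbatim) and leaves a complementary overlap range on which the unconditional second moment is already comparable to the square of the first moment. That balance is hard-wired into \Sec~\ref{sec_vanilla}, so there is no genuine obstacle here; the hypothesis on the ratio $\Erw[Z_{\rm tame}(\vec\Phi,\beta)]/\Erw[Z_\beta(\vec\Phi)]$ is used only in the initial reduction, where it upgrades the $o_n(n)$-slack from \Cor~\ref{cor_first_moment} into a bound phrased through $\Erw[Z_{\rm tame}(\vec\Phi,\beta)]^2$ rather than $\Erw[Z_\beta(\vec\Phi)]^2$.
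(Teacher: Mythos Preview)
Your argument is correct and is essentially the paper's own proof: split $\Erw[Z_{\rm tame}(\vec\Phi,\beta)^2]$ by overlap, use the tameness bound $\cC_{\Phi,\sigma}(\beta)\le\Erw[Z_\beta(\vec\Phi)]$ on the cluster part and \Lem~\ref{lemma_vanilla_non_local_0} (via \Prop~\ref{prop_second_moment_rate_function}) on the complement, then invoke the hypothesis to pass from $\Erw[Z_\beta(\vec\Phi)]^2$ to $\Erw[Z_{\rm tame}(\vec\Phi,\beta)]^2$. The only cosmetic difference is that the paper applies the hypothesis at the end rather than at the start, and you should be aware that the paper's overlap parameter $\alpha$ in \Sec~\ref{sec_vanilla} is effectively the \emph{disagreement} fraction (cf.\ the distribution of $\phi_{al}$ in the proof of \Prop~\ref{prop_second_moment_rate_function}), so the cluster corresponds to small $\alpha$ and \Lem~\ref{lemma_vanilla_non_local_0} to $\alpha\ge 2^{-k/10}$; with that reading your split and the paper's coincide up to an immaterial factor of $2$ in the threshold.
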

\begin{proof}
We let
\begin{align*} Z_{\rm tame}(d,\beta,\alpha)= \Erw \left [ \sum_{\substack{\sigma,\tau \\ \sigma \cdot \tau=(2\alpha-1) n}}  \prod_{a \in F} \left( \psi_{a,\beta}(\sigma_a)  \psi_{a,\beta}(\tau_a) \right) \mathbf{1}_{\sigma \textrm{\footnotesize is a tame}} \mathbf{1}_{\tau \textrm{\footnotesize is a tame}} \right ] .\end{align*}
Then we have, by the definition of a ``tame''
\begin{align*} \sum_{\alpha \leq 2^{-k/10}} Z_{\rm tame}(d,\beta, \alpha) &\leq \Erw \left[ \sum_{\substack{\sigma \in \{-1,1\}^n}}  \prod_{a \in F}  \psi_{a,\beta}(\sigma_a) \mathbf{1}_{\sigma \textrm{\footnotesize is a tame}} \cC_{\Phi,\sigma}(\beta)\right]
\\ & \leq \Erw \left[ Z_\beta (\vec \Phi) \right]^2 \end{align*}
On the other hand we have with \Lem~\ref{lemma_vanilla_non_local_0}
$$ \sum_{\alpha \geq 2^{-k/10}} Z_{\rm tame}(d, \beta, \alpha) \leq \sum_{\alpha \geq 2^{-k/10}} Z(d, \beta, \alpha) =  \exp(o_n(n)) \Erw \left[ Z_\beta(\vec \Phi)\right]^2.$$
This implies that
\begin{align*} \Erw[Z_{\rm tame}(\vec \Phi, \beta)^2] &= \sum_{\alpha < 2^{-k/10}} Z_{\rm tame}(d, \beta, \alpha) +  \sum_{\alpha \geq 2^{-k/10}} Z_{\rm tame}(d, \beta, \alpha)
\\& = \exp(o_n(n)) O\left(\Erw \left[ Z_\beta(\vec \Phi)\right]^2\right).\end{align*}
The lemma then follows from the assumption that $\Erw \left[ Z_\beta(\vec \Phi)\right] \leq \exp(o_n(n)) \left( \Erw \left[ Z_{\rm tame}(\vec \Phi,\beta)\right] \right)$.
\end{proof}

The reverse direction of \Lem~\ref{Lem_smmRemedy} will be given by the following lemma.

\begin{lemma} \label{lemma_planting_direct} Assume that $d \in [d_-(k), \dplus]$ and  $\beta > \beta_-(k)$ are such that (\ref{eqProp_smmRemedy1}) holds. Then $$ \frac{1}{n} \Erw \ln \left [ Z_\beta(\vec \Phi) \right] \sim  \frac{1}{n} \ln \Erw \left [ Z_\beta(\vec \Phi) \right].$$ \end{lemma}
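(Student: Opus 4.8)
The plan is to prove \Lem~\ref{lemma_planting_direct} by combining Jensen's inequality (the easy direction) with a truncated second--moment argument carried out on the ``tame'' partition function $Z_{\rm tame}(\vec\Phi,\beta)$, assembling in order the chain \Lem~\ref{Lemma_antiPlanting} $\to$ \Lem~\ref{Lemma_smm} $\to$ \Lem~\ref{lemma_concentration} that has been set up precisely for this purpose. First, by concavity of the logarithm, $\tfrac1n\Erw[\ln Z_\beta(\vec\Phi)]\le\tfrac1n\ln\Erw[Z_\beta(\vec\Phi)]$ for every $n$, so it suffices to establish the matching lower bound up to an $o(1)$ error.

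Second, I would check the hypothesis of \Lem~\ref{Lemma_antiPlanting}, namely $\cProb[\vec{\hat\sigma}\text{ tame in }\vec{\hat\Phi}]\ge\exp(o_n(n))$. By \Lem~\ref{lemma_total_size} the right--hand side of (\ref{eqProp_smmRemedy1}) equals $\cF(k,d,\beta)$, and by \Prop~\ref{prop_first_moment_vanilla} this quantity is nothing but $\lim_n\tfrac1n\ln\Erw[Z_\beta(\vec\Phi)]$; hence (\ref{eqProp_smmRemedy1}) reads $\limsup_n\tfrac1n\Erw[\ln\cC_{\vec{\hat\Phi},\hat\SIGMA}(\beta)]\le\lim_n\tfrac1n\ln\Erw[Z_\beta(\vec\Phi)]$. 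Plugging this into the concentration bound for $\ln\cC_{\vec{\hat\Phi},\hat\SIGMA}$ from \Lem~\ref{lemma_concentration} shows that $\cC_{\vec{\hat\Phi},\hat\SIGMA}(\beta)\le\exp(o(n))\,\Erw[Z_\beta(\vec\Phi)]$ with probability $1-o(1)$, so that $\vec{\hat\sigma}$ is tame in $\vec{\hat\Phi}$ with probability $\ge\exp(o_n(n))$; if the non--strict inequality forces it, one first enlarges the truncation threshold in the definition of tameness by a sub--exponential factor, which affects neither \Lem~\ref{Lemma_antiPlanting} nor \Lem~\ref{Lemma_smm}. Then \Lem~\ref{Lemma_antiPlanting} gives $\Erw[Z_{\rm tame}(\vec\Phi,\beta)]\ge\exp(o_n(n))\,\Erw[Z_\beta(\vec\Phi)]$, and since trivially $Z_{\rm tame}(\vec\Phi,\beta)\le Z_\beta(\vec\Phi)$ pointwise we conclude $\tfrac1n\ln\Erw[Z_{\rm tame}(\vec\Phi,\beta)]\sim\tfrac1n\ln\Erw[Z_\beta(\vec\Phi)]$.

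Third, I would run the second moment method on $Z_{\rm tame}$. The displayed ratio in the previous step verifies the hypothesis of \Lem~\ref{Lemma_smm}, giving $\Erw[Z_{\rm tame}(\vec\Phi,\beta)]^2/\Erw[Z_{\rm tame}(\vec\Phi,\beta)^2]\ge\exp(o_n(n))$. By the Paley--Zygmund inequality this forces $Z_{\rm tame}(\vec\Phi,\beta)\ge\tfrac12\Erw[Z_{\rm tame}(\vec\Phi,\beta)]$, hence $\tfrac1n\ln Z_{\rm tame}(\vec\Phi,\beta)\ge\tfrac1n\ln\Erw[Z_{\rm tame}(\vec\Phi,\beta)]-o(1)$, on an event of probability $\ge\exp(-o_n(n))$. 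On the other hand, $\tfrac1n\ln Z_{\rm tame}(\vec\Phi,\beta)$ concentrates around its mean at an exponential rate by the same Azuma/bounded--differences estimate as in \Lem~\ref{lemma_concentration} (a single edge switch perturbs $Z_{\rm tame}$ by a bounded factor once the threshold is slightly smoothed). Since an event of only sub--exponentially small probability cannot lie a constant below the mean when large deviations are exponentially rare, we obtain $\tfrac1n\Erw[\ln Z_{\rm tame}(\vec\Phi,\beta)]\ge\tfrac1n\ln\Erw[Z_{\rm tame}(\vec\Phi,\beta)]-o(1)$. Chaining this with the second step and with the monotonicity $\Erw[\ln Z_\beta(\vec\Phi)]\ge\Erw[\ln Z_{\rm tame}(\vec\Phi,\beta)]$ gives $\tfrac1n\Erw[\ln Z_\beta(\vec\Phi)]\ge\tfrac1n\ln\Erw[Z_\beta(\vec\Phi)]-o(1)$, which together with Jensen finishes the proof.

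I expect the delicate point to be the second step: converting (\ref{eqProp_smmRemedy1}), which is merely a non--strict inequality between exponential rates, into a genuine high--probability statement that $\vec{\hat\sigma}$ is tame, and making sure that the several $\exp(o_n(n))$ slacks — in the definition of tameness, in \Lem~\ref{Lemma_antiPlanting}, and in \Lem~\ref{Lemma_smm} — line up without compounding. The Paley--Zygmund plus concentration bootstrap of the third step is a standard device; it is the bookkeeping that controls the sub--exponential error terms, and the verification that $\ln Z_{\rm tame}$ genuinely enjoys bounded differences, that require care.
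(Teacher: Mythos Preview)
Your plan is essentially the paper's: feed (\ref{eqProp_smmRemedy1}) and the concentration of $\ln\cC_{\vec{\hat\Phi},\hat\SIGMA}$ into \Lem~\ref{Lemma_antiPlanting}, then \Lem~\ref{Lemma_smm}, then Paley--Zygmund, then concentration, and close with Jensen. The one place where the paper is cleaner than your sketch is the last step. After Paley--Zygmund yields $Z_{\rm tame}(\vec\Phi,\beta)\ge\tfrac12\,\Erw[Z_{\rm tame}(\vec\Phi,\beta)]$ on an event of probability $\ge\exp(o_n(n))$, the paper does \emph{not} try to prove concentration for $\ln Z_{\rm tame}$; instead it uses the pointwise bound $Z_\beta(\vec\Phi)\ge Z_{\rm tame}(\vec\Phi,\beta)$ together with $\Erw[Z_{\rm tame}]\ge\exp(o_n(n))\,\Erw[Z_\beta]$ to deduce that $\tfrac1n\ln Z_\beta(\vec\Phi)\ge\tfrac1n\ln\Erw[Z_\beta(\vec\Phi)]-o(1)$ on that same event, and then invokes the already--established concentration of $\ln Z_\beta(\vec\Phi)$ from \Lem~\ref{lemma_concentration}. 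This sidesteps your claim that $\ln Z_{\rm tame}$ has bounded differences, which is dubious as stated: a single edge switch can change $\cC_{\Phi,\sigma}(\beta)$ for \emph{every} $\sigma$ simultaneously and thus flip the tameness status of exponentially many assignments, so the ``smooth the threshold'' patch would need real work. With the paper's shortcut that issue simply does not arise, and the rest of your argument goes through.
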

\begin{proof} 
We can apply \Lem~\ref{Lemma_antiPlanting} to find that
$$ \frac{\Erw[Z_{\rm tame}(\vec \Phi, \beta)]}{\Erw[Z_\beta(\vec \Phi)]} \geq \exp(o_n(n)) .$$
Hence \Lem~\ref{Lemma_smm} implies that 
$$\liminf_{n \to \infty}\frac{\Erw[Z_{\rm tame}(\vec \Phi, \beta)]^2}{\Erw[Z_{\rm tame}(\vec \Phi, \beta)^2]}>\exp(o_n(n)).$$ Using the Paley-Zigmund inequality we have
$$ \liminf_{n \to \infty} \Prob \left[Z_{\rm tame}(\vec \Phi, \beta)  \geq \Erw \left[Z_{\rm tame}(\vec \Phi, \beta)\right]/2 \right] \geq \exp(o_n(n)). $$ 
In particular, we have
$$ \liminf_{n \to \infty} \Prob \left[Z_\beta(\vec \Phi)  \geq \frac{c_1}{2} \Erw \left[Z_\beta(\vec \Phi)\right]  \right] \geq \exp(o_n(n)) .$$
In other words
$$ \liminf_{n \to \infty} \Prob \left[\frac{1}{n} \ln Z_\beta(\vec \Phi)  \geq \frac{1}{n} \ln \Erw \left[Z_\beta(\vec \Phi)\right] -o_n(1) \right] \geq \exp(o_n(n)) .$$
It follows from \Lem~\ref{lemma_concentration} that $$\frac{1}{n} \Erw \ln \left[ Z_\beta(\vec \Phi) \right] \geq  \frac{1}{n} \Erw \ln \left[ Z_\beta(\vec \Phi) \right] - o_n(1). $$
The proof is completed by Jensen's inequality which give us $$ \frac{1}{n} \Erw \ln \left[ Z_\beta(\vec \Phi) \right] \leq \frac{1}{n}  \ln\Erw \left[ Z_\beta(\vec \Phi) \right].$$
 \end{proof}
 
 The second part of the proposition will be a simple application of the following lemma, which is similar to \Lem~\ref{Lemma_antiPlanting}.
 
 \begin{lemma} \label{lemma_planting_adirect} Let $d \leq \dplus$ and $\beta \in \mathbb{R}$ be fixed. Assume that there exists a sequence of event $\hcE_n$ such that 
 $$\Prob \left[ \vec \Phi \in \hcE_n \right] =1-o_n(1) \qquad \textrm{and}\qquad \liminf_{n \to \infty} \Prob \left[ \vec \hPhi \in \hcE_n \right]^{1/n} <1.$$
 Then we have $$ \liminf_{n \to \infty} \frac{1}{n} \ln \Erw \left[ Z_\beta(\vec \Phi) \right] <  \limsup_{n \to \infty} \frac{1}{n} \Erw \ln \left [ Z_\beta(\vec \Phi) \right].$$  \end{lemma}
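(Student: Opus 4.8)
The plan is to exploit the defining feature of the planted model, namely that $\vec\Phi$ and $\vec\hPhi$ are related by reweighting with the partition function, so that for every event $\cA$ one has $\Erw[Z_\beta(\vec\Phi)\vecone\{\vec\Phi\in\cA\}]=\Prob[\vec\hPhi\in\cA]\cdot\Erw[Z_\beta(\vec\Phi)]$ (this is exactly the identity underlying \Lem~\ref{Lemma_antiPlanting}). Applying it with $\cA=\hcE_n$ and invoking the second hypothesis, I first pass to a subsequence $(n_j)_j$ along which $\Prob[\vec\hPhi\in\hcE_{n_j}]^{1/n_j}\to c<1$; then $\Erw[Z_\beta(\vec\Phi)\vecone\{\vec\Phi\in\hcE_{n_j}\}]=\exp((\ln c+o(1))n_j)\,\Erw[Z_\beta(\vec\Phi)]$. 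In words: asymptotically all of the annealed first moment $\Erw[Z_\beta(\vec\Phi)]$ is carried by the event $\{\vec\Phi\notin\hcE_n\}$, which by the first hypothesis has probability $o_n(1)$. I will show that this is incompatible with the quenched free energy being as large as the annealed one.

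To make this quantitative I would combine the above with the concentration estimate \Lem~\ref{lemma_concentration}. Fix $\alpha>0$ and let $\delta=\delta(\alpha)>0$ be the constant it supplies, so that $\Prob[\frac{1}{n}\ln Z_\beta(\vec\Phi)<\frac{1}{n}\Erw\ln Z_\beta(\vec\Phi)-\alpha]<\exp(-\delta n)$. Since the first hypothesis gives $\Prob[\vec\Phi\notin\hcE_n]=o_n(1)$, the event $\cG_n=\{\vec\Phi\in\hcE_n\}\cap\{\frac{1}{n}\ln Z_\beta(\vec\Phi)\ge\frac{1}{n}\Erw\ln Z_\beta(\vec\Phi)-\alpha\}$ has probability at least $1-o_n(1)-\exp(-\delta n)\ge\frac12$ for $n$ large, and on $\cG_n$ we have $Z_\beta(\vec\Phi)\ge\exp(\Erw\ln Z_\beta(\vec\Phi)-\alpha n)$; hence $\Erw[Z_\beta(\vec\Phi)\vecone\{\vec\Phi\in\hcE_n\}]\ge\frac12\exp(\Erw\ln Z_\beta(\vec\Phi)-\alpha n)$. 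Chaining this with the first display along $(n_j)_j$, taking $\frac{1}{n_j}\ln(\cdot)$, letting $j\to\infty$, and using that the two limits $\phi_{d,k}(\beta)=\lim_n\frac{1}{n}\Erw\ln Z_\beta(\vec\Phi)$ (interpolation) and $\cF(k,d,\beta)=\lim_n\frac{1}{n}\ln\Erw[Z_\beta(\vec\Phi)]$ (\Prop~\ref{prop_first_moment_vanilla}) both exist, I obtain $\phi_{d,k}(\beta)-\alpha\le\ln c+\cF(k,d,\beta)$. Since $\alpha>0$ is arbitrary and $\ln c<0$ is fixed, this yields $\phi_{d,k}(\beta)\le\ln c+\cF(k,d,\beta)<\cF(k,d,\beta)$, that is, the strict separation $\limsup_n\frac{1}{n}\Erw\ln Z_\beta(\vec\Phi)<\liminf_n\frac{1}{n}\ln\Erw[Z_\beta(\vec\Phi)]$ between the quenched and annealed free energies asserted by the lemma.

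Structurally the argument parallels \Lem~\ref{lemma_planting_direct}: the same two ingredients, the planted-model reweighting identity and Azuma-type concentration, are deployed, only here to produce a strict gap rather than an equality. I do not expect a genuine obstacle. The one place that needs care is the bookkeeping in the definition of $\cG_n$, where the $o_n(1)$ failure probability of $\hcE_n$ has to be combined with the exponentially small concentration error $\exp(-\delta n)$; it is essential that $\delta$ depends on $\alpha$ but not on $n$ (as in \Lem~\ref{lemma_concentration}), so that $\Prob[\cG_n]\ge\frac12$ for all large $n$ and the resulting factor $\frac12$ is harmless once one passes to the exponential scale. The other, trivial, point is that the passage to a subsequence realising the $\liminf$ in the hypothesis must be made before invoking the existence of the two free-energy limits.
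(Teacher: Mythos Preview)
Your proof is correct (and you have correctly read the intended inequality as quenched $<$ annealed; the displayed direction in the lemma statement is a typo, as is clear from its use in the proof of \Lem~\ref{Lem_smmRemedy}).

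The route, however, differs from the paper's. Both arguments start from the planted--model reweighting $\Erw[Z_\beta(\vec\Phi)\vecone\{\vec\Phi\in\hcE_n\}]=\Prob[\vec\hPhi\in\hcE_n]\,\Erw[Z_\beta(\vec\Phi)]$ and pass to a subsequence realising the $\liminf$. From there the paper is more elementary: it uses only the deterministic bound $\frac1n\ln Z_\beta(\Phi)\in[\ln2-\beta d/k,\,\ln2]$ to argue that $\frac{1}{n}\Erw[\ln Z_\beta(\vec\Phi)]=\frac{1}{n}\Erw[\ln Z_\beta(\vec\Phi)\,\vecone\{\vec\Phi\in\hcE_n\}]+o(1)$ (since $\Prob[\vec\Phi\notin\hcE_n]=o(1)$), then applies Jensen conditionally on $\hcE_n$ to pass to $\frac{1}{n}\ln\Erw[Z_\beta(\vec\Phi)\vecone\{\vec\Phi\in\hcE_n\}]$, and finally plugs in the reweighting bound. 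You instead invoke the Azuma concentration \Lem~\ref{lemma_concentration} to produce a pointwise lower bound $Z_\beta(\vec\Phi)\ge\exp(\Erw\ln Z_\beta(\vec\Phi)-\alpha n)$ on a high-probability event intersected with $\hcE_n$, which gives a lower bound on $\Erw[Z_\beta(\vec\Phi)\vecone\{\vec\Phi\in\hcE_n\}]$ that you match against the reweighting upper bound. Your approach makes the parallel with \Lem~\ref{lemma_planting_direct} transparent (same two ingredients, deployed in the reverse direction); the paper's approach avoids the appeal to concentration altogether, needing only boundedness of $\frac1n\ln Z$. Either way the argument is a few lines.
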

 
 \begin{proof}
Let $m(n)$ be a monotically increasing sequence and $\xi$ be such that $\cProb \left[ \vec \hPhi \in \hcE_n \right]^{1/m(n)} \leq \exp(-\xi m(n))$.
  We have, by the same steps as in the proof of \Lem~\ref{Lemma_antiPlanting}
 \begin{align*} \Erw \left[ Z_\beta(\vec \Phi) \mathbf{1}_{\vec \Phi \in \hcE_{m(n)}} \right]
  \leq  \cProb \left[ \vec \hPhi \in \hcE_{m(n)} \right] \Erw \left[ Z_\beta(\vec \Phi) \right] \leq \exp(-\xi m(n)) \Erw \left[ Z(\vec \Phi(m(n),k,d), \beta) \right].
 \end{align*}
 Thereby we obtain, using that $\frac{1}{n}\ln Z_\beta(\Phi) \in [-\beta + \ln 2, \ln 2]$ for all $\Phi$
 \begin{align*} \frac{1}{m(n)} \Erw \ln \left[  Z_\beta(\vec \Phi) \right] &=  \frac{1}{m(n)} \Erw \ln \left[  Z_\beta(\vec \Phi) \mathbf{1}_{\vec \Phi \in \hcE_{m(n)}}\right] +o_n(1)
 \\ & \leq  \frac{1}{m(n)} \ln \Erw \left[  Z_\beta(\vec \Phi) \mathbf{1}_{\vec \Phi \in \hcE_{m(n)}}\right] +o_n(1)
 \\ & \leq   \frac{1}{m(n)} \Erw \ln \left [ Z_\beta(\vec \Phi) \right] - \xi + o_n(1).
  \end{align*}
  \end{proof}
 
 \begin{proof}[Proof of \Lem~\ref{Lem_smmRemedy}] Assume that Eq. (\ref{eqProp_smmRemedy1}) holds. Then by \Lem~\ref{lemma_planting_direct} we have $ \frac{1}{n} \Erw \ln \left [ Z_\beta(\vec \Phi) \right] \sim \frac{1}{n} \ln \Erw \left[ Z_\beta(\vec \Phi) \right]$.
 
 Assume that Eq. (\ref{eqProp_smmRemedy1}) does not hold and let $\epsilon$ be such that $$\limsup_{n \to \infty} \frac{1}{n} \cErw \left[  \ln \cC_{\vec \hPhi,\vec {\hat \sigma}}(\beta) \right] \geq \liminf_{n \to \infty} \frac{1}{n} \ln \Erw \left[ Z_\beta(\vec \Phi) \right]+\epsilon $$
 Let $z = \frac{1}{n} \ln \Erw \left[ Z_\beta(\vec \Phi) \right]+\epsilon/2$ and $\cE_n$ be the event that $\frac{1}{n} \ln Z_\beta(\vec \Phi) > \frac{1}{n} \ln \Erw \left[ Z_\beta(\vec \Phi) \right]+\epsilon/2$. Then using Jensen's inequality and \Lem~\ref{lemma_concentration} we obtain $\Prob \left[ \vec \Phi \in \cE_n \right]^{1/n} \sim 1$ while $\liminf_{n \to \infty} \Prob \left[ \vec \hPhi \in \cE_n \right]^{1/n} < 1$. Therefore with \Lem~\ref{lemma_planting_adirect} we have $$\liminf_{n \to \infty} \frac{1}{n} \Erw \ln \left [ Z_\beta(\vec \Phi) \right] < \limsup_{n \to \infty} \frac{1}{n} \ln \Erw \left[ Z_\beta(\vec \Phi) \right].$$ 
  \end{proof}
 
 \subsection{Proof of \Lem~\ref{lemma_vanilla_non_local_0} and \Lem~\ref{lemma_vanilla_non_local}}
 \label{sec_proof_lemma_vanilla_non_local}
 
 We first need to study $f_{2,k}(d,\beta, \cdot)$ locally around $\alpha = 1/2$ and compare it with $f_{1,k}(d,\beta)$. This will be given by the two following lemmas.
 
\begin{lemma}\label{lemma_vanilla_exp_f1} We have, for $d \leq \dplus$ and $\beta \in \mathbb{R}$, $$f_{1,k}(d,\beta) =\ln2- \frac{d}{k} \left( c_\beta 2^{-k} +  2^{-1-2k} -k  2^{-1-2k} \right) + \tilde O_k(4^{-k}). $$ \end{lemma}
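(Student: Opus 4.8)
The plan is to start from the closed-form expression for $f_{1,k}(d,\beta)$ and expand each of the three terms in powers of $2^{-k}$, keeping everything up to order $4^{-k}$ (i.e.\ absorbing $\tilde O_k(4^{-k})$ errors). Recall that
$$f_{1,k}(d,\beta)=\ln 2+\frac{d}{k}\ln z_{1,k}(\beta,1-q)+d\,D_1\!\left(\tfrac12,1-q\right),$$
with $z_{1,k}(\beta,h)=1-c_\beta(1-h)^k$ and $D_1(\alpha,h)=\alpha\ln(\alpha/h)+(1-\alpha)\ln((1-\alpha)/(1-h))$. The first step is to pin down the asymptotics of $q=q(k,d,\beta)$ from its defining equation \eqref{eq_def_q}, $1-c_\beta q^k=2(1-q)$, equivalently $2q-1=c_\beta q^k$. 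Since $q\to1$ and $q^k$ is exponentially small, one reads off $q=1-\tfrac12 c_\beta q^k$, so $1-q=\tfrac12 c_\beta q^k$ and, iterating once, $q^k=1+\tilde O_k(2^{-k})$ (more precisely $q^k=\exp(k\ln q)=\exp(-k(1-q)+\tilde O_k(k(1-q)^2))=1-\tfrac{k}{2}c_\beta 2^{-k}+\tilde O_k(4^{-k})$ once we know $1-q=\tfrac12 c_\beta 2^{-k}+\tilde O_k(4^{-k})$). Thus $1-q=\tfrac12 c_\beta 2^{-k}+\tilde O_k(4^{-k})$, which is the quantity that feeds both remaining terms.

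Next I would expand the two nontrivial contributions. For the clause term, $(1-(1-q))^k=1-k(1-q)+\tilde O_k(k^2(1-q)^2)$ with $1-q=\Theta(2^{-k})$, so $(1-q)^k$ above... wait, more carefully: $z_{1,k}(\beta,1-q)=1-c_\beta q^k=1-c_\beta(1-\tfrac{k}{2}c_\beta 2^{-k}+\tilde O_k(4^{-k}))=1-c_\beta+\tfrac{k}{2}c_\beta^2 2^{-k}+\tilde O_k(4^{-k})=e^{-\beta}+\tfrac{k}{2}c_\beta^2 2^{-k}+\tilde O_k(4^{-k})$; then $\ln z_{1,k}(\beta,1-q)=-\beta+\tfrac{k}{2}c_\beta^2 e^{\beta}2^{-k}+\tilde O_k(4^{-k})$. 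Hmm, this is getting $\beta$-dependent in a way that does not obviously match the target; the resolution is that the $-\beta\cdot d/k$ term must cancel against part of $d\,D_1$. Indeed $D_1(\tfrac12,1-q)=\tfrac12\ln\tfrac{1/2}{1-q}+\tfrac12\ln\tfrac{1/2}{q}=\tfrac12\ln\tfrac{1}{4q(1-q)}=-\tfrac12\ln(4q(1-q))$, and with $4q(1-q)=4(1-(1-q))(1-q)=4(1-q)-4(1-q)^2=2c_\beta 2^{-k}+\tilde O_k(4^{-k})$ this is $-\tfrac12\ln(2c_\beta 2^{-k})+\tilde O_k(4^{-k})$. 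I would then carry out these logarithmic expansions exactly, substitute $d/k$ and $d$, collect the $\ln$-of-$2^{-k}$ pieces (which combine, via the relation between $z_{1,k}$ and $q$, into the clean $-\tfrac dk(c_\beta 2^{-k}+2^{-1-2k}-k2^{-1-2k})$ of the claim), and track that all remaining terms are $\tilde O_k(4^{-k})$ after multiplication by $d=\Theta(k2^k)$ — note $d\cdot\tilde O_k(4^{-k})$ is \emph{not} negligible a priori, so the expansions of $q$, $1-q$, and $z_{1,k}$ must be pushed to enough orders that the surviving error is genuinely $\tilde O_k(4^{-k})$ as stated (i.e.\ after the $d\sim k2^k$ factor). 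Concretely I expect to need $1-q$ to relative accuracy $\tilde O_k(4^{-k})$ and $q^k$ likewise.

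The one genuinely delicate point — and the main obstacle — is the bookkeeping of orders: because $d=\Theta(k\,2^k)$, a term of size $\tilde O_k(4^{-k})$ inside $\ln z_{1,k}$ or $D_1$ contributes $\tilde O_k(k\,4^{-k}/2^{k})=\tilde O_k(k\,8^{-k})$, which is fine, but a term of size $2^{-k}\times(\text{const})$ that looks ``small'' actually survives at order $d\cdot 2^{-k}=\Theta(k)$, so nothing at order $2^{-k}$ may be dropped and everything at order $2^{-k}$ and $4^{-k}$ must be retained before multiplying by $d$ or $d/k$; only $\tilde O_k(8^{-k})$-type residuals inside the logarithms are safely discarded. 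So the plan is: (i) solve \eqref{eq_def_q} to get $1-q=\tfrac12 c_\beta 2^{-k}+\tilde O_k(4^{-k})$ and $q^k=1-\tfrac k2 c_\beta 2^{-k}+\tilde O_k(4^{-k})$; (ii) expand $\ln z_{1,k}(\beta,1-q)$ and $D_1(\tfrac12,1-q)$ to the required precision; (iii) assemble $f_{1,k}(d,\beta)$, observing the cancellation of the $\beta$-linear parts and of the $\ln(2^{-k})$ parts between the clause term and $dD_1$; (iv) read off the coefficient of $2^{-k}$ and of $k2^{-2k},2^{-2k}$ and check the residual is $\tilde O_k(4^{-k})$. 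Everything is elementary calculus; the proof is essentially a careful Taylor expansion, and I would present it as such.
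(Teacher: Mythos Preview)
Your overall approach---Taylor expand $f_{1,k}$ after pinning down $q$ from \eqref{eq_def_q}---is exactly what the paper does. But you have the wrong asymptotics for $q$, and everything downstream is built on that error.

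From $1-c_\beta q^k=2(1-q)$ you get $2q-1=c_\beta q^k$, i.e.\ $q=\tfrac12+\tfrac12 c_\beta q^k$, not $q=1-\tfrac12 c_\beta q^k$ as you wrote. The unique root in $(0,1)$ sits near $\tfrac12$, not near $1$: since $q^k\approx 2^{-k}$ when $q\approx\tfrac12$, one iterates to obtain
\[
q=\tfrac12+c_\beta 2^{-1-k}+\tilde O_k(4^{-k}),
\]
which is precisely the observation the paper's one-line proof invokes. Your own sentence ``Since $q\to1$ and $q^k$ is exponentially small'' is self-contradictory (if $q\to1$ then $q^k\to1$), and this should have been the signal that something went wrong. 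The symptom you noticed---that $\ln z_{1,k}(\beta,1-q)$ picks up a $-\beta$ and $4q(1-q)$ becomes $O(2^{-k})$, forcing mysterious cancellations---is an artifact of the wrong $q$; with the correct $q\approx\tfrac12$ one has $z_{1,k}(\beta,1-q)=1-c_\beta q^k\approx 1-c_\beta 2^{-k}$ and $4q(1-q)=1-(2q-1)^2=1-c_\beta^2 q^{2k}\approx 1$, and no large cancellations occur.

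Once you correct $q$, your plan is fine and matches the paper: expand $q^k=2^{-k}(1+c_\beta 2^{-k}+\tilde O_k(4^{-k}))^k$, plug into $\ln(1-c_\beta q^k)$ and $D_1(\tfrac12,1-q)=-\tfrac12\ln(1-(2q-1)^2)$, multiply by $d/k$ and $d$ respectively, and collect terms. Your cautionary remark about orders---that $d=\Theta(k2^k)$ forces you to retain all $2^{-k}$ and $4^{-k}$ contributions inside the logarithms before multiplying---is correct and is the only delicate point.
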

\begin{proof} The result follows from a direct computation, using the observation that $q = \frac{1}{2}+c_\beta 2^{-1-k}+ \tilde O_k(4^{-k})$.
\end{proof}
\begin{lemma} \label{lemma_vanilla_expansion_f2} Let $d \leq \dplus$ and $\beta \in \mathbb{R}$ be fixed. $f_{2,k}$ is of class $C^\infty$ on $\mathbb{R}^2 \times(0,1)$. It satisfies $f_{2,k}(d,\beta,1/2) = 2 f_{1,k}(d,\beta)$, $\frac{\partial}{\partial \alpha} f_{2,k}(d,\beta,1/2) = 0$ and $\sup_{|\alpha-1/2| \leq 2^{-k/3}}\frac{\partial^2}{\partial \alpha^2} f_{2,k}(d,\beta,\alpha) < 0.$ \end{lemma}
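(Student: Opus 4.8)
The plan is to prove Lemma~\ref{lemma_vanilla_expansion_f2} by reducing everything to the smooth dependence of $(h_\beta(\alpha),\hq_\beta(\alpha))$ on $\alpha$ established in Lemma~\ref{lemma_vanilla_2_implicit} and then performing a careful second-order Taylor expansion around $\alpha=1/2$. First I would record that $f_{2,k}(d,\beta,\alpha)$ is a composition of the $C^\infty$ functions $H$, $\ln z_{2,k}(\beta,\cdot,\cdot)$, $D_2(\cdot,\cdot,\cdot)$ with the $C^\infty$ map $\alpha\mapsto(h_\beta(\alpha),\hq_\beta(\alpha))$ (valid because $z_{2,k}>0$ on $\cT$ and all the arguments of the logarithms in $D_2$ stay in $(0,1)$ on the relevant range, using the explicit bound $\|(h_\beta(\alpha),\hq_\beta(\alpha))-(\tfrac12,\tfrac{1-\alpha}2)\|_2\le k2^{-k}$); hence $f_{2,k}\in C^\infty(\mathbb R^2\times(0,1))$.

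Next I would verify the value and first-derivative identities at $\alpha=1/2$. For the value: by (\ref{eq_lemma_v_2_imp_1}) we have $h_\beta(1/2)=q$ and $\hq_\beta(1/2)=q^2$, so $z_{2,k}(\beta,q,q^2)=1-2c_\beta(1-q)^k+c_\beta^2(1-q)^{2k}=(1-c_\beta(1-q)^k)^2=z_{1,k}(\beta,1-q)^2$, and a direct substitution shows $D_2(1/2,q,q^2)=2D_1(1/2,1-q)$ and $H(1/2)=\ln 2$; collecting terms gives $f_{2,k}(d,\beta,1/2)=2f_{1,k}(d,\beta)$. For the first derivative, I would differentiate $f_{2,k}$ via the chain rule, exploiting two facts: the constraint $g_{2,k,\beta}(h_\beta(\alpha),\hq_\beta(\alpha))=(\tfrac12,\tfrac{1-\alpha}2)$ from which one reads off the directional derivative, and the stationarity of $D_2$ and of the $\ln z_{2,k}$ terms with respect to the "internal" variables $(h,\hq)$ at their optimizing values. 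Concretely, $h_\beta,\hq_\beta$ are defined so that they are the saddle point of the exponent, so $\partial_h$ and $\partial_{\hq}$ of the bracketed expression vanish; what remains is the explicit $\alpha$-dependence through $H(\alpha)$ and through the first slot of $D_2$, and these two contributions cancel at $\alpha=1/2$ by the symmetry $\alpha\leftrightarrow 1-\alpha$ (equivalently, $\partial_\alpha[H(\alpha)+d\,\partial_1 D_2\text{-term}]|_{1/2}=0$ since $1/2$ is the symmetric point).

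The main work, and the main obstacle, is the strict negativity of the second derivative $\sup_{|\alpha-1/2|\le 2^{-k/3}}\partial_\alpha^2 f_{2,k}(d,\beta,\alpha)<0$. Here I would compute $\partial_\alpha^2 f_{2,k}$ at $\alpha=1/2$ explicitly, keeping track of the leading order in $k$. The dominant term comes from $\partial_\alpha^2 H(\alpha)=-1/(\alpha(1-\alpha))$, which at $\alpha=1/2$ equals $-4$; all the remaining contributions (from $\ln z_{2,k}$ and from $D_2$, mediated by $h_\beta',\hq_\beta'$) are controlled using (\ref{eq_lemma_v_2_imp_2}), $\hq_\beta'(\alpha)=h_\beta'(\alpha)-1/2+\tilde O_k(2^{-4k/3})$, together with the fact that $\partial z_{2,k}/\partial h$ and $\partial z_{2,k}/\partial\hq$ are $\tilde O_k(2^{-k})$ and $D_2$ contributions are $O(d\cdot 2^{-k})$. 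Since $d\le\dplus=O(2^k k)$, one gets that every correction is $\tilde O_k(k\cdot 2^{-k})=o_k(1)$, so $\partial_\alpha^2 f_{2,k}(d,\beta,1/2)=-4+o_k(1)<0$ for $k\ge k_0$; then a further continuity/uniformity estimate — the same bounds hold with $\alpha$ replaced by any $\alpha$ with $|\alpha-1/2|\le 2^{-k/3}$, since all the auxiliary estimates in Lemma~\ref{lemma_vanilla_2_implicit} are stated on that neighbourhood — upgrades this to the stated supremum bound. The delicate point is making sure the $k$-dependent prefactors ($d/k$, $d$) do not overwhelm the $2^{-k}$ smallness of the correction terms; this is exactly where the hypothesis $d\le\dplus$ (rather than merely $d<\infty$) is used, and it is the step I would write out most carefully.
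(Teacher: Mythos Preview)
Your overall strategy matches the paper's: use the smoothness of $(h_\beta,\hq_\beta)$ from Lemma~\ref{lemma_vanilla_2_implicit}, exploit the saddle-point condition to simplify the first derivative, and then analyze the second derivative near $\alpha=1/2$. The smoothness argument and the identity $f_{2,k}(d,\beta,1/2)=2f_{1,k}(d,\beta)$ are fine, and your first-derivative sketch is essentially correct (the paper makes it explicit: $\partial_\alpha f_{2,k}=H'(\alpha)-dH'(\alpha)+\tfrac d2\ln\bigl(\hq_\beta(1-2h_\beta+\hq_\beta)/(h_\beta-\hq_\beta)^2\bigr)$, and at $\alpha=1/2$ both $H'(1/2)=0$ and the logarithm vanishes because $\hq_\beta(1/2)=h_\beta(1/2)^2$).

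The second-derivative argument, however, has a real gap. You assert that ``$D_2$ contributions are $O(d\cdot 2^{-k})$'' and that ``every correction is $\tilde O_k(k\cdot 2^{-k})$''; neither is true, and in fact these two claims are not even mutually consistent (with $d\le\dplus\sim k2^k\ln 2$ one has $d\cdot 2^{-k}=O(k)$, not $o_k(1)$). The actual picture is this: once you differentiate $\partial_\alpha f_{2,k}=(1-d)H'(\alpha)+\tfrac d2\ln(\cdots)$ again, the explicit $\alpha$-derivative of the first piece gives $(1-d)H''(\alpha)\approx -4+4d$, which is of order $d$ and is \emph{not} ``mediated by $h_\beta',\hq_\beta'$''. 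Separately, $\hq_\beta'(\alpha)\approx -1/2$ (not small!), and the cross term $d\,\partial_\alpha\partial_{\hq}D_2\cdot\hq_\beta'$ is also of order $d$. These two huge terms cancel: in the paper's bookkeeping,
\[
\partial_\alpha^2 f_{2,k}=H''(\alpha)-dH''(\alpha)-8d\,h_\beta'+8d\,\hq_\beta'+\tilde O_k(2^{-k/3})
=-4+8d\bigl[\hq_\beta'-h_\beta'+\tfrac12\bigr]+\tilde O_k(2^{-k/3}),
\]
and it is precisely the refined estimate~(\ref{eq_lemma_v_2_imp_2}), $\hq_\beta'-h_\beta'+\tfrac12=\tilde O_k(2^{-4k/3})$, that kills the bracket and leaves $-4+\tilde O_k(2^{-k/3})$. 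The point of~(\ref{eq_lemma_v_2_imp_2}) is not a generic smallness bound but an exact leading-order cancellation between the $+4d$ and the $-4d$; your write-up treats it as if each $D_2$ term were individually small, which would make the argument fail. You should rewrite this step to isolate the $(1-d)H''$ contribution explicitly, compute the coefficients $\partial_\alpha\partial_hD_2\approx -8$ and $\partial_\alpha\partial_{\hq}D_2\approx 8$ at $(h,\hq)\approx(1/2,1/4)$, and then invoke~(\ref{eq_lemma_v_2_imp_2}) to exhibit the cancellation.
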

\begin{proof}
$\alpha \to H(\alpha)$ is clearly of class $C^\infty$ on $(0,1)$. Similarly, $(\beta, h,\hq) \to \ln z_{2,k}(\beta, h,\hq)$ is of class $C^\infty$ on $\cT$ and $D$ is of class $C^\infty$ on $(0,1) \times \cT$. The smoothness of $\alpha \to f_{2,k}(d,\beta,\alpha)$ therefore follows from the one of $\alpha \mapsto (h_\beta(\alpha), \hq_\beta(\alpha))$ granted by \Lem~\ref{lemma_vanilla_2_implicit}.

Because $(h_\beta(\alpha),\hq_\beta(\alpha))$ satisfy $g_{2,k,\beta}(h_\beta(\alpha),\hq_\beta(\alpha)) = ( \frac{1}{2},\frac{1-\alpha}{2})$, we have using the chain rule
\begin{align*} \frac{\partial z_{2,k}}{\partial h}(\beta,h_\beta(\alpha), \hq_\beta(\alpha)) &= \frac{\partial D_2}{\partial h} (\alpha, h_\beta(\alpha), \hq_\beta(\alpha)),
\\ \frac{\partial z_{2,k}}{\partial \hq}(\beta,h_\beta(\alpha), \hq_\beta(\alpha)) &= \frac{\partial D_2}{\partial \hq} (\alpha, h_\beta(\alpha), \hq_\beta(\alpha)) . \end{align*}
The differential of $f_{2,k}$ with respect to $\alpha$ then simplifies to
\begin{align*} \frac{\partial f_{2,k}}{\partial \alpha} (d,\beta,\alpha) &= H'(\alpha)  + d \frac{\partial}{\partial \alpha} D_2(\alpha, h_\beta(\alpha), \hq_\beta(\alpha))
\\ & = H'(\alpha) - d H'(\alpha) + \frac{d}{2} \ln \left(  \frac{\hq_\beta(\alpha) (1-2h_\beta(\alpha)+\hq_\beta(\alpha))}{(h_\beta(\alpha)-\hq_\beta(\alpha))^2} \right).
 \end{align*}
 In particular, for $\alpha=1/2$ we have $\hq_\beta(1/2) = h_\beta(1/2)^2$ and $\frac{\partial f_{2,k}}{\partial \alpha} (d,\beta,1/2) = 0$.
 
 Differentiating once more with respect to $\alpha$ yields
\begin{align*} \frac{\partial^2 f_{2,k}}{\partial^2 \alpha} (d,\beta,\alpha) &= H''(\alpha)  + d \frac{\partial^2}{\partial \alpha^2} D_2(\alpha, h_\beta(\alpha), \hq_\beta(\alpha))+ d \frac{\partial^2}{\partial \alpha \partial h} D_2(\alpha, h_\beta(\alpha), \hq_\beta(\alpha))+ d \frac{\partial^2}{\partial \alpha \partial \hq} D_2(\alpha, h_\beta(\alpha), \hq_\beta(\alpha))
\\ & = H''(\alpha) - d H''(\alpha) + \frac{d}{2}\left[ -\frac{2}{h_\beta(\alpha)-\hq_\beta(\alpha)}-\frac{2}{1-2h_\beta(\alpha)+\hq_\beta(\alpha)} \right] h_\beta'(\alpha)
\\ & \hspace{1cm} + \frac{d}{2}\left[ \frac{1}{\hq_\beta(\alpha)}+\frac{1}{1-2h_\beta(\alpha)+\hq_\beta(\alpha)} + \frac{2}{h_\beta(\alpha)-\hq_\beta(\alpha)}\right] \hq_\beta'(\alpha) .
 \end{align*}
 In particular for $|\alpha-1/2| \leq 2^{-k/3}$ we have with \Lem~\ref{lemma_vanilla_2_implicit}
 \begin{align*} \frac{\partial^2 f_{2,k}}{\partial^2 \alpha} (d,\beta,1/2) &= H''(\alpha) - d H''(\alpha) -8 d h_\beta'(\alpha) + 8d \hq_\beta'(\alpha)+ \tilde O_k(2^{-k/3})
\\&= -4 + 8 d \left[ \hq_\beta'(\alpha) - h_\beta'(\alpha) + \frac{1}{2} \right] +\tilde O_k(2^{-k/3}) 
\\ &= -4 + \tilde O_k(2^{-k/3}).  \end{align*}
\end{proof}

We now study $f_{2,k}(d,\beta, \alpha)$ when $|\alpha -1/2| > k^2 2^{-k/2}$.  For the sake of readability, we decompose this study in small steps. We first show that we can upper bound $f_{2,k}$ by a simpler function. Let $\ovz_{2,k} : \mathbb{R} \times (0,1) \to \mathbb{R}$ and $\ovf_{2,k} : \mathbb{R}^2 \times (0,1) \to \mathbb{R}$ be defined by
$$ \ovz_{2,k}(\beta,\alpha) =1-2 c_\beta 2^{-k} + c_\beta^2 \left( \frac{1-\alpha}{2} \right)^k , \qquad \ovf_{2,k}(d,\beta, \alpha) = \ln2 + H(\alpha) + \frac{d}{k} \ln \left(\ovz_{2,k}(d,\beta,\alpha)\right).$$

\begin{lemma}\label{simple_uper_bound_moments_f} For all $\alpha \in (0,1)$ we have $f_{2,k}(d,\beta,\alpha) \leq \ovf_{2,k}(d,\beta,\alpha)$. \end{lemma}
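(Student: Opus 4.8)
The plan is to sidestep any manipulation of the explicit formulas and instead argue directly about the quantity that $f_{2,k}$ computes. By \Prop~\ref{prop_second_moment_rate_function} we have $\frac1n\ln Z(d,\beta,\alpha)\sim f_{2,k}(d,\beta,\alpha)$, so it suffices to establish the crude upper bound
$$Z(d,\beta,\alpha)\leq n^{O(1)}\,2^n\binom n{\alpha n}\,\ovz_{2,k}(\beta,\alpha)^m.$$
Since the law of $\vec\Phi$ is invariant under relabelling variables and flipping signs, $Z(d,\beta,\alpha)=2^n\binom n{\alpha n}\,\Erw_{\vec\Phi}\big[\prod_{a\in F}\psi_{a,\beta}(\sigma_a)\psi_{a,\beta}(\tau_a)\big]$ for one arbitrary fixed pair $(\sigma,\tau)$ with $\sigma\cdot\tau=(2\alpha-1)n$, so the whole matter reduces to the single clause-wise estimate
$$\Erw_{\vec\Phi}\Big[\prod_{a\in F}\psi_{a,\beta}(\sigma_a)\psi_{a,\beta}(\tau_a)\Big]\leq n^{O(1)}\,\ovz_{2,k}(\beta,\alpha)^m.$$

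To prove this I would rerun the auxiliary-variable computation from the proof of \Prop~\ref{prop_second_moment_rate_function} --- the variables $(\phi_{al})$ recording the joint type (one of $(T,T),(T,F),(F,T),(F,F)$) of each literal clone under $(\sigma,\tau)$, together with the satisfaction variables $y_a^{(1)},y_a^{(2)}$ --- but now with the \emph{untilted} product law $\mathbb P$ whose single-clone marginal is exactly the empirical type-frequency of the fixed pair: false under $\sigma$ with probability $1/2$, false under $\tau$ with probability $1/2$, false under both with probability $(1-\alpha)/2$. Because in the configuration model the sequence of literal types is a uniformly random arrangement of a fixed multiset, it is distributed as $\mathbb P$ conditioned on the event $B$ that the empirical counts equal that multiset exactly; integrating out the $y$'s exactly as in that proof gives $\Erw_{\vec\Phi}[\prod_a\psi_{a,\beta}(\sigma_a)\psi_{a,\beta}(\tau_a)]=(1+\exp(-\beta))^{2m}\,\mathbb P[S^{(2)}\mid B]$. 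Now bound $\mathbb P[S^{(2)}\mid B]\leq\mathbb P[S^{(2)}]/\mathbb P[B]$. The event $B$ pins the empirical counts to their $\mathbb P$-mean, so a local limit theorem gives $\mathbb P[B]\geq n^{-O(1)}$; and under $\mathbb P$ the $m$ clauses are genuinely independent, so $\mathbb P[S^{(2)}]=\prod_a\mathbb P[S^{(2)}_a]$, where $(1+\exp(-\beta))^2\,\mathbb P[S^{(2)}_a]=\Erw_{\mathbb P}[\psi_{a,\beta}(\sigma_a)\psi_{a,\beta}(\tau_a)]=1-2c_\beta2^{-k}+c_\beta^2((1-\alpha)/2)^k=\ovz_{2,k}(\beta,\alpha)$ by the one-line inclusion--exclusion $\Erw_{\mathbb P}[\mathbf 1_\sigma]=\Erw_{\mathbb P}[\mathbf 1_\tau]=2^{-k}$, $\Erw_{\mathbb P}[\mathbf 1_\sigma\mathbf 1_\tau]=((1-\alpha)/2)^k$, with $\mathbf 1_\sigma$ the indicator that the clause is unsatisfied by $\sigma$. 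Combining these gives the clause-wise estimate.

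Taking $\frac1n\ln(\cdot)$ then yields $\frac1n\ln Z(d,\beta,\alpha)\leq\ln2+H(\alpha)+\frac dk\ln\ovz_{2,k}(\beta,\alpha)+o_n(1)$; passing to the limit $n\to\infty$ (along the grid of admissible overlaps, using continuity of $\ovf_{2,k}$ in $\alpha$) and identifying the left-hand side in the limit as $f_{2,k}(d,\beta,\alpha)$ via \Prop~\ref{prop_second_moment_rate_function} proves $f_{2,k}(d,\beta,\alpha)\leq\ovf_{2,k}(d,\beta,\alpha)$. The one point requiring care is the passage from the configuration model to the independent-clause model: it is lossless on the exponential scale precisely because the weights $\psi_{a,\beta}(\sigma_a)\psi_{a,\beta}(\tau_a)$ are nonnegative --- so discarding the indicator of $B$ can only inflate the expectation --- and because $\mathbb P$ is \emph{centred at the true type-frequency}, so that $\mathbb P[B]$ is only polynomially small rather than exponentially small. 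Everything else is the bookkeeping already carried out for \Prop~\ref{prop_second_moment_rate_function}, now run with the untilted rather than the tilted marginal.
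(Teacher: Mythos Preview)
Your proposal is correct and is essentially the same argument as the paper's: both switch to the untilted product law on clone types centred at the empirical frequencies $((1-\alpha)/2,\alpha/2,\alpha/2,(1-\alpha)/2)$, use that under this law $S^{(2)}$ factorises over clauses to give $\ovz_{2,k}(\beta,\alpha)^m$ while $B^{(2)}$ has only polynomially small probability, and then bound $\mathbb P[S^{(2)}\mid B^{(2)}]\leq\mathbb P[S^{(2)}]/\mathbb P[B^{(2)}]$ (equivalently, in the paper's Bayes formulation, $-\frac1m\ln\mathbb P'[B^{(2)}\mid S^{(2)}]\geq0$). Your write-up is a bit more explicit about the local CLT step and the inclusion--exclusion for the single-clause factor, but the route is identical.
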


\begin{proof} Consider the event $B^{(2)},S^{(2)}$ defined in the proof of \Prop~\ref{prop_second_moment_rate_function} and their probability $\mathbb{P}'$ under the distribution where (with the notations of the proof of \Prop~\ref{prop_second_moment_rate_function}) 
$$\phi_{al}=
\begin{cases}
(1,1), &\text{with probability } (1-\alpha)/2\\
(1,-1), &\text{with probability } \alpha/2\\
(-1,1), &\text{with probability } \alpha/2\\
(-1,-1), &\text{with probability }  (1-\alpha)/2\\
\end{cases}$$
independently for all $a \in [m], l \in [k]$. We have
\begin{align*} \frac{1}{m} \ln \mathbb{P}'[S^{(2)}] &\sim \ln \left(1-2 c_\beta 2^{-k} + c_\beta^2 \left( \frac{1-\alpha}{2} \right) \right) - \ln \left(1 + \exp(-\beta) \right),
\\ \frac{1}{m} \ln \mathbb{P}'[B^{(2)}] &\sim 1,
\\\frac{1}{m} \ln \mathbb{P}'[S^{(2)}|B^{(2)}] &\sim \frac{1}{m} \ln \mathbb{P}[S^{(2)}|B^{(2)}]. \end{align*}
In particular, with Bayes' theorem 
$$ \ovf_{2,k}(d,\beta,\alpha) - f_{2,k}(d,\beta,\alpha) \sim \frac{1}{m} \ln \mathbb{P}'[S^{(2)}]- \frac{1}{m} \ln \mathbb{P}[S^{(2)} | B^{(2)}] \sim -\frac{1}{m} \mathbb{P}'\beta[B^{(2)}|S^{(2)}] \geq 0 $$
 \end{proof}

\begin{lemma} \label{lemma_upb_ovf_2} Assume that $d \leq \dplus$ and $\beta \in \mathbb{R}$. Then $$ \ovf_{2,k}(d,\beta,1/2-k^2 2^{-k/2}) < f_{2,k}(d,\beta,1/2).$$ \end{lemma}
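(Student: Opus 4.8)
The plan is to expand both sides about $\alpha=1/2$ to an additive accuracy of $o_k(k^4 2^{-k})$ and observe that the entropy term of $\ovf_{2,k}$ falls off fast enough to push $\ovf_{2,k}(d,\beta,1/2-k^2 2^{-k/2})$ strictly below $f_{2,k}(d,\beta,1/2)$. Throughout write $\delta = k^2 2^{-k/2}$ and $\alpha_0 = 1/2-\delta$, and recall that $d\le\dplus$ forces $d/k = O(2^k)$ (since $\dplus/k = 2^k\ln 2 - k\ln 2/2 + O(1)$), so that any quantity of the form $\tfrac{d}{k}\,\tilde O_k(4^{-k})$ is $\tilde O_k(2^{-k})$; these crude bounds are uniform over $d\in[0,\dplus]$.

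For the right-hand side, \Lem~\ref{lemma_vanilla_expansion_f2} gives $f_{2,k}(d,\beta,1/2) = 2f_{1,k}(d,\beta)$, and then \Lem~\ref{lemma_vanilla_exp_f1} yields
$$ f_{2,k}(d,\beta,1/2) = 2\ln 2 - \frac{2d}{k}c_\beta 2^{-k} + O_k(k2^{-k}), $$
since the two subleading contributions $\tfrac{2d}{k}\bigl(2^{-1-2k} - k2^{-1-2k}\bigr) = \tfrac{d}{k}(1-k)4^{-k}$ are bounded in absolute value by $\dplus 4^{-k} = O_k(k2^{-k})$ and the error $\tilde O_k(4^{-k})$ is negligible.

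For the left-hand side I would work directly from $\ovf_{2,k}(d,\beta,\alpha) = \ln 2 + H(\alpha) + \tfrac{d}{k}\ln\ovz_{2,k}(\beta,\alpha)$. Since $H(1/2)=\ln 2$, $H'(1/2)=0$ and $H''(1/2)=-4$, we have $H(\alpha_0) = \ln 2 - 2\delta^2 + O(\delta^3)$ with $\delta^2 = k^4 2^{-k}$ and $O(\delta^3) = o_k(2^{-k})$. For the clause term, note $\ovz_{2,k}(\beta,1/2) = (1-c_\beta 2^{-k})^2$ and
$$ \ovz_{2,k}(\beta,\alpha_0) = (1-c_\beta 2^{-k})^2 + c_\beta^2 4^{-k}\big((1+2\delta)^k - 1\big); $$
because $k\delta = k^3 2^{-k/2} = o_k(1)$ one gets $(1+2\delta)^k = \exp\!\big(2k\delta + O(k\delta^2)\big) = 1 + O(k^3 2^{-k/2})$, hence $\ovz_{2,k}(\beta,\alpha_0) = (1-c_\beta 2^{-k})^2\bigl(1+\tilde O_k(2^{-5k/2})\bigr)$ and $\ln\ovz_{2,k}(\beta,\alpha_0) = 2\ln(1-c_\beta 2^{-k}) + \tilde O_k(2^{-5k/2})$. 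Multiplying by $d/k = O(2^k)$ and using $\ln(1-c_\beta 2^{-k}) = -c_\beta 2^{-k} + O_k(4^{-k})$ gives
$$ \ovf_{2,k}(d,\beta,\alpha_0) = 2\ln 2 - 2k^4 2^{-k} - \frac{2d}{k}c_\beta 2^{-k} + O_k(2^{-k}). $$

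Subtracting the two expansions, the $2\ln 2$ and the $\tfrac{2d}{k}c_\beta 2^{-k}$ terms cancel and we are left with
$$ \ovf_{2,k}(d,\beta,\alpha_0) - f_{2,k}(d,\beta,1/2) = -2k^4 2^{-k} + O_k(k2^{-k}) = 2^{-k}\big(-2k^4 + O_k(k)\big) < 0 $$
for all $k\ge k_0$ with $k_0$ large, which is the claim. The only step requiring real care is controlling $\ovz_{2,k}(\beta,\cdot)$ at $\alpha_0$ relative to $1/2$: one needs the perturbation factor $(1+2\delta)^k$ of $(\tfrac{1-\alpha}{2})^k$ to tend to $1$, which holds as soon as $\delta = o(1/k)$; the genuinely quantitative balance is that the entropy drop $2\delta^2 = 2k^4 2^{-k}$ must beat the gap between $\ovf_{2,k}$ and $f_{2,k}=2f_{1,k}$ at $\alpha=1/2$ (nonnegative by \Lem~\ref{simple_uper_bound_moments_f} and of size $O_k(k2^{-k})$), which forces $\delta \gg \sqrt k\,2^{-k/2}$ — comfortably satisfied by $\delta = k^2 2^{-k/2}$.
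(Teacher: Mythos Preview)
Your proof is correct and follows essentially the same approach as the paper: both expand $\ovf_{2,k}(d,\beta,1/2-k^2 2^{-k/2})$ directly and compare with $f_{2,k}(d,\beta,1/2)=2f_{1,k}(d,\beta)$ via \Lem~\ref{lemma_vanilla_expansion_f2} and \Lem~\ref{lemma_vanilla_exp_f1}, finding that the entropy loss $2k^4 2^{-k}$ dominates the residual terms of order $d\,O_k(4^{-k})=O_k(k2^{-k})$. Your presentation is slightly more explicit in isolating the entropy and clause contributions and in noting that the choice $\delta=k^2 2^{-k/2}$ is what makes the balance work, but the computation is the same.
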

\begin{proof} We compute 
\begin{align*}\ovf_{2,k}(d,\beta,1/2- k^2 2^{-k/2}) &= 2 \ln2 - k^4 2^{1-k} + \frac{d}{k} \left( - c_\beta 2^{1-k} - c_\beta^2 2^{-2k}  \right) + \tilde O_k(2^{-4k/3})
 \end{align*}
On the other hand using \Lem~\ref{lemma_vanilla_expansion_f2} and \Lem~\ref{lemma_vanilla_exp_f1} we have
\begin{align*} f_{2,k}(d,\beta,1/2) = 2 f_{1,k}(d,\beta) = 2 \ln2- 2\frac{d}{k} \left( c_\beta 2^{-k} +  2^{-1-2k} -k  2^{-1-2k} \right) + \tilde O_k(4^{-k}).  \end{align*}
It follows that 
$$\ovf_{2,k}(d,\beta,1/2- k^2 2^{-k/2}) -f_{2,k}(d,\beta,1/2) \leq - k^4 2^{1-k}+  d  O_k(4^{-k}) + \tilde O_k(4^{-k}) <0.$$ \end{proof}

In order to prove \Lem~\ref{lemma_vanilla_non_local_0}-\ref{lemma_vanilla_non_local}, it will be convenient to restrict the range of $(d,\beta)$ that we need to consider.  We define $\ovf_{2,k}:\mathbb{R} \times (0,1) \to \mathbb{R}$ and $\ovz_{2,k}: (0,1) \to \mathbb{R}$ by $$\ovf_{2,k}(d,\alpha) = \lim_{\beta \to \infty}\ovf_{2,k}(d,\beta,\alpha), \qquad \ovz_{2,k}(\alpha) = \lim_{\beta \to \infty} \ovz_{2,k}(\beta,\alpha).$$

The following claim is immediate, once one observes that $\frac{\partial}{\partial d}\ovf_{2,k}(d,\alpha) = \frac{1}{k} \ln \left( \bar{z}_{2,k}(d,\beta,\alpha) \right)$.
  \begin{claim} \label{claim_aux_vanilla_mon_1} Assume that $d \leq \dplus$. Then for $\alpha \in (0, 1/2-k^2 2^{-k/2})$ we have $$ \frac{\partial}{\partial d}\ovf_{2,k}(d,\alpha) \geq \frac{\partial}{\partial d}\ovf_{2,k}(d,1/2-k^2 2^{-k/2}).$$ \end{claim}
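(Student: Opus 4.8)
The plan is to reduce the claim to a one-line monotonicity observation, so the proof will be short. First I would make the $\beta\to\infty$ limit explicit. Since $c_\beta=1-\exp(-\beta)\to1$, taking the limit in the definition of $\ovz_{2,k}(\beta,\alpha)$ gives
$\ovz_{2,k}(\alpha)=1-2^{1-k}+\bigl(\tfrac{1-\alpha}{2}\bigr)^k$, and correspondingly
$\ovf_{2,k}(d,\alpha)=\ln2+H(\alpha)+\tfrac dk\ln\ovz_{2,k}(\alpha)$. I would record here that $\ovz_{2,k}(\alpha)\geq 1-2^{1-k}>0$ for all $k\geq2$, so that $\ln\ovz_{2,k}(\alpha)$ is well defined and smooth in $\alpha$ on $(0,1)$, and the exchange of limit and differentiation below is unproblematic.

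Next I would observe that $\ovf_{2,k}(d,\alpha)$ is affine in $d$, so that $\partial_d\ovf_{2,k}(d,\alpha)=\tfrac1k\ln\ovz_{2,k}(\alpha)$; in particular this derivative does not depend on $d$ at all (the hypothesis $d\leq\dplus$ merely keeps us in the regime of interest and is not otherwise used). Consequently the asserted inequality
$\partial_d\ovf_{2,k}(d,\alpha)\geq\partial_d\ovf_{2,k}(d,1/2-k^2 2^{-k/2})$ is equivalent to the purely scalar statement
$\ovz_{2,k}(\alpha)\geq\ovz_{2,k}(1/2-k^2 2^{-k/2})$ for every $\alpha\in(0,1/2-k^2 2^{-k/2})$.

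Finally I would dispose of this scalar inequality: the map $\alpha\mapsto\bigl(\tfrac{1-\alpha}{2}\bigr)^k$ is strictly decreasing on $(0,1)$, hence so is $\alpha\mapsto\ovz_{2,k}(\alpha)$, and since $\ovz_{2,k}$ stays positive, composing with the increasing logarithm preserves the order. Evaluating at any $\alpha<1/2-k^2 2^{-k/2}$ then yields the claim (in fact with strict inequality).

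There is no real obstacle here. The only two points that merit a word of care are checking positivity of $\ovz_{2,k}$ so that $\ln\ovz_{2,k}$ is meaningful, and pointing out that the $d$-derivative is genuinely independent of $d$, which makes the comparison trivially uniform in $d$ over the range $d\leq\dplus$.
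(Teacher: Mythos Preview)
Your proposal is correct and follows precisely the paper's own approach: the paper states that the claim ``is immediate, once one observes that $\frac{\partial}{\partial d}\ovf_{2,k}(d,\alpha) = \frac{1}{k}\ln\bigl(\ovz_{2,k}(\alpha)\bigr)$,'' and you have simply spelled out that observation and the monotonicity of $\alpha\mapsto\ovz_{2,k}(\alpha)$ that finishes it. Your added remarks on positivity of $\ovz_{2,k}$ and independence of the derivative from $d$ are accurate and harmless elaborations.
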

  
Similarly, we have the following.

\begin{claim} \label{claim_aux_vanilla_mon_2} Assume that $d \leq \dplus$. Then for $\alpha \in (0, 1/2- k^22^{-k/2})$ we have $$ \frac{\partial}{\partial \beta}\ovf_{2,k}(d,\beta,\alpha) \geq \frac{\partial}{\partial \beta}\ovf_{2,k}(d,\beta,1/2-k^2 2^{-k/2}).$$ \end{claim}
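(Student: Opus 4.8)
The plan is to reduce the inequality to the monotonicity of a single explicit rational function of one real variable. Since $H(\alpha)$ does not depend on $\beta$, differentiating the definition of $\ovf_{2,k}$ gives
\[
\frac{\partial}{\partial\beta}\ovf_{2,k}(d,\beta,\alpha)
=\frac{d}{k}\cdot\frac{\partial_\beta \ovz_{2,k}(\beta,\alpha)}{\ovz_{2,k}(\beta,\alpha)} .
\]
Write $c=c_\beta=1-\exp(-\beta)\in(0,1)$, $c'=\partial_\beta c_\beta=\exp(-\beta)\in(0,1)$, $t=2^{-k}$, and $\theta=\theta_\alpha:=\bigl(\tfrac{1-\alpha}{2}\bigr)^{k}$. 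Then $\ovz_{2,k}(\beta,\alpha)=1-2ct+c^{2}\theta$ and $\partial_\beta\ovz_{2,k}(\beta,\alpha)=c'(-2t+2c\theta)$, so
\[
\frac{\partial}{\partial\beta}\ovf_{2,k}(d,\beta,\alpha)=\frac{dc'}{k}\,g(\theta_\alpha),
\qquad g(\theta):=\frac{-2t+2c\theta}{1-2ct+c^{2}\theta}.
\]
As $\frac{dc'}{k}>0$, it is enough to show $g(\theta_\alpha)\ge g\bigl(\theta_{1/2-k^{2}2^{-k/2}}\bigr)$.

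First I would record two elementary facts. For $k\ge 2$ we have $1-2ct\ge 1-2^{1-k}>0$, hence $\ovz_{2,k}(\beta,\alpha)>0$ on the whole range and $g$ is well defined and smooth there. Also $\frac{1-\alpha}{2}$ is positive and strictly decreasing in $\alpha$, so $\alpha\mapsto\theta_\alpha$ is strictly decreasing, with $\theta_\alpha\in(0,t)$ for all $\alpha\in(0,1)$. In particular, for $\alpha\in\bigl(0,\tfrac12-k^{2}2^{-k/2}\bigr)$ we obtain $\theta_\alpha>\theta_{1/2-k^{2}2^{-k/2}}$, and both values lie in $(0,t)$.

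It then remains to check that $g$ is increasing on $(0,t)$, which is a one-line differentiation: the numerator of $g'(\theta)$ equals $2c\,(1-2ct+c^{2}\theta)-c^{2}(-2t+2c\theta)=2c(1-ct)$, so
\[
g'(\theta)=\frac{2c(1-ct)}{\bigl(1-2ct+c^{2}\theta\bigr)^{2}}>0,
\]
since $c\in(0,1)$ and $t=2^{-k}<\tfrac12$, hence $ct<1$. Combining $g'>0$ on $(0,t)$ with $\theta_\alpha>\theta_{1/2-k^{2}2^{-k/2}}$ yields $g(\theta_\alpha)\ge g\bigl(\theta_{1/2-k^{2}2^{-k/2}}\bigr)$, which is the claim. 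There is essentially no genuine obstacle: the only points needing (minor) care are the strict positivity of $\ovz_{2,k}$ and the sign of $1-ct$, both immediate from $c_\beta<1$ and $2^{-k}<\tfrac12$; the identical computation, with the analogous constant in place of $\theta_{1/2-k^{2}2^{-k/2}}$ and $c_\beta$ replaced by its limit $1$, also underlies Claim~\ref{claim_aux_vanilla_mon_1}.
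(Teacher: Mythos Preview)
Your proof is correct and follows essentially the same approach as the paper: both show that $\partial_\beta\ovf_{2,k}(d,\beta,\alpha)$ is monotone decreasing in $\alpha$ by direct differentiation. Your change of variables to $\theta=\bigl(\tfrac{1-\alpha}{2}\bigr)^k$ yields the exact expression $g'(\theta)=2c(1-ct)/(1-2ct+c^2\theta)^2>0$, which is slightly cleaner than the paper's route of computing the mixed partial $\partial_\alpha\partial_\beta\ovf_{2,k}$ with an asymptotic $(1+\tilde O_k(2^{-k}))$ error term, but the underlying idea is identical.
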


\begin{proof} We compute
$$ \frac{\partial}{\partial \beta}\ovf_{2,k}(d,\beta,\alpha) = - \frac{\exp(-\beta)}{2^{k-1}} \frac{d}{k} \frac{ 1 -  c_\beta \left( {1-\alpha} \right)^k }{1-2 c_\beta 2^{-k} + c_\beta^2 \left( \frac{1-\alpha}{2} \right)^k}.$$
In particular 
$$ \frac{\partial^2}{\partial \alpha \partial \beta}\ovf_{2,k}(d,\beta,\alpha)  = - \frac{d\exp(-\beta)}{2^{k-1}}  c_\beta (1-\alpha)^{k-1} (1+\tilde O_k(2^{-k}))<0.$$
  \end{proof}
  
  Therefore, in order to prove \Lem~\ref{lemma_vanilla_non_local_0} we can assume that $d= \dplus$ and $\beta \to \infty$, and to prove \Lem~\ref{lemma_vanilla_non_local}, we can focus on the following two cases. \begin{itemize} \item $d = d_-(k)$ and $\beta \to \infty$, \item $d = \dplus$ and $\beta = \beta_-(k)$. \\ \end{itemize}
  
  \begin{lemma}\label{lemma_aux_vanilla_ovf_inter} We have $$\sup_{\alpha \in [2^{-k+10},1/2- k^2 2^{-k/2}]} \ovf_{2,k}(\dplus,\alpha) \leq \ovf_{2,k}(\dplus,1/2-k^2 2^{-k/2}).$$ \end{lemma}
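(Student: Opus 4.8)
The plan is to reduce everything to the behaviour of the one‑variable function
\[
 g(\alpha)\ :=\ \ovf_{2,k}(\dplus,\alpha)\ =\ \ln 2+H(\alpha)+\frac{\dplus}{k}\ln\ovz_{2,k}(\alpha),\qquad
 \ovz_{2,k}(\alpha)=1-2^{1-k}+\Bigl(\tfrac{1-\alpha}{2}\Bigr)^{k},
\]
on the interval $I=[\,2^{-k+10},\ \alpha^{\star}\,]$, where $\alpha^{\star}:=\tfrac12-k^{2}2^{-k/2}$. Since $\bigl(\tfrac{1-\alpha}{2}\bigr)^{k}\in(0,2^{-k}]$ for every $\alpha$, we have $\ovz_{2,k}(\alpha)\in(1-2^{1-k},\,1-2^{-k})\subset(\tfrac12,1)$, so $g$ is smooth on $I$ with
\[
 g'(\alpha)=\ln\frac{1-\alpha}{\alpha}-\phi(\alpha),\qquad
 \phi(\alpha):=\frac{\dplus}{2}\cdot\frac{\bigl(\tfrac{1-\alpha}{2}\bigr)^{k-1}}{\ovz_{2,k}(\alpha)}>0 .
\]
Writing $\phi=\tfrac{\dplus}{2}x^{k-1}/(c+x^{k})$ with $x=\tfrac{1-\alpha}{2}$, $c=1-2^{1-k}$, one checks $\tfrac{d}{dx}\bigl(x^{k-1}/(c+x^{k})\bigr)=x^{k-2}\bigl((k-1)c-x^{k}\bigr)/(c+x^{k})^{2}>0$ for $0<x\le\tfrac12$, so $\phi$ is strictly decreasing in $\alpha$; meanwhile $H'(\alpha)=\ln\frac{1-\alpha}{\alpha}$ decreases from $+\infty$ at $0^{+}$ to $0$ at $\tfrac12$.

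The core of the argument is to show that $g$ is \emph{valley‑shaped} on $I$: there is a single $\alpha_{1}\in I$ with $g'<0$ on the part of $I$ left of $\alpha_{1}$ and $g'>0$ on the part right of $\alpha_{1}$. Granting this, $g$ first decreases and then increases on $I$, so $\sup_{I}g=\max\{\,g(2^{-k+10}),\ g(\alpha^{\star})\,\}$, and the claim reduces to comparing the two endpoint values. To establish the valley shape I would treat three ranges of $\alpha$ separately. (i) \emph{Left tail} (roughly $\alpha\lesssim k^{-1}$): here $\bigl(\tfrac{1-\alpha}{2}\bigr)^{k-1}=2^{1-k}(1-\alpha)^{k-1}=\Theta(2^{-k})$, so $\phi(\alpha)\sim\dplus 2^{-k}$ is of order $k$ — here one invokes $\dplus/k=2^{k}\ln2-\tfrac{k\ln2}{2}+O(1)$, so $\dplus 2^{-k}=k\ln2-o(1)$ — while $H'(\alpha)=\ln\frac{1-\alpha}{\alpha}\le\ln(1/2^{-k+10})$ is smaller for $k$ large; hence $g'<0$ (near the very left of $I$ this comparison is tight and must be done with $O(1)$ precision). (ii) \emph{Right tail} (roughly $\alpha\gtrsim(\ln k)/k$): here $\bigl(\tfrac{1-\alpha}{2}\bigr)^{k-1}\le2^{1-k}e^{-(k-1)\alpha}$ makes $\phi(\alpha)$ \emph{exponentially} small in $k$, while by convexity of $H'$ on $(0,\tfrac12)$ (as $H'''>0$ there) $H'(\alpha)\ge H'(\tfrac12)+H''(\tfrac12)(\alpha-\tfrac12)=4(\tfrac12-\alpha)\ge 4k^{2}2^{-k/2}$ on $I$; a short case split on the size of $\alpha$ (e.g.\ $\alpha\le\tfrac14$ versus $\tfrac14\le\alpha\le\alpha^{\star}$) then gives $\phi(\alpha)<H'(\alpha)$, i.e.\ $g'>0$. (iii) \emph{Transition window} (near $\alpha\asymp(\ln k)/k$): one shows $g''>0$ at every zero of $g'$ lying here. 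Using $H''(\alpha)=-\tfrac1{\alpha(1-\alpha)}$ and $\phi'(\alpha)=\phi(\alpha)\bigl(-\tfrac{k-1}{1-\alpha}+\tfrac{k}{\dplus}\phi(\alpha)\bigr)$, at a zero of $g'$ (where $\phi(\alpha)=H'(\alpha)$)
\[
 g''(\alpha)=-\frac{1}{\alpha(1-\alpha)}+\frac{(k-1)H'(\alpha)}{1-\alpha}-\frac{k}{\dplus}H'(\alpha)^{2};
\]
in the window $H'(\alpha)\asymp\ln k$, $\tfrac1\alpha\le O(k/\ln k)$, and the last term is $O\bigl((\ln k)^{2}2^{-k}\bigr)=o(1)$, so $g''(\alpha)\asymp k\ln k>0$. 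Thus every zero of $g'$ in the window is a strict local minimum of $g$, so $g'$ has at most one zero there; combined with (i)–(ii) this gives the single crossover of $g'$ on all of $I$.

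It then remains to compare $g$ at the two endpoints of $I$. This is a direct asymptotic computation with $\dplus/k=2^{k}\ln2-\tfrac{k\ln2}{2}+O(1)$. At $\alpha^{\star}$: $H(\alpha^{\star})=\ln2-2k^{4}2^{-k}+O(k^{8}2^{-2k})$ and $\bigl(\tfrac{1-\alpha^{\star}}{2}\bigr)^{k}=\bigl(\tfrac14+\tfrac{k^{2}}{2}2^{-k/2}\bigr)^{k}=O(4^{-k})$, so $\tfrac{\dplus}{k}\ln\ovz_{2,k}(\alpha^{\star})=-2\ln2+k\ln2\cdot2^{-k}+O(2^{-k})$, whence $g(\alpha^{\star})=-2k^{4}2^{-k}+O(k2^{-k})$. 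At the left endpoint $\alpha_{0}=2^{-k+10}$ one expands (using $k\alpha_{0}\to0$) $\bigl(\tfrac{1-\alpha_{0}}{2}\bigr)^{k}=2^{-k}\bigl(1-k\alpha_{0}+O((k\alpha_{0})^{2})\bigr)$ and $H(\alpha_{0})=\alpha_{0}\ln\tfrac1{\alpha_{0}}+\alpha_{0}+O(\alpha_{0}^{2})$, expands $\tfrac{\dplus}{k}\ln\ovz_{2,k}(\alpha_{0})$ to order $k2^{-k}$, and compares the resulting value of $g(\alpha_{0})$ with $g(\alpha^{\star})$. This comparison is the delicate point of this step, since the leading $\ln2$‑terms in $g$ cancel and one is left with quantities of small and comparable order. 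One concludes $\sup_{I}g=g(\alpha^{\star})$, which is the statement.

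The step I expect to be the main obstacle is establishing the valley shape — i.e.\ controlling the sign of $g'$ uniformly across $I$ and, in particular, ruling out additional oscillations — which is exactly why the argument is organized as above: the left tail (where $\phi$ is of order $k$), the right tail (where $\phi$ is exponentially small in $k$), and, via the second derivative, the narrow transition window around $\alpha\asymp(\ln k)/k$. Everything else is routine asymptotics once the $O(1)$‑precise value of $\dplus/k$ is plugged in.
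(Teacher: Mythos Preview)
Your valley-shape argument is essentially sound, but the final endpoint comparison fails as written. With the left endpoint $\alpha_0=2^{-k+10}$ that you read off the statement, a direct expansion gives
\[
g(\alpha_0)-g(0)=\alpha_0\bigl(1-10\ln 2\bigr)+O(k^2 4^{-k}),\qquad
g(0)=\ln 2+\tfrac{\dplus}{k}\ln(1-2^{-k})=\tfrac{k\ln 2}{2}\,2^{-k}\bigl(1+o(1)\bigr),
\]
so $g(\alpha_0)=\tfrac{k\ln 2}{2}\,2^{-k}(1+o(1))>0$ for large $k$, whereas $g(\alpha^{\star})=-2k^{4}2^{-k}(1+o(1))<0$. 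Hence $g(\alpha_0)>g(\alpha^{\star})$ and your conclusion $\sup_I g=g(\alpha^{\star})$ is simply false. In fact the lemma as literally stated with left endpoint $2^{-k+10}$ is false; the intended left endpoint is $2^{-k/10}$ (this is what the paper's own proof actually treats, see the range in the direct bound for small $\alpha$, and it is also the range used downstream). With $\alpha_0=2^{-k/10}$ your endpoint comparison does go through: $g(\alpha_0)=-\tfrac{9k\ln 2}{10}\,2^{-k/10}(1+o(1))\ll g(\alpha^{\star})$.

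On the corrected interval your route differs from the paper's. The paper does not establish a global valley shape: it shows $g'>0$ directly on $[2(\ln k)/k,\,\alpha^{\star}]$ by splitting into three explicit sub-intervals, and on the remaining piece $[2^{-k/10},\,2(\ln k)/k]$ it bypasses the derivative entirely via the crude value bound $g(\alpha)\le \alpha(-\ln\alpha-\tfrac{k\ln 2}{2})+O(\alpha)<g(\alpha^{\star})$. Your approach --- proving a single sign change of $g'$ via $g''>0$ at every critical point in the transition window $\alpha\asymp(\ln k)/k$ --- is a legitimate alternative, and conceptually cleaner, but it costs you the second-derivative analysis that the paper avoids altogether.
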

\begin{proof}
We first compute
\beq \label{eq_aux_vanilla_second_a0_bis}  \ovf_{2,k}(\dplus, 1/2-k^2 2^{-k/2}) = - k^4 2^{1-k}+ \tilde O_k(4^{-k}). \eeq
We differentiate $\ovf_{2,k}(\dplus,\alpha)$ with respect to $\alpha$.
$$ \frac{\partial \ovf_{2,k}}{\partial \alpha}(\dplus,  \alpha) = - \ln \left( \frac{\alpha}{1-\alpha} \right) - \frac{\dplus}{2^k} \frac{(1-\alpha)^{k-1}}{\ovz_{2,k}(\alpha)}. $$
Assume that $\alpha \in [1/2-k^2 2^{k/2}, 1/2-2^{-k/3}]$. Then we have
\beq \label{eq_aux_vanilla_second_a1_bis} \frac{\partial \ovf_{2,k}}{\partial \alpha}(\dplus, \alpha) \geq - \ln \left( \frac{1/2-k^2 2^{-k/2}}{1/2+k^2 2^{-k/2}} \right) -  (k \ln2 ) 2^{1-k} \left(1 + O_k(2^{-k})\right) >0. \eeq
Assume that $\alpha \in [0.4, 1/2-2^{-k/3}]$. Then we have
\beq \label{eq_aux_vanilla_second_a1} \frac{\partial \ovf_{2,k}}{\partial \alpha}(\dplus, \alpha) \geq - \ln \left( \frac{1/2-2^{-k/3}}{1/2+2^{-k/3}} \right) -  (k \ln2 ) {(0.6)^{k-1}} \left(1 + O_k(2^{-k})\right) >0. \eeq
Similarly, for $\alpha \in [2 (\ln k)/k, 0.4]$, we have
\beq \label{eq_aux_vanilla_second_a2} \frac{\partial \ovf_{2,k}}{\partial \alpha} \left (\dplus, \alpha \right) \geq - \ln \left( \frac{0.4}{0.6} \right) -  \frac{\ln2}{k}+ O_k((\ln k) k^{-2}) >0. \eeq
For $\alpha \in [2^{-k/10}, 2 (\ln k)/k]$, we compute with the help of (\ref{eq_aux_vanilla_second_a0_bis}), and using $-(1- \exp(-x)) \leq -x/2$ for $0<x< 1$
\beq \label{eq_aux_vanilla_second_a3_ter} \ovf_{2,k} \left (\dplus, \alpha \right) \leq \alpha \left(-\ln \alpha -\frac{ k \ln 2}{2}\right) +  O_k(\alpha) < \ovf_{2,k}(\dplus, 1/2-k^22^{-k/2}). \eeq
The lemma follows from Eq.(\ref{eq_aux_vanilla_second_a1_bis}-\ref{eq_aux_vanilla_second_a2}) and (\ref{eq_aux_vanilla_second_a3_ter}).

 \end{proof}
  
\begin{lemma}\label{lemma_border_case_1} We have $$\sup_{\alpha \in (0,1/2- k^2 2^{-k/2}]} \ovf_{2,k}(d_-(k),\alpha) \leq \ovf_{2,k}(d_-(k),1/2-k^2 2^{-k/2}).$$ \end{lemma}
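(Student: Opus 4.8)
The plan is to deduce the claim for the degree $d_-(k)$ from the already-established inequality for $d=\dplus$ (\Lem~\ref{lemma_aux_vanilla_ovf_inter}) by exploiting the (affine) dependence of $\ovf_{2,k}$ on $d$, and then to dispose separately of the sliver of very small $\alpha$ that \Lem~\ref{lemma_aux_vanilla_ovf_inter} does not cover.

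Write $\beta_0=\tfrac12-k^2 2^{-k/2}$. Recall that $\ovf_{2,k}(d,\alpha)=\ln 2+H(\alpha)+\tfrac dk\ln\ovz_{2,k}(\alpha)$ is affine in $d$ with $\partial_d\ovf_{2,k}(d,\alpha)=\tfrac1k\ln\ovz_{2,k}(\alpha)$, and that $\ovz_{2,k}(\alpha)=1-2^{1-k}+2^{-k}(1-\alpha)^k$ is strictly decreasing in $\alpha$ on $(0,1)$. Hence for every $\alpha\in(0,\beta_0)$ we have $\partial_d\bigl[\ovf_{2,k}(d,\alpha)-\ovf_{2,k}(d,\beta_0)\bigr]=\tfrac1k\bigl[\ln\ovz_{2,k}(\alpha)-\ln\ovz_{2,k}(\beta_0)\bigr]\ge 0$, which is exactly \Claim~\ref{claim_aux_vanilla_mon_1}. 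Therefore $d\mapsto\ovf_{2,k}(d,\alpha)-\ovf_{2,k}(d,\beta_0)$ is non-decreasing, and since $d_-(k)=\dplus-k^5\le\dplus$ we obtain, for all $\alpha\in(0,\beta_0)$,
\[\ovf_{2,k}(d_-(k),\alpha)-\ovf_{2,k}(d_-(k),\beta_0)\ \le\ \ovf_{2,k}(\dplus,\alpha)-\ovf_{2,k}(\dplus,\beta_0).\]
By \Lem~\ref{lemma_aux_vanilla_ovf_inter} the right-hand side is $\le 0$ whenever $\alpha\in[2^{-k+10},\beta_0]$, which settles the bulk of the range.

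It remains to treat $\alpha\in(0,2^{-k+10})$. This I would handle by a direct estimate, along the lines of the very-small-$\alpha$ subcase in the proof of \Lem~\ref{lemma_aux_vanilla_ovf_inter}: expand $H(\alpha)$ around $0$; expand $\tfrac{d_-(k)}{k}\ln\ovz_{2,k}(\alpha)$ using $q=q(k,d_-(k),\infty)=\tfrac12+\tilde O_k(2^{-k})$ (from (\ref{eq_def_q})) together with the expansion of $\ln\bigl(1-2^{1-k}+2^{-k}(1-\alpha)^k\bigr)$; and compare with $\ovf_{2,k}(d_-(k),\beta_0)$, whose expansion follows from \Lem~\ref{lemma_vanilla_exp_f1} and \Lem~\ref{lemma_vanilla_expansion_f2} (equivalently from (\ref{eq_aux_vanilla_second_a0_bis}), shifted from $\dplus$ to $d_-(k)$ via the affineness in $d$ noted above). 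All quantities involved are of order $\tilde O_k(\mathrm{poly}(k)\,2^{-k})$, and tracking them gives $\ovf_{2,k}(d_-(k),\alpha)<\ovf_{2,k}(d_-(k),\beta_0)$ on this range.

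The hard part is precisely this last step: on $(0,2^{-k+10})$ the leading term $\ln 2$ of $\ovf_{2,k}(d_-(k),\alpha)$ is very nearly cancelled by $\tfrac{d_-(k)}{k}\ln\ovz_{2,k}(\alpha)\approx-\ln 2$, so the inequality is decided by lower-order corrections and the bookkeeping has to be carried out with some care, in the same spirit as (and reusing the expansions from) the proof of \Lem~\ref{lemma_aux_vanilla_ovf_inter}. By contrast the bulk range $[2^{-k+10},\beta_0]$ comes essentially for free once the affine dependence on $d$ is observed.
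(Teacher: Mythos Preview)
Your reduction of the bulk range to $d=\dplus$ via the affine dependence on $d$ (\Claim~\ref{claim_aux_vanilla_mon_1}) together with \Lem~\ref{lemma_aux_vanilla_ovf_inter} is exactly what the paper does.

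The gap is in the remaining sliver near $0$. The method you point to---the small-$\alpha$ subcase of \Lem~\ref{lemma_aux_vanilla_ovf_inter}, namely (\ref{eq_aux_vanilla_second_a3_ter})---rests on the crude bound $\ovf_{2,k}(\dplus,\alpha)\le\alpha\bigl(-\ln\alpha-\tfrac{k\ln2}{2}\bigr)+O_k(\alpha)$, which is useful only while $-\ln\alpha<\tfrac{k\ln2}{2}$, i.e.\ for $\alpha>2^{-k/2}$. On the range you still need to cover, $\alpha$ goes all the way down to $0$, and there $\alpha\mapsto\ovf_{2,k}(d_-(k),\alpha)$ is \emph{not} monotone: since
\[
\partial_\alpha\ovf_{2,k}(d_-(k),\alpha)=-\ln\frac{\alpha}{1-\alpha}-\frac{d_-(k)}{2^k}\,\frac{(1-\alpha)^{k-1}}{\ovz_{2,k}(\alpha)}
\]
tends to $+\infty$ as $\alpha\to0^+$ and is negative once $\alpha$ is of order $2^{-k}$, there is a genuine interior local maximum that an expansion around $0$ does not locate or control. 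Nor can you fall back on the $d$-monotonicity here: at $d=\dplus$ one finds $\ovf_{2,k}(\dplus,\alpha^\star)>0>\ovf_{2,k}(\dplus,\beta_0)$ at that local maximum (combine the expansion behind (\ref{eq_aux_vanilla_second_a0_bis}) with the one in the proof of \Lem~\ref{lemma_border_case_2}), so the comparison inequality you derived points the wrong way on this sliver. Incidentally, $q$ from (\ref{eq_def_q}) does not enter $\ovf_{2,k}$ at all, so that reference is extraneous.

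The paper closes the gap by actually solving $\partial_\alpha\ovf_{2,k}(d_-(k),\alpha)=0$ on the small-$\alpha$ range: the critical equation forces $\alpha^\star=2^{-1-k}+\tilde O_k(4^{-k})$, and evaluating gives $\ovf_{2,k}(d_-(k),\alpha^\star)=k^5\,2^{-k}+\tilde O_k(4^{-k})$, which is then compared with the directly computed $\ovf_{2,k}(d_-(k),\beta_0)=k^5\,2^{1-k}+O_k(k^4 2^{-k})$. This is the missing step in your sketch.
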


\begin{proof}
We first compute
\beq \label{eq_aux_vanilla_second_b0}  \ovf_{2,k}(d_-(k), 1/2-k^2 2^{-k/2}) = k^5  2^{1-k}+ O_k(k^4 2^{-k}). \eeq
By \Lem~\ref{lemma_aux_vanilla_ovf_inter} and \Claim~\ref{claim_aux_vanilla_mon_1} we also have
\beq \label{eq_aux_vanilla_second_b1} \sup_{\alpha \in (2^{-k/10}, 1/2-k^2 2^{-k/2}]}  \ovf_{2,k}(d_-(k), \alpha) \leq  \ovf_{2,k}(d_-(k),1/2-k^2 2^{-k/2}). \eeq
Let $\alpha^\star$ be a maximum of $\ovf_{2,k}(d_-(k), \cdot)$ over $(0,2^{-k/10})$.The equation $\frac{\partial \ovf_{2,k}(d_-(k),\alpha)}{\partial \alpha}=0$ reads
\beq \label{eq_maximizer_boundary_vanilla_second_1} -\ln \left( \frac{\alpha^\star}{1-\alpha^\star} \right) = k \ln2 (1+o_k(1)). \eeq
Expanding for $\alpha \leq 2^{-k/10}$, we obtain that 
$\alpha^\star  \sim 2^{-k} $. Using (\ref{eq_maximizer_boundary_vanilla_second_1}) once again yields
$$ \alpha^\star = 2^{-1-k}  + \tilde O_k(4^{-k}).$$
In particular \begin{align*} \ovf_{2,k}(d_-(k), \alpha^\star) &= \ln2 + k 2^{-k} +2^{-k} + \frac{d_-(k)}{k} \left( -2^{-k}-2^{-1-2k} -k 2^{-2k} \right)+ \tilde O_k(4^{-k}) 
\\ & = k^5 2^{-k}+ \tilde O_k(4^{-k}). \end{align*}
Noting that $\lim_{\alpha \to 0} \frac{\partial}{\partial \alpha} \ovf_{2,k}(d_-(k),\alpha) = \infty$ and using (\ref{eq_aux_vanilla_second_b0}), this gives
\beq \label{eq_aux_vanilla_second_b4} \sup_{\alpha \in (0,2^{-k/10})} \ovf_{2,k}(d_-(k), \alpha) \leq \ovf_{2,k}(d_-(k),1/2-k^2 2^{-k/2}). \eeq
Collecting (\ref{eq_aux_vanilla_second_b1}) and (\ref{eq_aux_vanilla_second_b4}) ends the proof of the lemma.
 \end{proof}

\begin{lemma}\label{lemma_border_case_2} We have $$ \sup_{\alpha \in (0,1/2-k^2 2^{-k/2}]} \ovf_{2,k}(\dplus, \beta_-(k), \alpha) \leq \ovf_{2,k}(\dplus,\beta_-(k), 1/2-k^2 2^{-k/2}).$$ \end{lemma}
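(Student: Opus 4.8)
The plan is to mimic the proof of \Lem~\ref{lemma_border_case_1}, now keeping $d=\dplus$ fixed and letting $\beta$ decrease from $+\infty$ down to $\beta_-(k)$, i.e.\ letting $c_\beta$ decrease from $1$ to $1-k^{10}2^{-k}$; as there, we split the supremum over $\alpha$ into the ranges $[2^{-k/10},1/2-k^2 2^{-k/2}]$ and $(0,2^{-k/10})$. On the first range we would argue by monotonicity in $\beta$: the estimate $\frac{\partial^2}{\partial\alpha\partial\beta}\ovf_{2,k}<0$ established in the proof of \Claim~\ref{claim_aux_vanilla_mon_2} shows that $\beta\mapsto \ovf_{2,k}(\dplus,\beta,\alpha)-\ovf_{2,k}(\dplus,\beta,1/2-k^2 2^{-k/2})$ is non-decreasing, and since the explicit formula for $\frac{\partial}{\partial\beta}\ovf_{2,k}$ decays like $\exp(-\beta)$ this difference has a limit as $\beta\to\infty$, namely $\ovf_{2,k}(\dplus,\alpha)-\ovf_{2,k}(\dplus,1/2-k^2 2^{-k/2})\leq 0$ by \Lem~\ref{lemma_aux_vanilla_ovf_inter}. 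Hence the difference is $\leq 0$ at the finite point $\beta=\beta_-(k)$ as well, which is exactly the bound wanted on this range (the right endpoint being an equality).

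For $\alpha\in(0,2^{-k/10})$ a direct computation is needed. Since $\frac{\partial}{\partial\alpha}\ovf_{2,k}(\dplus,\beta_-(k),\alpha)\to+\infty$ as $\alpha\to0^+$ and $\frac{\partial}{\partial\alpha}\ovf_{2,k}(\dplus,\beta_-(k),\cdot)$ is strictly decreasing on $(0,2^{-k/10})$ (the summand $-\ln\frac{\alpha}{1-\alpha}$ has derivative $-\frac1{\alpha(1-\alpha)}\leq-2^{k/10}$ there, which dominates the $O(k^2)$ derivative of the other summand), there is a unique critical point $\alpha^\star\in(0,2^{-k/10})$, and it is the maximiser on that interval. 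Solving $-\ln\frac{\alpha^\star}{1-\alpha^\star}=\frac{\dplus c_\beta^2(1-\alpha^\star)^{k-1}}{2^k\,\ovz_{2,k}(\beta_-(k),\alpha^\star)}$ gives $-\ln\frac{\alpha^\star}{1-\alpha^\star}=k\ln2\,(1+o_k(1))$, so $\alpha^\star\sim2^{-k}$, which can be sharpened to the next order exactly as in \Lem~\ref{lemma_border_case_1}. Plugging back and expanding, using $\tfrac{\dplus}{k}=2^k\ln2-\tfrac{k\ln2}{2}+O(1)$, $H(\alpha^\star)=(k\ln2+1)2^{-k}+\tilde O_k(4^{-k})$ and $\ovz_{2,k}(\beta_-(k),\alpha^\star)=1-2^{-k}+\tilde O_k(4^{-k})$, one finds $\ovf_{2,k}(\dplus,\beta_-(k),\alpha^\star)=O(k\,2^{-k})$, with no large polynomial correction: indeed the $c_\beta$-dependent part of $\ovz_{2,k}(\beta_-(k),\alpha^\star)$ collapses to the combination $-2c_\beta+c_\beta^2=-1+(1-c_\beta)^2$, which is stationary at $c_\beta=1$, so lowering $\beta$ from $+\infty$ to $\beta_-(k)$ perturbs $\ovf_{2,k}(\dplus,\beta_-(k),\alpha^\star)$ only by $\tilde O_k(4^{-k})$.

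By contrast, running the computation in the proof of \Lem~\ref{lemma_upb_ovf_2} at $c_\beta=1-k^{10}2^{-k}$ yields $\ovf_{2,k}(\dplus,\beta_-(k),1/2-k^2 2^{-k/2})=(2k^{10}\ln2-2k^4+O(k))2^{-k}+\tilde O_k(2^{-4k/3})$; here the $c_\beta$-dependent part enters through the term $-c_\beta 2^{1-k}$ of $\ln\ovz_{2,k}$, which is linear in $1-c_\beta$, so decreasing $\beta$ raises this value by $\approx 2k^{10}\ln2\cdot2^{-k}$. Since $\tfrac{k\ln2}{2}<2k^{10}\ln2$ for all $k\geq1$, this beats the $O(k\,2^{-k})$ value at $\alpha^\star$ once $k\geq k_0$, whence $\ovf_{2,k}(\dplus,\beta_-(k),\alpha^\star)\leq\ovf_{2,k}(\dplus,\beta_-(k),1/2-k^2 2^{-k/2})$; combined with the first range this proves the lemma. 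The delicate point, and the main obstacle, is precisely this last comparison: the leading $\ln2$ terms of $H$ and of $\tfrac{\dplus}{k}\ln\ovz_{2,k}$ cancel, so one must expand both sides several orders in $2^{-k}$ and, crucially, track the $1-c_\beta=k^{10}2^{-k}$ corrections with enough precision to see that the particular choice $\beta_-(k)=k\ln2-10\ln k$ (polynomial degree $10$ in $k$) outweighs the $-2k^4 2^{-k}$ deficit contributed by $H(1/2-k^2 2^{-k/2})$.
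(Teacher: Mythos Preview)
Your proof is correct and follows essentially the same route as the paper: split at $\alpha=2^{-k/10}$, handle the outer range by monotonicity in $\beta$ (Claim~\ref{claim_aux_vanilla_mon_2}) combined with \Lem~\ref{lemma_aux_vanilla_ovf_inter}, locate the unique inner critical point $\alpha^\star\sim 2^{-k}$, and compare the value there with the right endpoint. Your observation that the $c_\beta$-dependence enters through $-2c_\beta+c_\beta^2=-(1-(1-c_\beta)^2)$ at $\alpha^\star$ (hence quadratic in $1-c_\beta$) but through the linear term $-2c_\beta 2^{-k}$ at $\alpha=1/2-k^2 2^{-k/2}$ is the clean way to see why lowering $\beta$ to $\beta_-(k)$ lifts the endpoint by $\sim 2k^{10}\ln2\cdot 2^{-k}$ while leaving the critical value at $O(k\,2^{-k})$; this is in fact sharper bookkeeping than the paper's own expansion, which records the critical value as $\tilde O_k(4^{-k})$.
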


\begin{proof} By combining \Lem~\ref{lemma_aux_vanilla_ovf_inter}, \Claim~\ref{claim_aux_vanilla_mon_1} and \Claim~\ref{claim_aux_vanilla_mon_2} we obtain
\beq \label{eq_aux_vanilla_second_c1} \sup_{\alpha \in [2^{-k/10}, 1/2-k^22^{-k/2}]} \ovf_{2,k}(\dplus,\beta_-(k), \alpha) = \ovf_{2,k}(\dplus,\beta_-(k), 1/2-k^2 2^{-k/2}). \eeq
Let $\alpha^\star$ be a maximum of $\ovf_{2,k}(\dplus,\beta_-(k), \cdot)$ over $(0,2^{-k/10})$.The equation $\frac{\partial \ovf_{2,k}(\dplus,\beta_-(k),\alpha)}{\partial \alpha}=0$ is again given by (\ref{eq_maximizer_boundary_vanilla_second_1}) and hence
$$ \alpha^\star = \frac{1}{2} - 2^{-1-k}  + \tilde O_k(4^{-k}).$$
In particular \begin{align*} \ovf_{2,k}(\dplus,\beta_-(k), \alpha^\star) &= \ln2 + k 2^{-k} +2^{-k} + \frac{\dplus}{k} \left( -c_\beta 2^{-k}-2^{-1-2k} -k 2^{-2k} \right)+ \tilde O_k(4^{-k}) 
\\ & = \tilde O_k(4^{-k}), \end{align*}
while $$ \ovf_{2,k}(\dplus,\beta_-(k), 1/2-k^2 2^{-k/10}) = k^{10}  2^{1-k} + O_k(k^4 2^{-k}).$$
Noting that $\lim_{\alpha \to 0} \frac{\partial}{\partial \alpha} \ovf_{2,k}(d,\beta_-(k), \alpha) = \infty$, this gives
\beq \label{eq_aux_vanilla_second_c2} \sup_{\alpha \in (0,2^{-k/10})} \ovf_{2,k}(\dplus,\beta_-(k), \alpha) < \ovf_{2,k}(\dplus,\beta_-(k), 1/2-k^2 2^{-k/2}). \eeq
This concludes the proof of the lemma.
\end{proof}

\begin{proof}[Proof of \Lem~\ref{lemma_vanilla_non_local_0}] Let $d$ and $\beta$ be as in \Lem~\ref{lemma_vanilla_non_local_0}.
We first observe that for $\alpha \in (0,1/2)$, $f_{2,k}(d,\beta,\alpha) \geq f_{2,k}(d,\beta,1-\alpha)$. Therefore, we can restrict ourselves to $\alpha \in (0,1/2)$. By \Lem~\ref{lemma_vanilla_expansion_f2} we have $$ \sup_{1/2 - 2^{-k/3} \leq \alpha \leq 1/2} f_{2,k}(d,\beta,\alpha) \leq f_{2,k}(d,\beta, 1/2) .$$
Combining \Claim~\ref{claim_aux_vanilla_mon_1}-\ref{claim_aux_vanilla_mon_1} with \Lem~\ref{lemma_aux_vanilla_ovf_inter} we obtain
$$\sup_{\alpha \in [2^{-k/10},1/2- k^2 2^{-k/2}]} \ovf_{2,k}(d,\beta,\alpha) \leq \ovf_{2,k}(d,\beta,1/2-k^2 2^{-k/2}).$$
Using in addition \Lem~\ref{simple_uper_bound_moments_f} and \Lem~\ref{lemma_upb_ovf_2} ends the proof of the lemma.
  \end{proof}

\begin{proof}[Prood of \Lem~\ref{lemma_vanilla_non_local}] The proof follows by combining the previous results with \Lem~\ref{lemma_border_case_1} and \Lem~\ref{lemma_border_case_2}. \end{proof}

\end{document}